\title[The dynamical Kirchberg--Phillips theorem]{The dynamical Kirchberg--Phillips theorem}
\author{James Gabe}
\author{Gábor Szabó}
\address{Department of Mathematics and Computer Science, University of Southern\linebreak
\phantom{-}\hspace{2mm} Denmark, Campusvej 55, 5230 Odense, Denmark}
\email{gabe@imada.sdu.dk}
\address{Department of Mathematics, KU Leuven, Celestijnenlaan 200b, box 2400\linebreak
\phantom{-}\hspace{2mm} B-3001 Leuven, Belgium}
\email{gabor.szabo@kuleuven.be}
\dedicatory{In memory of Eberhard Kirchberg.}
\subjclass[2020]{46L55, 46L35, 19K35}
\numberwithin{equation}{section}
\begin{document}

% math
\renewcommand\matrix[1]{\left(\begin{array}{*{10}{c}} #1 \end{array}\right)}  % Matrix
\newcommand\set[1]{\left\{#1\right\}}  % Menge

%% Besondere Variablen
%Zahlmengen-Stil
\newcommand{\IA}[0]{\mathbb{A}} \newcommand{\IB}[0]{\mathbb{B}}
\newcommand{\IC}[0]{\mathbb{C}} \newcommand{\ID}[0]{\mathbb{D}}
\newcommand{\IE}[0]{\mathbb{E}} \newcommand{\IF}[0]{\mathbb{F}}
\newcommand{\IG}[0]{\mathbb{G}} \newcommand{\IH}[0]{\mathbb{H}}
\newcommand{\II}[0]{\mathbb{I}} \renewcommand{\IJ}[0]{\mathbb{J}}
\newcommand{\IK}[0]{\mathbb{K}} \newcommand{\IL}[0]{\mathbb{L}}
\newcommand{\IM}[0]{\mathbb{M}} \newcommand{\IN}[0]{\mathbb{N}}
\newcommand{\IO}[0]{\mathbb{O}} \newcommand{\IP}[0]{\mathbb{P}}
\newcommand{\IQ}[0]{\mathbb{Q}} \newcommand{\IR}[0]{\mathbb{R}}
\newcommand{\IS}[0]{\mathbb{S}} \newcommand{\IT}[0]{\mathbb{T}}
\newcommand{\IU}[0]{\mathbb{U}} \newcommand{\IV}[0]{\mathbb{V}}
\newcommand{\IW}[0]{\mathbb{W}} \newcommand{\IX}[0]{\mathbb{X}}
\newcommand{\IY}[0]{\mathbb{Y}} \newcommand{\IZ}[0]{\mathbb{Z}}

\newcommand{\Ia}[0]{\mathbbmss{a}} \newcommand{\Ib}[0]{\mathbbmss{b}}
\newcommand{\Ic}[0]{\mathbbmss{c}} \newcommand{\Id}[0]{\mathbbmss{d}}
\newcommand{\Ie}[0]{\mathbbmss{e}} \newcommand{\If}[0]{\mathbbmss{f}}
\newcommand{\Ig}[0]{\mathbbmss{g}} \newcommand{\Ih}[0]{\mathbbmss{h}}
\newcommand{\Ii}[0]{\mathbbmss{i}} \newcommand{\Ij}[0]{\mathbbmss{j}}
\newcommand{\Ik}[0]{\mathbbmss{k}} \newcommand{\Il}[0]{\mathbbmss{l}}
\renewcommand{\Im}[0]{\mathbbmss{m}} \newcommand{\In}[0]{\mathbbmss{n}}
\newcommand{\Io}[0]{\mathbbmss{o}} \newcommand{\Ip}[0]{\mathbbmss{p}}
\newcommand{\Iq}[0]{\mathbbmss{q}} \newcommand{\Ir}[0]{\mathbbmss{r}}
\newcommand{\Is}[0]{\mathbbmss{s}} \newcommand{\It}[0]{\mathbbmss{t}}
\newcommand{\Iu}[0]{\mathbbmss{u}} \newcommand{\Iv}[0]{\mathbbmss{v}}
\newcommand{\Iw}[0]{\mathbbmss{w}} \newcommand{\Ix}[0]{\mathbbmss{x}}
\newcommand{\Iy}[0]{\mathbbmss{y}} \newcommand{\Iz}[0]{\mathbbmss{z}}

%Geschwungener Stil
\newcommand{\CA}[0]{\mathcal{A}} \newcommand{\CB}[0]{\mathcal{B}}
\newcommand{\CC}[0]{\mathcal{C}} \newcommand{\CD}[0]{\mathcal{D}}
\newcommand{\CE}[0]{\mathcal{E}} \newcommand{\CF}[0]{\mathcal{F}}
\newcommand{\CG}[0]{\mathcal{G}} \newcommand{\CH}[0]{\mathcal{H}}
\newcommand{\CI}[0]{\mathcal{I}} \newcommand{\CJ}[0]{\mathcal{J}}
\newcommand{\CK}[0]{\mathcal{K}} \newcommand{\CL}[0]{\mathcal{L}}
\newcommand{\CM}[0]{\mathcal{M}} \newcommand{\CN}[0]{\mathcal{N}}
\newcommand{\CO}[0]{\mathcal{O}} \newcommand{\CP}[0]{\mathcal{P}}
\newcommand{\CQ}[0]{\mathcal{Q}} \newcommand{\CR}[0]{\mathcal{R}}
\newcommand{\CS}[0]{\mathcal{S}} \newcommand{\CT}[0]{\mathcal{T}}
\newcommand{\CU}[0]{\mathcal{U}} \newcommand{\CV}[0]{\mathcal{V}}
\newcommand{\CW}[0]{\mathcal{W}} \newcommand{\CX}[0]{\mathcal{X}}
\newcommand{\CY}[0]{\mathcal{Y}} \newcommand{\CZ}[0]{\mathcal{Z}}

%Script Stil
\newcommand{\FA}[0]{\mathfrak{A}} \newcommand{\FB}[0]{\mathfrak{B}}
\newcommand{\FC}[0]{\mathfrak{C}} \newcommand{\FD}[0]{\mathfrak{D}}
\newcommand{\FE}[0]{\mathfrak{E}} \newcommand{\FF}[0]{\mathfrak{F}}
\newcommand{\FG}[0]{\mathfrak{G}} \newcommand{\FH}[0]{\mathfrak{H}}
\newcommand{\FI}[0]{\mathfrak{I}} \newcommand{\FJ}[0]{\mathfrak{J}}
\newcommand{\FK}[0]{\mathfrak{K}} \newcommand{\FL}[0]{\mathfrak{L}}
\newcommand{\FM}[0]{\mathfrak{M}} \newcommand{\FN}[0]{\mathfrak{N}}
\newcommand{\FO}[0]{\mathfrak{O}} \newcommand{\FP}[0]{\mathfrak{P}}
\newcommand{\FQ}[0]{\mathfrak{Q}} \newcommand{\FR}[0]{\mathfrak{R}}
\newcommand{\FS}[0]{\mathfrak{S}} \newcommand{\FT}[0]{\mathfrak{T}}
\newcommand{\FU}[0]{\mathfrak{U}} \newcommand{\FV}[0]{\mathfrak{V}}
\newcommand{\FW}[0]{\mathfrak{W}} \newcommand{\FX}[0]{\mathfrak{X}}
\newcommand{\FY}[0]{\mathfrak{Y}} \newcommand{\FZ}[0]{\mathfrak{Z}}

\newcommand{\Fa}[0]{\mathfrak{a}} \newcommand{\Fb}[0]{\mathfrak{b}}
\newcommand{\Fc}[0]{\mathfrak{c}} \newcommand{\Fd}[0]{\mathfrak{d}}
\newcommand{\Fe}[0]{\mathfrak{e}} \newcommand{\Ff}[0]{\mathfrak{f}}
\newcommand{\Fg}[0]{\mathfrak{g}} \newcommand{\Fh}[0]{\mathfrak{h}}
\newcommand{\Fi}[0]{\mathfrak{i}} \newcommand{\Fj}[0]{\mathfrak{j}}
\newcommand{\Fk}[0]{\mathfrak{k}} \newcommand{\Fl}[0]{\mathfrak{l}}
\newcommand{\Fm}[0]{\mathfrak{m}} \newcommand{\Fn}[0]{\mathfrak{n}}
\newcommand{\Fo}[0]{\mathfrak{o}} \newcommand{\Fp}[0]{\mathfrak{p}}
\newcommand{\Fq}[0]{\mathfrak{q}} \newcommand{\Fr}[0]{\mathfrak{r}}
\newcommand{\Fs}[0]{\mathfrak{s}} \newcommand{\Ft}[0]{\mathfrak{t}}
\newcommand{\Fu}[0]{\mathfrak{u}} \newcommand{\Fv}[0]{\mathfrak{v}}
\newcommand{\Fw}[0]{\mathfrak{w}} \newcommand{\Fx}[0]{\mathfrak{x}}
\newcommand{\Fy}[0]{\mathfrak{y}} \newcommand{\Fz}[0]{\mathfrak{z}}

%Modifikation der Variablen
\renewcommand{\phi}[0]{\varphi}
\newcommand{\eps}[0]{\varepsilon}

%zusätzliche Features
\newcommand{\id}[0]{\operatorname{id}}		% Identität
\newcommand{\eins}[0]{\mathbf{1}}			% Eine Eins in allgemeinerem Kontext, z.B. in einem Ring
\newcommand{\diag}[0]{\operatorname{diag}}
\newcommand{\ad}[0]{\operatorname{Ad}}
\newcommand{\ev}[0]{\operatorname{ev}}
\newcommand{\fin}[0]{{\subset\!\!\!\subset}}
\newcommand{\Aut}[0]{\operatorname{Aut}}
\newcommand{\dst}[0]{\displaystyle}
\newcommand{\cstar}[0]{\ensuremath{\mathrm{C}^*}}
\newcommand{\dist}[0]{\operatorname{dist}}
\newcommand{\cc}[0]{\simeq_{\mathrm{cc}}}
\newcommand{\ue}[0]{{~\approx_{\mathrm{u}}}~}
\newcommand{\prim}[0]{\ensuremath{\mathrm{Prim}}}
\newcommand{\GL}[0]{\operatorname{GL}}
\newcommand{\Hom}[0]{\operatorname{Hom}}
\newcommand{\sep}[0]{\ensuremath{\mathrm{sep}}}
\newcommand{\wc}[0]{\preccurlyeq}
\newcommand{\supp}[0]{\operatorname{supp}}
\renewcommand{\asymp}[0]{\sim_{\mathrm{asymp}}}
\newcommand{\asuc}[0]{\precsim_{\mathrm{as.u.}}}

% theorems
\newtheorem{satz}{Satz}[section]		% <--- optional, zählt so mit den Abschnitten

\newaliascnt{corCT}{satz}
\newtheorem{cor}[corCT]{Corollary}
\aliascntresetthe{corCT}
\providecommand*{\corCTautorefname}{Corollary}
\newaliascnt{lemmaCT}{satz}
\newtheorem{lemma}[lemmaCT]{Lemma}
\aliascntresetthe{lemmaCT}
\providecommand*{\lemmaCTautorefname}{Lemma}
\newaliascnt{propCT}{satz}
\newtheorem{prop}[propCT]{Proposition}
\aliascntresetthe{propCT}
\providecommand*{\propCTautorefname}{Proposition}
\newaliascnt{theoremCT}{satz}
\newtheorem{theorem}[theoremCT]{Theorem}
\aliascntresetthe{theoremCT}
\providecommand*{\theoremCTautorefname}{Theorem}
\newtheorem*{theoreme}{Theorem}

\theoremstyle{definition}

\newaliascnt{conjectureCT}{satz}
\newtheorem{conjecture}[conjectureCT]{Conjecture}
\aliascntresetthe{conjectureCT}
\providecommand*{\conjectureCTautorefname}{Conjecture}
\newaliascnt{defiCT}{satz}
\newtheorem{defi}[defiCT]{Definition}
\aliascntresetthe{defiCT}
\providecommand*{\defiCTautorefname}{Definition}
\newtheorem*{defie}{Definition}
\newaliascnt{notaCT}{satz}
\newtheorem{nota}[notaCT]{Notation}
\aliascntresetthe{notaCT}
\providecommand*{\notaCTautorefname}{Notation}
\newtheorem*{notae}{Notation}
\newaliascnt{remCT}{satz}
\newtheorem{rem}[remCT]{Remark}
\aliascntresetthe{remCT}
\providecommand*{\remCTautorefname}{Remark}
\newtheorem*{reme}{Remark}
\newaliascnt{exampleCT}{satz}
\newtheorem{example}[exampleCT]{Example}
\aliascntresetthe{exampleCT}
\providecommand*{\exampleCTautorefname}{Example}
\newaliascnt{questionCT}{satz}
\newtheorem{question}[questionCT]{Question}
\aliascntresetthe{questionCT}
\providecommand*{\questionCTautorefname}{Question}
\newtheorem*{questione}{Question}

\newcounter{theoremintro}
\renewcommand*{\thetheoremintro}{\Alph{theoremintro}}
\newaliascnt{theoremiCT}{theoremintro}
\newtheorem{theoremi}[theoremiCT]{Theorem}
\aliascntresetthe{theoremiCT}
\providecommand*{\theoremiCTautorefname}{Theorem}
\newaliascnt{defiiCT}{theoremintro}
\newtheorem{defii}[defiiCT]{Definition}
\aliascntresetthe{defiiCT}
\providecommand*{\defiiCTautorefname}{Definition}
\newaliascnt{coriCT}{theoremintro}
\newtheorem{cori}[coriCT]{Corollary}
\aliascntresetthe{coriCT}
\providecommand*{\coriCTautorefname}{Corollary}

%%%%%%%%%%%%%%%%%%%%%%%%%%%%%%%%%%%%%%%%%%%%

\begin{abstract} 
Let $G$ be a second-countable, locally compact group.
In this article we study amenable $G$-actions on Kirchberg algebras that admit an approximately central embedding of a canonical quasi-free action on the Cuntz algebra $\CO_\infty$.
If $G$ is discrete, this coincides with the class of amenable and outer $G$-actions on Kirchberg algebras.
We show that the resulting $G$-\cstar-dynamical systems are classified by equivariant Kasparov theory up to cocycle conjugacy.
This is the first classification theory of its kind applicable to actions of arbitrary locally compact groups.
Among various applications, our main result solves a conjecture of Izumi for actions of discrete amenable torsion-free groups, and recovers the main results of recent work by Izumi--Matui for actions of poly-$\IZ$ groups.
%For actions of $\IR$, we also obtain an alternative proof of Kishimoto's conjecture that every Kirchberg algebra carries a unique Rokhlin flow, and in fact the approach immediately yields the higher-dimensional generalization of this statement.
\end{abstract}

\maketitle

\setcounter{tocdepth}{1}
\tableofcontents

%%%%%%%%%%%%%%%%%%%%%%%%%%%

\section*{Introduction}

The present paper breaks new ground in the classification of group actions on \cstar-algebras up to cocycle conjugacy.
The development of such a classification theory represents the overarching goal behind a lot of past and present research in the area of operator algebras and can be argued to form a branch of its own.
Noncommutative dynamics is deeply rooted and ubiquitous within the subject of operator algebras, arguably because of the interesting ways in which groups can act on noncommutative structures.
Whether one wishes to mention $\IR$-actions in Tomita--Takesaki theory \cite{Takesaki70, Connes73}, the role of single automorphisms within Connes--Haagerup's classification of injective factors \cite{Connes76, Haagerup87} or later subsequent developments in Jones' subfactor theory \cite{Jones83, Popa90, Popa94, Popa10} or Popa's deformation/rigidity theory \cite{Popa07, Vaes10, Ioana18}, it is evident that dynamical ideas have been taking center stage on the side of von Neumann algebras for a long time.
The von Neumann algebraic result most pertinent to the context of this paper is the complete classification of actions of discrete amenable groups on injective factors \cite{Connes75, Connes77, Jones80, Ocneanu85, KawahigashiSutherlandTakesaki92, KatayamaSutherlandTakesaki98, Masuda13}, whose famous type II subcase was pioneered by Connes, Jones and Ocneanu.

A special case of our main result \autoref{intro:main-result} is the following \cstar-algebraic analogue of the above, though we wish to emphasise that our main theorem is applicable to all second countable, locally compact groups.
The general statement merely requires setting up some additional terminology.

\begin{theoremi} \label{intro:main-result-discrete}
Let $G$ be a countable discrete amenable group and let $\alpha: G\curvearrowright A$ and $\beta: G\curvearrowright B$ be pointwise outer actions on stable Kirchberg algebras.\footnote{These are the separable, nuclear, purely infinite, simple \cstar-algebras classified in \cite{KirchbergC, Phillips00}.}
Then $(A,\alpha)$ and $(B,\beta)$ are cocycle conjugate if and only if they are $KK^G$-equivalent.
\end{theoremi}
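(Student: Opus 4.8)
The plan is to deduce this statement from the general classification theorem \autoref{intro:main-result} by showing that its standing hypothesis is automatic in the present setting; the genuinely new input for the discrete case is therefore the identification of the relevant absorption property rather than the classification machinery itself. The ``only if'' direction is formal and I would dispose of it first: a cocycle conjugacy exhibits $\alpha$ as conjugate to a cocycle perturbation of $\beta$, and since both conjugation and exterior (cocycle) equivalence preserve the equivariant $KK$-class, the composite data is invertible in $KK^G$, giving a $KK^G$-equivalence $(A,\alpha)$ and $(B,\beta)$.

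For the substantive ``if'' direction, the key step is to show that any pointwise outer action $\alpha\colon G\curvearrowright A$ of a countable discrete amenable group on a stable Kirchberg algebra admits an approximately central embedding of the canonical quasi-free action $(\CO_\infty,\gamma)$, so that $\alpha$ falls into the class to which \autoref{intro:main-result} applies. Since $A$ is already $\CO_\infty$-absorbing at the level of the underlying algebra, the real issue is to produce embeddings $\CO_\infty\hookrightarrow A$ that are simultaneously approximately central and approximately $G$-equivariant with respect to $\gamma$ and $\alpha$; equivalently, to embed $(\CO_\infty,\gamma)$ equivariantly and unitally into the central sequence algebra $F_\infty(A)$ carrying its induced $G$-action. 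Here amenability and outerness play complementary roles: pointwise outerness, through a Kishimoto-type freeness argument, guarantees that the induced action on $F_\infty(A)$ is sufficiently free that the required systems of isometries obeying the Cuntz relations can be found with prescribed approximate covariance, while amenability, via Følner sets, is what allows these isometries to carry an approximately $G$-invariant copy of the regular representation, which is precisely the data encoded by the model action $\gamma$.

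I expect this absorption step to be the main obstacle. It is the dynamical analogue of Kirchberg's central-sequence $\CO_\infty$-absorption, and its difficulty lies in achieving centrality, the Cuntz relations, and near-equivariance against a fixed model action all at once; the tension between needing outerness to liberate the isometries and needing amenability to average them into an invariant configuration is where the technical heart of the argument sits. Once the absorption is established, \autoref{intro:main-result} supplies the remainder: it provides both an existence theorem, realising a given $KK^G$-equivalence and its inverse by equivariant $*$-homomorphisms, and a uniqueness theorem, asserting that equivariant morphisms agreeing in $KK^G$ are equivariantly asymptotically unitarily equivalent. Feeding these into a two-sided equivariant Elliott intertwining — in which the two compositions, being $KK^G$-equivalent to the respective identities, are absorbed up to asymptotically unitary paths that can be organised into an approximately inner cocycle — yields the desired cocycle conjugacy $(A,\alpha)\cc(B,\beta)$.
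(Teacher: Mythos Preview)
Your overall strategy is correct and matches the paper's: deduce the statement from \autoref{intro:main-result} by verifying that its standing hypotheses are automatic here. However, you have misidentified how the two assumptions on $G$ divide the labour.

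The hypotheses of \autoref{intro:main-result} are that the actions be (a) amenable and (b) isometrically shift-absorbing. These are verified \emph{separately}, and neither step is where you locate it. For a countable discrete group acting on a Kirchberg algebra, isometric shift-absorption is equivalent to pointwise outerness, and this equivalence holds whether or not $G$ is amenable; see \autoref{rem:outer-actions}. The argument is: Kishimoto-type freeness together with the pure infiniteness of the central sequence algebra $F_\omega(B)$ produces, for any finite $K\subseteq G\setminus\{1\}$, an isometry $s$ with $ss^*\perp\tilde{\beta}_g(ss^*)$ for $g\in K$. A diagonal argument then yields such an $s$ in $F_\infty(B)$ valid for all $g\neq 1$, and the assignment $\delta_g\mapsto\tilde{\beta}_g(s)$ realises the left-regular representation on the nose. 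No F{\o}lner averaging is involved; the regular-representation structure comes for free from the orthogonality of the translates. Your claim that ``amenability, via F{\o}lner sets, is what allows these isometries to carry an approximately $G$-invariant copy of the regular representation'' is therefore mistaken.

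Amenability of $G$ enters at the other hypothesis, and trivially: when $G$ is amenable, every $G$-action is amenable in the sense required by \autoref{intro:main-result}, since the net $\zeta_i$ can be taken to be scalar-valued (built from a F{\o}lner sequence). This is the only place F{\o}lner data appears, and it has nothing to do with constructing isometries in $F_\infty(A)$. Once (a) and (b) are verified in this way, \autoref{intro:main-result} gives the cocycle conjugacy directly; there is no need to re-run the existence/uniqueness/intertwining argument, as that is already packaged inside the cited theorem.
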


A classification theory of this scope has long been sought-after for \cstar-algebras and has in fact motivated most of the interesting developments found in the literature.
Yet our understanding of \cstar-dynamics on simple \cstar-algebras has mostly remained  underdeveloped in direct comparison to von Neumann algebras, which in hindsight is not surprising given the extra challenge posed by $K$-theoretical obstructions.
This is perhaps most convincingly demonstrated in the recent article \cite{Meyer21} of Meyer.
It would be an unpractically herculean task to thoroughly review all the important developments found in the literature in this regard (especially relating to finite \cstar-algebras), hence we will be somewhat selective in what we mention, in particular considering past articles that have given more thorough reviews (at least in part) already.
An obvious quintessential reference is Izumi's survey article \cite{Izumi10}, though the reader may also wish to consult the introduction of \cite{Szabo21cc} and the references therein with an eye to more recent works of the past decade.

At the level of methodology, almost all classification results for group actions rely on some kind of Rokhlin property, which in one way or another is made to work in conjunction with the Evans--Kishimoto intertwining method \cite{EvansKishimoto97}.
This kind of modus operandi still underpins the majority of the state-of-the-art concerning actions of non-compact groups, be it for flows \cite{Szabo21R} or poly-$\IZ$ groups \cite{IzumiMatui21, IzumiMatui21_2}.
The drawback of this approach is that the actual implementation of the Evans--Kishimoto intertwining technique becomes considerably less realistic without having full control over the precise structure of the acting group, which is due to the fact that the technical obstacles one faces  become increasingly opaque in $K$-theoretic terms.
In this paper we hope to achieve a full paradigm shift by promoting an approach that instead mirrors much more closely the methodology of the Elliott program \cite{Elliott94, Kirchberg95, Winter17, White23} to classify simple nuclear \cstar-algebras.
More concretely, we classify group actions following the conceptual approach suggested by the second author in \cite{Szabo21cc}, which relies on the prevalence of so-called \emph{existence and uniqueness theorems} in conjunction with Elliott's intertwining machinery applied to the \emph{(proper) cocycle category} of \cstar-dynamics.

\begin{defii}
Let $G$ be any locally compact group.
Let $\alpha: G\curvearrowright A$ and $\beta: G\curvearrowright B$ be two actions on \cstar-algebras.
A proper cocycle morphism is a pair $(\phi,\Iu): (A,\alpha)\to (B,\beta)$, where $\phi: A\to B$ is a $*$-homomorphism, $\Iu: G\to\CU(\eins+B)$ is a norm-continuous $\beta$-cocycle, and one has the equation $\ad(\Iu_g)\circ\beta_g\circ\phi=\phi\circ\alpha_g$ for all $g\in G$.
\end{defii}

As is fleshed out in \cite{Szabo21cc}, the proper cocycle morphisms form the arrows in a category, which provides a fair deal of additional flexibility in comparison to just working with equivariant maps.
In particular it is natural in this framework to start from a proper cocycle morphism as above and \emph{perturb} it with any given unitary $v\in\CU(\eins+B)$ to arrive at another proper cocycle morphism $\ad(v)\circ(\phi,\Iu)=(\ad(v)\circ\phi, v\Iu_\bullet\beta_\bullet(v)^*)$.
This provides the concept of unitary equivalence among proper cocycle morphisms, and in complete analogy to what one does for $*$-homomorphisms, one can generalize this to get a suitable notion of (proper) approximate or asymptotic unitary equivalence.
This turns out to fit into Elliott's intertwining machinery \cite{Elliott10}, which provides a full-fledged analog of the fundamental methodology underpinning the Elliott program.
The latter has enjoyed enormous success in recent years in the context of finite \cstar-algebras, first driven by breakthroughs in the approach related to tracial approximation \cite{GongLinNiu20, GongLinNiu20-2, ElliottGongLinNiu15, ElliottGongLinNiu20, ElliottGongLinNiu20-2, GongLin20, GongLin21, GongLin22}, and more recently gaining momentum through a more refined understanding of how ultrapowers, traces, and $K$-theory interact \cite{TikuisisWhiteWinter17, Schafhauser20, CGSTW23}.

However, it cannot be emphasized enough how influential and groundbreaking the much earlier work of Kirchberg and Phillips \cite{KirchbergC, Phillips00, KirchbergPhillips00} has been, which classified the traceless algebras within Elliott's program, now commonly referred to as the class of Kirchberg algebras.
Their classification result was arguably the first classification of \cstar-algebras that was truly abstract. Using ideas of Rørdam from \cite{Rordam95}, they exploit Kasparov's bivariant $K$-theory \cite{Kasparov88} for the classification of $*$-homomorphisms. 
To summarize their main result, one has for any two stable Kirchberg algebras $A$ and $B$ that any invertible element in $KK(A,B)$ lifts to an isomorphism $A\cong B$.
Determining whether this is true becomes more tractable if one assumes these algebras to satisfy the \emph{universal coefficient theorem} (UCT) \cite{RosenbergSchochet87}, whereby a $KK$-equivalence can be obtained from an isomorphism between the ordinary $K$-groups of $A$ and $B$.

For the reasons stated above, the class of Kirchberg algebras plays a special role within the Elliott program, to the point where many problems pertaining to general classifiable \cstar-algebras are first considered for Kirchberg algebras as a supposedly easier test case.
The desire to unravel the structure of group actions is no exception, hence the case of Kirchberg algebras has long been considered as the natural starting point in this regard \cite{Nakamura00, Izumi04, Izumi04II, IzumiMatui10}. 
In fact they were still the focal point in Izumi--Matui's recent groundbreaking work \cite{IzumiMatui21, IzumiMatui20, IzumiMatui21_2}.
In view of these past developments, a few researchers have raised suspicions over the years that there ought to be a dynamical analog of the Kirchberg--Phillips theorem for outer actions of discrete amenable groups.

Phillips himself has notably promoted this viewpoint for actions of finite groups in conference talks dating back at least a decade, and it should be mentioned that he has had an unpublished draft proving a number of results overlapping with our present work for actions of finite groups, although the detailed methods differ significantly.
For actions of torsion-free groups, Izumi conjectured in \cite{Izumi10} that  outer actions are classified up to cocycle conjugacy via isomorphism classes of certain bundles, and his recent work with Matui is the positive solution to his conjecture for poly-$\IZ$ groups.
Meyer has discovered in \cite{Meyer21} that the Baum--Connes machinery can be employed to reformulate Izumi's conjecture in terms of equivariant $KK$-theory.
As we make more precise in the sixth section, we can use Meyer's observation and our main result \autoref{intro:main-result-discrete} to obtain a positive solution to Izumi's conjecture; see \autoref{thm:Izumi-conjecture}.
While the things mentioned above have been going on, a part of the second author's work was driven by the goal to get closer to some kind of dynamical Kirchberg--Phillips theorem, which is most obvious in \cite{Szabo18kp}.

Apart from the aforementioned difficulties related to the implementation of the Evans--Kishimoto method, another major obstacle to cover larger classes of acting groups has been a lack of any means to systematically employ equivariant $KK$-theory as an invariant.
If we look back at more recent milestones in \cstar-algebra classification, one can see upon close examination that the systematic use of $KK$-theory is most commonly achieved in the Cuntz picture \cite{Cuntz83, Cuntz84, Higson87} via the stable uniqueness theorem of Lin and Dadarlat--Eilers \cite{Lin02, DadarlatEilers01, DadarlatEilers02}.
Although something similar had been demonstrated in Dadarlat--Eilers' original work, the recent work of the first author \cite{Gabe21} includes a new proof of the original Kirchberg--Phillips theorem by exploiting the stable uniqueness theorem to its fullest.
Motivated by the importance of such methods in the Elliott program, the authors developed in \cite{GabeSzabo22} a dynamical generalization of the stable uniqueness theorem in the context of equivariant $KK$-theory for arbitrary locally compact groups.

In the present work, we take the next major step and prove the desired dynamical Kirchberg--Phillips theorem in the highest generality possible.
Although we originally set out to classify outer actions of discrete amenable groups, we can in fact cover actions of arbitrary (second-countable) locally compact groups on Kirchberg algebras under appropriate dynamical assumptions.
The first important assumption is that the involved actions ought to be amenable (instead of the acting groups), the theory of which was only fleshed out recently in full generality \cite{BussEchterhoffWillett20, BussEchterhoffWillett23, Suzuki19, OzawaSuzuki21}.
The basic idea that the amenability of actions can be used for classification even for non-amenable acting groups goes back to Suzuki \cite{Suzuki21}.
We note that this stands out as a feature that seems unique to \cstar-algebras, as it is for instance well-known that non-amenable groups cannot act amenably on factors.
The second important assumption is inspired by a known result for finite group actions \cite{GoldsteinIzumi11} and gives rise to a class of actions that we call \emph{isometrically shift-absorbing}:

\begin{defii}
Let $\mu$ be a left-invariant Haar measure on $G$ and $\lambda: G\to\CU(L^2(G,\mu))$ the left-regular representation.
Let $\beta: G\curvearrowright B$ be an action on a separable unital \cstar-algebra.\footnote{We note that unitality is only assumed for ease of notation here. Our definition in the main body of the paper, \autoref{defi:the-condition}, covers arbitrary separable \cstar-algebras.}
We say that $\beta$ is isometrically shift-absorbing, if there exists a linear map $\Fs: L^2(G,\mu)\to B_\infty\cap B'$ that is equivariant in the sense that $\beta_{\infty,g}\circ\Fs=\Fs\circ\lambda_g$ holds for all $g\in G$, and such that one has $\Fs(\xi)^*\Fs(\eta)=\langle\xi\mid\eta\rangle\cdot\eins_B$ for all $\xi,\eta\in L^2(G,\mu)$.\footnote{In order to be in line with common terminology related to Hilbert modules over \cstar-algebras, we will always assume the first component of an inner product to be anti-linear.}
\end{defii}

Although we have not yet succeeded in finding a characterization of isometrically shift-absorbing actions on Kirchberg algebras in more familiar terms for arbitrary locally compact groups, one can easily obtain one for discrete groups with the available literature.
In what essentially boils down to an observation of Izumi--Matui in \cite{IzumiMatui10}, an action of a countable discrete group on a Kirchberg algebra is isometrically shift-absorbing if and only if it is outer.
For topological groups, outerness is known to be a significantly weaker propery, however.
For instance, we prove that for actions of $\IR^k$ on $\CO_\infty$-absorbing \cstar-algebras, being isometrically shift-absorbing is equivalent to the Rokhlin property \cite{Kishimoto96_R}, although the analogous statement is not true for many other groups like compact ones.
As one can observe with the help of \cite[Theorem 6.1]{OzawaSuzuki21}, \emph{every} second-countable locally compact group admits actions on Kirchberg algebras that are amenable and isometrically shift-absorbing.
One might adopt the viewpoint that isometric shift-absorption is some general manifestation of a Rokhlin-type property, but it has several major benefits in comparison to any earlier candidates given in the literature.
On the one hand, it can be formulated for all groups, and on the other hand, it turns out to pose no $K$-theoretical obstruction as a property of actions.
The latter is for instance the primary drawback of the Rokhlin property for compact group actions \cite{Izumi04II, HirshbergPhillips15, AranoKubota17, Gardella19, Gardella22}.
In fact we can observe the following, which is \autoref{thm:range} in the main body of the article.

\begin{theoremi}[cf.~Pimsner, Kumjian, Meyer, Ozawa--Suzuki] \label{thmi:range}
Let $\alpha: G\curvearrowright A$ be an amenable action on a separable nuclear \cstar-algebra.
Then there exists an amenable and isometrically shift-absorbing action $\beta: G\curvearrowright B$ on a stable Kirchberg algebra and an equivariant embedding $(A,\alpha)\to (B, \beta)$ that induces a $KK^G$-equivalence.
By \autoref{intro:main-result} it follows that $(B, \beta)$ is unique up to cocycle conjugacy.
\end{theoremi}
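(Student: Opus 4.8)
The plan is to split the construction into two phases: first arrange every \emph{local} feature of the target — separability, nuclearity, stability, $\CO_\infty$-absorption, and a candidate for the shift map $\Fs$ — together with the correct $KK^G$-class and an equivariant embedding, while tolerating that the algebra is not yet simple; and then pass to a simple algebra by an equivariant Cuntz--Pimsner construction that does not disturb any of this.

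For the first phase I would use the canonical quasi-free model $\gamma\colon G\curvearrowright\CO_\infty$ on the Cuntz algebra $\CO_{L^2(G,\mu)}$, whose generating isometries are indexed by $L^2(G,\mu)$ and on which $G$ acts through $\lambda$; the assignment sending $\xi\in L^2(G,\mu)$ to the corresponding generator is precisely a map of the kind demanded in the definition of isometric shift-absorption, satisfying $\gamma_{\infty,g}\circ\Fs=\Fs\circ\lambda_g$ and the isometry relations. A standard (equivariant) computation shows that the unital inclusion $(\IC,\mathrm{triv})\hookrightarrow(\CO_\infty,\gamma)$ is a $KK^G$-equivalence, and here Meyer's $KK^G$-formalism is the natural setting. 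Setting $(D,\delta):=(A\otimes\CO_\infty\otimes\CK,\ \alpha\otimes\gamma\otimes\ad\lambda)$ with $\CK=\CK(L^2(G)\otimes\ell^2)$ the standard equivariant stabilisation, the external tensor product of $\mathrm{id}_A$ with this equivalence shows that $a\mapsto a\otimes\eins\otimes e$ is an equivariant embedding inducing a $KK^G$-equivalence; amenability is automatic because amenable actions are closed under tensoring with arbitrary actions (a permanence property established by Buss--Echterhoff--Willett); and the map $\Fs$ survives into the central sequence algebra of $D$. Thus $(D,\delta)$ is separable, nuclear, stable, $\CO_\infty$-absorbing, amenable and isometrically shift-absorbing, and $KK^G$-equivalent to $(A,\alpha)$ through an equivariant embedding — it fails only to be simple.

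The second phase is the genuinely hard one. I would realise the target as an equivariant Cuntz--Pimsner algebra $\CO_E$ of a suitably chosen $G$-equivariant \cstar-correspondence $E$ over $D$. The key point, due to Pimsner, is that for a correspondence whose left action is by \emph{non-compact} operators with infinite multiplicity the covariance ideal is trivial, so $\CO_E$ coincides with the Toeplitz--Pimsner algebra and the coefficient inclusion $\iota\colon D\to\CO_E$ is \emph{automatically} a $KK^G$-equivalence; this buys the correct $KK^G$-class for free, independently of the remaining design choices. I would then spend those choices on forcing the left action of $D$ on $E$ to be minimal and aperiodic with respect to $\prim(D)$, so that Kumjian-type criteria make $\CO_E$ simple and purely infinite, hence a Kirchberg algebra once nuclearity and separability are noted (both inherited from $D$). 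Incorporating the multiplicity space $L^2(G,\mu)$ with $\lambda$ into $E$ arranges an equivariant $\CO_\infty$-absorption $(\CO_E,\beta)\cong(\CO_E,\beta)\otimes(\CO_\infty,\gamma)$, which promotes the generators to a central equivariant map $\Fs$ and thereby yields isometric shift-absorption of the resulting action $\beta$; amenability persists because a Cuntz--Pimsner construction over amenable data stays amenable, by Ozawa--Suzuki and Buss--Echterhoff--Willett. A final equivariant stabilisation makes $B:=\CO_E\otimes\CK$ stable, the composite $A\hookrightarrow D\hookrightarrow\CO_E\hookrightarrow B$ is the desired equivariant embedding inducing a $KK^G$-equivalence, and uniqueness of $(B,\beta)$ up to cocycle conjugacy is then immediate from \autoref{intro:main-result}.

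The main obstacle is concentrated entirely in this second phase, and it is one of \emph{simultaneity} rather than of any single property. Forcing $\CO_E$ to be simple requires the correspondence to mix $\prim(D)$ transitively, yet this mixing must be carried out compatibly with the central-sequence map $\Fs$ so that isometric shift-absorption survives, and without destroying equivariant amenability of the construction. Reconciling these competing demands is where the real work lies, and I expect it to require a hands-on construction of the $G$-correspondence $E$ together with careful equivariant bookkeeping in the Cuntz--Pimsner picture.
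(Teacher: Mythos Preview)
Your Phase 1 contains a gap. Isometric shift-absorption requires the map $\Fs$ to land in the \emph{central} sequence algebra $F_{\infty,\delta}(D)$, i.e., its values must approximately commute with all of $D$. The generators of the single tensor factor $(\CO_\infty,\gamma)$ inside $D=A\otimes\CO_\infty\otimes\CK$ do not commute with themselves, so they do not represent elements of $F_{\infty,\delta}(D)$; your claim that ``the map $\Fs$ survives into the central sequence algebra of $D$'' is therefore unjustified. The fix is to use the infinite tensor power $(\CO_\infty^{\otimes\infty},\gamma^{\otimes\infty})$ instead: the tail copies then furnish an equivariant unital map $(\CO_\infty,\gamma)\to F_{\infty,\delta}(D)$. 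One also needs the separate inductive-limit argument that the unital inclusion $\IC\hookrightarrow(\CO_\infty^{\otimes\infty},\gamma^{\otimes\infty})$ is still a $KK^G$-equivalence, which does not follow immediately from the single-copy statement.

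More importantly, the ``main obstacle'' you identify in Phase 2 --- arranging simplicity, isometric shift-absorption, and amenability \emph{simultaneously} in one Cuntz--Pimsner construction --- is entirely avoidable, and the paper avoids it by reversing your two phases. The paper first invokes Ozawa--Suzuki's Theorem~6.1 as a black box: this is precisely the Kumjian/Meyer Cuntz--Pimsner construction together with the Ozawa--Suzuki amenability-preservation, and it already delivers an amenable action $\beta_0$ on a (stable or unital) Kirchberg algebra $B_0$ with an equivariant embedding $(A,\alpha)\hookrightarrow(B_0,\beta_0)$ inducing a $KK^G$-equivalence. Only \emph{afterwards} does the paper tensor with $(\CO_\infty^{\otimes\infty},\gamma^{\otimes\infty})$ to obtain isometric shift-absorption. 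Since tensoring with this fixed unital nuclear system preserves the Kirchberg property, amenability, and --- by the $KK^G$-triviality of $\gamma^{\otimes\infty}$ just mentioned --- the $KK^G$-class of the embedding, nothing is disturbed. The Cuntz--Pimsner step therefore need not be designed with isometric shift-absorption in mind at all, and your ``simultaneity'' problem disappears.
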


In this article we prove a classification result for actions on Kirchberg algebras that are both amenable and isometrically shift-absorbing.
Our theory is enabled by the observation that our two dynamical assumptions work as a very powerful tool in conjunction.
In a nutshell, an action $\beta: G\curvearrowright B$ can be seen to be isometrically shift-absorbing if $B$ is locally approximated by $L^2(G,B)$ as an equivariant $B$-bimodule (\autoref{prop:the-condition}), whereas $\beta$ is amenable precisely when $L^2(G,B)$ admits a suitable net of approximate fixed points.
This grants us access to the kind of averaging arguments that one usually only has with some kind of Rokhlin property, and becomes the key ingredient that allows us to apply our stable uniqueness theorem \cite{GabeSzabo22} in a fruitful way.
All of this culminates in suitable existence and uniqueness theorems for the group actions under consideration, which we shall state here in an oversimplified form for the sake of readability.
For the precise statements, we refer to \autoref{thm:existence} and \autoref{thm:uniqueness}.

\begin{theoremi} \label{intro:existence-uniqueness}
Let $\alpha: G\curvearrowright A$ and $\beta: G\curvearrowright B$ be actions on Kirchberg algebras.
Suppose $\alpha$ and $\beta$ are amenable and isometrically shift-absorbing.
Assume that $\beta$ tensorially absorbs the trivial $G$-action on $\CK$ up to conjugacy.
Then the assignment $(\phi,\Iu)\mapsto KK^G(\phi,\Iu)$ induces a bijection between proper cocycle embeddings $(A,\alpha)\to (B,\beta)$ which are anchored (see \autoref{rem:KKGEG}), modulo strong asymptotic unitary equivalence, and the group $KK^G(\alpha,\beta)$.
\end{theoremi}

There is of course a unital version of the above theorem as well (see \autoref{thm:unital-existence} and \autoref{thm:unital-uniqueness}), but it should be noted that the level of generality is then in truth strictly lower because only exact groups can act amenably on unital \cstar-algebras; see \cite[Corollary 3.6]{OzawaSuzuki21}.
The above ends up being the optimal generalization of the known existence and uniqueness theorems in the context of the Kirchberg--Phillips theorem, whereby $*$-homomorphisms between stable Kirchberg algebras are classified by $KK$-theory up to asymptotic unitary equivalence.
Finally, the desired classification result for the actions can be deduced from the above theorem in conjunction with the Elliott intertwining machinery.
We shall again state it here in its most abridged form, and refer to \autoref{thm:main-theorem} for the detailed statement.

\begin{theoremi} \label{intro:main-result}
Let $\alpha: G\curvearrowright A$ and $\beta: G\curvearrowright B$ be actions on stable Kirchberg algebras.
Suppose $\alpha$ and $\beta$ are amenable and isometrically shift-absorbing.
Then $(A,\alpha)$ and $(B,\beta)$ are cocycle conjugate if and only if they are $KK^G$-equivalent.
\end{theoremi}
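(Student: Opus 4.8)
I plan to derive the statement from the existence and uniqueness results of \autoref{intro:existence-uniqueness}, combined with an equivariant version of Elliott's two-sided approximate intertwining carried out in the proper cocycle category, following \cite{Szabo21cc, Elliott10}. The forward implication is the formal one: a cocycle conjugacy is in particular an invertible arrow in the cocycle category, and since $KK^G$ is functorial with respect to proper cocycle morphisms, it sends such an arrow to an invertible class in $KK^G(\alpha,\beta)$, i.e.\ a $KK^G$-equivalence. The content lies entirely in the converse.

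For the converse, I first reduce to the stabilized framework of \autoref{intro:existence-uniqueness}. Since $A$ and $B$ are stable and $\alpha,\beta$ are isometrically shift-absorbing, I expect to establish $\alpha\cc\alpha^s$ and $\beta\cc\beta^s$, so that the existence and uniqueness statements apply directly to proper cocycle embeddings between $(A,\alpha)$ and $(B,\beta)$. Fixing a $KK^G$-equivalence $x\in KK^G(\alpha,\beta)$ with inverse $y$, I use \autoref{intro:existence-uniqueness}(i) to lift $x$ and $y$ to proper cocycle embeddings $(\phi,\Iu)\colon(A,\alpha)\to(B,\beta)$ and $(\psi,\Iv)\colon(B,\beta)\to(A,\alpha)$, each with cocycle homotopic to the trivial one and with $KK^G(\phi,\Iu)=x$ and $KK^G(\psi,\Iv)=y$. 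The composite $(\psi,\Iv)\circ(\phi,\Iu)$ is then a proper cocycle embedding of $(A,\alpha)$ into itself whose cocycle is again homotopic to the trivial cocycle, and whose $KK^G$-class equals the Kasparov product $x\cdot y=1_\alpha=KK^G(\id_A,\eins)$. Hence \autoref{intro:existence-uniqueness}(ii) applies and shows that $(\psi,\Iv)\circ(\phi,\Iu)$ is strongly asymptotically unitarily equivalent to $(\id_A,\eins)$; by symmetry, $(\phi,\Iu)\circ(\psi,\Iv)$ is strongly asymptotically unitarily equivalent to $(\id_B,\eins)$.

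This puts me in precisely the configuration needed for a two-sided intertwining: the cocycle embeddings $(\phi,\Iu)$ and $(\psi,\Iv)$ are mutually inverse up to strong asymptotic unitary equivalence. I will then feed the continuous unitary paths furnished by \autoref{intro:existence-uniqueness}(ii) --- which, crucially, start at $\eins$ --- into the intertwining machinery of \cite{Elliott10, Szabo21cc}, obtaining an approximate intertwining whose limit is a genuine cocycle conjugacy $(A,\alpha)\cong(B,\beta)$.

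Granting \autoref{intro:existence-uniqueness}, the hard part will not be the intertwining itself, which is formal, but the preparatory reductions: establishing the stability identifications $\alpha\cc\alpha^s$ and $\beta\cc\beta^s$ that bridge the stabilized statement with the given stable actions, and verifying that composing proper cocycle embeddings with trivial-homotopic cocycles again yields a trivial-homotopic cocycle, so that the uniqueness theorem is legitimately applicable to the composites. Once these are in place, the passage to a cocycle conjugacy is a direct consequence of the abstract machinery.
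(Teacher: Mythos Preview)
Your proposal is correct and follows essentially the same route as the paper: reduce to strongly stable actions via \autoref{prop:cc-stability} (your $\alpha\cc\alpha^s$), apply the existence theorem twice to lift $x$ and $x^{-1}$ to anchored proper cocycle embeddings, verify the composites are anchored, apply the uniqueness theorem to see both composites are strongly asymptotically inner, and then invoke the two-sided intertwining \autoref{thm:two-sided-intertwining}. The one point worth noting is that the paper handles your concern about composites of trivial-homotopic cocycles by first upgrading ``anchored'' to ``asymptotic coboundary'' (applying the uniqueness theorem with $A=0$) and then observing in \autoref{rem:vscc-comp} that asymptotic coboundaries are closed under composition; this is exactly the verification you flagged as necessary.
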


In light of an earlier remark, we can restrict to $G$ being discrete and conclude that we have classified all amenable and outer $G$-actions on Kirchberg algebras up to cocycle conjugacy.
This represents the first abstract classification result up to cocycle conjugacy by $K$-theoretical invariants that covers all (discrete) amenable groups.
One may of course argue that the main result of \cite{Szabo18kp} went in this direction, but the scope and methods therein were very narrow in comparison and relied on much more than just $K$-theoretical information.
Even if we were to count this and hence also Suzuki's work \cite{Suzuki21} as a sort of classification theory, \autoref{intro:main-result} is certainly the first result of its kind covering actions of arbitrary locally compact groups.

In order to come full circle with the analogy regarding the known Kirch\-berg--Phillips theorem, we remark that the results achieved in this article ought to be viewed as the completion of the \emph{analytical} work needed to arrive at a fully satisfactory classification theory.
In analogy to Rosenberg--Schochet's work on the UCT \cite{RosenbergSchochet87}, it is still a largely unsolved and independent problem to determine the existence of a $KK$-equivalence between two actions in the \emph{equivariant bootstrap class} \cite{DAEmersonMeyer14} in terms of isomorphism of more manageable local invariants.
It should be noted that Rosenberg--Schochet recognized this challenge early on and developed methods to tackle this problem for certain compact connected Lie groups \cite{RosenbergSchochet86}, but there was a subsequent long period without any follow-up.
In our opinion, the ideas in Köhler's PhD thesis \cite{KoehlerPHD} related to the equivariant UCT seem promising to build on, but this needs to involve methods related to algebraic topology and homological algebra. 
Therefore we support the viewpoint propagated by Meyer \cite{Meyer21} to view that as the \emph{algebraic} side of the classification problem.

The article is organized as follows.
Conceptually speaking, the content is deliberately ordered so that the level of generality of the involved techniques decreases with each section.
The introduction of basic concepts is done in the first section, covering the cocycle category framework, equivariant $KK$-theory, sequence algebras, and amenability for actions.
In the second section, we study the concept of approximate domination between cocycle representations, which is a stronger version of weak containment.
In particular we obtain a sufficient condition for a proper cocycle morphism to absorb another one in the sense of Cuntz addition up to asymptotic unitary equivalence, which is inspired by an important technical step in the known Kirchberg--Phillips theorem; see \cite[Lemma 2.3.6]{Phillips00}.

In the third section we introduce and study isometrically shift-absorbing actions, with a focus on those that are also amenable.
The key lemma of this section (\autoref{lem:key-lemma}) serves as the technical backbone of our theory and represents a reduction principle: If one considers a pair of cocycle representations $(\phi,\Iu), (\psi,\Iv): (A,\alpha)\to (B,\beta)$ with $\beta$ being amenable and isometrically shift-absorbing, then $(\phi,\Iu)$ approximately dominates $(\psi,\Iv)$ if and only if $\phi$ approximately dominates $\psi$ as an ordinary $*$-homomorphism.
In other words, the dynamical assumptions of the codomain action allow one to solve an a priori difficult dynamical problem by solving a much easier and more familiar \cstar-algebra problem one encounters in the literature.
An important consequence of this reduction principle is that it allows us to obtain explicit descriptions of examples of absorbing cocycle representations, which are highly relevant in light of the stable uniqueness theorem for equivariant $KK$-theory.
We note here that in various places in both the second and third section, many of the most non-trivial technical ingredients are imported as consequences of observations made in the context of studying general $KK^G$-groups in our earlier article \cite{GabeSzabo22}.

In the fourth section we prove a dynamical version of the famous $\CO_2$-embedding theorem \cite{KirchbergPhillips00}, which is a technical centerpiece towards the `onto' part of \autoref{intro:existence-uniqueness}.
Since one may see this as a result of independent interest generalizing also significant parts of \cite{Szabo18kp, Suzuki21}, let us state it here; see also \autoref{thm:equi-O2-embedding}.

\begin{theoremi} \label{thmi:O2-embedding}
Let $\alpha: G\curvearrowright A$ be an amenable action on a separable exact \cstar-algebra.
Let $\beta: G\curvearrowright B$ be an isometrically shift-absorbing action on a Kirchberg algebra.
Then there exists a proper cocycle embedding $(A,\alpha)\to (B\otimes\CO_2,\beta\otimes\id_{\CO_2})$.
\end{theoremi}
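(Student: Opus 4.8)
The plan is to prove this by a direct construction rather than by invoking the reduction principle of \autoref{lem:key-lemma}: since $\beta$ is only assumed isometrically shift-absorbing and \emph{not} amenable, the codomain hypotheses of that lemma are unavailable, so all of the amenability that drives the argument must be extracted from the domain action $\alpha$. The tensor factor $\CO_2$ is what makes existence (as opposed to classification) tractable: it guarantees a \emph{non}-equivariant embedding to start from and renders any two embeddings comparable, so that the non-equivariance of that starting embedding can later be absorbed into a cocycle. Concretely I would assemble a proper cocycle embedding out of three ingredients — a non-equivariant Kirchberg embedding of $A$, the central equivariant copy of $L^2(G)$ supplied by shift-absorption, and the approximately invariant vectors supplied by amenability of $\alpha$ — first into the sequence algebra $(B\otimes\CO_2)_\infty$ and then, by a reindexing argument, into $B\otimes\CO_2$ itself.

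\textbf{The three ingredients and their composition.} First I would invoke the ordinary $\CO_2$-embedding theorem \cite{KirchbergPhillips00}: as $A$ is separable and exact there is an injective $*$-homomorphism $\kappa\colon A\hookrightarrow\CO_2$, which I compose with a corner inclusion $\CO_2\hookrightarrow B\otimes\CO_2$ to obtain an injective but non-equivariant $\kappa'\colon A\to B\otimes\CO_2$. Second, isometric shift-absorption of $\beta\otimes\id_{\CO_2}$ provides a $G$-equivariant isometric linear map $\Fs\colon L^2(G)\to (B\otimes\CO_2)_\infty\cap(B\otimes\CO_2)'$, i.e. a central, equivariant copy of $L^2(G)$; by \autoref{prop:the-condition} this is exactly the statement that $B\otimes\CO_2$ is locally approximated by the module $L^2(G,B\otimes\CO_2)$. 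Third, amenability of $\alpha$ furnishes approximately $(\lambda\otimes\alpha)$-invariant unit vectors $\xi_n$ in (the relevant completion of) $C_c(G,A)$, which I read as an approximate, asymptotically $G$-equivariant factorization of $\id_A$ through the regular module $L^2(G)\otimes A$. Substituting $\kappa'$ for the coefficient algebra and realizing the $L^2(G)$-slot through $\Fs$ turns this factorization into a sequence of maps $A\to B\otimes\CO_2$ that are isometric, asymptotically multiplicative, and asymptotically equivariant — the latter because the approximate $\lambda$-invariance of $\xi_n$ cancels the shift that $(\beta\otimes\id_{\CO_2})_\infty$ induces on $\Fs$. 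Assembling these into the sequence algebra yields an injective $*$-homomorphism $\Phi\colon A\to(B\otimes\CO_2)_\infty$ that intertwines $\alpha$ with $(\beta\otimes\id_{\CO_2})_\infty$ up to a controlled, unitarily implementable discrepancy.

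\textbf{From the sequence algebra to the target.} That residual discrepancy stems from the non-equivariance of $\kappa'$, and this is precisely where I would let a cocycle enter. Using that any two embeddings into the $\CO_2$-stable target agree up to (approximate) unitary equivalence, the mismatch between $(\beta\otimes\id_{\CO_2})_g\circ\Phi$ and $\Phi\circ\alpha_g$ is implemented, level by level, by a unitary path, and these can be organized into a norm-continuous cocycle $\Iu\colon G\to\CU(\eins+B\otimes\CO_2)$ with $\ad(\Iu_g)\circ(\beta\otimes\id_{\CO_2})_g\circ\phi=\phi\circ\alpha_g$. A one-sided Elliott-type intertwining in the proper cocycle category — applicable since $A$ is separable and the data live in a sequence algebra — then produces an honest proper cocycle morphism $(\phi,\Iu)\colon(A,\alpha)\to(B\otimes\CO_2,\beta\otimes\id_{\CO_2})$. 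Injectivity of $\phi$ is inherited from that of $\kappa'$, as the approximating maps are isometric, so $(\phi,\Iu)$ is the desired proper cocycle embedding.

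\textbf{Main obstacle.} The essential difficulty is not the algebra of the construction but its analysis for genuinely topological $G$: one must simultaneously (i) keep the group-average inside $B\otimes\CO_2$ rather than its multiplier algebra, (ii) preserve asymptotic multiplicativity and centrality, and (iii) obtain a cocycle that is \emph{norm-continuous} in $g$ with estimates uniform on compact subsets of $G$. All three fail for the naive equivariant induction $\sum_g \Fs(\delta_g)\,\kappa'(\alpha_{g^{-1}}(\,\cdot\,))\,\Fs(\delta_g)^*$, which only lands in the multipliers and never corrects the non-equivariance of $\kappa'$. Amenability of $\alpha$ is exactly the device that resolves (i)--(iii): the approximately invariant $\xi_n$ make the average converge and near-equivariant at once, playing the role that a Rokhlin-type averaging would otherwise have to play. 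Verifying the uniform-in-$g$ estimates needed to assemble a norm-continuous cocycle, and matching them against the continuity of $g\mapsto\Fs(\lambda_g\xi_n)$, is the most delicate point, and is where the full strength of the amenability hypothesis on the domain is consumed.
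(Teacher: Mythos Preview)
Your proposal identifies the right three ingredients and correctly diagnoses that the amenability must come from the \emph{domain} action, but it has a genuine gap at the point where those ingredients are supposed to produce a $*$-homomorphism. The sentence ``substituting $\kappa'$ for the coefficient algebra and realizing the $L^2(G)$-slot through $\Fs$ turns this factorization into a sequence of maps $A\to B\otimes\CO_2$ that are isometric, asymptotically multiplicative, and asymptotically equivariant'' does not specify a map, and the natural candidates built from your data (for instance $a\mapsto\theta(\eta)^*\kappa'(a)\theta(\eta)$ with $\eta$ constructed from $\xi_n$ and $\kappa'$) are completely positive but not multiplicative. The paper circumvents this with a suspension trick: it first builds a \emph{genuinely equivariant} $*$-homomorphism from the suspension $(SA,S\alpha)$ into $(B_{\infty,\beta},\beta_\infty)$---the extra $\CC_0(\IR)$-factor is exactly what permits functional calculus on an approximately $\beta$-invariant positive element and thus a multiplicative map---and then ``unsuspends'' by applying a uniqueness theorem to extend to the crossed product $SA\rtimes_{\sigma\otimes\id_A}\IZ\cong A\otimes\CC(\IT)\otimes\CK$ and restricting to the full corner $A$. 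No cocycle is needed at this stage: the resulting map $\phi:(A,\alpha)\to(B_{\infty,\beta},\beta_\infty)$ is honestly equivariant.

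There is a second gap in your passage from the sequence algebra to $B\otimes\CO_2$. The one-sided intertwining you invoke (which in the paper is \cite[Theorem~4.10]{Szabo21cc}) requires that $\phi$ be properly unitarily equivalent to $\kappa^*\circ\phi$ for every reindexation $\kappa$, and verifying this is precisely where your three ingredients are actually consumed: the paper proves a uniqueness lemma (\autoref{lem:O2-absorbing-uniqueness}) for equivariant full nuclear maps into the sequence algebra of an $\CO_2$-absorbing target, and its proof rests on an approximate $1$-domination argument (\autoref{lem:1domequicont}) that combines amenability of $\alpha$ with isometric shift-absorption of $\beta$ in a manner very close to what you sketch. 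So your averaging idea is present in the paper, but it is deployed to prove \emph{uniqueness} of already-equivariant maps, not to manufacture equivariance from a non-equivariant Kirchberg embedding. Finally, your claim that the residual non-equivariance ``can be organized into a norm-continuous cocycle'' is unsupported: approximate unitaries implementing $\ad(u_g)\circ\beta'_g\circ\Phi\approx\Phi\circ\alpha_g$ need not satisfy the cocycle identity, and you give no mechanism for enforcing it.
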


The main conclusions of our work are then coming together in the fifth section.
Here we prove the aforementioned existence and uniqueness theorems, i.e., the more detailed version of \autoref{intro:existence-uniqueness}.
Conceptually speaking, this part of our approach runs in perfect parallel with techniques suggested by the first author in \cite[Section 7]{Gabe21} as a more direct way to prove the Kirchberg--Phillips theorem that also applied to the classification of non-simple \cstar-algebras absorbing the Cuntz algebra $\CO_\infty$.

In the sixth and final section, we prove our main classification result, i.e., the more detailed version of \autoref{intro:main-result}.
In the rest of the section, we deduce a number of consequences of our main result, including:
\begin{enumerate}[leftmargin=*,label=\textup{\textbf{(\alph*)}}]
\item A refined version of our main result classifying actions of compact groups up to genuine conjugacy; see \autoref{cor:main-compact}.
\item The positive solution to a conjecture of Izumi \cite{Izumi10}, a special case of which was recently the focus of \cite{IzumiMatui21, IzumiMatui20, IzumiMatui21_2}; see \autoref{thm:Izumi-conjecture}.
\item An alternative proof of \cite[Theorem A]{Szabo21R}, asserting that every Kirchberg algebra admits a unique Rokhlin flow \cite{Kishimoto96_R} up to cocycle conjugacy, which started as a conjecture of Kishimoto.
We can in fact obtain the same rigidity theorem for actions of $\IR^k$ for all $k\geq 1$; see \autoref{cor:Rokhlin-multiflows}.
\item If $G$ is a discrete amenable group, then all faithful quasi-free actions of $G$ on $\CO_\infty$ are mutually cocycle conjugate; see \autoref{thm:Izumi-other-conjecture}.
This generalizes the main result of \cite{GoldsteinIzumi11} and confirms another conjecture of Izumi.
\item For all exact groups with the Haagerup property, we prove the existence of an amenable $G$-action on $\CO_\infty$ which is canonically $KK^G$-equivalent to $\IC$; see \autoref{thm:exact-HP}.
Aside from amenable groups, the existence of such actions has only been known for discrete free groups due to Suzuki.
(We remark that shortly before submission of our article, Suzuki has independently constructed such examples by a different method \cite{Suzuki23}.)
\item If $G$ is a discrete exact torsion-free group with the Haagerup property and $\CD$ a strongly self-absorbing Kirchberg algebra, then there exists a unique amenable outer $G$-action on $\CD$ up to cocycle conjugacy, which in fact must be a strongly self-absorbing action \cite{Szabo18ssa}; see \autoref{cor:ssa-conj}.
This confirms and generalizes the traceless case of \cite[Conjecture A]{Szabo19ssa4}.
\end{enumerate}

%%%%%%%%%%%%%%%%%%%%%%%%%%%%%%%%%%%%%%%%%%%%%%%%%%%%%

\section{Preliminaries}

\begin{nota} \label{basic-notation}
Throughout, $G$ will denote a second-countable, locally compact group unless specified otherwise.
Normal capital letters like $A,B,C$ will denote \cstar-algebras.
The multiplier algebra of $A$ is denoted as $\CM(A)$, whereas $A^\dagger$ denotes the proper unitization of $A$, i.e., one adds a new unit even if $A$ was unital.
We sometimes denote the closed unital ball of $A$ by $A_{\leq 1}$.
We write $\CU(\eins+A)$ for the set of all unitaries in $A^\dagger$ whose scalar part is $\eins$, which can be canonically identified with the unitary group of $A$ if it was already unital.
Throughout the article, the symbol $\CK$ denotes the \cstar-algebra of compact operators on a separable infinite-dimensional Hilbert space $\CH$, and we write $\CK(\CH)$ when a specific description of $\CH$ is relevant to the matter at hand.
A \cstar-algebra $A$ is called stable when $A\cong A\otimes\CK$.
Let $e_{k,\ell}\in \CK$ for $k,\ell\geq 1$ denote a collection of generating matrix units.
Greek letters such as $\alpha,\beta,\gamma$ are used for point-norm continuous maps $G\to\Aut(A)$, most often group actions.
Depending on the situation, we may denote $\id_A$ either for the identity map on $A$ or the trivial $G$-action on $A$.
We will denote by $A^\alpha$ or $\CM(A)^\alpha$ the \cstar-subalgebra of fixed points (in $A$ or $\CM(A)$) with respect to $\alpha$.
Normal alphabetical letters such as $u,v,U,V$ are used for unitary elements in some \cstar-algebra $A$.
If either $u\in\CU(\CM(A))$ or $u\in\CU(\eins+A)$, we denote by $\ad(u)$ the induced inner automorphism of $A$ given by $a\mapsto uau^*$.
Double-struck letters such as $\Iu, \Iv, \IU, \IV$ are used for strictly continuous maps $G\to\CU(\CM(A))$, most often (1-)cocycles with respect to an action $\alpha: G\curvearrowright A$, which for the map $\Iu$ would mean that it satisfies the cocycle identity $\Iu_{gh}=\Iu_g\alpha_g(\Iu_h)$ for all $g,h\in G$.
Under this assumption, one obtains a new (cocycle perturbed) action $\alpha^\Iu: G\curvearrowright A$ via $\alpha^\Iu_g=\ad(\Iu_g)\circ\alpha_g$.

Although we will introduce equivariant $KK$-theory in this section via the Cuntz--Thomsen picture, we will implicitly assume that the reader has some existing passing familiarity with it; see for example \cite{BlaKK}.
In particular we make use of the Kasparov product and freely use its known properties. We apply the common tensor product notation $x\otimes y\in KK^G(\alpha,\gamma)$ for the product of two elements $x\in KK^G(\alpha,\beta)$ and $y\in KK^G(\beta,\gamma)$.
\end{nota}

\begin{defi}[see {\cite[Section 1]{Szabo21cc}}] \label{def:cocycle-morphism}
Let $\alpha: G\curvearrowright A$ and $\beta: G\curvearrowright B$ be two actions on \cstar-algebras.
\begin{enumerate}[label=\textup{(\roman*)}]
\item A \emph{cocycle representation} $(\phi,\Iu): (A,\alpha)\to (\CM(B),\beta)$ consists of a $*$-homomorphism $\phi: A\to\CM(B)$ and a strictly continuous $\beta$-cocycle $\Iu: G\to\CU(\CM(B))$ satisfying $\ad(\Iu_g)\circ\beta_g\circ\phi=\phi\circ\alpha_g$ for all $g\in G$.
\item If additionally $\phi(A)\subseteq B$, then the pair $(\phi,\Iu)$ is called a \emph{cocycle morphism}, and we denote $(\phi,\Iu): (A,\alpha)\to (B,\beta)$.
\item If $\phi(A)\subseteq B$ and furthermore $\Iu$ takes values in $\CU(\eins+B)$, then the pair $(\phi,\Iu)$ is called a \emph{proper cocycle morphism}.
Note that by \cite[Proposition 6.9(ii)]{Szabo21cc}, this assumption implies that $\Iu$ is automatically a norm-continuous map.
\end{enumerate}
We will later use the convention that a \emph{(proper) cocycle embedding} is a (proper) cocycle morphism $(\phi,\Iu)$ with the property that $\phi$ is injective.
Moreover a \emph{(proper) cocycle conjugacy} is a (proper) cocycle morphism $(\phi,\Iu)$ with the property that $\phi$ is an isomorphism. We write $\cc$ for the relation of cocycle conjugacy. 
\end{defi}

\begin{nota} \label{nota:strongly-stable}
We say that an action $\beta: G\curvearrowright B$ is \emph{strongly stable} if $(B,\beta)$ is (genuinely) conjugate to $(B\otimes\CK,\beta\otimes\id_\CK)$.
\end{nota}

As is easily observed (see \cite[Remark 1.4]{GabeSzabo22}), an action $\beta: G\curvearrowright B$ is strongly stable if and only if there is a sequence of isometries $r_n\in\CM(B)^\beta$ such that $\eins=\sum_{n=1}^\infty r_nr_n^*$ holds in the strict topology.
After exploring the literature it would appear that, at least to the best of our knowledge, the following simple observation is so far unknown.\footnote{There seems to be evidence suggesting that the analogous statement in the setting of von Neumann algebras is known; see for example the proof of \cite[Theorem 6.1(2)]{MasudaTomatsu16}.
Our method of proof would appear to suitably translate back to that context to give an elegant approach to that analogous result.}
We note that this is the one and only spot in the entire article that refers to twisted $G$-actions instead of genuine actions, so we deliberately stated \autoref{def:cocycle-morphism} above only for genuine actions.
Before delving into the proof below the reader may hence either choose to check out \cite[Definitions 1.1 and 1.10]{Szabo21cc} or pretend like one has $\Fw=\eins$ below.

\begin{prop} \label{prop:cc-stability}
Let $B$ be a stable \cstar-algebra.
Then every twisted action $(\beta,\Fw): G\curvearrowright B$ is cocycle conjugate to $(\beta\otimes\id_\CK,\Fw\otimes\eins): G\curvearrowright B\otimes\CK$. In particular, every action on a stable \cstar-algebra is cocycle conjugate to a strongly stable action.
\end{prop}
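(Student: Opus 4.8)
The plan is to treat the genuine (untwisted) case first, as the twist only adds bookkeeping (the footnote's invitation to set $\Fw=\eins$). Since $B$ is stable, I would fix isometries $s_n\in\CM(B)$, $n\geq1$, with orthogonal ranges and $\sum_n s_ns_n^*=\eins$ in the strict topology, and use them to form the ampliation isomorphism $\Psi\colon B\otimes\CK\to B$ determined by $\Psi(b\otimes e_{mn})=s_m b s_n^*$. The crucial device is that for each $g\in G$ the systems $(s_n)_n$ and $(\beta_g(s_n))_n$ are both orthogonal isometries summing strictly to $\eins$, so that
\[
W_g:=\sum_{n\geq1}\beta_g(s_n)s_n^*\in\CU(\CM(B))
\]
is a well-defined unitary characterised by $W_g s_n=\beta_g(s_n)$.

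First I would record the two properties of $W$ on which everything rests. One is that $g\mapsto W_g$ is strictly continuous; since the sum is infinite this needs a short argument, using point-norm continuity of $\beta$ together with the strict convergence of $\sum_n s_ns_n^*$ to control the tails uniformly on compact sets. The other is the identity $\beta_g(W_h)=W_{gh}W_g^*$, which is a direct computation from the defining relation of $W$. Setting $V_g:=\Psi^{-1}(W_g)\in\CU(\CM(B\otimes\CK))$, a matrix-unit computation gives $\beta_g\circ\Psi=\ad(W_g)\circ\Psi\circ(\beta\otimes\id_\CK)_g$, equivalently $\Psi^{-1}\circ\beta_g\circ\Psi=\ad(V_g)\circ(\beta\otimes\id_\CK)_g$, while the identity above turns into the cocycle relation $V_{gh}=V_g(\beta\otimes\id_\CK)_g(V_h)$. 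Hence $\Psi$ is a genuine conjugacy from the cocycle-perturbed action $(\beta\otimes\id_\CK)^V$ onto $\beta$. Since any cocycle-perturbed action is cocycle conjugate to the original via the identity map paired with the inverse cocycle $V^*$, I would compose these to conclude $(B\otimes\CK,\beta\otimes\id_\CK)\cc(B,\beta)$ in the untwisted case.

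For the twisted action $(\beta,\Fw)$ I would reuse the same $\Psi$ and $V$: the action parts are handled exactly as above, and the one genuinely new point is to compute the $2$-cocycle of the pulled-back twisted system $\Psi^{-1}\circ(\beta,\Fw)\circ\Psi$ and to check that it coincides with the perturbation of $\Fw\otimes\eins$ by $V$, so that $(\id,V^*)$ again implements a cocycle conjugacy of twisted actions onto $(\beta\otimes\id_\CK,\Fw\otimes\eins)$. I expect this to be the main obstacle, precisely because $\Psi(m\otimes\eins)=\sum_n s_n m s_n^*$ does not equal $m$ in general, so the twist does not transport naively; one must feed the twisted-action axioms through the defining relations of $W$ to verify the transformed $2$-cocycle identity. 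This is the step where the method alluded to in the footnote does its real work, and it is technical rather than conceptual once the untwisted core is in place.

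The final clause is then immediate: applying the main statement to a genuine action $\beta$ on the stable algebra $B$ yields $(B,\beta)\cc(B\otimes\CK,\beta\otimes\id_\CK)$, and $\beta\otimes\id_\CK$ is visibly strongly stable, witnessed by the $(\beta\otimes\id_\CK)$-fixed isometries $\eins_B\otimes s_n$ with orthogonal $s_n\in\CM(\CK)$ summing strictly to $\eins$.
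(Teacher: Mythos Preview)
Your proposal is correct and essentially identical to the paper's proof: the paper uses the same isomorphism $\Lambda=\Psi$ but takes the cocycle $\IU_g=\sum_n r_n\beta_g(r_n)^*=W_g^*$ on the \emph{target} side, so that $(\Lambda,\IU)$ is a cocycle conjugacy $(B\otimes\CK,\beta\otimes\id_\CK,\Fw\otimes\eins)\to(B,\beta,\Fw)$ in one step rather than your two-step detour through $V=\Psi^{-1}(W)$. The twisted case, which you defer as a technical obstacle, is handled in the paper by a direct three-line computation verifying $\IU_g\beta_g(\IU_h)\Fw_{g,h}\IU_{gh}^*=\Lambda(\Fw_{g,h}\otimes\eins)$, confirming your expectation that it is routine once the untwisted core is in place.
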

\begin{proof}
Since $B$ is stable, we may find a sequence of isometries $r_n\in\CM(B)$ such that $\sum_{n=1}^\infty r_nr_n^*=\eins$ holds in the strict topology.
Let $\set{e_{j,k}}_{j,k\geq 1}$ be a set of matrix units generating the compacts $\CK$.
We consider the isomorphism
\[
\Lambda: B\otimes\CK\to B,\quad \Lambda(b\otimes e_{j,k})=r_j b r_k^*.
\]
We moreover consider the strictly continuous map 
\[
\IU: G\to\CU(\CM(B)),\quad \IU_g = \sum_{n=1}^\infty r_n\beta_g(r_n)^*.
\]
We claim that $(\Lambda,\IU)$ is a cocycle conjugacy $(B\otimes\CK,\beta\otimes\id_\CK,\Fw\otimes\eins)\cc (B,\beta,\Fw)$.
Since $\Lambda$ is an isomorphism, we only need to verify the equivariance condition and the cocycle condition.
The first follows because we can compute for all $j,k\geq 1$, $b\in B$ and $g\in G$ that
\[
\ad(\IU_g)\circ\beta_g\circ\Lambda(b\otimes e_{j,k})=\IU_g \beta_g(r_j b r_k^*) \IU_g^* = r_j\beta_g(b)r_k^* = \Lambda\circ(\beta_g\otimes\id_\CK)(b\otimes e_{j,k}).
\]
The cocycle condition follows as we compute for all $g,h\in G$ that
\[
\begin{array}{ccl}
\IU_g \beta_g(\IU_h) \Fw_{g,h} \IU_{gh}^* &=& \dst \sum_{n,m=1}^\infty r_n\beta_g(r_n)^* \beta_g(r_m) (\beta_g\circ\beta_h)(r_m)^* \Fw_{g,h} \IU_{gh}^* \\
&=& \dst \sum_{n=1}^\infty r_n \Fw_{g,h} \beta_{gh}(r_n)^* \IU_{gh}^* \\
&=& \dst \sum_{n=1}^\infty r_n \Fw_{g,h} r_n^* \\
&=& \dst \Lambda(\Fw_{g,h}\otimes\eins_{\CM(\CK)}).
\end{array}
\]
This finishes our proof.
\end{proof}

\begin{rem}
By the Packer--Raeburn stabilization technique \cite[Section 3]{PackerRaeburn89}, it follows as a consequence of the above that when $B$ is a stable \cstar-algebra, then every twisted action on $B$ is exterior equivalent to a genuine action.
As far as we are aware, this has not been observed before.
\end{rem}

We shall now recall some necessary background on equivariant $KK$-theory.
Throughout the paper the main focus lies on the Cuntz--Thomsen picture \cite{Cuntz83, Cuntz84, Higson87, Thomsen98}.

\begin{defi}[cf.\ {\cite[Section 3]{Thomsen98}}] \label{def:equi-Cuntz-pair}
Let $\alpha: G\curvearrowright A$ and $\beta: G\curvearrowright B$ be two actions on \cstar-algebras such that $A$ is separable and $B$ is $\sigma$-unital.
An \emph{$(\alpha,\beta)$-Cuntz pair} is a pair of cocycle representations
\[
(\phi,\Iu), (\psi,\Iv): (A,\alpha) \to (\CM(B\otimes\CK),\beta\otimes\id_\CK),
\]
such that the pointwise differences $\phi-\psi$ and $\Iu-\Iv$ take values in $B\otimes\CK$.\footnote{In Thomsen's article it was also assumed that the map $\Iu-\Iv$ is norm-continuous. This turns out to be redundant, see \cite[Proposition 6.9]{Szabo21cc}.}
Whenever $\beta$ is assumed to be strongly stable, we also allow $(B,\beta)$ in place of $(B\otimes\CK,\beta\otimes\id_\CK)$ appearing in the definition of an $(\alpha,\beta)$-Cuntz pair.
\end{defi}

\begin{defi}[cf.\ {\cite[Lemma 3.4]{Thomsen98}}]
Let $\beta: G\curvearrowright B$ be an action on a \cstar-algebra.
Suppose that there exists a unital inclusion $\CO_2\subseteq\CM(B)^\beta$.
For two isometries $t_1,t_2\in\CM(B)^\beta$ with $t_1t_1^*+t_2t_2^*=\eins$, we may consider the $\beta$-equivariant $*$-homomorphism
\[
\CM(B)\oplus\CM(B)\to\CM(B),\quad b_1\oplus b_2\mapsto b_1\oplus_{t_1,t_2} b_2 := t_1b_1t_1^*+t_2b_2t_2^*.
\]
Up to unitary equivalence with a unitary in $\CM(B)^\beta$, this $*$-homomorphism does not depend on the choice of $t_1$ and $t_2$.\footnote{If $v_1, v_2\in\CM(B)^\beta$ are two other isometries with $v_1v_1^*+v_2v_2^*=\eins$, then the unitary equivalence between ``$\oplus_{t_1,t_2}$'' and ``$\oplus_{v_1,v_2}$'' is implemented by $w=t_1v_1^*+t_2v_2^*\in\CM(B)^\beta$.}
One refers to the element $b_1\oplus_{t_1,t_2} b_2$ as the \emph{Cuntz sum} of the two elements $b_1$ and $b_2$ (with respect to $t_1$ and $t_2$).
Now let $\alpha: G\curvearrowright A$ be another action on a \cstar-algebra, and $(\phi,\Iu), (\psi,\Iv): (A,\alpha)\to (\CM(B),\beta)$ two cocycle representations.
We likewise define the (pointwise) Cuntz sum
\[
(\phi,\Iu)\oplus_{t_1,t_2} (\psi,\Iv) = (\phi\oplus_{t_1,t_2}\psi, \Iu\oplus_{t_1,t_2}\Iv): (A,\alpha)\to (\CM(B),\beta),
\]
which is easily seen to be another cocycle representation.
Since its unitary equivalence class does not depend on the choice of $t_1$ and $t_2$, we will often omit $t_1$ and $t_2$ from the notation if it is clear from context that a given statement is invariant under said equivalence.
\end{defi}

\begin{nota}
Given a \cstar-algebra $B$, we denote $B[0,1]=\CC[0,1]\otimes B$.
If one has an action $\beta: G\curvearrowright B$, we consider the obvious $G$-action on $B[0,1]$ given by $\beta[0,1]=\id_{\CC[0,1]}\otimes\beta$.
\end{nota}

\begin{defi}[see {\cite[Section 3]{Thomsen98} and \cite[Section 1]{GabeSzabo22}}] \label{def:KKG-Thomsen}
Let $A$ be a separable \cstar-algebra and $B$ a $\sigma$-unital \cstar-algebra.
For two actions $\alpha: G\curvearrowright A$ and $\beta: G\curvearrowright B$,
let $\IE^G(\alpha,\beta)$ denote the set of all $(\alpha,\beta)$-Cuntz pairs.

Two elements $\big( (\phi^0,\Iu^0), (\psi^0,\Iv^0) \big)$ and $\big( (\phi^1,\Iu^1), (\psi^1,\Iv^1) \big)$ in $\IE^G(\alpha,\beta)$ are called \emph{homotopic}, abbreviated $\big( (\phi^0,\Iu^0), (\psi^0,\Iv^0) \big)\sim_h \big( (\phi^1,\Iu^1), (\psi^1,\Iv^1) \big)$, if there exists an $(\alpha,\beta[0,1])$-Cuntz pair that restricts to $\big( (\phi^0,\Iu^0), (\psi^0,\Iv^0) \big)$ upon evaluation at $0\in [0,1]$, and restricts to $\big( (\phi^1,\Iu^1), (\psi^1,\Iv^1) \big)$ upon evaluation at $1\in [0,1]$.
An $(\alpha,\beta)$-Cuntz pair of the form $\big( (\phi,\Iu), (\psi,\Iv) \big)$ with $\phi=\psi=0$ is called a \emph{cocycle pair} and is denoted by $(\Iu,\Iv)$ with slight abuse of notation.
We define $\IE_0^G(\alpha,\beta)$ as the set of all \emph{anchored} $(\alpha,\beta)$-Cuntz pairs, i.e., those $\big( (\phi,\Iu), (\psi,\Iv) \big)\in\IE^G(\alpha,\beta)$ such that $(\Iu,\Iv)\sim_h (\eins,\eins)$.

For any unital inclusion $\CO_2\subseteq\CM(B\otimes\CK)^{\beta\otimes\id_\CK}$ with generating isometries $t_1,t_2$, one can perform the Cuntz addition for two $(\alpha,\beta)$-Cuntz pairs as
\[
\begin{array}{cl}
\multicolumn{2}{l}{
\big( (\phi^0,\Iu^0), (\psi^0,\Iv^0) \big) \oplus_{t_1,t_2} \big( (\phi^1,\Iu^1), (\psi^1,\Iv^1) \big) } \\
=& \big( (\phi^0,\Iu^0)\oplus_{t_1,t_2}(\phi^1,\Iu^1), (\psi^0,\Iv^0)\oplus_{t_1,t_2} (\psi^1,\Iv^1) \big).
\end{array}
\]
This Cuntz pair is independent of the choice of $t_1,t_2$ up to homotopy; see \cite[Lemma 3.4]{Thomsen00}.
\end{defi}

\begin{rem}[see {\cite[Proposition 1.12]{GabeSzabo22}}] \label{def:KKG}
The quotient $\IE^G(\alpha,\beta)/{\sim_h}$ becomes an abelian group with Cuntz addition.
The homotopy classes of cocycle pairs form a subgroup $H_\beta$.
It was proved by Thomsen in \cite[Theorem 3.5]{Thomsen98} that the group quotient of $\IE^G(\alpha,\beta)/{\sim_h}$ modulo $H_\beta$ is naturally isomorphic to $KK^G(\alpha,\beta)$.
For an $(\alpha,\beta)$-Cuntz pair consisting of $(\phi,\Iu)$ and $(\psi,\Iv)$, we denote its associated equivalence class in $KK^G(\alpha,\beta)$ by $[(\phi,\Iu),(\psi,\Iv)]$.
Under this identification, one has that the inclusion map $\IE^G_0(\alpha,\beta)\subseteq\IE^G(\alpha,\beta)$ also induces a natural isomorphism of abelian groups $\IE^G_0(\alpha,\beta)/{\sim_h}\cong KK^G(\alpha,\beta)$.
In other words, $KK^G(\alpha,\beta)$ may be defined as the abelian group of homotopy classes of anchored $(\alpha,\beta)$-Cuntz pairs.
\end{rem}

\begin{rem}
Thomsen explicitly constructs the aforementioned isomorphism (described just before \cite[Theorem 3.5]{Thomsen98}). 
In \cite[Section 6]{Szabo21cc} this description was revisited in detail and used to show that under certain conditions, compositions of cocycle morphisms are compatible with the Kasparov product.
In this paper we need that this holds for arbitrary compositions of proper cocycle morphisms, so let us briefly recall again the functoriality of $KK^G$ both in the Kasparov picture and the Cuntz--Thomsen picture.
Given an action $\beta: G\curvearrowright B$ and a $\beta$-cocycle $\Iu: G\to\CU(\CM(B))$, we denote by $B^\Iu$ the Hilbert $(B,\beta)$-module that is equal to $B$ as an ordinary Hilbert right-$B$-module, but is equipped with the continuous linear $G$-action given by $g\cdot b := \Iu_g\beta_g(b)$ for all $g\in G$ and $b\in B$.
For any cocycle morphism $(\phi,\Iu): (A,\alpha)\to (B,\beta)$, one has that $(B^\Iu,\phi,0)$ is a Kasparov triple, and one defines $KK^G(\phi,\Iu)\in KK^G(\alpha,\beta)$ as its equivalence class; see \cite[Definition 6.3]{Szabo21cc}.
Under Thomsen's identification above, this element is the class associated to the $(\alpha,\beta)$-Cuntz pair 
\[
\big( (\phi\otimes e_{1,1},\Iu\otimes e_{1,1}+(\eins-e_{1,1})), (0,\Iu\otimes e_{1,1}+(\eins-e_{1,1}) \big).\]
If $(\phi,\Iu)$ happens to be proper, then the second cocycle above can also be chosen to be trivial; see \cite[Proposition 6.14]{Szabo21cc}. Note that these two different ways of associating a Cuntz pair to a proper cocycle morphism might give different classes in $\IE^G(\alpha, \beta)/{\sim_h}$, but they agree in $KK^G(\alpha, \beta)$.
\end{rem}

\begin{prop} \label{prop:KKG-simplified}
Let $(\phi, \Iu) : (A, \alpha) \to (B, \beta)$ be a cocycle morphism.
Then $KK^G(\phi,\Iu)$ is represented by the Kasparov triple $( \overline{\phi(A) B^\Iu},\phi,0)$.
\end{prop}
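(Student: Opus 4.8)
The plan is to start from the description of the class provided by the preceding remark and then pass to the essential submodule of the standard Kasparov triple. By that remark, $KK^G(\phi,\Iu)$ is represented by the Kasparov triple $(B^\Iu,\phi,0)$, where $B^\Iu$ is the standard (rank-one free) Hilbert $(B,\beta)$-module underlying $B$ equipped with the twisted $G$-action $g\cdot b=\Iu_g\beta_g(b)$, and where $\phi$ acts by left multiplication. Since the compact operators on this module satisfy $\CK(B^\Iu)\cong B$ (identifying $xy^*$ with the rank-one operator $z\mapsto x\langle y,z\rangle$), the inclusion $\phi(A)\subseteq B$ gives $\phi(A)\subseteq\CK(B^\Iu)$, so that $(B^\Iu,\phi,0)$ is a genuine $G$-equivariant Kasparov triple with zero operator.

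Next I would check that the essential submodule $E_0:=\overline{\phi(A)B^\Iu}$ also gives rise to a Kasparov triple. It is a closed right $B$-submodule, and it is invariant under the twisted action: from the cocycle intertwining relation $\Iu_g\beta_g(\phi(a))=\phi(\alpha_g(a))\Iu_g$ one computes $g\cdot(\phi(a)b)=\phi(\alpha_g(a))(g\cdot b)\in E_0$. The left action of $\phi(A)$ evidently preserves $E_0$, and an approximate-unit argument shows $\phi(A)\subseteq\CK(E_0)$: for an approximate unit $(u_\lambda)$ of $\phi(A)$ one has $\phi(a)u_\lambda\to\phi(a)$ in norm, while each $\phi(a)u_\lambda$ is a finite-rank operator on $E_0$. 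Along the way one records that $\CK(E_0)$ is canonically the hereditary subalgebra $H:=\overline{\phi(A)B\phi(A)}$ of $B$, and that $E_0=\overline{H B^\Iu}$. Hence $(E_0,\phi,0)$ is a $G$-equivariant Kasparov triple as well.

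Finally I would show that the two triples represent the same class. The conceptual reason is functoriality of the Kasparov product together with the factorisation $\phi\colon A\to H\hookrightarrow B$: both $(B^\Iu,\phi,0)$ and $(E_0,\phi,0)$ compute the product of the class of $A\to H$ with the $KK^G$-class of the inclusion $H\hookrightarrow B$, the latter being realised precisely by the correspondence $E_0=\overline{H B^\Iu}$. Equivalently, this is the standard principle that a Kasparov module and its essential submodule define the same class; see \cite{BlaKK}.

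The main obstacle I anticipate is that $E_0$ need not be orthogonally complemented in $B^\Iu$ (for instance when $\overline{\phi(A)B}$ is an ideal of functions vanishing at a single point), so one cannot simply split off a degenerate orthogonal direct summand. The care therefore lies in establishing the equivalence \emph{without} complementability while keeping every step compatible with the $G$-action. The Morita/functoriality formulation above is convenient for exactly this reason: it avoids choosing a complement and inherits equivariance automatically, whereas a naive ``discard the orthogonal degenerate part'' argument would fail.
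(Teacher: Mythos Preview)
Your factorisation argument has a circularity problem. You reduce the claim to showing that $[(B^\Iu,\phi,0)]$ and $[(E_0,\phi,0)]$ both equal $KK^G(\phi_0)\otimes[\iota]$, where $\phi_0:A\to H$ and $\iota:H\hookrightarrow B$. Using pullback functoriality for the genuinely equivariant map $\phi_0$, one has $[(B^\Iu,\phi,0)]=\phi_0^*[(B^\Iu,\iota,0)]$ and $[(E_0,\phi,0)]=\phi_0^*[(E_0,\iota,0)]$. So you are left needing $[(B^\Iu,\iota,0)]=[(E_0,\iota,0)]$ in $KK^G(\beta^\Iu|_H,\beta)$. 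But $E_0=\overline{\iota(H)B^\Iu}$ is precisely the essential submodule for $(B^\Iu,\iota,0)$, so this is the \emph{same} statement you started with, just with $(H,\iota)$ in place of $(A,\phi)$. The factorisation has not bought you anything. Likewise, invoking that $\phi_0^*=KK^G(\phi_0)\otimes(-)$ as a ``standard'' identity already hides an instance of the essential-submodule principle in the computation of the balanced tensor product. The vague citation of \cite{BlaKK} for that principle is not a substitute for an argument, especially since you yourself note that the obvious degenerate-complement approach fails.

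The paper instead writes down an explicit homotopy. Take the Hilbert $(B[0,1],\beta[0,1])$-module
\[
D=\{f\in C([0,1],B):f(0)\in\overline{\phi(A)B}\}
\]
with $G$-action $(g\cdot f)(t)=\Iu_g\beta_g(f(t))$, and let $A$ act by pointwise left multiplication by $\phi(a)$. Then $(D,\Phi,0)$ is a Kasparov triple (one checks $\phi(a)\in\CK(D)$ since $\phi(A)$ sits inside the hereditary subalgebra at every fibre), and evaluation at $0$ and $1$ gives exactly $(\overline{\phi(A)B^\Iu},\phi,0)$ and $(B^\Iu,\phi,0)$. This bypasses complementability entirely and is manifestly equivariant.
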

\begin{proof}
By the above remark, it suffices to show that $( \overline{\phi(A) B^\Iu},\phi,0)$ and $(B^\Iu, \phi, 0)$ are homotopic in the sense of \cite[Definition 17.2.2]{BlaKK}.
Let $D = \{ f\in C([0,1], B) : f(0) \in \overline{\phi(A)B} \}$ be the equivariant Hilbert $(B[0,1],\beta[0,1])$-module with $G$-action $(g\cdot f)(t) = \Iu_g \beta_g(f(t))$.
Let $\Phi : A \to \mathcal \IK(D)$ be the representation where $\Phi(a)$ is given by pointwise multiplication from the left by $\phi(a)$.
Then $(D, \Phi, 0)$ is the desired homotopy.
\end{proof}

\begin{cor}[cf.\ {\cite[Proposition 6.5]{Szabo21cc}}]
Let $\alpha: G\curvearrowright A$, $\beta: G\curvearrowright B$ and $\gamma: G\curvearrowright C$ be actions on separable \cstar-algebras.
Let $(\phi, \Iu) : (A, \alpha) \to (B, \beta)$ and $(\psi, \Iv) : (B, \beta) \to (C, \gamma)$ be proper cocycle morphisms.
Then
\[
KK^G(\phi,\Iu)\otimes KK^G(\psi,\Iv) = KK^G\big( (\psi,\Iv)\circ(\phi,\Iu) \big) \ \in \ KK^G(\alpha,\gamma).
\]
\end{cor}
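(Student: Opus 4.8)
The plan is to compute all three classes in the Cuntz--Thomsen picture using the simplified representatives furnished by \autoref{prop:KKG-simplified}, and then to reduce the Kasparov product to an internal tensor product of Hilbert modules. Recall first that $(\psi,\Iv)\circ(\phi,\Iu)$ is the proper cocycle morphism $(\psi\circ\phi,\Iw):(A,\alpha)\to(C,\gamma)$ whose cocycle is $\Iw_g=\psi(\Iu_g)\Iv_g$, where $\psi$ is applied via its canonical extension $\CU(\eins+B)\to\CU(\eins+C)$; this form is forced by the two equivariance relations $\phi\circ\alpha_g=\ad(\Iu_g)\circ\beta_g\circ\phi$ and $\psi\circ\beta_g=\ad(\Iv_g)\circ\gamma_g\circ\psi$, and the cocycle identity for $\Iw$ follows from those for $\Iu$ and $\Iv$. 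By \autoref{prop:KKG-simplified}, the three relevant classes are represented by the Kasparov triples $(\overline{\phi(A)B^\Iu},\phi,0)$, $(\overline{\psi(B)C^\Iv},\psi,0)$, and $(\overline{(\psi\circ\phi)(A)C^\Iw},\psi\circ\phi,0)$.

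Since all three triples carry the zero operator, the equivariant Kasparov product of the first two requires no connection-theoretic input: one may take the product operator to be $0$, so the product is represented by the internal tensor product
\[
\big( \overline{\phi(A)B^\Iu}\otimes_B\overline{\psi(B)C^\Iv},\ \phi\otimes 1,\ 0 \big),
\]
with $A$ acting through $\phi$ on the first tensor leg, the balancing taken over the left $B$-action on the second leg implemented by $\psi$, and the diagonal $G$-action. The corollary therefore reduces to exhibiting a $G$-equivariant unitary isomorphism of Hilbert $(C,\gamma)$-modules
\[
V:\ \overline{\phi(A)B^\Iu}\otimes_B\overline{\psi(B)C^\Iv}\ \longrightarrow\ \overline{(\psi\circ\phi)(A)C^\Iw}
\]
intertwining $\phi\otimes 1$ with $\psi\circ\phi$.

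I would define $V$ on elementary tensors by $V(b\otimes c)=\psi(b)c$. A short computation with the internal-tensor-product inner product gives $\langle b_1\otimes c_1,b_2\otimes c_2\rangle=c_1^*\psi(b_1^*b_2)c_2=(\psi(b_1)c_1)^*(\psi(b_2)c_2)$, so $V$ is isometric; its range is the closed span of $\psi(\phi(A)B)C$, which coincides with $\overline{(\psi\circ\phi)(A)C}$ (a routine approximate-unit argument), so $V$ is a surjective isometry of modules. Intertwining of the left $A$-actions is immediate from $V(\phi(a)b\otimes c)=\psi(\phi(a))\psi(b)c=(\psi\circ\phi)(a)\,V(b\otimes c)$.

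The one genuinely substantive point --- and the step I expect to require the most care --- is the $G$-equivariance of $V$. Here one computes
\[
V\big(g\cdot(b\otimes c)\big)=\psi(\Iu_g\beta_g(b))\Iv_g\gamma_g(c)=\psi(\Iu_g)\,\psi(\beta_g(b))\,\Iv_g\gamma_g(c),
\]
and then substitutes $\psi(\beta_g(b))=\Iv_g\gamma_g(\psi(b))\Iv_g^*$, which is exactly the equivariance relation for $(\psi,\Iv)$. After the cancellation $\Iv_g^*\Iv_g=\eins$ this collapses to $\psi(\Iu_g)\Iv_g\,\gamma_g(\psi(b)c)=\Iw_g\gamma_g(\psi(b)c)$, precisely the $G$-action on $\overline{(\psi\circ\phi)(A)C^\Iw}$ applied to $V(b\otimes c)$. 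This is the point where the exact shape $\Iw_g=\psi(\Iu_g)\Iv_g$ is dictated, and where properness is essential to keep $\psi(\Iu_g)$ inside $\CU(\eins+C)$ via the unitization extension. With $V$ in hand the product triple and $(\overline{(\psi\circ\phi)(A)C^\Iw},\psi\circ\phi,0)$ are unitarily equivalent, yielding the claimed identity in $KK^G(\alpha,\gamma)$.
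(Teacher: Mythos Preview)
Your proof is correct and follows essentially the same route as the paper: both invoke \autoref{prop:KKG-simplified} to represent the three classes by triples with zero operator, realize the Kasparov product as the internal tensor product, and verify that the map $b\otimes c\mapsto\psi(b)c$ is an equivariant unitary identifying the product module with $\overline{(\psi\circ\phi)(A)C^{\Iw}}$. Your equivariance computation is the same as the paper's (you expand via $\psi(\beta_g(b))=\Iv_g\gamma_g(\psi(b))\Iv_g^*$ and cancel, whereas the paper writes the equivalent relation $\psi(\beta_g(b))\Iv_g=\Iv_g\gamma_g(\psi(b))$ directly), and you are slightly more explicit about isometry and surjectivity of $V$.
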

\begin{proof}
Let $\Iw = \phi^\dagger(\Iu) \Iv: G\to\CU(\eins+C)$ be the induced $\gamma$-cocycle so that $(\psi , \Iv) \circ (\phi , \Iu) = (\psi\circ \phi, \Iw)$.
Since the $KK$-elements are in this instance represented by Kasparov triples of a specific form, their Kasparov product can be described as a balanced tensor product as in \cite[Example 18.4.2]{BlaKK}.
In light of \autoref{prop:KKG-simplified}, the Kasparov product $KK^G(\phi,\Iu)\otimes KK^G(\psi,\Iv)$ is represented by the triple $(E,\kappa,0)$, where $E=\overline{\phi(A)B^\Iu}\otimes_{\psi}\overline{\psi(B)C^\Iv}$ is equipped with the tensor product $G$-action, and $\kappa=\phi\otimes\eins$.
We have $E\cong\overline{(\psi\circ\phi)(A)C^{\Iw}}$ via $b\otimes c\mapsto \psi(b)c$ as Hilbert right-$C$-modules, and under this identification the map $\kappa$ corresponds to $\psi\circ\phi$.
This map is actually equivariant because an element of the form
\[
E\ \ni \ g\cdot (b\otimes c)=\Iu_g\beta_g(b)\otimes \Iv_g\gamma_g(c)
\]
is mapped to the product
\[
\psi(\Iu_g\beta_g(b))\Iv_g\gamma_g(c)=\psi^\dagger(\Iu_g) \underbrace{ \psi(\beta_g(b)) \Iv_g }_{=\Iv_g\gamma_g(\psi(b))} \gamma_g(c) = \Iw_g\gamma_g(\psi(b)c) = g\cdot\big( \psi(b)c \big).
\]
So appealing to \autoref{prop:KKG-simplified} again, we have found an identification with a Kasparov triple representing the composition $(\psi,\Iv)\circ(\phi,\Iu)$.
\end{proof}

\begin{defi}
Let $(\phi,\Iu), (\psi,\Iv): (A,\alpha) \to (B,\beta)$ be two proper cocycle morphisms.
We say that $(\phi,\Iu)$ and $(\psi,\Iv)$ are \emph{properly unitarily equivalent},
if there exists a unitary $u\in\CU(\eins+B)$ with $\psi=\ad(u)\circ\phi$ and $\Iv_g=u\Iu_g\beta_g(u)^*$ for all $g\in G$.
%
\begin{comment}
We say that they are properly approximately unitarily equivalent,
if there exists a sequence $u_n\in\CU(\eins+B)$ such that
\[
\lim_{n\to\infty} \|\psi(a)-u_n\phi(a)u_n^*\|=0
\]
for all $a\in A$ and
\[
\lim_{n\to\infty} \max_{g\in K}\ \| \Iv_g - u_n\Iu_g\beta_g(u_n)^* \| =0
\] 
for all compact sets $K\subseteq G$.
\end{comment}
%
\end{defi}

\begin{defi} \label{def:various-equivalences}
Let $(\phi,\Iu), (\psi,\Iv): (A,\alpha)\to (\CM(B),\beta)$ be two cocycle representations.
We write $(\phi,\Iu)\asymp (\psi,\Iv)$, if there exists a norm-continuous path $u: [0,\infty)\to\CU(\CM(B))$ such that
	\begin{itemize}
  	\item $\psi(a) = \dst\lim_{t\to\infty} u_t\phi(a)u_t^*$ for all $a\in A$;
  	\item $\psi(a)-u_t\phi(a)u_t^* \in B$ for all $a\in A$ and $t\geq 0$;
  	\item $\dst\lim_{t\to\infty} \max_{g\in K}\ \| \Iv_g-u_t\Iu_g\beta_g(u_t)^* \| =0$ for all compact sets $K\subseteq G$;
  	\item $\Iv_g-u_t\Iu_g\beta_g(u_t)^* \in B$ for all $t\geq 0$ and $g\in G$.
  \end{itemize} 
If it is possible to choose $u$ to have its range in $\CU(\eins+B)$, then $(\phi,\Iu)$ and $(\psi,\Iv)$ are called \emph{properly asymptotically unitarily equivalent}.
If it is additionally possible to arrange $u_0=\eins$, then we say that $(\phi,\Iu)$ and $(\psi,\Iv)$ are \emph{strongly asymptotically unitarily equivalent}.
If $B$ is unital to begin with, we usually omit the word ``properly'' above.
\end{defi}

Since we will appeal to the following technical result in various places, we shall state it here for subsequent use:

\begin{theorem}[see {\cite[Theorem 5.6]{Szabo21cc}}] \label{thm:strong-ssa-abs}
Let $\alpha: G\curvearrowright A$ be an action on a separable \cstar-algebra, and let $\delta: G\curvearrowright\CD$ be a strongly self-absorbing action\footnote{Here we use the definition given in \cite[Section 5]{Szabo21cc}.} on a (necessarily strongly self-absorbing \cite{TomsWinter07}) \cstar-algebra.
Suppose that $\alpha$ or $\delta$ is equivariantly Jiang--Su stable.\footnote{In particular this is the case if $\alpha\cc\alpha\otimes\id_{\CO_\infty}$ or $\delta\cc\delta\otimes\id_{\CO_\infty}$.}
Then $\alpha\cc\alpha\otimes\delta$ if and only if the equivariant first-factor embedding
\[
\id_A\otimes\eins_\CD: (A,\alpha)\to (A\otimes\CD,\alpha\otimes\delta)
\]
is strongly asymptotically unitarily equivalent to a proper cocycle conjugacy.
\end{theorem}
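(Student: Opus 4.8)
The implication from the asymptotic equivalence to $\alpha\cc\alpha\otimes\delta$ is immediate and requires no real work: a proper cocycle conjugacy $(A,\alpha)\to(A\otimes\CD,\alpha\otimes\delta)$ already witnesses the cocycle conjugacy, so the hypothesis that $\id_A\otimes\eins_\CD$ is strongly asymptotically unitarily equivalent to one such map certainly yields $\alpha\cc\alpha\otimes\delta$. All the substance is in the converse, which I would prove by running Elliott's approximate intertwining inside the proper cocycle category of \cite{Szabo21cc}.

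So assume $\alpha\cc\alpha\otimes\delta$. The first thing I would establish is a \emph{dynamical McDuff-type property}: from this cocycle conjugacy together with the strong self-absorption $\delta\cc\delta\otimes\delta$, one extracts a unital equivariant $*$-homomorphism from $(\CD,\delta)$ into the equivariant central sequence algebra of $(A,\alpha)$, i.e.\ approximately central and approximately equivariant unital embeddings $\CD\to A$. This is essentially a reindexing argument and is the only place where the hypothesis $\alpha\cc\alpha\otimes\delta$ is genuinely consumed; it provides the technical fuel for the intertwining.

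Next I would carry out the intertwining itself. Using these approximately central embeddings together with the uniqueness of equivariant unital embeddings of a strongly self-absorbing action up to unitary equivalence, the plan is to build unitaries $w_n\in\CU(\eins+(A\otimes\CD))$ so that the inner-perturbed first-factor embeddings $\ad(w_n)\circ(\id_A\otimes\eins_\CD)$ assemble into an approximate intertwining diagram whose point-norm limit is an equivariant isomorphism $(A,\alpha)\cong(A\otimes\CD,\alpha\otimes\delta)$. Throughout the construction one must carry along the $\delta$-cocycles and organise the telescoping products of the $w_n$ so that, in the limit and by the functoriality of the Cuntz--Thomsen picture, the resulting isomorphism upgrades to a genuine \emph{proper cocycle conjugacy} rather than a bare equivariant isomorphism. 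This matching of cocycle data is exactly the output format that Elliott's intertwining machinery guarantees once it is set up for proper cocycle morphisms.

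The step I expect to be the real obstacle is the word \emph{strong}, namely arranging $u_0=\eins$ and the norm-continuity of the implementing path rather than settling for plain approximate unitary equivalence. This is precisely where the hypothesis that $\alpha$ or $\delta$ is equivariantly Jiang--Su stable becomes indispensable: absorbing the trivial action on $\CZ$ supplies a reservoir of unitaries lying in the path-component of $\eins$ inside the relevant relative commutants (exploiting the connectedness of the unitary group and triviality of $K_1$ for $\CZ$), which lets each discrete perturbation $w_n$ be unitarily homotoped to $\eins$ in a controlled, equivariant way. Concatenating these homotopies produces the single norm-continuous path $u:[0,\infty)\to\CU(\eins+(A\otimes\CD))$ with $u_0=\eins$ demanded by the statement. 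I would treat the convergence of the cocycle products as a secondary but unavoidable piece of bookkeeping, conceptually routine once the unitary paths are under control.
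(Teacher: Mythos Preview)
The paper does not actually prove this statement: it is imported verbatim as \cite[Theorem 5.6]{Szabo21cc} and used as a black box throughout, with no proof or sketch given here. So there is no ``paper's own proof'' to compare your proposal against.

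That said, your outline is a faithful high-level description of how the cited result is established in \cite{Szabo21cc}. The reduction to a dynamical McDuff property (a unital equivariant $*$-homomorphism $(\CD,\delta)\to(F_{\infty,\alpha}(A),\tilde\alpha_\infty)$), the Elliott-style intertwining carried out in the proper cocycle category, and the role of equivariant Jiang--Su stability in upgrading discrete approximate unitary equivalence to a norm-continuous path starting at $\eins$ are precisely the three pillars of the argument there. One point worth sharpening: the passage from approximate to strong asymptotic unitary equivalence does not merely use connectedness of $\CU(\CZ)$, but rather that unitaries in the \emph{fixed-point} central sequence algebra can be connected to $\eins$ by a path of fixed unitaries, which is what $\CZ$-stability (or $\CO_\infty$-stability) of one of the actions provides via a suitable approximation argument. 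Apart from that nuance your sketch is accurate.
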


\begin{defi}
Suppose that there exists a unital inclusion $\CO_2\subseteq\CM(B)^\beta$.
Let $(\phi,\Iu), (\psi,\Iv): (A,\alpha)\to (\CM(B),\beta)$ be two cocycle representations.
We say that $(\phi,\Iu)$ \emph{absorbs} $(\psi,\Iv)$ if $(\phi\oplus\psi,\Iu\oplus\Iv)\asymp (\phi,\Iu)$.
A cocycle representation is called \emph{absorbing}, if it absorbs every cocycle representation.
\end{defi}

We will in some instances make use of the following useful fact due to Kasparov concerning quasicentral approximate units coming from ideals invariant under a group action.

\begin{lemma}[see {\cite[Lemma 1.4]{Kasparov88}} and its proof] \label{lem:Kasparov}
Let $\beta: G\curvearrowright B$ be an action on a $\sigma$-unital \cstar-algebra and let $e_n\in B$ be any countable increasing approximate unit.
Then for any separable \cstar-subalgebra $D\subseteq\CM(B)$, there exists a countable increasing approximate unit of positive contractions $h_n\in B$ belonging to the convex hull of $\{e_n\mid n\geq 1\}$
satisfying 
\[
\lim_{n\to\infty} \|[h_n,d]\|=0 \quad\text{and}\quad \lim_{n\to\infty} \max_{g\in K} \| h_n-\beta_g(h_n) \| = 0
\] 
for all $d\in D$ and compact sets $K\subseteq G$.
\end{lemma}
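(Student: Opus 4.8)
The plan is to isolate the three requirements and produce, for each finite $F\fin B$, finite $F'\fin D$, compact $K\subseteq G$ and $\eps>0$, a single positive contraction $h\in B$ with $\|hb-b\|<\eps$ for $b\in F$, $\|[h,d]\|<\eps$ for $d\in F'$, and $\max_{g\in K}\|h-\beta_g(h)\|<\eps$. Since $B$ is $\sigma$-unital, $D$ is separable, and $G$ is second-countable and hence $\sigma$-compact, there is a cofinal sequence of such data, and a diagonal selection yields a sequence $(h_n)$ with the three asserted limit properties. Arranging $(h_n)$ to be increasing is then a routine matter following the standard Arveson--Pedersen construction of quasicentral approximate units, carried out so as not to disturb the approximate invariance.

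For the quasicentrality and the approximate-unit property I would invoke the classical convexity argument: starting from any approximate unit of $B$, the existence of finite convex combinations that almost commute with the finite set $F'\subseteq D$ is precisely the usual Hahn--Banach separation argument underlying the existence of quasicentral approximate units. The real content lies in simultaneously achieving the approximate invariance $\max_{g\in K}\|h-\beta_g(h)\|<\eps$. Here the main obstacle---and the reason the statement is genuinely dynamical---is that the obvious device of averaging an approximate unit over the group, $h=\int_G f(g)\beta_g(e)\,d\mu(g)$, is insufficient in general: one computes $\|\beta_{g_0}(h)-h\|\le\|L_{g_0}f-f\|_1$, so uniform invariance over a compact $K$ would force $f$ to be $L^1$-almost-invariant, i.e.\ impose a Følner condition, which fails as soon as $G$ is non-amenable.

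The correct mechanism must therefore avoid $L^1$-averaging. Using Struble's theorem I would fix a proper, continuous, left-invariant metric on $G$ and the associated proper length function $\ell$, which satisfies $|\ell(g_0g)-\ell(g)|\le\ell(g_0)$. The goal is then to build an increasing, quasicentral approximate unit $(e_n)$ that is \emph{$G$-controlled} by $\ell$, in the sense that the partial sums $\sum_{n=1}^N(\beta_g(e_n)-e_n)$ stay norm-bounded by a locally bounded function of $g$ uniformly in $N$. Given such a unit, the Cesàro averages $h_N=\tfrac1N\sum_{n=1}^N e_n$ are again positive contractions forming a quasicentral approximate unit, and the telescoping estimate $\|\beta_g(h_N)-h_N\|=\tfrac1N\|\sum_{n=1}^N(\beta_g(e_n)-e_n)\|\le c(g)/N$ gives exactly the required approximate invariance, uniformly for $g$ in compact sets. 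This is transparent in the two model cases $B=C_0(G)$ and $B=\CK(L^2(G,\mu))$, where one may take $e_n$ to be a function, respectively a spectral cut-off, of $\ell$ viewed as a (possibly unbounded) multiplier, and $h_N$ becomes literally a slowly varying $(1-\ell/N)_+$ whose Lipschitz constant $1/N$ absorbs the displacement $\ell(g)$.

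The step I expect to be hardest is the construction of a $G$-controlled increasing approximate unit for an arbitrary $\sigma$-unital $G$-\cstar-algebra: one must transport the length function $\ell$ on $G$ into a positive, slowly growing element of $\CM(B)$ whose sub-level cut-offs form an approximate unit of $B$ and which $\beta$ moves only by a bounded amount, all while keeping these cut-offs quasicentral with respect to $D$. I would attempt this scale by scale along the exhaustion of $G$ by the compact sets $\ell^{-1}[0,r_n]$, placing translates of a strictly positive contraction of $B$ at successive length scales and applying quasicentral convex combinations within each scale, so that the resulting nested family both telescopes under the action and remains quasicentral. Reconciling these two demands---control under $\beta$ and near-commutation with $D$---is the technical heart of the argument.
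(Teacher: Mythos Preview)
The paper does not supply its own proof of this lemma; it is quoted directly from Kasparov with a citation. So the comparison is between your proposal and Kasparov's original argument.

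Your proposal contains a genuine gap and, more importantly, a strategic misstep. You correctly invoke the Hahn--Banach convexity argument for the quasicentrality condition, but then assert that ``the real content lies in simultaneously achieving the approximate invariance'' and that this requires a fundamentally different mechanism. This is where you go astray. The \emph{same} convexity argument handles the invariance. If $(e_\lambda)$ is any approximate unit of positive contractions in $B$, then for each compact $K\subseteq G$ the function $g\mapsto \beta_g(e_\lambda)-e_\lambda$ lies in $C(K,B)$ and converges to $0$ in the strict topology of $C(K,\CM(B))$: indeed $e_\lambda\to\eins$ strictly, hence $\beta_g(e_\lambda)\to\eins$ strictly, and a compactness argument on $\{b(g):g\in K\}$ for $b\in C(K,B)$ upgrades this to uniform strict convergence over $K$. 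Since every bounded functional on $C(K,B)$ is a combination of positive ones, and positive functionals extend strictly continuously to the multiplier algebra, strict convergence to $0$ implies weak convergence to $0$, and Hahn--Banach then gives convex combinations converging to $0$ in norm. One simply bundles this together with the commutator map $h\mapsto([h,d])_{d\in F'}$ into a single map into a direct sum, and runs the convexity argument once.

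Your length-function and Ces\`aro-averaging machinery is therefore unnecessary, and you yourself flag that the crucial step---constructing a $G$-controlled approximate unit in an arbitrary $\sigma$-unital $G$-\cstar-algebra---is the hardest and leave it unfinished. That construction would be genuinely difficult if it were needed, but it is not: you have mistaken the failure of $L^1$-averaging for the failure of convexity, when in fact convexity (which makes no amenability demand on $G$) suffices throughout.
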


Similarly to how we formed Cuntz sums of two elements earlier, we may also form countably infinite sums by a similar method if the underlying action is strongly stable.

\begin{defi}
Suppose that $\beta$ is strongly stable.
Let $t_n\in \CM(B)^\beta$ be any sequence of isometries such that $\sum_{n=1}^\infty t_nt_n^* = \eins$ in the strict topology.
Then we have a $\beta$-equivariant $*$-homomorphism
\[
\ell^\infty(\IN, \CM(B)) \to \CM(B),\quad (b_n)_{n\geq 1} \mapsto \sum_{n=1}^\infty t_nb_nt_n^*,
\]
which does not depend on the choice of $t_n$ up to unitary equivalence with a unitary in $\CM(B)^\beta$.\footnote{Similarly as before, if $v_n\in\CM(B)^\beta$ is another sequence of isometries satisfying the same relation, then the unitary $w=\sum_{n=1}^\infty t_nv_n^*$ implements this equivalence.}
For any sequence of cocycle representations $(\phi^{(n)},\Iu^{(n)}): (A,\alpha)\to (\CM(B),\beta)$, we may hence define the countable sum
\[
(\Phi,\IU)=\bigoplus_{n=1}^\infty (\phi^{(n)},\Iu^{(n)}): (A,\alpha)\to (\CM(B),\beta)
\]
via the pointwise strict limits
\[
\Phi(a)=\sum_{n=1}^\infty t_n\phi^{(n)}(a)t_n^*,\quad \IU_g = \sum_{n=1}^\infty t_n\Iu^{(n)}_g t_n^*.
\]
Up to equivalence with a unitary in $\CM(B)^\beta$, this cocycle representation does not depend on the choice of $(t_n)_n$.
In particular, in the special case that $(\phi^{(n)},\Iu^{(n)})=(\phi,\Iu)$ for all $n$, we denote the resulting countable sum by $(\phi^\infty, \Iu^\infty)$ and call it the \emph{infinite repeat} of $(\phi,\Iu)$.
\end{defi}

\begin{defi} \label{def:sequence-algebras}
Let $\beta: G\curvearrowright B$ be an action on a \cstar-algebra.
We denote by $\ell^\infty_\beta(\IN,B)$ the \cstar-algebra of those $B$-valued bounded sequences $(b_n)$ such that the map $[g\mapsto (\beta_g(b_n))_{n}]$ is continuous, and consider the \emph{($\beta$-continuous) sequence algebra} 
\[
B_{\infty,\beta}=\ell^\infty_\beta(\IN,B)/c_0(\IN,B),
\] 
which carries an induced continuous action $\beta_\infty: G\curvearrowright B$ given by pointwise application of $\beta$.
Noting that $B$ canonically embeds as (equivalence classes of) constant sequences, the \emph{($\beta$-continuous) central sequence algebra} is
\[
F_{\infty,\beta}(B)=(B_{\infty,\beta}\cap B')/(B_{\infty,\beta}\cap B^\perp),
\]
which carries an induced continuous action $\tilde{\beta}_\infty: G\curvearrowright F_{\infty,\beta}(B)$.
Note that if $B$ is $\sigma$-unital, then $F_{\infty,\beta}(B)$ is unital, and the unit is represented by any sequential approximate unit of $B$.

The better-known object $B_\infty$ is recovered upon choosing $\beta=\id$.
Note that any action $\beta$ as above induces an algebraic $G$-action on $B_\infty$, the equicontinuous elements of which can be identified with the elements in $B_{\infty,\beta}$; see \cite[Theorem 2]{Brown00}.
\end{defi}

\begin{lemma} \label{lem:local-stability}
Let $\beta: G\curvearrowright B$ be a strongly stable action on a \cstar-algebra.
Then for every $\sigma$-unital $\beta_\infty$-invariant \cstar-subalgebra $D\subseteq B_{\infty,\beta}$, there exists an equivariant $*$-homomorphism $\iota: (D\otimes\CK,\beta_\infty\otimes\id_\CK)\to(B_{\infty,\beta},\beta_\infty)$ such that $\iota(d\otimes e_{11})=d$ for all $d\in D$.
\end{lemma}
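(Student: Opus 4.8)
The plan is to transport the canonical stable matrix structure supplied by strong stability into the sequence algebra, and then to correct the resulting compression by a single equivariant unitary. Since $\beta$ is strongly stable, there is a sequence of isometries $r_k\in\CM(B)^\beta$ with $\sum_{k\geq1}r_kr_k^*=\eins$ in the strict topology. Left and right multiplication by a $\beta$-fixed multiplier preserves both $\beta$-equicontinuity and $c_0(\IN,B)$, so each $r_k$ acts on $B_{\infty,\beta}$, and one obtains a map on algebraic tensors
\[
\iota_0\colon D\otimes_{\mathrm{alg}}\CK\to B_{\infty,\beta},\qquad \iota_0(d\otimes e_{k,\ell})=r_kdr_\ell^*.
\]
It is $*$-preserving and multiplicative because $r_k^*r_\ell=\delta_{k,\ell}\eins$, and it intertwines $\beta_\infty\otimes\id_\CK$ with $\beta_\infty$ because the $r_k$ are $\beta$-fixed; as $\CK$ is nuclear it extends to a $*$-homomorphism $\iota_0\colon D\otimes\CK\to B_{\infty,\beta}$. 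The only defect is that $\iota_0(d\otimes e_{1,1})=r_1dr_1^*$ instead of $d$.

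It therefore suffices to produce a $\beta_\infty$-fixed unitary $u\in\CM(B_{\infty,\beta})$ with $u\,r_1dr_1^*\,u^*=d$ for all $d\in D$; then $\iota:=\ad(u)\circ\iota_0$ is the desired embedding, and it is automatically injective because $\iota|_{D\otimes e_{1,1}}=\id_D$ while every ideal of $D\otimes\CK$ is of the form $J\otimes\CK$. I would build $u$ as a perturbation of an intertwining partial isometry. Fix a $\beta_\infty$-fixed projection $p\in B_{\infty,\beta}$ acting as a unit for $D$, i.e.\ $pd=dp=d$ for all $d\in D$, and put $w:=pr_1^*$. A direct computation using $pd=d$ and $r_1^*r_1=\eins$ shows that $w$ is a partial isometry with $ww^*=p$ and $w^*w=r_1pr_1^*$ which intertwines $\ad(r_1)|_D$ with $\id_D$, in the sense that $w\,r_1dr_1^*=dw$. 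Extending $w$ to a unitary $u=w+z$, where $z$ is a $\beta_\infty$-fixed partial isometry with initial projection $\eins-r_1pr_1^*$ and final projection $\eins-p$, one has $z\,(r_1dr_1^*)=0=(r_1dr_1^*)\,z^*$ since $r_1dr_1^*$ lies under $w^*w$, whence $u\,r_1dr_1^*\,u^*=w\,r_1dr_1^*\,w^*=d\,ww^*=d$, as required.

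The genuine work is contained in the two equivariant existence statements used above, and the comparison of complementary projections is the step I expect to demand the most care. To obtain the $\beta_\infty$-fixed projection unit $p$, I would lift a countable approximate unit of the $\sigma$-unital algebra $D$ to $B$ and apply \autoref{lem:Kasparov}: a diagonal reindexing produces a representing sequence that is simultaneously quasicentral for a dense sequence of $D$ and asymptotically $\beta$-invariant, so that after the standard spectral-gap perturbation its class is a $\beta_\infty$-fixed projection acting as a unit on $D$. To obtain $z$, observe that both $\eins-p$ and $\eins-r_1pr_1^*$ are $\beta_\infty$-fixed and properly infinite, since each dominates (after absorbing the separable contribution of $p$) the $\beta$-fixed infinite orthogonal system $\{r_kr_k^*\}_{k\geq2}$ inherited from the strong stability of $\beta$; hence they are equivariantly Murray--von Neumann equivalent, which yields $z$. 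The delicate point throughout is precisely that this projection comparison inside the strongly stable sequence algebra must be carried out so that the implementing partial isometry is genuinely $\beta_\infty$-fixed rather than merely approximately $\beta$-invariant, which is what forces the systematic use of \autoref{lem:Kasparov} in every reindexing above.
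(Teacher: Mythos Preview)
Your strategy has a genuine gap at the very first existence step: there is in general no $\beta_\infty$-fixed projection $p\in B_{\infty,\beta}$ acting as a unit for $D$. The argument you sketch --- lift an approximate unit of $D$ to $B$, make it approximately $\beta$-invariant via \autoref{lem:Kasparov}, then apply ``the standard spectral-gap perturbation'' --- does not produce a projection. An approximate unit $(h_n)$ of a $\sigma$-unital \cstar-algebra typically satisfies $\|h_n-h_n^2\|\not\to 0$ (think of bump functions in $C_0(\IR)$), so its class in $B_{\infty,\beta}$ is not idempotent and there is no spectral gap to exploit. The lemma is stated for arbitrary strongly stable actions, with no real-rank-zero or pure-infiniteness hypothesis on $B$, so you cannot assume an approximate unit of projections either. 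Without the projection $p$, the partial isometry $w=pr_1^*$ is undefined and the rest of the argument collapses.

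Even granting the existence of $p$, your comparison of the complements $\eins-p$ and $\eins-r_1pr_1^*$ is not justified. You correctly observe that $\eins-r_1pr_1^*$ dominates $\sum_{k\geq 2}r_kr_k^*$, but there is no reason why $\eins-p$ should dominate any $r_kr_k^*$; the phrase ``after absorbing the separable contribution of $p$'' does not supply one. Proper infiniteness alone would not yield equivariant Murray--von Neumann equivalence without further input. The paper sidesteps both issues entirely: rather than correcting $\iota_0$ by a unitary, it builds the isometries correctly from the start. Using strong stability one writes down, for each $k\geq 0$, a new sequence of $\beta$-fixed isometries $r_n^{(k)}\in\CM(B)^\beta$ with $\sum_n r_n^{(k)}r_n^{(k)*}=\eins$ and $r_1^{(k)}\to\eins$ strictly as $k\to\infty$. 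A diagonal choice $\ell_k\to\infty$ against a strictly positive element of $D$ then gives $s_n=[(r_n^{(\ell_k)})_k]\in(\CM(B)^\beta)_\infty$ with $s_1d=ds_1=d$ for all $d\in D$, and one simply sets $\iota(d\otimes e_{k,\ell})=s_kds_\ell^*$. No projection and no comparison are needed.
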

\begin{proof}
Let $r_n\in\CM(B)^\beta$ be a sequence of isometries such that $\eins=\sum_{n=1}^\infty r_nr_n^*$.
For each $k\geq 0$, we define another such sequence via 
\[
r_1^{(k)}=\sum_{j=1}^{k} r_jr_j^*+ r_{k+1}\sum_{\ell=1}^{\infty} r_\ell r_{\ell+k}^*,\quad r_n^{(k)}=r_{n+k},\quad n\geq 2.
\]
Then we have $r_1^{(k)}\to\eins$ in the strict topology.
By using that $D$ has a strictly positive element, we may find an increasing sequence of numbers $\ell_k$ such that if we consider $s_n=[(r_n^{(\ell_k)})_k]\in (\CM(B)^\beta)_\infty$, we have $s_1d=ds_1=d$ for all $d\in D$.
Using this fact, we can proceed as in the proof of \autoref{prop:cc-stability} and define $\iota: D\otimes\CK\to B_{\infty,\beta}$ via $\iota(d\otimes e_{k,\ell})=s_kds_\ell^*$, which satisfies the desired property.
\end{proof}

We end this preliminary section with a brief discussion of amenability for actions.
For actions on von Neumann algebras this concept has long been around due to work of Anantharaman-Delaroche \cite{Delaroche79}, which provided a rather straightforward candidate of amenability for \cstar-dynamics over discrete groups.
After Suzuki recently demonstrated that this theory applies to interesting examples on simple \cstar-algebras \cite{Suzuki19} (contrary to the case of factors), some attention was dedicated to flesh out the correct concept for general \cstar-dynamics by Buss--Echterhoff--Willett \cite{BussEchterhoffWillett20, BussEchterhoffWillett23}.
The theory was subsequently enriched further by work of Suzuki \cite{Suzuki21, Suzuki23} and Ozawa--Suzuki \cite{OzawaSuzuki21}.
By now it is clear that amenability can be defined in various equivalent ways, and we use the version most useful to us, which is also known as the quasicentral approximation property.

\begin{defi}[cf.\ {\cite[Definition 2.11]{OzawaSuzuki21}}]
Let $\alpha: G\curvearrowright A$ be an action on a \cstar-algebra.
Let us consider $\CC_c(G,A)$ equipped with the action $\bar{\alpha}: G\curvearrowright\CC_c(G,A)$ via $\bar{\alpha}_g(f)(h)=\alpha_g(f(g^{-1}h))$.
Given a Haar measure $\mu$ on $G$, let us equip $\CC_c(G,A)$ with the $A$-valued inner product given by $\langle f\mid g\rangle = \int_G f^*g~d\mu$.
The resulting norm on $\CC_c(G,A)$ shall be denoted as $\|\cdot\|_2$.
We say that $\alpha$ is \emph{amenable}, if there exists a net $\zeta_i\in\CC_c(G,A)$ such that
\[
\|\zeta_i\|_2\leq 1,\quad \langle\zeta_i\mid\zeta_i\rangle a \to a,\quad \|a\zeta_i-\zeta_i a\|_2\to 0,\quad \max_{g\in K} \|(\zeta_i-\bar{\alpha}_g(\zeta_i))a\|_2 \to 0
\]
for all $a\in A$ and compact sets $K\subseteq G$.
\end{defi}

\begin{rem}
In the context of working with nets such as above coming from amenability of a given action, we will subsequently use without further mention the following simple observation.
The two properties
\[
\eins \geq \langle\zeta_i\mid\zeta_i\rangle \stackrel{\text{\tiny strictly}}{\longrightarrow} \eins \quad\text{and}\quad \|a\zeta_i-\zeta_i a\|_2\to 0
\]
imply in conjunction that $\langle\zeta_i\mid a\zeta_i\rangle\to a$ for all $a\in A$.
Furthermore we can also conclude $\langle \zeta_i\mid d\zeta_i\rangle \stackrel{\text{\tiny strictly}}{\longrightarrow} d$ for all $d\in\CM(A)$, as follows from considering that for all $a\in A$ and large enough $i$, one has
\[
\langle\zeta_i\mid d\zeta_i\rangle a = \langle\zeta_i\mid d\zeta_i a\rangle \approx \langle\zeta_i\mid da\zeta_i\rangle \approx da
\]
and analogously $a\langle\zeta_i\mid d\zeta_i\rangle \approx ad$.
Finally, if $\Iu$ is an $\alpha$-cocycle, it is easy to check $\max_{g\in K} \| (\zeta_i - \overline \alpha_g^{\Iu}(\zeta_i)) a\|_2 \to 0$ for all $a\in A$ and compact sets $K\subseteq G$, and thus the net $\zeta_i$ also witnesses amenability of the perturbed action $\alpha^\Iu$. In particular, amenability is preserved under cocycle conjugacy.
\end{rem}

\begin{rem}
We will use several times without reference that for any $f,g\in \CC_c(G, A)$ and every norm-bounded continuous function $h : G \to A$, one has $\| \langle f, hg\rangle\| \leq \sup_{t\in \overline{\supp} f \cap \overline{\supp} g}\| h(t)\| \| f\|_2 \| g\|_2$ by the Cauchy--Schwarz inequality.
Here the product $hg$ is to be understood as pointwise.   
\end{rem}

%%%%%%%%%%%%%%%%%%%%%%%%%%%%%%%%%%%%%%%%%%%%

\section{Approximate 1-domination of cocycle representations}

Going forward, we let $A$ be a separable \cstar-algebra and $B$ a $\sigma$-unital \cstar-algebra unless specified otherwise.

\begin{defi}[cf.\ {\cite[Definition 3.3]{GabeSzabo22}}] \label{def:wc}
Let 
\[
(\phi,\Iu), (\psi,\Iv): (A,\alpha) \to (\CM(B),\beta)
\] 
be two cocycle representations.
We say that $(\psi,\Iv)$ is \emph{weakly contained} in $(\phi,\Iu)$, if the following is true.

For all compact sets $K\subseteq G$, $\CF\subset A$, every $\eps> 0$, and every contraction $b\in B$ there exists a collection of elements $\{c_{k} \mid k=1,\dots,N\}\subset B$ satisfying
\begin{equation} \label{eq:wc:1}
\max_{g\in K} \Big\| b^*\Iv_g\beta_g(b)-\sum_{k=1}^{N} c_{k}^*\Iu_g\beta_g(c_{k}) \Big\| \leq \eps
\end{equation}
and
\begin{equation} \label{eq:wc:2}
\max_{a\in\CF} \Big\| b^*\psi(a)b - \sum_{k=1}^N c_{k}^*\phi(a)c_{k} \Big\| \leq \eps.
\end{equation}
If it is always possible to choose $N=1$, then we say that $(\phi,\Iu)$ \emph{approximately 1-dominates} $(\psi,\Iv)$.\footnote{We may somtimes omit the ``1-'' for convenience, since this causes no notational conflicts within this article.
We note however, that ``approximately dominates'' may in general conflict with definitions in other sources.}
If $(\phi,\Iu)$ and $(\psi,\Iv)$ weakly contain each other, we say that they are \emph{weakly equivalent}.
\end{defi}

\begin{rem} \label{rem:ordinary-dom}
For an application later, we record a known result in the context of the above definition when $G=\set{1}$:
Suppose $A$ and $B$ are separable \cstar-algebras with $B\cong B\otimes\CO_\infty\otimes\CK$, and $\phi,\psi: A\to \CM(B)$ are two $*$-homomorphisms with $\phi(A)\subseteq B$.
If $\phi$ is full and $\psi$ is weakly nuclear\footnote{This means that for all contractions $b\in B$, the c.c.p.\ map $b^*\psi(\cdot)b$ is nuclear.}, then for every contraction $b\in B$ one can find $c\in B$ satisfying condition \eqref{eq:wc:2} (with $N=1$) by \cite[Theorem 7.21]{KirchbergRordam02} and \cite[Proposition 3.12]{Gabe21}.
By \cite[Lemma 7.4]{KirchbergRordam02} one gets that $(\phi, \eins)$ approximately 1-dominates $(\psi, \eins)$ in the case $G=\set{1}$. 
\end{rem}

\begin{defi}[cf.\ {\cite[Notation 2.2]{GabeSzabo22}}]
Let $\beta: G\curvearrowright B$ be an action on a \cstar-algebra.
We denote by $\CM^\beta(B)$ the \cstar-subalgebra of those elements $x\in\CM(B)$ where $\{x-\beta_g(x) \mid g\in G\}\subset B$.\footnote{This is not be confused with the algebra $\CM(B)^\beta$ of genuine fixed points; see \autoref{basic-notation}.}
The canonical extension of $\beta$ to an action on this \cstar-algebra is in fact point-norm continuous.
\end{defi}

\begin{lemma} \label{lem:absorption-step-1}
Let $\alpha: G\curvearrowright A$ and $\beta: G\curvearrowright B$ be two actions on \cstar-algebras, where $A$ is separable.
Suppose that $\beta$ is strongly stable.
Let
\[
(\phi,\Iu), (\psi,\Iv): (A,\alpha)\to (B,\beta)
\]
be proper cocycle morphisms.
Suppose there exists a sequence of contractions $s_n\in B$ such that for every $a\in A$ and compact set $K\subseteq G$, one has
\begin{enumerate}[leftmargin=*,label=\textup{(\roman*)}]
\item $\|s_n^*\phi(a)s_n - \psi(a)\| \to 0$; \label{AS-1:1}
\item $\dst \max_{g\in K} \| s_n^*\Iu_g\beta_g(s_n) - \Iv_gs_n^*s_n \| \to 0$; \label{AS-1:2}
\item $\dst \max_{g\in K} \| s_n^*s_n-\beta_g(s_n^*s_n) \| \to 0$; \label{AS-1:3}
\item $\dst \max_{g\in K} \| (\eins-s_n^*s_n)(\Iv_g-\eins) \| \to 0$. \label{AS-1:4}
\end{enumerate}
Then there exists a sequence of isometries $S_n\in\CM^\beta(B)$ such that
\[
\lim_{n\to\infty} \|\phi(a)S_n - S_n\psi(a)\|,\quad \lim_{n\to\infty}\ \max_{g\in K} \| \Iu_g\beta_g(S_n) - S_n\Iv_g\|=0
\]
for all $a\in A$ and every compact set $K\subseteq G$.
\end{lemma}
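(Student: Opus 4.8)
The plan is to first distill the four hypotheses into a single clean statement saying that $s_n$ behaves like an approximately $\beta$-equivariant isometric intertwiner, and then to rigidify this approximate object into a genuine $\beta$-fixed isometry using the room afforded by strong stability.

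First I would observe that conditions \ref{AS-1:2}--\ref{AS-1:4} combine into the module-type equivariance
\[
\max_{g\in K}\ \|\Iu_g\beta_g(s_n)-s_n\Iv_g\|\longrightarrow 0 .
\]
Indeed, expanding $\|s_n\Iv_g-\Iu_g\beta_g(s_n)\|^2=\|(s_n\Iv_g-\Iu_g\beta_g(s_n))^*(s_n\Iv_g-\Iu_g\beta_g(s_n))\|$ and substituting \ref{AS-1:2} for the cross terms $s_n^*\Iu_g\beta_g(s_n)\approx\Iv_gs_n^*s_n$ and \ref{AS-1:3} for $\beta_g(s_n^*s_n)\approx s_n^*s_n$, the expression collapses to $\|q_n-\Iv_g^*q_n\Iv_g\|$ with $q_n=\eins-s_n^*s_n$, which vanishes uniformly on compacta by \ref{AS-1:4} (since $q_n\Iv_g\approx q_n$ forces $\Iv_g^*q_n\Iv_g\approx q_n$). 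Together with \ref{AS-1:1}, this repackages the data as follows: $s_n$ is an approximately isometric compression taking $\phi$ to $\psi$, is approximately equivariant as a map of the associated cocycle modules, and has a defect $q_n$ that is approximately $\beta$-fixed and on which $\Iv$ acts approximately trivially.

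Next I would complete $s_n$ to a genuine isometry. Since $\beta$ is strongly stable, I fix isometries $r_1,r_2\in\CM(B)^\beta$ with $r_1r_1^*+r_2r_2^*=\eins$; then $S_n=r_1s_n+r_2q_n^{1/2}$ satisfies $S_n^*S_n=s_n^*s_n+q_n=\eins$, so it is an honest isometry, and the completion piece $r_2q_n^{1/2}$ is harmless precisely because $q_n$ is approximately $\beta$-fixed and $\Iv$-neutral. The two things this naive formula does not yet give are (a) membership in $\CM(B)^\beta$, which fails because $s_n$ is not $\beta$-fixed, and (b) the left intertwining $\phi(a)S_n\approx S_n\psi(a)$, which is genuinely stronger than the compression $S_n^*\phi(a)S_n\approx\psi(a)$ since it additionally demands that the range projection of $S_n$ be approximately invariant under $\phi(A)$.

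The hard part will be reconciling exact $\beta$-invariance of $S_n$ with the left intertwining, and this is where the structure theory of the previous section enters. I would pass to the sequence algebra $B_{\infty,\beta}$, view the data as a single contraction $\sigma=[(s_n)_n]$ whose only failure to be $\beta_\infty$-fixed is governed by the cocycle, namely $\beta_{\infty,g}(\sigma)=\Iu_g^*\sigma\Iv_g$ by the equivariance relation above, and then apply \autoref{lem:local-stability} to a separable $\beta_\infty$-invariant subalgebra $D\subseteq B_{\infty,\beta}$ containing $\phi(A)$, $\psi(A)$, $\{\Iu_g-\eins\}$, $\{\Iv_g-\eins\}$ and $\sigma$. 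This produces $\beta_\infty$-fixed isometries acting as the identity on $D$, hence commuting with $\phi(A)$ and $\psi(A)$ and supplying orthogonal ``fresh'' copies into which the cocycle twist of $\sigma$ can be absorbed, while the $\Iv$-neutral $\beta$-fixed corner accommodates the completed defect and forces the range invariance needed for the left intertwining. I expect this absorption of the cocycle twist, so as to obtain an isometry that is \emph{simultaneously} genuinely $\beta_\infty$-fixed and an approximate intertwiner in the strong (left) sense, to be the main obstacle and the step requiring the most care. Finally, a standard diagonal/reindexing argument over a dense sequence in $A$ and an exhaustion of $G$ by compacta converts the resulting $\beta_\infty$-fixed isometry back into a sequence $S_n\in\CM(B)^\beta$ with the two stated limits.
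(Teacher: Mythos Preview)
Your opening computation---expanding $\|s_n\Iv_g-\Iu_g\beta_g(s_n)\|^2$ and using \ref{AS-1:2}--\ref{AS-1:4} to annihilate it---is exactly what the paper does and is correct. After that, however, your plan diverges from the paper in two substantive ways.

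First, the statement almost certainly contains a typo: $\CM(B)^\beta$ should read $\CM^\beta(B)$, the \cstar-subalgebra of $x\in\CM(B)$ with $x-\beta_g(x)\in B$ for all $g$. The paper's proof explicitly constructs $S_n\in\CM^\beta(B)$, and every downstream use (\autoref{lem:strong-sum-absorption}, \autoref{lem:reduce-1-dom}) only invokes $\CM^\beta(B)$. Your elaborate plan to manufacture genuine $\beta$-fixedness via sequence-algebra absorption is therefore chasing the wrong target; the ``absorption of the cocycle twist'' step you flag as the main obstacle is not needed, and your sketch of it is too vague to evaluate anyway.

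Second, and more importantly, you miss that the left intertwining already holds for $s_n$. The compression hypothesis $s_n^*\phi(a)s_n\to\psi(a)$ forces $\phi(a)s_n-s_n\psi(a)\to 0$: in $B_\infty$ the relation $v^*\phi(a)v=\psi(a)$ holds exactly for $v=[(s_n)_n]$, and expanding $(\phi(a)v-v\psi(a))^*(\phi(a)v-v\psi(a))$ yields $\psi(a)^*(v^*v-\eins)\psi(a)\leq 0$, hence zero. This is \cite[Lemma 3.8]{Gabe20}, and it also gives that $s_n^*s_n$ acts as an approximate unit on $\psi(A)$.

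Once the left intertwining is established for $s_n$, the isometric completion is much simpler than you propose. Rather than fixed $r_1,r_2$, the paper takes \emph{varying} isometries $r_{1,n},r_{2,n}\in\CM(B)^\beta$ with $r_{1,n}\to\eins$ strictly (available by strong stability; cf.\ the proof of \autoref{lem:local-stability}), reindexed so that $\|(\eins-r_{1,n})s_n\|\to 0$. Then $S_n=r_{1,n}s_n+r_{2,n}(\eins-s_n^*s_n)^{1/2}\in\CM^\beta(B)$ is an isometry satisfying $b(S_n-s_n)\to 0$ for $b\in B$ and $(S_n-s_n)\psi(a)\to 0$, so both desired limits transfer directly from $s_n$ to $S_n$ by elementary estimates. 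No passage to the central sequence algebra or diagonal reindexing is required.
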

\begin{proof}
We claim that
\begin{equation} \label{eq:AS-1:1}
\|\phi(a)s_n - s_n\psi(a)\| \to 0 \quad\text{and}\quad \max_{g\in K} \| \Iu_g\beta_g(s_n) - s_n\Iv_g \| \to 0
\end{equation}
holds in place of the first two properties. The first property follows from applying \ref{AS-1:1} and \cite[Lemma 3.8]{Gabe20} to the case $D = B_\infty$ and $v = [(s_n)_{n\in \mathbb N}]$. This lemma also implies that $s_n^\ast s_n$ acts like an approximate unit on $\psi(A)$. 
For the second property, we observe for every compact set $K\subseteq G$ that
\[
\begin{array}{cl}
\multicolumn{2}{l}{ \dst\limsup_{n\to\infty} \max_{g\in K} \|s_n\Iv_g-\Iu_g\beta_{g}(s_n)\|^2 } \\
=& \dst\limsup_{n\to\infty} \max_{g\in K} \|(s_n\Iv_g-\Iu_g\beta_{g}(s_n))^*(s_n\Iv_g-\Iu_g\beta_{g}(s_n))\|  \\
=& \dst\limsup_{n\to\infty} \max_{g\in K} \|\Iv_g^*s_n^*s_n\Iv_g- \beta_{g}(s_n)^*\Iu_g^*s_n\Iv_g - \Iv_g^*s_n^*\Iu_g\beta_{g}(s_n)+\beta_{g}(s_n^*s_n)\| \\
\stackrel{\ref{AS-1:2}, \ref{AS-1:3}}{=}& \dst\limsup_{n\to\infty} \max_{g\in K} \|\Iv_g^*s_n^*s_n\Iv_g - s_n^*s_n \|  \\
 = & \dst\limsup_{n\to\infty} \max_{g\in K} \| (\Iv_g^* - \eins) s_n^* s_n + \Iv_g^* s_n^* s_n (\Iv_g - \eins) \| \\
 \stackrel{\ref{AS-1:4}}{=} & \dst\max_{g\in K} \| \Iv_g^* - \eins + \Iv_g^*(\Iv_g - \eins) \| \ = \ 0.
\end{array}
\]
Since $\beta$ is strongly stable, we can find two sequences of isometries $r_{1,n}, r_{2,n}\in\CM(B)^\beta$ such that $r_{1,n}r_{1,n}^*+r_{2,n}r_{2,n}^*=\eins$ and $r_{1,n}\to\eins$ in the strict topology; cf.\ the proof of \autoref{lem:local-stability}.
By passing to a subsequence of $r_{1,n}$ and $r_{2,n}$, if necessary, let us additionally assume that $\|(\eins-r_{1,n})s_n\|\to 0$.
We consider
\begin{equation}\label{eq:tnr1nsn}
S_n = r_{1,n}s_n + r_{2,n}(\eins-s_n^*s_n)^{1/2} \in \CM^\beta(B).
\end{equation}
Then $S_n$ is an isometry.
Clearly $b(S_n-s_n)\to 0$ for all $b\in B$.
Since the sequence $s_n^*s_n$ acts like an approximate unit on $\psi(A)$, we also have $(S_n-s_n)\psi(a)\to 0$ for all $a\in A$.
We can in particular observe for every $a\in A$ and large enough $n$ that
\[
\phi(a)s_n-s_n\psi(a) \approx \phi(a)S_n - S_n\psi(a).
\]
For large enough $n$, property \ref{AS-1:3} implies $(\eins-s_n^*s_n)^{1/2}\approx (\eins-\beta_g(s_n^*s_n))^{1/2}$ and therefore by \eqref{eq:tnr1nsn}
\[
S_n-\beta_g(S_n) \approx s_n-\beta_g(s_n).
\]
Using that $\Iu$ takes values in $\CU(\eins+B)$, we have $(\Iu_g-\eins)\beta_g(s_n) \approx (\Iu_g-\eins)\beta_g(S_n)$ uniformly over compact sets if $n$ is sufficiently large.
Given property \ref{AS-1:4}, we also have $s_n(\Iv_g-\eins)\approx S_n(\Iv_g-\eins)$.
Thus we see for large enough $n$ that
\[
\begin{array}{ccl}
s_n\Iv_g-\Iu_g\beta_g(s_n) &=& s_n-\beta_g(s_n) + s_n(\Iv_g-\eins)-(\Iu_g-\eins)\beta_g(s_n) \\
&\approx& S_n-\beta_g(S_n) + S_n(\Iv_g-\eins)-(\Iu_g-\eins)\beta_g(S_n) \\
&=& S_n\Iv_g - \Iu_g\beta_g(S_n).
\end{array}
\]
Note that these approximations are uniform over compact subsets in $G$.
The claim follows with \eqref{eq:AS-1:1}.
\end{proof}

\begin{lemma} \label{lem:absorption-step-2}
Let $\alpha: G\curvearrowright A$ and $\beta: G\curvearrowright B$ be two actions on \cstar-algebras, where $A$ is separable and $B$ is $\sigma$-unital.
Let
\[
(\phi,\Iu), (\psi,\Iv): (A,\alpha)\to (B,\beta)
\]
be two cocycle morphisms such that $(\phi,\Iu)$ approximately 1-dominates $(\psi,\Iv)$.
Then there exists a sequence of contractions $s_n\in B$ such that $s_n^*s_n$ is a (not necessarily increasing) approximately $\beta$-invariant approximate unit, and moreover
\[
\lim_{n\to\infty} \|s_n^*\phi(a)s_n-\psi(a)\|=0,\quad \lim_{n\to\infty}\max_{g\in K} \|s_n^*\Iu_g\beta_g(s_n)-\Iv_gs_n^*s_n\|=0
\]
for all $a\in A$ and every compact set $K\subseteq G$.
\end{lemma}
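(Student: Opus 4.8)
The plan is to produce the sequence $(s_n)$ by feeding a carefully chosen approximate unit into the approximate $1$-domination hypothesis, and then to read off the four desired properties from the two defining estimates of domination together with the behaviour of that approximate unit.

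First I would fix, via Kasparov's Lemma \autoref{lem:Kasparov}, an approximate unit $(h_n)$ of positive contractions in $B$ that is approximately $\beta$-invariant, i.e.\ $\max_{g\in K}\|\beta_g(h_n)-h_n\|\to0$ for every compact $K\subseteq G$, and which in addition is asymptotically quasicentral relative to the cocycle $\Iv$ in the weak sense that $\max_{g\in K}\|[h_n,\Iv_g]\,h_n\|\to0$. Choosing an increasing sequence of finite sets $\CF_n\subseteq A$ with dense union, an exhaustion $K_n\subseteq G$ by compacta with $e\in K_n$, and $\eps_n\downarrow0$, I would then invoke approximate $1$-domination with the contraction $b=h_n$ to obtain $c_n\in B$ with
\[
\max_{g\in K_n}\|h_n\Iv_g\beta_g(h_n)-c_n^*\Iu_g\beta_g(c_n)\|\le\eps_n,\qquad \max_{a\in\CF_n}\|h_n\psi(a)h_n-c_n^*\phi(a)c_n\|\le\eps_n.
\]
Evaluating the first estimate at $g=e$, where $\Iu_e=\Iv_e=\eins$ and $\beta_e=\id$, gives $\|c_n^*c_n-h_n^2\|\le\eps_n$, hence $\|c_n\|^2\le1+\eps_n$; rescaling $s_n:=(1+\eps_n)^{-1/2}c_n$ yields honest contractions for which all estimates persist up to an error tending to $0$.

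The verification of the three conclusions is then routine. Since $s_n^*s_n$ differs from $h_n^2$ by a vanishing amount and $(h_n^2)$ is an approximate unit, so is $(s_n^*s_n)$; its approximate $\beta$-invariance follows from $\beta_g(h_n^2)=\beta_g(h_n)^2\approx h_n^2$. For the first limit I would combine the domination estimate $s_n^*\phi(a)s_n\approx h_n\psi(a)h_n$ with $h_n\psi(a)h_n\to\psi(a)$, valid since $\psi(a)\in B$ and $(h_n)$ is an approximate unit. For the cocycle limit I would estimate
\[
\|s_n^*\Iu_g\beta_g(s_n)-\Iv_gs_n^*s_n\|\le\|s_n^*\Iu_g\beta_g(s_n)-h_n\Iv_g\beta_g(h_n)\|+\|h_n\Iv_g\beta_g(h_n)-\Iv_gh_n^2\|+\|\Iv_g\|\,\|h_n^2-s_n^*s_n\|,
\]
where the outer terms are controlled by the domination estimate and by $\|c_n^*c_n-h_n^2\|\le\eps_n$, and the middle term is bounded, after inserting $h_n\Iv_gh_n$, by $\|\beta_g(h_n)-h_n\|+\|[h_n,\Iv_g]h_n\|$; both summands vanish uniformly over $K$ by the two chosen properties of $(h_n)$.

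The main obstacle is therefore concentrated entirely in the construction of $(h_n)$: securing the weak quasicentrality $\max_{g\in K}\|[h_n,\Iv_g]h_n\|\to0$ uniformly over compact sets. When $(\phi,\Iu),(\psi,\Iv)$ are proper, $g\mapsto\Iv_g-\eins$ is a norm-continuous $B$-valued map, so $\{\Iv_g-\eins:g\in K\}$ is norm-compact and may simply be included in the separable subalgebra to which Kasparov's Lemma \autoref{lem:Kasparov} is applied, yielding even $\max_{g\in K}\|[h_n,\Iv_g]\|\to0$. For general cocycle morphisms, however, $\Iv_g$ is only a strictly continuous family of multipliers that may generate a non-separable subalgebra of $\CM(B)$, so the honest commutators $[h_n,\Iv_g]$ cannot be expected to vanish. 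Here one must exploit that only the self-weighted commutators $[h_n,\Iv_g]h_n=[h_n,\Iv_gh_n]$, with $\Iv_gh_n\in B$, need to be controlled, and produce a \emph{slowly varying} approximate unit — for instance by Cesàro-averaging an increasing approximate unit, so that the increments responsible for the commutators become asymptotically orthogonal — while simultaneously retaining approximate $\beta$-invariance and deducing the uniformity over $K$ from the strict continuity of $\Iv$.
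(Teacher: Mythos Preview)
Your approach is essentially the paper's: feed an approximately $\beta$-invariant approximate unit $b_n$ into the approximate $1$-domination hypothesis as the test contraction, and read off all four conclusions from the two domination estimates together with the properties of $b_n$. The paper is terser --- it absorbs your rescaling step into the phrase ``find sequences of contractions $s_n$'' and handles the commutator issue with the single clause ``keeping in mind that $b_n$ approximately commutes with the values of $\Iv$'' --- but the skeleton is identical. Your explicit rescaling $s_n=(1+\eps_n)^{-1/2}c_n$ and your triangle-inequality decomposition of $\|s_n^*\Iu_g\beta_g(s_n)-\Iv_gs_n^*s_n\|$ are exactly the details the paper suppresses.

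Your worry about the non-proper case is well placed: for a general cocycle morphism the range of $\Iv$ need not be norm-separable in $\CM(B)$, so one cannot simply throw $\{\Iv_g\}$ into the separable $D$ of \autoref{lem:Kasparov}. The paper's one-line assertion is as underspecified here as your own sketch. However, your proposed repair via Ces\`aro-averaging or ``slowly varying'' approximate units is not convincing: the identity $[h_n,\Iv_g]h_n=[h_n,\Iv_gh_n]$ does not help because $\Iv_gh_n$ varies with $n$, and there is no reason a Ces\`aro-averaged approximate unit should force $\|[h_n,\Iv_g]h_n\|\to0$ for arbitrary multipliers $\Iv_g$. A cleaner and genuinely short fix is to choose the approximate unit simultaneously approximately invariant under $\beta$ \emph{and} under the cocycle-perturbed action $\beta^{\Iv}$; the convexity argument behind Kasparov's lemma accommodates two point-norm continuous actions at once. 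Then $\|\Iv_g\beta_g(h_n)-h_n\Iv_g\|=\|\beta^{\Iv}_g(h_n)-h_n\|\to0$ and $\|\beta_g(h_n)-h_n\|\to0$ together give $\max_{g\in K}\|[h_n,\Iv_g]\|\to0$, which is exactly what both you and the paper need.
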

\begin{proof}
Let $e_n\in B$ be any countable increasing approximate unit.
Then $(e_n,e_n)\in B\oplus B$ is also a countable increasing approximate unit.
If we equip $B\oplus B$ with the action $\beta\oplus\beta^\Iv$, then we can apply \autoref{lem:Kasparov} and conclude that there exists a countable approximate unit $b_n\in B$ that is both asymptotically $\beta$-invariant and asymptotically $\beta^\Iv$-invariant.
In particular this implies that $b_n$ approximately commutes with the values of $\Iv$, uniformly over compact sets, as $n\to\infty$.

We can now apply \autoref{def:wc} to $b=b_n$ and increasing $K,\CF$ and decreasing $\eps$ to find sequences of contractions $s_n\in B$ such that
\begin{equation} \label{eq:AS-2:1}
\lim_{n\to\infty} s_n^*\phi(a)s_n = \lim_{n\to\infty} b_n\psi(a)b_n=\psi(a),\quad a\in A,
\end{equation}
and
\begin{equation} \label{eq:AS-2:2}
\lim_{n\to\infty} \max_{g\in K} \|s_n^*\Iu_g\beta_{g}(s_n)-b_n\Iv_g\beta_{g}(b_n)\| = 0 
\end{equation}
for every compact set $K\subseteq G$.
Applying this relation to the particular case $g=1_G$ and keeping in mind that $b_n$ approximately commutes with the values of $\Iv$, we see that $s_n^*s_n$ approximately equals $b_n^2$ and therefore has the stated properties.
\end{proof}

\begin{cor} \label{cor:approx-dom}
Let $\alpha: G\curvearrowright A$ and $\beta: G\curvearrowright B$ be two actions on \cstar-algebras, where $A$ is separable and $B$ is $\sigma$-unital.
Suppose that $\beta$ is strongly stable.
Let
\[
(\phi,\Iu), (\psi,\Iv): (A,\alpha)\to (B,\beta)
\]
be two proper cocycle morphisms such that $(\phi,\Iu)$ approximately 1-dominates $(\psi,\Iv)$.
Then there exists a sequence of isometries $S_n\in\CM^\beta(B)$ such that
\[
\lim_{n\to\infty} \|\phi(a)S_n - S_n\psi(a)\|,\quad \lim_{n\to\infty}\ \max_{g\in K} \| \Iu_g\beta_g(S_n) - S_n\Iv_g\|=0
\]
for all $a\in A$ and every compact set $K\subseteq G$.
\end{cor}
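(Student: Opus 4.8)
The plan is to obtain the conclusion as an immediate chaining of the two preceding lemmas, feeding the output of \autoref{lem:absorption-step-2} directly into the hypotheses of \autoref{lem:absorption-step-1}. First I would invoke \autoref{lem:absorption-step-2}: since $B$ is $\sigma$-unital and $(\phi,\Iu)$ approximately $1$-dominates $(\psi,\Iv)$, it yields a sequence of contractions $s_n\in B$ for which $s_n^*s_n$ is an approximately $\beta$-invariant approximate unit and for which
\[
\|s_n^*\phi(a)s_n-\psi(a)\|\to 0,\qquad \max_{g\in K}\|s_n^*\Iu_g\beta_g(s_n)-\Iv_gs_n^*s_n\|\to 0
\]
hold for all $a\in A$ and every compact $K\subseteq G$. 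These are verbatim conditions \ref{AS-1:1} and \ref{AS-1:2} of \autoref{lem:absorption-step-1}, so the substantive content is already in hand.

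Second, I would verify the two remaining hypotheses of \autoref{lem:absorption-step-1} for this same sequence. Condition \ref{AS-1:3}, namely $\max_{g\in K}\|s_n^*s_n-\beta_g(s_n^*s_n)\|\to 0$, is precisely the assertion that $s_n^*s_n$ is approximately $\beta$-invariant, which \autoref{lem:absorption-step-2} supplies. For condition \ref{AS-1:4} I would exploit that $(\psi,\Iv)$ is a \emph{proper} cocycle morphism: then $\Iv$ takes values in $\CU(\eins+B)$, so $\Iv_g-\eins\in B$ for every $g$, and $\Iv$ is norm-continuous. Consequently $\set{\Iv_g-\eins \mid g\in K}$ is a norm-compact subset of $B$. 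Since an approximate unit acts uniformly on norm-compact sets and $s_n^*s_n$ is an approximate unit for $B$, we get $\max_{g\in K}\|(\eins-s_n^*s_n)(\Iv_g-\eins)\|\to 0$, which is exactly \ref{AS-1:4}.

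Having checked all four hypotheses, I would apply \autoref{lem:absorption-step-1} to the sequence $s_n$ to produce the desired isometries $S_n\in\CM(B)^\beta$ with the stated asymptotic intertwining properties; the assumed strong stability of $\beta$ is used solely inside that lemma to manufacture the $S_n$ from the $s_n$. I do not expect a genuine obstacle, as this corollary is essentially a composition of the two lemmas; the only point demanding a moment's care is the uniformity over $K$ in \ref{AS-1:4}, where one must upgrade the pointwise action of the approximate unit to uniform action on the compact set of cocycle values, and this is exactly where properness of $(\psi,\Iv)$ is indispensable.
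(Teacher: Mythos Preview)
Your proposal is correct and matches the paper's approach exactly: the paper's proof consists of the single line ``Combine \autoref{lem:absorption-step-1} and \autoref{lem:absorption-step-2}.'' You have simply spelled out the combination, including the verification of condition \ref{AS-1:4} via properness of $(\psi,\Iv)$ and compactness of $\{\Iv_g-\eins : g\in K\}\subset B$, which the paper leaves implicit.
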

\begin{proof}
Combine \autoref{lem:absorption-step-1} and \autoref{lem:absorption-step-2}.
\end{proof}

\begin{lemma} \label{lem:removing-multipliers}
Let $\beta: G\curvearrowright B$ be a strongly stable action on a $\sigma$-unital \cstar-algebra.
Let $W\in\CU(\CM^\beta(B))$ be a unitary such that there exists a norm-continuous path $U: [0,\infty)\to\CU(\CM^\beta(B))$ with $U_0\in\CM(B)^\beta$ and 
\[
\lim_{t\to\infty} \max_{g\in K} \|U_t\beta_g(U_t)^*-W\beta_g(W)^*\|=0
\]
for every compact set $K\subseteq G$.
Then there exists a norm-continuous unitary path $v: [0,\infty)\to\CU(\eins+B)$ with $v_0=\eins$ such that
\[
\lim_{t\to\infty} \max_{g\in K} \|v_t\beta_g(v_t)^*-W\beta_g(W)^*\|=0
\]
for every compact set $K\subseteq G$, and $\dst\lim_{t\to\infty} v_t=W$ holds in the strict topology.

In particular, for every $V\in \CU(\CM(B)^\beta)$ the equivariant automorphism $\ad(V)$ is strongly asymptotically unitarily equivalent to $\id_B$.
\end{lemma}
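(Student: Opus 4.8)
The plan is to derive the ``in particular'' clause from the main statement, and then to prove the main statement by an explicit construction that uses strong stability to approximate $W$ strictly from within $\CU(\eins+B)$ while importing norm control of the cocycle from the given path $U$.

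\emph{Reduction of the last sentence.} Given $V\in\CU(\CM(B)^\beta)$, I would apply the main statement with $W:=V$ and the \emph{constant} path $U_t:=V$; this is admissible because $U_0=V\in\CM(B)^\beta$ and $U_t\beta_g(U_t)^*=VV^*=\eins=V\beta_g(V)^*$ for all $t$ and $g$. The conclusion provides $v\colon[0,\infty)\to\CU(\eins+B)$ with $v_0=\eins$, $v_t\to V$ strictly, and $\max_{g\in K}\|\eins-v_t\beta_g(v_t)^*\|\to0$ for every compact $K$. Setting $u_t:=v_t$, strict convergence $v_t\to V$ forces $u_tbu_t^*\to VbV^*$ in norm for each $b\in B$ (with the difference lying in $B$), so $u$ witnesses $(\id_B,\eins)\asymp(\ad(V),\eins)$ in the sense of \autoref{def:various-equivalences}; since $u_0=\eins$ and the cocycles are trivial, this is exactly a strong asymptotic unitary equivalence $\ad(V)\sim\id_B$.

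\emph{Set-up and reformulation for the main statement.} Put $c_g:=W\beta_g(W)^*$. Since $W\in\CM^\beta(B)$, one has $c_g-\eins=(W-\beta_g(W))\beta_g(W)^*\in B$, the assignment $g\mapsto c_g$ is norm-continuous, $c$ is a $\beta$-cocycle valued in $\CU(\eins+B)$, and $\{c_g\mid g\in K\}$ is norm-compact in $\eins+B$ for each compact $K$. Writing $v_t=\eins+b_t$ with $b_t\in B$, the target conditions become: $\eins+b_t$ unitary, $b_0=0$, $b_t\to W-\eins$ strictly, and
\[
b_t+\beta_g(b_t)^*+b_t\beta_g(b_t)^*\ \longrightarrow\ c_g-\eins \quad\text{in norm, uniformly for } g\in K .
\]
It is useful to record that $Y_t:=W^*U_t\in\CU(\CM^\beta(B))$ is \emph{asymptotically $\beta$-invariant}, because $Y_t\beta_g(Y_t)^*=W^*\big(U_t\beta_g(U_t)^*\big)\beta_g(W)\to W^*c_g\beta_g(W)=\eins$ uniformly on compacts; thus the hypothesis really asserts that $W$ is linked, through $\CM^\beta(B)$, to a fixed unitary by a path whose cocycle matches that of $W$ asymptotically, which is precisely what makes the cocycle condition achievable within $\eins+B$ at all.

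\emph{Construction.} Using strong stability I fix isometries $r_n\in\CM(B)^\beta$ with $\sum_nr_nr_n^*=\eins$ strictly, as in the proofs of \autoref{prop:cc-stability} and \autoref{lem:local-stability}; being $\beta$-fixed, they allow shift/Eilenberg-swindle manipulations that respect $\beta$. I would build $v$ by concatenating short $\CU(\eins+B)$-paths between a sequence $V_N\in\CU(\eins+B)$ with $V_0=\eins$, $V_N\to W$ strictly, and $\max_{g\in K_N}\|V_N\beta_g(V_N)^*-c_g\|\to0$ along an exhaustion $K_N\nearrow G$. Achieving the strict convergence alone, starting at $\eins$, is cheap from stability, since the strict closure of $\CU(\eins+B)$ exhausts the connected group $\CU(\CM(B))$; the content is to enforce the cocycle estimate simultaneously. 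For this I would cut $U_{T_N}$ (with $T_N\to\infty$ chosen so that $U_{T_N}\beta_g(U_{T_N})^*\approx c_g$ on $K_N$) down into $\eins+B$ against a quasicentral, approximately $\beta$-invariant approximate unit $h_N\in B$ supplied by \autoref{lem:Kasparov}. A direct damping $U_{T_N}h_N+(\eins-h_N^2)^{1/2}$ already lands in $\eins+B$ and has the correct diagonal part $c_gh_N^2+(\eins-h_N^2)$, but it develops order-one off-diagonal cross terms of the form $U_{T_N}h_N(\eins-h_N^2)^{1/2}$; so the complementary defect must be routed through the fixed isometries (carrying the same $U_{T_N}$-action) rather than through a scalar, so that the cross terms recombine into $c_g$. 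Taking $h_N$ to be an approximate unit for the norm-compact set $\{c_g\mid g\in K_N\}$ then makes the residual $(\eins-c_g)(\eins-h_N^2)$ norm-small, while $h_N\to\eins$ yields the strict convergence.

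\emph{Main obstacle.} The hard part is reconciling the three demands at once. Because $W\notin\eins+B$, strict convergence $v_t\to W$ forces the construction to push the non-$B$ part of $W$ out to infinity; yet strict convergence is far too weak to pin the cocycle $v_t\beta_g(v_t)^*$ in norm. These can be reconciled only because $U_t$ supplies a genuine norm-approximation of the \emph{same} cocycle $c_g$, and because the stabilizing isometries are truly $\beta$-fixed, so the swindle introduces no new non-equivariance. I expect the bulk of the work to lie in the cross-term bookkeeping of this cut-down step: converting a multiplier unitary into an element of $\eins+B$ while holding its cocycle to $c_g$ in norm uniformly on compacts. Kasparov's quasicentral, approximately invariant approximate units (\autoref{lem:Kasparov}), the norm-compactness of $\{c_g\mid g\in K\}$, and the $\beta$-fixedness of the isometries from strong stability are precisely the tools that should force these errors to vanish.
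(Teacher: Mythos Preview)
Your reduction of the ``in particular'' clause is correct. The gap is in the main construction: your damping procedure applies to $U_{T_N}$, not to $W$, and there is nothing in your scheme that forces the resulting unitaries to converge strictly to $W$. The hypothesis links $U_t$ to $W$ only through their \emph{cocycles}; asymptotically $U_t$ may differ from $W$ by an arbitrary element of $\CM(B)^\beta$, so any limit your cut-down produces will be some unitary with the correct cocycle, but not $W$ in particular. You note that strict approximation of $W$ ``alone'' is cheap and that the content is combining it with the cocycle estimate, but you never actually combine them: the approximate-unit damping of $U_{T_N}$ and the strict approximation of $W$ remain two unrelated constructions. Your remark about ``routing the complementary defect through the fixed isometries carrying the same $U_{T_N}$-action'' is too vague to count as a construction, and in any case still tracks $U_{T_N}$ rather than $W$.

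The paper's approach is structurally different and sidesteps this obstacle. One forms the Cuntz sum $X_t=W\oplus_{r_1^{(0)},r_2^{(0)}} U_0^*W^*U_t$ inside $\CU(\CM^\beta(B))$: the first summand is literally $W$, while the second summand is asymptotically $\beta$-fixed (since $Y_t=W^*U_t$ is, as you yourself observed) and starts at a fixed unitary, so $X_0$ is homotopic to $\eins$ via a standard swap homotopy. The heavy lifting is then outsourced to \cite[Lemma 4.3]{GabeSzabo22}, which converts a path in $\CU(\CM^\beta(B))$ starting at $\eins$ into a path $y$ in $\CU(\eins+B)$ with the same asymptotic cocycle \emph{and} $y_t-X_t\to0$ strictly. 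Finally one conjugates by a strictly continuous path $z_t\in\CU(\CM(B)^\beta)$ that asymptotically collapses onto the first summand; this simultaneously turns the strict limit into $W$ (because the first summand of $X_t$ is $W$) and turns the cocycle $c_g\oplus\eins$ into $c_g$. The Cuntz-sum trick is exactly what lets $W$ itself, rather than a proxy, appear in the construction.
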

\begin{proof}
Since $\beta$ is strongly stable, we choose (cf.\ \cite[Proposition 1.9]{GabeSzabo22}) strictly continuous paths of isometries $r_1, r_2: [0,\infty)\to\CM(B)^\beta$ satisfying the relation $r_1^{(t)}r_1^{(t)*}+r_2^{(t)}r_2^{(t)*}=\eins$ for all $t\geq 0$, as well as $r_1^{(t)}\to\eins$ strictly as $t\to\infty$.
Consider the strictly continuous path of unitaries $z: [0,\infty)\to\CU(\CM(B)^\beta)$ given by $z_t=r_1^{(t)}r_1^{(0)*}+r_2^{(t)}r_2^{(0)*}$.
We observe for all $b\in B$ that
\begin{equation} \label{eq:remove-multipliers:1}
\lim_{t\to\infty} z_t r_1^{(0)}br_1^{(0)*} z_t^* = \lim_{t\to\infty} b\oplus_{r_1^{(t)},r_2^{(t)}} 0 = b.
\end{equation}
We consider the norm-continuous path $X: [0,\infty)\to\CU(\CM^\beta(B))$ given by $X_t=W \oplus_{r_1^{(0)},r_2^{(0)}} U_0^*W^*U_t$.
Then $X_0=W \oplus_{r_1^{(0)},r_2^{(0)}} U_0^* W^* U_0$ is norm-homotopic to the unit inside $\CU(\CM^\beta(B))$ via
\[
\begin{array}{cll}
X_0 &=& (\eins\oplus_{r_1^{(0)},r_2^{(0)}} U_0^*)(W\oplus_{r_1^{(0)},r_2^{(0)}} W^*U_0) \\
&\sim_h& (U_0^*\oplus_{r_1^{(0)},r_2^{(0)}} \eins)(W\oplus_{r_1^{(0)},r_2^{(0)}} W^*U_0) \\
&=& (U_0^*W\oplus_{r_1^{(0)},r_2^{(0)}} \eins)(\eins\oplus_{r_1^{(0)},r_2^{(0)}} W^*U_0) \\
&\sim_h& (U_0^*W\oplus_{r_1^{(0)},r_2^{(0)}} \eins)(W^*U_0\oplus_{r_1^{(0)},r_2^{(0)}} \eins) \ = \ \eins.
\end{array}
\]
Hence we may apply \cite[Lemma 4.3]{GabeSzabo22} (with $D=0$) and see that there is a norm-continuous path $y: [0,\infty)\to\CU(\eins+B)$ with $y_0=\eins$ such that 
\begin{equation} \label{eq:remove-multipliers:2}
\lim_{t\to\infty} \max_{g\in K} \|y_t\beta_g(y_t)^*-X_t\beta_g(X_t)^*\|=0
\end{equation}
for every compact set $K\subseteq G$, and 
\begin{equation} \label{eq:remove-multipliers:3}
y_t-X_t \stackrel{t\to\infty}{\longrightarrow} 0 \quad\text{strictly}.
\end{equation}
We claim that the path $v_t=z_ty_tz_t^*$ does the job.
First we compute for all $b\in B$ that
\[
\begin{array}{ccl}
\dst \lim_{t\to\infty} v_tb &=& \dst\lim_{t\to\infty} z_ty_tz_t^*b \\
&\stackrel{\eqref{eq:remove-multipliers:1}}{=}& \dst\lim_{t\to\infty} z_ty_t \big( r_1^{(0)} b r_1^{(0)*} \big) z_t^* \\
&\stackrel{\eqref{eq:remove-multipliers:3}}{=}& \dst\lim_{t\to\infty} z_t X_t r_1^{(0)} b  r_1^{(0)*} z_t^* \\
&=& \dst\lim_{t\to\infty} z_t r_1^{(0)} W b r_1^{(0)*} z_t^* \\
&\stackrel{\eqref{eq:remove-multipliers:1}}{=}& Wb.
\end{array}
\]
Similarly, $\lim_{t\to \infty} b v_t = bW$, so $v_t \to W$ strictly. 
Using that $W\beta_g(W)^*\in\CU(\eins+B)$ for all $g\in G$, we observe for every compact set $K\subseteq G$ that
\[
\begin{array}{cl}
\multicolumn{2}{l}{ \dst \lim_{t\to\infty} \max_{g\in K} \|v_t\beta_g(v_t)^*-W\beta_g(W)^*\| } \\
\stackrel{\eqref{eq:remove-multipliers:1}}{=}& \dst \lim_{t\to\infty} \max_{g\in K} \|v_t\beta_g(v_t)^*-z_t(W\beta_g(W)^*\oplus_{r_1^{(0)},r_2^{(0)}}\eins)z_t^*\| \\
\stackrel{z_t\in\CM(B)^\beta}{=}& \dst \lim_{t\to\infty} \max_{g\in K} \|y_t\beta_g(y_t)^*-(W\beta_g(W)^*\oplus_{r_1^{(0)},r_2^{(0)}}\eins)\| \\
\stackrel{\eqref{eq:remove-multipliers:2}}{=}& \dst \lim_{t\to\infty} \max_{g\in K} \|X_t\beta_g(X_t)^*-(W\beta_g(W)^*\oplus_{r_1^{(0)},r_2^{(0)}}\eins)\| \\
=& \dst \lim_{t\to\infty} \max_{g\in K} \|U_0^*W^*U_t\beta_g(U_t^*WU_0)-\eins\| \\
\stackrel{U_0\in\CM(B)^\beta}{=}& \dst \lim_{t\to\infty} \max_{g\in K} \|W^*U_t\beta_g(U_t^*W)-\eins\| \ = \ 0.
\end{array}
\]
The ``in particular'' part follows by applying the result to $V=W=U_t$. 
\end{proof}

The following abstract absorption principle resembles and generalizes analogous technical results appearing in the known proofs of the classical Kirchberg--Phillips theorem.
As it was the case there, this will become a quintessential ingredient in our uniqueness theorem later.

\begin{lemma} \label{lem:strong-sum-absorption}
Let $\alpha: G\curvearrowright A$ and $\beta: G\curvearrowright B$ be two actions on \cstar-algebras, where $A$ is separable and $B$ is $\sigma$-unital.
Suppose that $\beta$ is strongly stable.
Let
\[
(\phi,\Iu), (\theta,\Iy): (A,\alpha)\to (B,\beta)
\]
be two proper cocycle morphisms such that $(\phi,\Iu)$ approximately 1-dominates $(\theta,\Iy)$.
Suppose there exists a unital embedding $\CO_2\to\CM(B)^\beta$ commuting with the range of $\theta$ and $\Iy$, and a unital embedding $\CO_\infty\to\CM(B)^\beta$ commuting with the range of $\phi$ and $\Iu$.
Then $(\phi,\Iu)$ and $(\phi\oplus\theta,\Iu\oplus\Iy)$ are strongly asymptotically unitarily equivalent.\footnote{Here ``$\oplus$'' denotes the Cuntz sum. It is worth noticing right away that the choice of the isometries used to form this sum has no effect on the resulting strong asymptotic unitary equivalence class. This is a consequence of \autoref{lem:removing-multipliers}, since Cuntz sums with two different choices of isometries are unitarily equivalent via an element in $\CM(B)^\beta$.}
\end{lemma}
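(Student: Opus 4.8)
The plan is to run a Cuntz sum swindle in which the two infiniteness hypotheses play complementary roles: the commuting $\CO_2$ makes $(\theta,\Iy)$ self-absorbing, while the commuting $\CO_\infty$ lets $(\phi,\Iu)$ swallow a dominated infinite repeat. Throughout, let me abbreviate $\Phi=(\phi,\Iu)$ and $\Theta=(\theta,\Iy)$, and write $\sim$ for the relation of strong asymptotic unitary equivalence. First I would record the tools making $\sim$ behave well. By the ``in particular'' part of \autoref{lem:removing-multipliers}, conjugation by any $V\in\CU(\CM(B)^\beta)$ is strongly asymptotically unitarily equivalent to the identity; threading the associated path $v_t\to V$ (with $v_0=\eins$) through the cocycles shows that two proper cocycle morphisms that are properly unitarily equivalent via a unitary in $\CM(B)^\beta$ are already strongly asymptotically unitarily equivalent. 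In particular the Cuntz sum is well defined up to $\sim$ independently of the chosen isometries (as in the footnote to the statement), and $\sim$ is a congruence for Cuntz addition. Next, since $\Phi$ approximately $1$-dominates $\Theta$ and $\beta$ is strongly stable, \autoref{cor:approx-dom} furnishes isometries $S_n\in\CM(B)^\beta$ with $\|\phi(a)S_n-S_n\theta(a)\|\to0$ and $\max_{g\in K}\|\Iu_g\beta_g(S_n)-S_n\Iy_g\|\to0$ for all $a\in A$ and compact $K\subseteq G$.

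The role of the commuting copy of $\CO_2$ is to make $\Theta$ self-absorbing. If $w_1,w_2\in\CM(B)^\beta$ are its generating isometries, then since they commute with $\theta(A)$ and with the values of $\Iy$, the Cuntz sum with respect to $w_1,w_2$ satisfies $(\theta\oplus\theta)(a)=\theta(a)(w_1w_1^*+w_2w_2^*)=\theta(a)$ and likewise $\Iy\oplus\Iy=\Iy$, i.e.\ $\Theta\oplus\Theta=\Theta$ on the nose. Combined with the isometry-independence of Cuntz sums up to $\sim$, a standard swindle (matching the $2^N$-fold repeats coming from $\CO_2$ against the infinite repeat formed with the strongly stable isometries) then upgrades this to $\Theta\sim\Theta^\infty$, where $\Theta^\infty=(\theta^\infty,\Iy^\infty)$. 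I would also use the elementary shift identity $\Theta\oplus\Theta^\infty\sim\Theta^\infty$, valid for any infinite repeat by reindexing its defining isometries.

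The heart of the argument is the one-sided absorption $\Phi\oplus\Theta^\infty\sim\Phi$, and this is where the commuting $\CO_\infty$ enters. Let $v_1,v_2,\dots\in\CM(B)^\beta$ be its generating isometries, with pairwise orthogonal ranges, commuting with $\phi(A)$ and $\Iu$, and put $P:=\sum_i v_iv_i^*\leq\eins$. Because the $v_i$ commute with $\phi$, restricting $\phi$ to the corner $P$ exhibits it as an infinite repeat of itself, $\phi(a)P=\sum_i v_i\phi(a)v_i^*$, and similarly for $\Iu$; thus $\Phi$ carries a full copy of its own infinite repeat on $P$ while still acting on the complementary corner $\eins-P$. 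The products $v_iS_n$ are again approximate intertwiners of $\Theta$ into $\Phi$ (both for $\phi$ and for the cocycle, using that the $v_i$ are fixed and commute with $\Iu$), now with pairwise orthogonal ranges inside $P$, so $W_n=\sum_i v_iS_nr_i^*\in\CM(B)^\beta$ is an isometry with $W_nW_n^*\leq P$ approximately intertwining $\Theta^\infty$ into $\Phi$; this routes the dominated copy of $\Theta^\infty$ precisely into the infinite-repeat corner. The remaining, and most delicate, step is a shift/Berg-type rotation absorbing the external $\Theta^\infty$-summand of $\Phi\oplus\Theta^\infty$ into that infinite repeat inside $P$ without disturbing $\phi$ on $\eins-P$ — an asymptotic, cocycle-tracking version of the identity $\Phi^\infty\oplus(\text{dominated})\sim\Phi^\infty$ performed on the corner. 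This must be carried out asymptotically in $n$ (the intertwining via $W_n$ is only approximate), and in order to reach strong asymptotic unitary equivalence with $v_0=\eins$ rather than a mere sequence of conjugating fixed-point unitaries, the resulting $\CM(B)^\beta$-unitaries are fed into \autoref{lem:removing-multipliers} to straighten them into a norm-continuous path from the unit with the cocycle estimates uniform on compacta. I expect controlling the cocycles throughout this rotation to be the main obstacle.

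Finally I would assemble the swindle. Using that $\sim$ is a congruence for Cuntz addition, together with $\Theta\sim\Theta^\infty$ from the second step and the crux from the third, one obtains
\[
\Phi\oplus\Theta \ \sim\ \Phi\oplus\Theta^\infty \ \sim\ \Phi,
\]
which is exactly the assertion that $(\phi,\Iu)$ and $(\phi\oplus\theta,\Iu\oplus\Iy)$ are strongly asymptotically unitarily equivalent.
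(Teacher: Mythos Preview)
Your swindle strategy has two genuine gaps.

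First, the relation $\Theta\sim\Theta^\infty$ cannot hold as stated. The infinite repeat $\Theta^\infty=(\theta^\infty,\Iy^\infty)$ is only a cocycle representation into $\CM(B)$, not a proper cocycle morphism: for nonzero $a$ one has $\theta^\infty(a)\notin B$ and $\Iy^\infty$ does not take values in $\CU(\eins+B)$. Since strong asymptotic unitary equivalence via a path in $\CU(\eins+B)$ preserves membership in $B$, you cannot have $v_t\theta(a)v_t^*\to\theta^\infty(a)$ with $v_t\in\CU(\eins+B)$. The self-absorption $\Theta\oplus\Theta=\Theta$ via the commuting $\CO_2$ is fine, but the leap to the infinite repeat breaks the category you are working in. The same objection contaminates $\Phi\oplus\Theta^\infty$: this object is no longer a proper cocycle morphism, so your final chain $\Phi\oplus\Theta\sim\Phi\oplus\Theta^\infty\sim\Phi$ is comparing inequivalent kinds of data.

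Second, and more seriously, the step you yourself flag as ``the heart of the argument'' is not proved. You describe a ``shift/Berg-type rotation'' absorbing $\Theta^\infty$ into the $\CO_\infty$-corner $P$ of $\Phi$ and say you ``expect controlling the cocycles throughout this rotation to be the main obstacle''; but no construction of the rotation is given, and the isometry $W_n=\sum_i v_iS_nr_i^*$ you build intertwines into a proper sub-projection $P<\eins$, which is not where the Cuntz-sum summand lives. Producing a genuine unitary path (not merely a partial isometry) that swallows the extra summand while tracking the cocycles uniformly on compacta is precisely the non-trivial analytic content of the lemma, and your proposal does not carry it out.

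The paper avoids the infinite-repeat detour entirely. After obtaining the approximate intertwining isometries $S_n\in\CM(B)^\beta$ from \autoref{cor:approx-dom}, it observes that the differences $\phi(a)S_n-S_n\theta(a)$ and $S_n\Iy_g-\Iu_g\beta_g(S_n)$ lie in $B$ (properness of both cocycle morphisms), and then invokes \cite[Lemma~3.9, Remark~3.10]{GabeSzabo22}. That external lemma uses the commuting $\CO_2$ and $\CO_\infty$ embeddings together with the $S_n$ to directly manufacture a norm-continuous path $U:[0,\infty)\to\CU(\CM^\beta(B))$ with $U_0\in\CM(B)^\beta$ implementing the asymptotic equivalence $(\phi\oplus\theta,\Iu\oplus\Iy)\asymp(\phi,\Iu)$. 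One then applies \cite[Corollary~4.4]{GabeSzabo22} to $U_tU_0^*$ and \autoref{lem:removing-multipliers} to the remaining fixed-point unitary $U_0$ to upgrade this to strong asymptotic unitary equivalence. So the delicate rotation you allude to is exactly what is packaged inside the cited lemma from the companion paper; reproducing it here would require substantially more than your sketch provides.
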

\begin{proof}
Because of \autoref{cor:approx-dom} we can find isometries $S_n\in\CM^\beta(B)$ with
\[
\lim_{n\to\infty} \|\phi(a)S_n - S_n\theta(a)\|=0,\quad \lim_{n\to\infty}\max_{g\in K} \|S_n\Iy_g-\Iu_g\beta_g(S_n)\|=0
\]
for all $a\in A$ and every compact set $K\subseteq G$.
Since both $(\phi,\Iu)$ and $(\theta,\Iy)$ are proper cocycle morphisms, we have automatically that
\[
\phi(a)S_n-S_n\theta(a) \in B \quad\text{and}\quad S_n\Iy_g-\Iu_g\beta_g(S_n)\in B.
\]
By \cite[Lemma 3.9, Remark 3.10]{GabeSzabo22}, it follows that there exists a norm-continuous path of unitaries $U: [0,\infty)\to\CU(\CM^\beta(B))$ with $U_0\in\CM(B)^\beta$ such that
\[
\lim_{t\to\infty} \|\phi(a)-U_t(\phi\oplus\theta)(a)U_t^*\|=0,\quad \lim_{t\to\infty}\max_{g\in K} \|\Iu_g-U_t(\Iu\oplus\Iy)_g\beta_g(U_t)^*\|=0
\]
for all $a\in A$ and every compact set $K\subseteq G$.
Applying \cite[Corollary 4.4]{GabeSzabo22} to the unitary path $(U_t U_0^*)_t$, one has that $(\phi, \Iu)$ is strongly asymptotically unitarily equivalent to $\ad(U_0) \circ (\phi \oplus \theta, \Iu \oplus \Iy)$, and by \autoref{lem:removing-multipliers} the proof is complete.
\end{proof}

%%%%%%%%%%%%%%%%%%%%%%%%%%%%%%%%%%%%%%%%%%%%%%%%%%%%%%

\section{Isometrically shift-absorbing actions}

\begin{rem} \label{rem:quasifree-actions}
Let $\CH$ be an infinite-dimensional separable Hilbert space.
Recall \cite{Evans80} that the Cuntz algebra $\CO_\infty$ is isomorphic to $\CO_\CH$, the universal unital \cstar-algebra generated by the range of a linear map $\Fs: \CH\to\CO_\CH$ subject to the relation $\Fs(\xi)^* \Fs(\eta)=\langle \xi\mid\eta\rangle\cdot\eins$ for all $\xi,\eta\in\CH$.\footnote{Here we follow the convention that an inner product on a Hilbert space is linear in the second variable instead of the first.
This ensures that in the common language of right Hilbert modules over \cstar-algebras, every Hilbert space is a right $\IC$-Hilbert module in the obvious sense.}
As a consequence, every unitary $U$ on $\CH$ gives rise to a unique automorphism on $\CO_\CH$ such that $\Fs(\xi)$ is sent to $\Fs(U\xi)$ for all $\xi\in\CH$.
The resulting assignment $\CU(\CH)\to\Aut(\CO_\CH)$ is a group homomorphism which is continuous with respect to the strong operator topology on the left and the point-norm topology on the right.
Any group action on $\CO_\infty$ that is conjugate to one factoring through this homomorphism is said to be \emph{quasi-free}.

Let $\CH_F=\bigoplus_{n=0}^\infty \CH^{\otimes n}$ be the Fock space and consider the linear map $\Fs_F: \CH\to\CB(\CH_F)$ given by $\Fs_F(\xi)(x)=\xi\otimes x$ for all $\xi\in\CH$, $x\in\CH^{\otimes n}$ and all $n\geq 0$.
Then $\Fs_F$ gives rise to the Fock representation $\pi: \CO_\CH\to\CB(\CH_F)$, which is easily seen (and well-known) to be irreducible. The vacuum state on $\CO_\CH$ is the vector state given by any fixed unit vector in the one-dimensional subspace $\CH^{\otimes 0}$ of $\CH_F$.
\end{rem}

Although the following is well-known, we provide the full justificiation for the reader's convenience.

\begin{prop} \label{prop:quasifree-outer}
Every  automorphism on $\CO_\CH$ induced by a unitary $U\in\CU(\CH)\setminus\set{\eins}$ is outer, and the vacuum state is invariant.
\end{prop}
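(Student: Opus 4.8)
The plan is to prove the two assertions separately, starting with the easier invariance of the vacuum state. For the vacuum state, I would work in the Fock representation. The induced automorphism $\gamma_U$ sends $\Fs(\xi)$ to $\Fs(U\xi)$, and under the Fock representation $\Fs_F(\xi)$ acts by tensoring on the left with $\xi$. The vacuum vector $\Omega$ spans $\CH^{\otimes 0}$ and is characterised as the unique (up to phase) unit vector annihilated by all the adjoints $\Fs_F(\xi)^*$. Since the vacuum state is $\omega(x) = \langle \Omega \mid \pi(x)\Omega\rangle$, I would compute $\omega(\gamma_U(x))$ on the dense $*$-subalgebra of words in the $\Fs(\xi)$ and $\Fs(\eta)^*$. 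The key observation is that a reduced word has nonzero vacuum expectation only when it reduces to a scalar via the relations $\Fs(\xi)^*\Fs(\eta) = \langle\xi\mid\eta\rangle\eins$, and each such scalar is built from inner products $\langle\xi_i\mid\eta_j\rangle$; applying $\gamma_U$ replaces every $\xi$ by $U\xi$, and since $U$ is unitary these inner products $\langle U\xi_i \mid U\eta_j\rangle = \langle \xi_i\mid\eta_j\rangle$ are unchanged. Hence $\omega\circ\gamma_U = \omega$.

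For outerness, suppose toward a contradiction that $\gamma_U = \ad(w)$ for some unitary $w \in \CO_\CH$, where $U \neq \eins$. The strategy I would pursue is to pass to the Fock representation, where $\pi$ is irreducible, and combine this with the vacuum invariance just established. Since $\omega$ is $\gamma_U$-invariant and $\gamma_U = \ad(w)$, the vector state $\omega$ is also invariant under $\ad(w)$; concretely $\langle \Omega \mid \pi(w)\pi(x)\pi(w)^*\Omega\rangle = \langle\Omega\mid\pi(x)\Omega\rangle$ for all $x$. This says that the unit vector $\pi(w)^*\Omega$ induces the same state as $\Omega$, and by irreducibility (purity of $\omega$) it must be a scalar multiple of $\Omega$: thus $\pi(w)\Omega = \zeta\Omega$ for some $\zeta \in \IT$. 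The plan is then to use the defining relation $\gamma_U(\Fs(\xi)) = \Fs(U\xi)$ together with $\gamma_U = \ad(w)$ to derive a constraint on $U$. Evaluating $\pi(\Fs(U\xi))\Omega = U\xi$ on one side and $\pi(w)\pi(\Fs(\xi))\pi(w)^*\Omega = \zeta^{-1}\pi(w)\pi(\Fs(\xi))\Omega = \zeta^{-1}\pi(w)(\xi)$ on the other, I would aim to show that $\pi(w)$ acts on the one-particle space $\CH^{\otimes 1}$ as a fixed multiple of $U$, and more generally track how $\pi(w)$ interacts with the grading.

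I expect the main obstacle to be converting the single relation $\pi(w)\Omega = \zeta\Omega$ into a genuine contradiction that forces $U = \eins$. The cleanest route I foresee is to show that $w$ must commute with the number operator or respect the Fock grading in a way incompatible with $U \neq \eins$: since $\ad(w)$ fixes $\eins$ and $\gamma_U$ scales each $\Fs(\xi)$ linearly, iterating the relation $\pi(w)(\xi) = \zeta^{-1}U^{-1}$-type identities across $\CH^{\otimes n}$ should pin down $\pi(w)$ as a scalar on each graded piece and ultimately as a scalar multiple of the identity, forcing $U$ to be scalar and hence $\gamma_U = \id$, i.e. $U = \eins$, contrary to hypothesis. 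The delicate point is handling the noncompactness and infinite-dimensionality carefully so that the vacuum-annihilation characterisation and the irreducibility argument combine rigorously rather than just formally; making the step from ``$\pi(w)\Omega$ is a scalar multiple of $\Omega$'' to ``$w$ is a scalar'' airtight is where the real work lies.
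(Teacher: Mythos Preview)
Your argument for invariance of the vacuum state is correct, though the paper does it more directly by noting that the automorphism is implemented on the Fock space by $U_F=\bigoplus_{n\geq 0} U^{\otimes n}$ with $U^{\otimes 0}=\id$ on $\CH^{\otimes 0}$, so the vacuum vector is fixed.

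For outerness, your setup is essentially the same as the paper's: via irreducibility and vacuum invariance you get $\pi(w)\Omega=\zeta\Omega$, and then iterating $w\,\Fs(\xi)w^*=\Fs(U\xi)$ on the Fock space yields, exactly as you outline, $\pi(w)|_{\CH^{\otimes n}}=\zeta\, U^{\otimes n}$. But here is where your plan goes wrong. This computation does \emph{not} pin down $\pi(w)$ as a scalar on each graded piece; it pins it down as $\zeta\, U^{\otimes n}$, which is scalar only if $U$ itself is. In other words, what you have actually shown is $\pi(w)=\zeta\, U_F$, and nothing in your argument forces $U$ to be scalar. The real content of the proof---and the step you are missing---is to show that $U_F\notin\pi(\CO_\CH)$ whenever $U\neq\eins$.

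The paper handles this as follows. Since $U_F$ commutes with each gauge-implementer $(z\eins)_F$, one has $U_F\in\pi(\CO_\CH)^\beta$, the fixed-point algebra of the gauge action. This subalgebra is the closed linear span of $\IC\eins$ together with all products $\Fs(\xi_1)\cdots\Fs(\xi_n)\Fs(\eta_n)^*\cdots\Fs(\eta_1)^*$. Approximating $U_F$ to within $\eps<\tfrac{1}{2}\|U-\eins\|$ by a finite sum $a$ of such terms of length at most $N$, one evaluates both $a$ and $U_F$ on elementary tensors in $\CH^{\otimes N}$ and in $\CH^{\otimes N+1}$. The finite sum $a$ acts on an elementary tensor in $\CH^{\otimes N+1}$ by touching only the first $N$ factors and leaving the last factor $x_{N+1}$ unchanged, whereas $U_F$ applies $U$ to $x_{N+1}$ as well. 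Choosing $x_{N+1}$ to be a unit vector moved by $U$ by more than $2\eps$ yields a contradiction. This finite-degree-versus-one-more-tensor-factor trick is the missing idea; your proposal does not supply it, and the heuristic that the iteration forces $\pi(w)$ to be scalar is simply not what the iteration produces.
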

\begin{proof}
To see this one can consider the Fock representation of $\CO_\CH$ as above.
For notional convenience we also denote $\Fs_F(\xi)=\hat{\xi}$ in this proof.
We see that the unitary $U_F=\bigoplus_{n=0}^\infty U^{\otimes n} \in\CU(\CH_F)$ satisfies $\ad(U_F)\circ \Fs_F(\xi)=\Fs_F(U\xi)$ for all $\xi\in\CH$. By convention, $U^{\otimes 0}$ is the identity map on the one-dimensional subspace $\CH^{\otimes 0}$ and therefore the vacuum state is invariant. 
Note in particular that the gauge action $\beta: \IT\curvearrowright\pi(\CO_\CH)$ is implemented by the unitaries $(z\cdot\eins)_F$ for $z\in\IT$.

Now let us assume that the automorphism $\delta\in\Aut(\pi(\CO_\CH))$ given by $\hat{\xi}\mapsto \widehat{U\xi}$ for all $\xi\in\CH$ is inner, say implemented by $v\in\pi(\CO_\CH)$.
Then $U_Fv^*$ commutes with $\pi(\CO_\CH)$, so by irreducibility of $\pi$, we may assume $v=U_F\in\pi(\CO_\CH)$.
We will lead this to a contradiction.
We first observe for all $z\in\IT$ that $U_F$ commutes with $(z\eins)_F$, and hence
$\beta_z(U_F)=(zUz^{-1})_F=U_F$.
By classical methods of Cuntz from \cite{Cuntz77} one observes that
\[
\pi(\CO_\CH)^\beta = \IC\eins+\overline{\operatorname{span}}\set{ \hat{\xi}_1\hat{\xi}_2\cdots\hat{\xi}_n\hat{\eta}_n^*\cdots\hat{\eta}_1^* \mid n\geq 1,\ \xi_1,\dots,\xi_n,\eta_1,\dots,\eta_n\in\CH}.
\]
Note that for any $\eta\in\CH$ one has $\hat{\eta}^*|_{\CH^{\otimes 0}}=0$ and 
\[
\hat{\eta}^*(x_1\otimes\dots\otimes x_n)=\langle \eta\mid x_1\rangle (x_2\otimes\dots\otimes x_n)
\] 
for all $n\geq 1$ and elementary tensors $x_1\otimes\dots\otimes x_n\in\CH^{\otimes n}$.
By assumption, we may find some natural number $N$, a scalar $\lambda_0\in\IC$ and finitely many vectors 
\[
\set{\xi_{\ell,m,n} \mid n=1,\dots,N,\ m=1,\dots, k_n,\ \ell=1,\dots,n} \subset\CH
\]
and
\[
\set{\eta_{\ell,m,n} \mid n=1,\dots,N,\ m=1,\dots, k_n,\ \ell=1,\dots,n} \subset\CH
\]
such that the element
\[
a=\lambda_0\eins+\sum_{n=1}^N \sum_{m=1}^{k_n} \hat{\xi}_{1,m,n}\hat{\xi}_{2,m,n}\cdots\hat{\xi}_{n,m,n}\hat{\eta}_{n,m,n}^*\cdots\hat{\eta}_{1,m,n}^*
\]
satisfies $\eps:=\|a-U_F\|< \frac{\|U-\eins\|}{2}$.
We may thus conclude for all elementary tensors of unit vectors $x_1\otimes\dots\otimes x_N\in\CH^{\otimes N}$ the approximate equality
\[
\begin{array}{cl}
 \multicolumn{2}{l}{ Ux_1\otimes\dots\otimes Ux_N } \\
 =_\eps & \dst \lambda_0(x_1\otimes\dots\otimes x_N)+ \\
 & \dst + \sum_{m=1}^{k_N} \langle \eta_{1,m,N}\mid x_1 \rangle \cdots \langle \eta_{N,m,N}\mid x_n \rangle \cdot (\xi_{1,m,N}\otimes\cdots\otimes\xi_{N,m,N})
\end{array}
\]
Now let $x_{N+1}\in\CH$ be a specific unit vector such that $\|Ux_{N+1}-x_{N+1}\|>2\eps$.
Then we observe the approximate equalities
\[
\begin{array}{cl}
 \multicolumn{2}{l}{ Ux_1\otimes\dots\otimes Ux_N \otimes Ux_{N+1} } \\
 =_\eps & \dst \lambda_0(x_1\otimes\dots\otimes x_N\otimes x_{N+1})+ \\
 & \dst + \sum_{m=1}^{k_N} \langle \eta_{1,m,N}\mid x_1 \rangle \cdots \langle \eta_{N,m,N} \mid x_N \rangle \cdot (\xi_{1,m,N}\otimes\cdots\otimes\xi_{N,m,N}\otimes x_{N+1}) \\
 =_\eps & Ux_1\otimes\dots\otimes Ux_N\otimes x_{N+1}
\end{array}
\]
Since all the vectors $x_j\in\CH$ are unit vectors, this leads to the inequality $\|Ux_{N+1}-x_{N+1}\|\leq 2\eps$, a contradiction.
We may thus finally conclude that $\delta$ is outer.
\end{proof}

\begin{rem} \label{rem:KK-class-quasifree}
Let $\CH$ be a separable infinite-dimensional Hilbert space. The construction of $\mathcal O_{\CH} \cong \mathcal O_\infty$  is a special case of the construction of Pimsner from \cite{Pimsner97}. With this in mind the quasi-free actions $\delta : G \curvearrowright \CO_{\CH}$ are exactly the actions considered in \cite[Corollay 4.5, Remark 4.10(2)]{Pimsner97}, and therefore the unital inclusion $(\mathbb C, \id_\IC) \to (\CO_\infty, \delta)$ is a $KK^G$-equivalence for every quasi-free action $\delta$.  
\end{rem}

\begin{defi} \label{def:the-model}
Let $G$ be a second-countable, locally compact group.
Let us choose a left-invariant Haar measure $\mu$ for $G$.
We denote $\CH_G=L^2(G,\mu)$ and $\CH_G^\infty=\ell^2(\IN)\hat{\otimes}\CH_G$, both of which are separable Hilbert spaces, and at least the latter is infinite-dimensional.
Then the left-regular representation $\lambda: G\to\CU(\CH_G)$ is given by $\lambda_g(\xi)(h) = \xi(g^{-1}h)$ for all $g,h\in G$.
The infinite repeat is denoted $\lambda^\infty: G\to\CU(\CH_G^\infty)$.
For the rest of the paper, we denote (using the notation from \autoref{rem:quasifree-actions}) by $\gamma: G\curvearrowright\CO_\infty\cong\CO_{\CH_G^{\infty}}$ the quasi-free action determined by $\gamma_g\circ \Fs=\Fs\circ\lambda^\infty_g$ for all $g\in G$.
\end{defi}

\begin{nota} \label{nota:Hilbert-modules}
Let $A$ and $B$ be two \cstar-algebras.
Let $\FZ$ be a Hilbert $A$-$B$-module, i.e., a right Hilbert $B$-module with $B$-valued inner product 
\[
\langle \cdot\mid\cdot\rangle=\langle \cdot\mid\cdot\rangle_B: \FZ\times\FZ\to B
\] 
and a left-action of $A$; cf.\ \cite[Section 13]{BlaKK}.
For any given Hilbert space $\CH$, the external tensor product $\CH\otimes\FZ$ is then a Hilbert $A$-$B$-module after the obvious identifications $A\cong \IC\otimes A$ and $B\cong\IC\otimes B$.
Suppose furthermore that $\alpha: G\curvearrowright A$ and $\beta: G\curvearrowright B$ are two actions and $\delta: G\curvearrowright\FZ$ is a point-norm continuous action by linear isometries turning $(\FZ,\delta)$ into a Hilbert $(A,\alpha)$-$(B,\beta)$-module, i.e., satisfying the formulas
\[
\delta_g(az)=\alpha_g(a)\delta_g(z),\quad \delta_g(zb)=\delta_g(z)\beta_g(b),\quad \langle \delta_g(z_1) \mid \delta_g(z_2)\rangle = \beta_g(\langle z_1\mid z_2\rangle) 
\] 
for all $g\in G$, $a\in A$, $b\in B$, and $z, z_1, z_2\in\FZ$.
If $\sigma: G\to\CU(\CH)$ is an SOT-continuous unitary representation, we may obtain a point-norm continuous action $\sigma\otimes\delta: G\curvearrowright \CH\otimes \FZ$ by linear isometries.
This turns $(\CH\otimes \FZ, \sigma\otimes\delta)$ into a Hilbert $(A,\alpha)$-$(B,\beta)$-module as well.
Of special significance will be the two choices $\CH=\CH_G$ or $\CH=\CH_G^{\infty}$ of Hilbert spaces, where we denote 
\[
L^2(G,\FZ)=\CH_G\otimes \FZ \quad\text{and}\quad L^2_\infty(G,\FZ) = \CH_G^{\infty}\otimes \FZ,
\] 
respectively.
Recall from \cite[Paragraph 1.3]{Kasparov88} that $L^2(G,\FZ)$ contains the compactly supported continuous functions $\CC_{c}(G,\FZ)$ as a dense pre-Hilbert module.
Unless specified otherwise, we will always implicitly equip $\CH_G$ with the left-regular representation of $G$, and $\CH_G^{\infty}$ with its infinite repeat.
In applications, we will usually encounter the case where $A=B=\FZ$, the latter having the obvious $B$-valued inner product.
We then denote, with slight abuse of notation, $\bar{\beta}=\lambda\otimes\beta: G\curvearrowright L^2(G,B)$ as well as $\bar{\beta}=\lambda^\infty\otimes\beta: G\curvearrowright L^2_\infty(G,B)$.
\end{nota}

\begin{prop} \label{prop:induced-bimodule-map}
Let $\beta: G\curvearrowright C$ be an algebraic action on a \cstar-algebra, and let $C_\beta\subseteq C$ be the largest \cstar-subalgebra on which $\beta$ restricts to a point-norm continuous action.
Let $B\subseteq C$ be a non-degenerate $\beta$-invariant \cstar-subalgebra and $A\subseteq\CM(B)$ any $\beta$-invariant \cstar-subalgebra such that both restrictions $\beta|_B$ and $\beta|_A$ are point-norm continuous.
Let $\CH$ be a separable infinite-dimensional Hilbert space with an SOT-continuous unitary representation $\sigma: G\to\CU(\CH)$.
Suppose there is a linear $\sigma$-to-$\beta$-equivariant map
\[
\Fs: \CH\to \CM(C)\cap A'
\]
that satisfies the identity $\Fs(\xi)^*\Fs(\eta)=\langle\xi\mid\eta\rangle\cdot\eins$ for all $\xi,\eta\in\CH$.
Then there exists an equivariant isometric linear left $A$-module and right $B$-module map
\[
\theta: (\CH\otimes \overline{AB}, \sigma\otimes\beta)\to (C_\beta,\beta)
\]
satisfying the identity $\theta(\xi)^*\theta(\eta)=\langle\xi\mid\eta\rangle_{B}$ for all $\xi,\eta\in\CH\otimes \overline{AB}$.
\end{prop}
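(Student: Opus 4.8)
The plan is to define $\theta$ on elementary tensors by the formula $\theta(\xi\otimes x)=\Fs(\xi)x$ for $\xi\in\CH$ and $x\in\overline{AB}$, and then verify the required properties. First I would observe that $\overline{AB}\subseteq B\subseteq C$ is a $\beta$-invariant right Hilbert $B$-module, with inner product $\langle x\mid y\rangle_B=x^*y$ and left $A$-action by multiplication (using $A\subseteq\CM(B)$, so that $A\,\overline{AB}\subseteq\overline{AB}$), and that $\beta$ restricts on it to the point-norm continuous action inherited from $\beta|_B$. Thus $(\CH\otimes_\IC\overline{AB},\sigma\otimes\beta)$ is a genuine equivariant Hilbert $(A,\beta|_A)$-$(B,\beta|_B)$-module in the sense of \autoref{nota:Hilbert-modules}. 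Since $(\xi,x)\mapsto\Fs(\xi)x$ is bilinear and $\IC$ acts on $\overline{AB}$ by scalars, this prescription descends to a well-defined linear map on the algebraic tensor product $\CH\odot\overline{AB}$.

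The key computation is that $\theta$ is isometric. For a finite sum one computes, using the relation $\Fs(\xi)^*\Fs(\eta)=\langle\xi\mid\eta\rangle\cdot\eins$,
\[
\Big\|\sum_i\Fs(\xi_i)x_i\Big\|^2 = \Big\|\sum_{i,j}x_i^*\Fs(\xi_i)^*\Fs(\xi_j)x_j\Big\| = \Big\|\sum_{i,j}\langle\xi_i\mid\xi_j\rangle\,x_i^*x_j\Big\|,
\]
and the right-hand side is precisely the norm of $\sum_i\xi_i\otimes x_i$ in the internal tensor product $\CH\otimes_\IC\overline{AB}$. Hence $\theta$ preserves the $B$-valued inner product on elementary tensors, kills the null space of the module semi-norm, and so extends isometrically to the completion, with image inside the norm-closed set $C$. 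The right $B$-linearity is immediate from $\theta(\xi\otimes x)b=\Fs(\xi)xb=\theta(\xi\otimes xb)$, while the left $A$-linearity is exactly where the hypothesis $\Fs(\CH)\subseteq A'$ enters: for $a\in A$ one has $\theta(\xi\otimes ax)=\Fs(\xi)ax=a\Fs(\xi)x=a\,\theta(\xi\otimes x)$.

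For equivariance I would use that $\Fs$ is $\sigma$-to-$\beta$-equivariant, i.e.\ $\beta_g(\Fs(\xi))=\Fs(\sigma_g\xi)$ in $\CM(C)$, to get
\[
\beta_g(\theta(\xi\otimes x))=\beta_g(\Fs(\xi))\beta_g(x)=\Fs(\sigma_g\xi)\beta_g(x)=\theta\big((\sigma_g\otimes\beta_g)(\xi\otimes x)\big),
\]
and by linearity and boundedness of $\theta$ this identity persists for all $\zeta\in\CH\otimes_\IC\overline{AB}$. The step I expect to require the most care is showing that $\theta$ actually lands in $C_\beta$ rather than merely in $C$, since $\beta$ is only assumed to be an algebraic (possibly discontinuous) action on $C$ and $\Fs$ takes values in $\CM(C)$, where continuity may fail. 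This is resolved by the equivariance identity just established: the orbit map $g\mapsto(\sigma_g\otimes\beta_g)(\zeta)$ is norm-continuous because $\sigma\otimes\beta$ is a point-norm continuous action (by \autoref{nota:Hilbert-modules}), and composing with the bounded map $\theta$ shows that $g\mapsto\beta_g(\theta(\zeta))=\theta\big((\sigma_g\otimes\beta_g)\zeta\big)$ is norm-continuous. Since $C_\beta$ is by definition the largest \cstar-subalgebra of $C$ consisting of elements with norm-continuous orbit map, we conclude $\theta(\zeta)\in C_\beta$, which simultaneously confirms that $\theta$ is the asserted equivariant isometric $A$-$B$-bimodule map into $(C_\beta,\beta)$.
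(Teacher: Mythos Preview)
Your proposal is correct and follows essentially the same approach as the paper: define $\theta$ on elementary tensors by $\xi\otimes x\mapsto\Fs(\xi)x$, verify the $B$-valued inner product identity to get isometry and hence extension to the completion, check the $A$-$B$-bimodule property via $\Fs(\CH)\subseteq A'$, and deduce that the range lies in $C_\beta$ from equivariance together with point-norm continuity of $\sigma\otimes\beta$. The only cosmetic difference is that the paper verifies the inner product formula on sums of tensors of the form $\xi\otimes ab$ with $a\in A$, $b\in B$ (and then passes to the closure), whereas you work directly with $x\in\overline{AB}$; both are fine.
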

\begin{proof}
We consider the linear map on the algebraic tensor product
\[
\theta: \CH\odot\overline{AB}\to C,\quad \xi\otimes b \mapsto \Fs(\xi)b.
\]
Since the range of $\Fs$ is in the relative commutant of $A$, we see that $\theta$ is indeed a left $A$-module and right $B$-module map.
It is obvious that $\beta_g\circ\theta=\theta\circ(\sigma\otimes\beta)_g$ for all $g\in G$, hence the range of the map above belongs to $C_\beta$.
If we can show that $\theta$ satisfies the required inner product formula on $\CH\odot AB$, then $\theta$ is automatically isometric with respect to the norm whose completion yields $\CH\otimes \overline{AB}$, in which case its unique continuous extension will be the required map.
Indeed, if we are given elements $a_1^{(i)},\dots,a_n^{(i)}\in A$, $b_1^{(i)},\dots,b_n^{(i)}\in B$ for $i=1,2$ and $\xi_1,\dots,\xi_n,\eta_1,\dots,\eta_n\in\CH$, then we can directly compute
\[
\begin{array}{cl}
\multicolumn{2}{l}{ \dst\theta\Big( \sum_{j=1}^n \xi_j\otimes a_j^{(1)}b_j^{(1)} \Big)^* \theta\Big( \sum_{k=1}^n \eta_k\otimes a_k^{(2)}b_k^{(2)} \Big) }\\
 =& \dst\sum_{j,k=1}^n b_j^{(1)*} a_j^{(1)*} \Fs(\xi_j)^* \Fs(\eta_k) a_k^{(2)} b_k^{(2)} \\
 =& \dst\sum_{j,k=1}^n b_j^{(1)*} a_j^{(1)*} a_k^{(2)} b_k^{(2)}\cdot \langle \xi_j\mid\eta_k\rangle \\
 =& \dst\Big\langle \sum_{j=1}^n \xi_j\otimes a_j^{(1)} b_j^{(1)} \ \Big| \ \sum_{k=1}^n \eta_k\otimes a_k^{(2)}b_k^{(2)} \Big\rangle_{B}.
\end{array}
\]
\end{proof}

The following dynamical condition is one of the key assumptions that enable classification via the techniques presented in this article.

\begin{defi} \label{defi:the-condition}
Let $\beta: G\curvearrowright B$ be an action on a separable \cstar-algebra.
We say $\beta$ is \emph{isometrically shift-absorbing} if there is a linear equivariant map
\[
\Fs: (\CH_G,\lambda)\to \big( F_{\infty,\beta}(B), \tilde{\beta}_\infty \big)
\]
satisfying the identity $\Fs(\xi)^*\Fs(\eta)=\langle \xi\mid\eta\rangle\cdot\eins$ for all $\xi,\eta\in\CH_G$.
\end{defi}

\begin{prop} \label{prop:the-condition}
Let $G$ be a second-countable, locally compact group with more than one element.\footnote{We assume this because the statement would need to be awkwardly adjusted otherwise, due to the fact that being isometrically shift-absorbing is a vacuous condition if $G=\set{1}$.}
Let $\beta: G\curvearrowright B$ be an action on a separable \cstar-algebra, and let $\gamma: G\curvearrowright\CO_\infty$ be the action from \autoref{def:the-model}.
The following are equivalent:
\begin{enumerate}[label=\textup{(\roman*)},leftmargin=*]
\item $\beta$ is isometrically shift-absorbing. \label{prop:the-condition:1}
\item There exists a unital equivariant $*$-homomorphism from $(\CO_\infty,\gamma)$ to $\big( F_{\infty,\beta}(B), \tilde{\beta}_\infty \big)$. \label{prop:the-condition:2}
\item There exists an equivariant linear $B$-bimodule map 
\[
\theta: ( L^2_\infty(G,B), \bar{\beta}) \to ( B_{\infty,\beta} , \beta_\infty)
\] 
satisfying $\theta(\xi)^*\theta(\eta)=\langle\xi\mid\eta\rangle_B$ for all $\xi,\eta\in L^2_\infty(G,B)$. \label{prop:the-condition:3}
\item There exists an equivariant linear $B$-bimodule map 
\[
\theta: ( L^2(G,B), \bar{\beta}) \to ( B_{\infty,\beta} , \beta_\infty)
\] 
satisfying $\theta(\xi)^*\theta(\eta)=\langle\xi\mid\eta\rangle_B$ for all $\xi,\eta\in L^2(G,B)$. \label{prop:the-condition:4}
\end{enumerate}.
\end{prop}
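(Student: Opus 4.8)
The plan is to prove the four conditions equivalent by a cycle of implications, and then to derive the ``moreover'' clause by using amenability to produce a $\tilde\beta_\infty$-\emph{fixed} copy of $\CO_\infty$. Throughout I would identify, via the universal property of the quasi-free Cuntz algebras in \autoref{rem:quasifree-actions}, condition \ref{prop:the-condition:1} with the existence of a unital equivariant $*$-homomorphism $(\CO_{\CH_G},\gamma_\lambda)\to(F_{\infty,\beta}(B),\tilde\beta_\infty)$ (quasi-free action from $\lambda$), and condition \ref{prop:the-condition:2} with the same statement for $(\CO_{\CH_G^\infty},\gamma)=(\CO_\infty,\gamma)$ (quasi-free action from $\lambda^\infty=\eins\otimes\lambda$ on $\ell^2(\IN)\otimes\CH_G$). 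Since $\lambda$ is equivariantly contained in $\lambda^\infty$ as the first coordinate, the isometric intertwiner $\CH_G\hookrightarrow\CH_G^\infty$ induces a unital equivariant embedding $\CO_{\CH_G}\hookrightarrow\CO_{\CH_G^\infty}$, and precomposition gives \ref{prop:the-condition:2}$\Rightarrow$\ref{prop:the-condition:1}; the same coordinate embedding $L^2(G,B)\hookrightarrow L^2_\infty(G,B)$ gives \ref{prop:the-condition:3}$\Rightarrow$\ref{prop:the-condition:4}.

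Next I would pass from the central sequence algebra to the sequence algebra using \autoref{prop:induced-bimodule-map}. Given a Cuntz family in $F_{\infty,\beta}(B)$ as in \ref{prop:the-condition:1} or \ref{prop:the-condition:2}, I lift it to a genuine equivariant Cuntz family $\Fs\colon\CH\to\CM(B_\infty)\cap B'$ with $\CH=\CH_G$ (resp.\ $\CH_G^\infty$), which is possible because the defining relations hold exactly on $B$ and can be corrected to hold in $\CM(B_\infty)$. Applying \autoref{prop:induced-bimodule-map} with $C=B_\infty$ carrying the algebraic $\beta_\infty$-action, $A=B$, and $\overline{AB}=B$ produces an equivariant isometric $B$-bimodule map $\theta\colon(\CH\otimes_\IC B,\sigma\otimes\beta)\to(B_{\infty,\beta},\beta_\infty)$; since $\CH_G\otimes_\IC B=L^2(G,B)$ and $\CH_G^\infty\otimes_\IC B=L^2_\infty(G,B)$, this yields \ref{prop:the-condition:1}$\Rightarrow$\ref{prop:the-condition:4} and \ref{prop:the-condition:2}$\Rightarrow$\ref{prop:the-condition:3}. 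For the reverse passage I would invoke \autoref{lem:Kasparov} to fix a quasicentral, asymptotically $\beta$-invariant approximate unit $(e_k)$ of $B$. Given $\theta$ as in \ref{prop:the-condition:4} (resp.\ \ref{prop:the-condition:3}), the assignment $\xi\mapsto\theta(\xi\otimes e_k)$ is, as $k\to\infty$, asymptotically central (as $\theta$ is a bimodule map and $be_k-e_kb\to0$), asymptotically isometric (since $\theta(\xi\otimes e_k)^*\theta(\eta\otimes e_k)=\langle\xi\mid\eta\rangle e_k^2$ acts as $\langle\xi\mid\eta\rangle\cdot\eins$), and asymptotically $\tilde\beta_\infty$-equivariant (as $e_k$ is asymptotically invariant); a diagonal sequence then gives an honest equivariant Cuntz family into $F_{\infty,\beta}(B)$, hence \ref{prop:the-condition:4}$\Rightarrow$\ref{prop:the-condition:1} and, by universality of $\CO_{\CH_G^\infty}$, \ref{prop:the-condition:3}$\Rightarrow$\ref{prop:the-condition:2}.

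At this stage one has $\ref{prop:the-condition:1}\Leftrightarrow\ref{prop:the-condition:4}$, $\ref{prop:the-condition:2}\Leftrightarrow\ref{prop:the-condition:3}$, and \ref{prop:the-condition:2}$\Rightarrow$\ref{prop:the-condition:1}, so the decisive remaining point is the upgrade \ref{prop:the-condition:1}$\Rightarrow$\ref{prop:the-condition:2}, i.e.\ enlarging a single equivariant copy of $\CO_{\CH_G}$ to one of $\CO_{\CH_G^\infty}$. Because $\lambda^\infty=\eins\otimes\lambda$ is trivial on the multiplicity space $\ell^2(\IN)$, realizing the extra coordinates forces one to find a $\tilde\beta_\infty$-\emph{fixed} sequence of isometries with mutually orthogonal ranges in $F_{\infty,\beta}(B)$; I expect manufacturing these to be the main obstacle, since the isometries coming directly from $\Fs$ are only $\lambda$-covariant rather than fixed. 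I would resolve this through the reindexing self-absorption of the equivariant central sequence algebra (after replacing $\beta$ by a strongly stable cocycle conjugate via \autoref{prop:cc-stability}): combining countably many mutually commuting equivariant copies of $\Fs$ obtained by reindexing into $\tilde\beta_\infty$-fixed partial isometries of the form $\sum_n\Fs^{(j+1)}(\xi_n)\Fs^{(j)}(\xi_n)^*$, and exploiting the proper infiniteness of $F_{\infty,\beta}(B)$ to complete them to full fixed isometries.

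Finally, for the ``moreover'' clause I would feed the amenability net into \ref{prop:the-condition:3}. Let $\zeta_i\in\CC_c(G,B)$ witness amenability and let $(\varepsilon_n)$ be an orthonormal basis of $\ell^2(\IN)$, so $\varepsilon_n\otimes\zeta_i\in L^2_\infty(G,B)$. Applying the bimodule map $\theta$ from \ref{prop:the-condition:3} and passing to a diagonal sequence, the elements $V_n=\lim_i\theta(\varepsilon_n\otimes\zeta_i)$ satisfy $V_n^*V_m=\lim_i\langle\varepsilon_n\otimes\zeta_i\mid\varepsilon_m\otimes\zeta_i\rangle=\delta_{n,m}\cdot\eins$, commute with $B$ (because $\|b\zeta_i-\zeta_i b\|_2\to0$), and are $\tilde\beta_\infty$-fixed (because $\max_{g\in K}\|(\zeta_i-\bar\beta_g\zeta_i)b\|_2\to0$ while $\lambda^\infty$ fixes each $\varepsilon_n$). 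Thus the $V_n$ generate a unital copy of $\CO_\infty$ inside $F_{\infty,\beta}(B)^{\tilde\beta_\infty}$, that is, a unital equivariant $*$-homomorphism $(\CO_\infty,\id_{\CO_\infty})\to(F_{\infty,\beta}(B),\tilde\beta_\infty)$ for the \emph{trivial} action on $\CO_\infty$. Since $\id_{\CO_\infty}$ is a strongly self-absorbing action that is equivariantly Jiang--Su stable (as $\id_{\CO_\infty}\cc\id_{\CO_\infty}\otimes\id_{\CO_\infty}$), the equivariant McDuff-type absorption theorem for strongly self-absorbing actions—which characterizes $\beta\cc\beta\otimes\delta$ by a unital equivariant $*$-homomorphism $(\CD,\delta)\to(F_{\infty,\beta}(B),\tilde\beta_\infty)$; cf.\ \autoref{thm:strong-ssa-abs}—then yields $\beta\cc\beta\otimes\id_{\CO_\infty}$, completing the proof.
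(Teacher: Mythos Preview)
Your cycle \ref{prop:the-condition:2}$\Rightarrow$\ref{prop:the-condition:3}$\Rightarrow$\ref{prop:the-condition:4}$\Rightarrow$\ref{prop:the-condition:1} is fine and matches the paper (the lift of the Cuntz family from $F_{\infty,\beta}(B)$ to $\CM(B_\infty)\cap B'$ is standard via the identification $F_\infty(B)\cong\CM(\overline{B\cdot B_\infty\cdot B})\cap B'$, and the approximate-unit diagonal argument for \ref{prop:the-condition:4}$\Rightarrow$\ref{prop:the-condition:1} is exactly what the paper does).

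The real problem is your upgrade \ref{prop:the-condition:1}$\Rightarrow$\ref{prop:the-condition:2}. The element $\sum_n\Fs^{(j+1)}(\xi_n)\Fs^{(j)}(\xi_n)^*$ is only a partial isometry with source projection $\sum_n\Fs^{(j)}(\xi_n)\Fs^{(j)}(\xi_n)^*\neq\eins$, and when $G$ is infinite the sum does not even converge in norm (nor is there a strict topology available on $F_{\infty,\beta}(B)$ to rescue this). Completing these to $\tilde\beta_\infty$-fixed isometries via ``proper infiniteness'' is circular: the only isometries you have at this point are $\lambda$-covariant, not fixed, so you cannot produce a fixed isometry in the orthogonal complement without already having solved the problem. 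The paper bypasses this entirely with a one-line trick: the product map $(\Fs\times\Fs)(\xi_1\otimes\xi_2)=\Fs(\xi_1)\Fs(\xi_2)$ is equivariant for $\lambda\otimes\lambda$ and satisfies the Cuntz relation, and Fell's absorption principle gives $(\CH_G\otimes\CH_G,\lambda\otimes\lambda)\cong(\CH_G^\infty,\lambda^\infty)$ whenever $\CH_G$ is infinite-dimensional. The finite-$G$ case is handled separately (amenability is automatic, so one first runs the ``moreover'' argument to get a fixed $\CO_\infty$, then takes the product $\hat\Fs\times\Fs$).

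Your ``moreover'' argument via $\varepsilon_n\otimes\zeta_i\in L^2_\infty(G,B)$ is actually \emph{simpler} than the paper's, which only uses the map from \ref{prop:the-condition:4} and therefore has to manufacture multiplicity by hand via approximately central sequences $t_n^{(1)},t_n^{(2)}$ lifted from two orthogonal isometries in $F_\infty(B)$. Your route exploits the built-in $\ell^2(\IN)$ factor in \ref{prop:the-condition:3} directly and is correct, with the caveat that amenability only gives $\|(\zeta_i-\bar\beta_g\zeta_i)b\|_2\to 0$ after multiplication by $b\in B$; this is precisely what is needed for $\tilde\beta_\infty$-fixedness modulo $B^\perp$, but you should say so explicitly rather than asserting that the $V_n$ are fixed outright.
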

\begin{proof}
We note right away that due to the definition of the model action $\gamma$, it is tautological that condition \ref{prop:the-condition:2} is equivalent to the existence of a linear equivariant map
\begin{equation}\label{eq:sHGinfty}
\Fs: (\CH_G^\infty,\lambda^\infty)\to \big( F_{\infty,\beta}(B), \tilde{\beta}_\infty \big)
\end{equation}
satisfying the identity $\Fs(\xi)^*\Fs(\eta)=\langle \xi\mid\eta\rangle\cdot\eins$ for all $\xi,\eta\in\CH_G^\infty$.
We have a canonical equivariant isomorphism\footnote{This is \cite[Proposition 1.5(ii)]{BarlakSzabo16}, which originally goes back to \cite[Proposition 1.9(4)+(5)]{Kirchberg04}. Since the isomorphism is natural, it is automatically equivariant with respect to any of the maps induced from automorphisms of $B$.} 
\[
F_{\infty}(B)\cong\CM(\overline{B\cdot B_{\infty}\cdot B})\cap B',
\] 
which (together with the last remark in \autoref{def:sequence-algebras}) gives the implications \ref{prop:the-condition:2}$\implies$\ref{prop:the-condition:3} in light of \autoref{prop:induced-bimodule-map}.
The implication \ref{prop:the-condition:3}$\implies$\ref{prop:the-condition:4} is obvious because $( L^2(G,B), \bar{\beta})$ embeds into $(L^2_\infty(G,B), \bar{\beta})$.

Now let us show \ref{prop:the-condition:4}$\implies$\ref{prop:the-condition:1}.
Suppose that a map $\theta$ is given as in \ref{prop:the-condition:4}.
Let $h_n\in B$ be an increasing approximate unit satisfying $\max_{g\in K} \|\beta_g(h_n)-h_n\|\to 0$ for every compact set $K\subseteq G$.
Then we can consider the contractive linear maps $s_n : \CH_G \to B_{\infty,\beta}$ given by $s_n(\xi) = \theta(\xi \otimes h_n)$ for $\xi\in \CH_G$.
Then given any $b\in B$ and $\xi,\eta\in\CH_G$, we can see that
\[
s_n(\xi)^*s_n(\eta)b = \langle \xi\mid\eta\rangle\cdot h_n^2 b \to \langle \xi\mid\eta\rangle\cdot b
\]
and
\[
s_n(\xi)b-bs_n(\xi) = \theta(\xi\otimes (h_nb-bh_n)) \to 0.
\]
If $K\subseteq G$ is a compact set and $\xi\in\CH_G^\infty$ is any unit vector, then
\[
\max_{g\in K} \|\beta_{g,\infty}(s_n(\xi))-s_n(\lambda_g(\xi))\| = \max_{g\in K} \|\theta(\lambda_g(\xi) \otimes (\beta_g(h_n)-h_n))\| \to 0.
\]
By a standard reindexation trick, we may therefore come up with a contractive linear map $s: \CH_G \to B_{\infty}\cap B'$ satisfying the relations
\[
s(\xi)^* s(\eta) b = \langle\xi\mid\eta\rangle b,\quad \beta_{\infty,g}\circ s=s\circ\lambda_g
\]
for all $b\in B$ and $g\in G$. As $s$ is equivariant and contractive, its image is contained in $B_{\infty,\beta} \cap B'$.
The induced linear map $\Fs: \CH_G \to F_{\infty,\beta}(B)$ given by $\Fs(\xi)=s(\xi)+(B_{\infty,\beta}\cap B^\perp)$ then witnesses isometric shift absorption.

We finally show that \ref{prop:the-condition:1} implies \ref{prop:the-condition:2}.
Let $\Fs$ be a map as in the definition of isometric shift-absorption.
It is necessary to argue a bit differently depending on whether $G$ is finite or not.
Assume for now that $G$ has infinitely many elements.
Let $\Fs \times \Fs : \CH_G \odot \CH_G \to F_{\infty, \beta}(B)$ be the product map given on elementary tensors by $(\Fs \times \Fs)(\xi_1 \otimes \xi_2) = \Fs(\xi_1) \Fs(\xi_2)$.
A straightforward computation gives that $(\Fs \times \Fs)(\xi)^* (\Fs \times \Fs)(\eta) = \langle \xi \mid \eta \rangle \cdot \eins$ for $\xi, \eta \in \CH_G \odot \CH_G$, and that $\beta_g \circ (\Fs \times \Fs) = (\Fs \times \Fs) \circ (\lambda_g \times \lambda_g)$ for $g\in G$.
Thus $\Fs \times \Fs$ extends to an equivariant linear map $(\CH_G \hat \otimes \CH_G, \lambda \times \lambda) \to \big( F_{\infty,\beta}(B), \tilde{\beta}_\infty \big)$ with the inner product condition.
By Fell's absorption principle $(\CH_G^\infty, \lambda^\infty) \cong (\CH_G \otimes \CH_G, \lambda \times \lambda)$ since $\CH_G$ is infinite-dimensional.
Hence a map as in \eqref{eq:sHGinfty} exists whenever $G$ has infinitely many elements, thus witnessing \ref{prop:the-condition:2}.

Still assuming \ref{prop:the-condition:1}, suppose now that $G$ is a non-trivial finite group. Fix $h\in G\setminus\{1\}$ and let
\[
s_1 := \tfrac{1}{|G|^{1/2}} \sum_{g\in G}\Fs(\delta_g) \Fs(\delta_g), \quad s_2 := \tfrac{1}{|G|^{1/2}} \sum_{g\in G} \Fs(\delta_g) \Fs (\delta_{gh}) \in F_{\infty, \beta}(B).
\]
It is straight-forward to check that $s_1,s_2$ are $\tilde \beta_\infty$-invariant isometries for which $s_1^\ast s_2 = 0$. Thus the fixed-point algebra $F_{\infty, \beta}(B)^{\tilde \beta_\infty}$ is properly infinite. Equivalently, there is an equivariant unital embedding $(\CO_\infty, \id_{\CO_\infty}) \to \big( F_{\infty,\beta}(B), \tilde{\beta}_\infty \big)$.
Hence there is an equivariant linear map $\hat{\Fs} : (\ell^2(\mathbb N), \eins) \to \big( F_{\infty,\beta}(B), \tilde{\beta}_\infty \big)$ such that $\hat{\Fs}(\xi)^* \hat{\Fs}(\eta) = \langle \xi \mid \eta\rangle \cdot \eins$ for $\xi, \eta\in \ell^2(\mathbb N)$.
Arguing as for infinite groups, the product map $\hat{\Fs} \times \Fs$ realizes a map as in \eqref{eq:sHGinfty} and thus \ref{prop:the-condition:2} is satisfied.
\end{proof}

\begin{prop} \label{prop:isa-implies-absorption}
Let $G$ be a second-countable, locally compact group with more than one element.
Let $\beta: G\curvearrowright B$ be an action on a separable \cstar-algebra.
If $\beta$ is isometrically shift-absorbing and amenable, then $\beta$ is equivariantly $\CO_\infty$-absorbing, i.e., $\beta \cc \beta \otimes \id_{\CO_\infty}$.
\end{prop}
\begin{proof}
Since $G$ is not the trivial group, the Hilbert space $\CH_G$ has at least two orthogonal unit vectors.
If $\Fs: \CH_G\to F_{\infty,\beta}(B)$ is a linear map witnessing that $\beta$ is isometrically shift-absorbing, then the images of two such unit vectors are two isometries in $F_{\infty,\beta}(B)\subseteq F_\infty(B)$ with orthogonal ranges.
Since $B$ is separable, this means that there exist two approximately central sequences of contractions $t^{(1)}_n, t_n^{(2)}\in B$ such that $t_n^{(i)*}t_n^{(j)}$ strictly converges to $\delta_{ij}\cdot\eins$ for $i,j=1,2$.
We let $\theta$ be a map as in condition \autoref{prop:the-condition}\ref{prop:the-condition:4}.
Let $\eps>0$ and let $K\subseteq G$, $\CF\subset B_{\leq 1}$ be two compact sets.
Let $\mu$ be a left-invariant Haar measure on $G$.
Since $\beta$ is amenable, there exists a function $\zeta\in\CC_c(G,B)$ with $\|\zeta\|_2\leq 1$ and
\[
\max_{b\in\CF} \|b\zeta-\zeta b\|_2+\|(\eins-\langle\zeta\mid\zeta\rangle)b\|\leq\eps,\quad \max_{g\in K} \|\zeta-\bar{\beta}_g(\zeta)\|_2\leq\eps.
\]
Let us denote $R=\overline{\supp(\zeta)}\subseteq G$.
Define elements $\xi_n^{(1)},\xi_n^{(2)}\in \CC_c(G,B)$ via
\[
\xi_n^{(i)}(h) = \beta_h(t^{(i)}_n) \zeta(h) ,\quad h\in G,\ i=1,2.
\]
Here we implicitly exploit the fact that $\CC_c(G,B)$ is a $\CC_b(G,B)$-bimodule in an obvious way, such that one has $\|f\eta\|_2^2\leq\|f\|^2\|\eta\|_2^2$ for all $f\in\CC_b(G,B)$ and $\eta\in\CC_c(G,B)$.
Denote by $\Io_\beta: B\to\CC_b(G,B)$ the $*$-homomorphism given via $\Io_\beta(b)(g)=\beta_g(b)$ for all $b\in B$ and $g\in G$.
Using this perspective we can view $\xi_n^{(i)}=\Io_\beta(t^{(i)}_n)\zeta$.

We note that for $h\in G$ and $g\in K$ one has
\[
\bar{\beta}_g(\xi_n^{(i)})(h) = \beta_g(\xi_n^{(i)}(g^{-1} h)) = \beta_g(\beta_{g^{-1} h}(t_n^{(i)})\zeta(g^{-1} h)) = \beta_h(t_n^{(i)}) \cdot \bar {\beta}_g(\zeta)(h)
\]
and therefore
\[
\bar{\beta}_g(\xi_n^{(i)})(h)-\xi_n^{(i)}(h) = \beta_h(t^{(i)}_n)\cdot (\bar{\beta}_g(\zeta)(h)-\zeta(h)).
\]
Thus we observe for every $g\in K$ that
\[
\begin{array}{ccl}
\|\beta_{g,\infty}(\theta(\xi_n^{(i)}))-\theta(\xi_n^{(i)})\| &=& \|\theta\big( \bar{\beta}_g(\xi^{(i)}_n)-\xi^{(i)}_n \big)\| \\
&=& \|\bar{\beta}_g(\xi^{(i)}_n)-\xi^{(i)}_n \|_2 \\
&\leq& \|\bar{\beta}_g(\zeta)-\zeta\|_2 \ \leq \ \eps.
\end{array}
\]
Furthermore we observe for $i,j=1,2$ and $b\in \CF$ that
\[
\def\arraystretch{1.5}
\begin{array}{cl}
\multicolumn{2}{l}{ \| \theta(\xi_n^{(i)})b-b\theta(\xi_n^{(i)})\| } \\
=& \|\theta(\xi_n^{(i)}b-b\xi_n^{(i)})\| \\
=&  \|\xi_n^{(i)}b-b\xi_n^{(i)}\|_2\\
=& \|\Io_\beta(t^{(i)}_n)\zeta b-b\Io_\beta(t^{(i)}_n)\zeta\|_2 \\
\leq & \dst\max_{h\in R} \|[b,\beta_h(t^{(i)}_n)]\| + \|\Io_\beta(t^{(i)}_n)\zeta b-\Io_\beta(t^{(i)}_n)b\zeta\|_2 \\
\leq & \dst\max_{h\in R} \|[b,\beta_h(t^{(i)}_n)]\| + \|\zeta b-b\zeta\|_2 \\
\leq & \dst\max_{h\in R} \|[\beta_{h^{-1}}(b),t^{(i)}_n]\| + \eps \ \to \ \eps,
\end{array}
\]
and
\[
\def\arraystretch{1.5}
\begin{array}{cl}
\multicolumn{2}{l}{ \|\theta(\xi_n^{(i)})^*\theta(\xi_n^{(j)})b-\delta_{ij}b\| } \\
=& \|\langle \xi_n^{(i)}\mid\xi_n^{(j)}\rangle b-\delta_{ij}b\| \\
=& \dst \max_{h\in R} \| \big( \beta_h(t^{(i)}_n t^{(j)}_n)-\delta_{ij} \big) \zeta(h) \| + \delta_{ij}\cdot \| (\eins-\langle\zeta\mid\zeta\rangle)b \| \\
\leq&  \dst \max_{h\in R} \| \big( \beta_h(t^{(i)}_n t^{(j)}_n)-\delta_{ij} \big) \zeta(h) \| + \eps \ \to \ \eps.
\end{array}
\]
Clearly we have $\|\theta(\xi_n^{(i)})\|\leq 1$ for all $n\geq 1$ and $i=1,2$.
Since the triple $(\eps,K,\CF)$ was arbitrary, we can apply a standard diagonal sequence argument to obtain two contractions $w_1, w_2\in (B_{\infty,\beta}\cap B')^{\beta_\infty}$ such that $w_i^*w_jb=\delta_{ij}b$ for all $b\in B$ and $i,j=1,2$.
The resulting isometries $v_i=w_i+(B_{\infty,\beta}\cap B^\perp)$ are then in $F_{\infty,\beta}(B)^{\tilde{\beta}_\infty}$ and have orthogonal ranges.
In particular we may obtain a unital inclusion $\CO_\infty\subset F_{\infty,\beta}(B)^{\tilde{\beta}_\infty}$ (see, e.g., \cite[Proposition 1.1.2]{Rordam}), which yields $\beta\cc\beta\otimes\id_{\CO_\infty}$ by \cite[Corollary 3.8]{Szabo18ssa}.
\end{proof}

\begin{rem}\label{multiplierbimodule}
Note that the $B$-bimodule maps $\theta : L^2_{(\infty)}(G, B) \to B_{\infty, \beta}$ (the notation suggests that inserting the symbol $\infty$ is optional) from \autoref{prop:the-condition} are automatically $\CM(B)$-bimodule maps. One can verify this directly using approximate units, but here is an alternative short argument using Cohen's factorization property (see for instance \cite[Theorem 4.6.4]{BrownOzawa}). In fact, the factorization property implies that any element $\xi\in L^2_{(\infty)}(G,B)$ can be written as $b_1 \xi_0 b_2$ for $b_1,b_2\in B$ and $\xi_0 \in L^2_{(\infty)}(G,B)$. Hence for every $x\in \CM(B)$ one has
\[
x\theta(\xi) = x \theta(b_1 \xi_0 b_2) = (xb_1) \theta(\xi_0 b_2) = \theta((xb_1) \xi_0 b_2) = \theta(x\xi),
\] 
and similarly $\theta(\xi)x = \theta(\xi x)$. 
\end{rem}

Next we include an argument proving a folklore result, which we will use to see that isometric shift-absorption has no $K$-theoretical obstruction for group actions.

\begin{lemma}\label{lem:inductivelimit}
Let $\alpha_n : G \curvearrowright A_n$ be actions on separable \cstar-algebras for $n\in \mathbb N$. 
Suppose $\phi_n : (A_n, \alpha_n) \to (A_{n+1}, \alpha_{n+1})$ are equivariant $*$-homomorphisms and $\psi_n : (A_{n+1}, \alpha_{n+1}) \to (A_n, \alpha_n)$ are equivariant completely positive contractive maps such that $\psi_n \circ \phi_n = \id_{A_n}$.
If each $\phi_n$ is a $KK^G$-equivalence, then the canonical equivariant $*$-homomorphism $(A_1, \alpha_1) \to \varinjlim ((A_n, \alpha_n), \phi_n)$ is a $KK^G$-equivalence.
\end{lemma}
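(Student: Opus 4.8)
The plan is to prove that the canonical equivariant inclusion $\iota_1 : (A_1,\alpha_1) \to (A_\infty,\alpha_\infty)$, where $A_\infty = \varinjlim((A_n,\alpha_n),\phi_n)$ with structure maps $\iota_n : A_n \to A_\infty$, induces an \emph{isomorphism}
\[
\iota_1^* : KK^G(A_\infty, D) \xrightarrow{\ \cong\ } KK^G(A_1, D), \qquad w \mapsto KK^G(\iota_1)\otimes w,
\]
for every separable $G$-\cstar-algebra $D$, and then to deduce invertibility of $KK^G(\iota_1)$ by a Yoneda-type argument. Concretely, taking $D=A_1$ and using surjectivity yields a class $z\in KK^G(\alpha_\infty,\alpha_1)$ with $KK^G(\iota_1)\otimes z = \id_{A_1}$. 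Taking $D=A_\infty$ and using injectivity, the identity $\iota_1^*(z\otimes KK^G(\iota_1)) = KK^G(\iota_1)\otimes z\otimes KK^G(\iota_1) = KK^G(\iota_1) = \iota_1^*(\id_{A_\infty})$ forces $z\otimes KK^G(\iota_1) = \id_{A_\infty}$. Thus $z$ is a two-sided inverse and $\iota_1$ is a $KK^G$-equivalence. Note this uses only the degree-zero groups $KK^G(-,D)$ for all $D$.

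The main step is therefore to establish that $\iota_1^*$ is an isomorphism. For this I would invoke the equivariant Milnor $\varprojlim^1$ exact sequence
\[
0 \to {\varprojlim_n}^{1}\, KK^G_{*+1}(A_n, D) \to KK^G_*(A_\infty, D) \to \varprojlim_n KK^G_*(A_n, D) \to 0,
\]
in which the inverse system has structure maps $\phi_n^*$ and the surjection is $(\iota_n^*)_n$. Since each $\phi_n$ is a $KK^G$-equivalence, every $\phi_n^*$ is an isomorphism; an inverse system all of whose connecting maps are invertible is Mittag--Leffler with vanishing $\varprojlim^1$, and its $\varprojlim$ is the first term $KK^G_*(A_1,D)$, the limit projection to the first coordinate being precisely $\iota_1^*$. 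Hence $\iota_1^*$ is an isomorphism as desired.

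The crux, and where the completely positive contractive maps $\psi_n$ are genuinely used, is the availability of this $\varprojlim^1$ sequence in the \emph{equivariant} setting. I would realize $A_\infty$ through the defining extension of the mapping telescope of $(A_n,\phi_n)$ and observe that this extension is equivariantly semisplit: the partial compositions $\psi_1\circ\cdots\circ\psi_{n-1} : (A_n,\alpha_n)\to(A_1,\alpha_1)$ are compatible with the $\phi_n$ (since $\psi_{n}\circ\phi_{n}=\id_{A_n}$), so they assemble the required equivariant c.p.c.\ sections — indeed they even produce an equivariant c.p.c.\ retraction $A_\infty \to A_1$ restricting to the identity on $A_1$. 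It is exactly under such equivariant semisplitting that Kasparov's six-term exactness of $KK^G(-,D)$, and hence the telescope $\varprojlim^1$ sequence, applies. I expect the principal obstacle to be making this equivariant bookkeeping rigorous: constructing the telescope, verifying that the $\psi_n$ yield an equivariant c.p.c.\ section of its defining extension, and identifying the induced structure maps with $\phi_n^*$. The remaining ingredients — the Yoneda deduction and the vanishing of $\varprojlim^1$ for an isomorphic inverse system — are purely formal.
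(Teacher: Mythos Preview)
Your proposal is correct and follows essentially the same route as the paper: invoke the equivariant Milnor $\varprojlim^1$ exact sequence (the paper cites \cite[Section 2.4]{MeyerNest06}, where the maps $\psi_n$ witness admissibility of the inductive system), observe that the connecting maps are isomorphisms so the $\varprojlim^1$-term vanishes and $\iota_1^*$ is an isomorphism, and then run the Yoneda argument with $D=A_1$ and $D=A_\infty$. You are simply more explicit about the telescope/semisplitting bookkeeping and the Yoneda step than the paper, which compresses these into a citation and the phrase ``it easily follows''.
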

\begin{proof}
By the results of \cite[Section 2.4]{MeyerNest06} the inductive system $((A_n, \alpha_n), \phi_n)$ is admissible (because of the maps $\psi_n$) and therefore there is an induced short exact sequence
\[
0 \to \varprojlim\!{}^1 KK^G_1(\alpha_n, \beta) \to KK^G(\varinjlim \alpha_n , \beta) \to \varprojlim KK^G(\alpha_n, \beta) \to 0
\]
for any action $\beta: G\curvearrowright B$ on a separable \cstar-algebra. 
As each map $\phi_n$ is a $KK^G$-equivalence, the induced maps $KK^G(\alpha_{n+1}, \beta) \to KK^G(\alpha_n, \beta)$ are isomorphisms.
Hence the $\varprojlim^1$-term vanishes, and $\varprojlim KK^G(\alpha_n, \beta) \cong KK^G(\alpha_1, \beta)$ canonically.
Therefore the canonical equivariant map $(A_1, \alpha_1) \to \varinjlim(A_n, \alpha_n)$ induces an isomorphism $KK^G(\varinjlim \alpha_n, \beta) \to KK^G(\alpha_1, \beta)$.
Applying this to $\beta = \alpha_1$ and $\beta = \varinjlim \alpha_n$ it easily follows that $A_1 \to \varinjlim A_n$ is a $KK^G$-equivalence.
\end{proof}

\begin{cor}\label{cor:modelKKtrivial}
Let $\gamma: G\curvearrowright\CO_\infty$ be the action from \autoref{def:the-model}. Then the canonical unital inclusion $(\IC, \id_{\IC}) \to (\CO_\infty^{\otimes \infty}, \gamma^{\otimes \infty})$ is a $KK^G$-equivalence.
\end{cor}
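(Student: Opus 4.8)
The plan is to realize $(\CO_\infty^{\otimes\infty},\gamma^{\otimes\infty})$ as an equivariant inductive limit of its finite tensor powers and then apply \autoref{lem:inductivelimit}. Concretely, I would consider the system with $(A_0,\alpha_0)=(\IC,\id_\IC)$ and $(A_n,\alpha_n)=(\CO_\infty^{\otimes n},\gamma^{\otimes n})$ for $n\geq 1$, where the connecting $*$-homomorphisms $\phi_n$ are the first-factor embeddings $x\mapsto x\otimes\eins$ (so that in particular $\phi_0:\IC\to\CO_\infty$ is the canonical unital inclusion). Each $\phi_n$ is equivariant, and by construction $\varinjlim((A_n,\alpha_n),\phi_n)=(\CO_\infty^{\otimes\infty},\gamma^{\otimes\infty})$, with the canonical map out of $(A_0,\alpha_0)$ being exactly the unital inclusion appearing in the statement. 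After the obvious reindexing so that the system starts at $1$, it then remains to verify the two hypotheses of \autoref{lem:inductivelimit}.

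For the equivariant completely positive contractive splitting maps I would use the vacuum state. By \autoref{prop:quasifree-outer} the vacuum state $\omega$ on $\CO_\CH$ is invariant under every quasi-free automorphism, hence it is a $\gamma$-invariant state on $\CO_\infty$. Setting $\psi_0=\omega:(\CO_\infty,\gamma)\to(\IC,\id_\IC)$ and $\psi_n=\id_{\CO_\infty^{\otimes n}}\otimes\omega$ for $n\geq 1$ produces equivariant unital (in particular completely positive contractive) slice maps, and a direct check gives $\psi_n\circ\phi_n=\id_{A_n}$ since $\omega(\eins)=1$.

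For the requirement that each $\phi_n$ be a $KK^G$-equivalence, the base case $\phi_0$ is precisely the content of \autoref{rem:KK-class-quasifree}. For $n\geq 1$ I would write $\phi_n=\id_{\CO_\infty^{\otimes n}}\otimes\iota$, where $\iota:(\IC,\id_\IC)\to(\CO_\infty,\gamma)$ denotes the unital inclusion, and invoke the exterior Kasparov product. Since $KK^G(\iota)$ is invertible (again \autoref{rem:KK-class-quasifree}), its exterior product with the identity class $1_{\gamma^{\otimes n}}\in KK^G(\gamma^{\otimes n},\gamma^{\otimes n})$ is invertible, with inverse given by the exterior product of $KK^G(\iota)^{-1}$ with $1_{\gamma^{\otimes n}}$; by multiplicativity of the external product under tensoring of $*$-homomorphisms this exterior product represents $KK^G(\phi_n)$, so every $\phi_n$ is a $KK^G$-equivalence.

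Once both hypotheses are in place, \autoref{lem:inductivelimit} immediately yields that the canonical map $(\IC,\id_\IC)\to(\CO_\infty^{\otimes\infty},\gamma^{\otimes\infty})$ is a $KK^G$-equivalence. The only step that I expect to require genuine care is the invertibility of $KK^G(\phi_n)$ for $n\geq 1$, i.e.\ the stability of $KK^G$-equivalences under tensoring with an identity morphism; this is a standard feature of the exterior product in equivariant Kasparov theory, but it deserves to be made explicit here since elsewhere the paper works predominantly with the composition product.
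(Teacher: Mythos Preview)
Your proposal is correct and follows essentially the same route as the paper: it too applies \autoref{lem:inductivelimit} to the inductive system of finite tensor powers, uses the $\gamma$-invariance of the vacuum state (\autoref{prop:quasifree-outer}) to obtain the equivariant slice maps $\psi_n$, and invokes \autoref{rem:KK-class-quasifree} for the $KK^G$-equivalence of the connecting maps. The only differences are cosmetic: the paper starts the system at $A_1=\CO_\infty$ rather than $A_0=\IC$ (so one composes at the end with the $KK^G$-equivalence $\IC\to\CO_\infty$), and it leaves implicit the exterior-product argument you spelled out for $\phi_n$ with $n\geq 1$.
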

\begin{proof}
Let $(A_n, \alpha_n) = (\CO_\infty^{\otimes n}, \gamma^{\otimes n})$. By \autoref{prop:quasifree-outer} the vacuum state on $\CO_{\CH_G^\infty} \cong \CO_\infty$ is a $\gamma$-invariant state. Hence the slice map with respect to this state gives an equivariant conditional expectation $\psi_n : (\CO_\infty^{\otimes (n+1)}, \gamma^{\otimes (n+1)}) \to (\CO_\infty^{\otimes n}, \gamma^{\otimes n})$. By \autoref{rem:KK-class-quasifree} and \autoref{lem:inductivelimit} the result follows.
\end{proof}

The following result demonstrates that the main classification theorem cannot be extended (using only $KK^G$) to cover more general amenable actions on nuclear \cstar-algebras, since the actions we classify form a skeleton in the $KK^G$-category.
The version given here is slightly more general than \autoref{thmi:range} given in the introduction.
The proof (which amounts to a combination of literature sources) uses a Cuntz-Pimsner algebra construction which is due to Kumjian \cite{Kumjian04} in the non-equivariant case, and Meyer \cite{Meyer21} in the equivariant case.
That amenability is preserved by this construction is due to Ozawa--Suzuki \cite{OzawaSuzuki21}.

\begin{theorem}[cf.\ Pimsner, Kumjian, Meyer, Ozawa--Suzuki] \label{thm:range}
Let $G$ be a second-countable locally compact group and let $\alpha : G \curvearrowright A$ be an amenable action on a separable nuclear \cstar-algebra.
Then there exists an amenable and isometrically shift-absorbing action $\beta : G \curvearrowright B$ on a stable Kirchberg algebra and an equivariant embedding $(A,\alpha)\to (B, \beta)$ that induces a $KK^G$-equivalence.
If $A$ is unital (in which case $G$ must be exact), then $B$ can instead be chosen unital such that the embedding is unital.
\end{theorem}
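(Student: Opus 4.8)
The plan is to construct $(B,\beta)$ in two stages. In the first stage I would pass from $(A,\alpha)$ to an \emph{amenable} action on a Kirchberg algebra that is $KK^G$-equivalent to it via an honest equivariant embedding; this is achieved by a Cuntz--Pimsner construction and is where all the analytic difficulty sits. In the second stage I would tensor with the infinite tensor power of the model action $\gamma$ from \autoref{def:the-model} to install isometric shift-absorption while keeping the algebra a Kirchberg algebra, keeping the action amenable, and keeping the embedding a $KK^G$-equivalence. The point is that the second stage is purely formal once the first is in place.

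For the first stage I would invoke the equivariant Cuntz--Pimsner machinery of Kumjian \cite{Kumjian04} in the equivariant formulation of Meyer \cite{Meyer21}: applied to the amenable action $\alpha: G\curvearrowright A$ on a separable nuclear \cstar-algebra, it produces an action $\gamma_0: G\curvearrowright C$ on a Kirchberg algebra together with an equivariant embedding $(A,\alpha)\to (C,\gamma_0)$ inducing a $KK^G$-equivalence. Here amenability of $\gamma_0$ is guaranteed because the relevant correspondence action is amenable and amenability is preserved under the Cuntz--Pimsner passage by \cite{OzawaSuzuki21}. In the unital case one arranges $C$ to be unital and the embedding unital (and then $G$ is automatically exact, as it acts amenably on a unital algebra by \cite[Corollary 3.6]{OzawaSuzuki21}).

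For the second stage I would set $(B,\beta)=(C\otimes\CO_\infty^{\otimes\infty},\gamma_0\otimes\gamma^{\otimes\infty})$ in the unital case, and additionally tensor with $(\CK,\id_\CK)$ in the stable case. Since $C$ is a Kirchberg algebra it absorbs $\CO_\infty$, so $B$ is again a (unital, resp.\ stable) Kirchberg algebra. The composite $(A,\alpha)\to (C,\gamma_0)\to (B,\beta)$, where the second arrow is the first-factor inclusion, is a $KK^G$-equivalence: by \autoref{cor:modelKKtrivial} the inclusion $(\IC,\id_\IC)\to(\CO_\infty^{\otimes\infty},\gamma^{\otimes\infty})$ is a $KK^G$-equivalence, so its exterior Kasparov product with the identity class of $(C,\gamma_0)$ shows that the first-factor embedding $(C,\gamma_0)\to(C\otimes\CO_\infty^{\otimes\infty},\gamma_0\otimes\gamma^{\otimes\infty})$ is one, and the corner embedding into the stabilization is a $KK^G$-equivalence by equivariant stability of $KK$-theory. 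Amenability of $\beta$ holds because $\gamma_0$ is amenable and amenability of an action is inherited by any minimal tensor product in which one factor acts amenably (see \cite{OzawaSuzuki21, BussEchterhoffWillett20}); note $\gamma$ itself need not be amenable. Finally, $\beta$ is isometrically shift-absorbing: the copies of $(\CO_\infty,\gamma)$ occupying the tensor factors with index tending to infinity are asymptotically central and equivariant, hence assemble into a unital equivariant $*$-homomorphism $(\CO_\infty,\gamma)\to F_{\infty,\beta}(B)$, which witnesses isometric shift-absorption via the equivalence \ref{prop:the-condition:1}$\Leftrightarrow$\ref{prop:the-condition:2} of \autoref{prop:the-condition}.

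The main obstacle is entirely located in the first stage: one must know that the equivariant Cuntz--Pimsner construction genuinely outputs a Kirchberg algebra that is $KK^G$-equivalent to $(A,\alpha)$ through an actual embedding, and that amenability survives the construction. This is precisely the content carried by the cited work of Kumjian, Meyer and Ozawa--Suzuki, so the role of the present proof is to combine these inputs correctly. By contrast, the second stage is a soft assembly that leverages only the already-established properties of the model action $\gamma$ together with the $KK^G$-triviality recorded in \autoref{cor:modelKKtrivial} and the characterization of isometric shift-absorption in \autoref{prop:the-condition}.
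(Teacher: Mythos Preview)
Your proposal is correct and matches the paper's approach essentially verbatim: the paper invokes \cite[Theorem 6.1]{OzawaSuzuki21} (which packages the Kumjian--Meyer Cuntz--Pimsner construction together with the Ozawa--Suzuki amenability preservation) for the first stage, and then tensors with $(\CO_\infty^{\otimes\infty},\gamma^{\otimes\infty})$ using \autoref{cor:modelKKtrivial} and \autoref{prop:the-condition} for the second stage, exactly as you do.
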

\begin{proof}
\cite[Theorem 6.1]{OzawaSuzuki21} implies the above result except for the part about $\beta$ being isometrically shift-absorbing. By \autoref{cor:modelKKtrivial} we may tensor $(B, \beta)$ with $(\mathcal O_\infty^{\otimes \infty}, \gamma^{\otimes \infty})$ and obtain an action which additionally is isometrically shift-absorbing by \autoref{prop:the-condition}.
\end{proof}

\begin{rem} \label{rem:ISA-KK}
If $G$ has the Haagerup property (see \cite{CCJJV}), then \autoref{thm:range} holds even without assuming that $\alpha$ is amenable.
In fact, by a theorem of Higson--Kasparov \cite{HigsonKasparov01} there exists an proper (and therefore amenable) action on a separable type I \cstar-algebra which is $KK^G$-equivalent to $\mathbb C$.
Hence we may tensor this onto any action and obtain an amenable action with the same $KK^G$-equivalence class.
In the case where $G$ is exact, the unital subcase can be obtained by additionally applying \autoref{thm:exact-HP} or \cite[Theorem B]{Suzuki23}.

On the other hand, if $G$ admits an amenable action on any \cstar-algebra which is $KK^G$-equivalent to $\IC$, then the quotient map $C^\ast(G) \to C^\ast_\lambda(G)$ from the full to the reduced group \cstar-algebra is a $KK$-equivalence by \cite[Proposition 6.5]{OzawaSuzuki21}.
If $G$ is non-compact and has property (T) (as opposed to the Haagerup property), then this quotient map is not a $KK$-equivalence since the canonical Kazhdan projection has non-trivial class in $K_0(C^*(G))$ but it vanishes in $C^\ast_\lambda(G)$ (cf.~\cite[Remark 2.7]{Cuntz83_2} and \cite[Corollary 3.7]{JulgValette84}). Thus no amenable action is $KK^G$-equivalent to $\mathbb C$ for such groups.
\end{rem}

Suppose for now that $G$ is a countable discrete group and that $\beta: G\curvearrowright B$ is an action on a separable \cstar-algebra.
Then, if we keep in mind that the left-regular representation on $\ell^2(G)$ is cyclic with respect to the charactistic function over the neutral element, it follows that $\beta$ is isometrically shift-absorbing if and only if there exists an isometry $s\in F_\infty(B)$ such that $ss^*\perp\tilde{\beta}_{\infty,g}(ss^*)$ for all $g\neq 1$.

With this observation at hand, we identify isometric shift-absorption for actions by discrete groups on Kirchberg algebras in terms of outerness. In the unital case this was observed by Izumi--Matui in \cite[Lemma 3.4]{IzumiMatui10}.
For completion, we fill in a proof that also works in the non-unital case using results of the second author.
These are based on substantially deeper results of Kishimoto \cite{Kishimoto81} and Kirchberg--Phillips \cite{KirchbergPhillips00}.
We remark that this is the only place in the paper where ultrafilters are used.\footnote{One could modify the results from  \cite{Szabo18kp} and obtain a proof without using ultrafilters, but this would be a major digression from our main objective.}

\begin{prop} \label{rem:outer-actions}
Suppose that $G$ is a countable discrete group, and let $\beta: G\curvearrowright B$ be an action on a Kirchberg algebra.
Then $\beta$ is isometrically shift-absorbing if and only if it is pointwise outer.
\end{prop}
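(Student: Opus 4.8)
The plan is to prove both implications, using throughout the reformulation recorded just above the statement: since $G$ is discrete and the left-regular representation is cyclic for the characteristic function $\delta_1\in\ell^2(G)$, the action $\beta$ is isometrically shift-absorbing if and only if there is an isometry $s\in F_\infty(B)$ with $ss^*\perp\tilde\beta_{\infty,g}(ss^*)$ for all $g\neq1$.

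For the implication ``isometrically shift-absorbing $\Rightarrow$ pointwise outer'' I would argue by contraposition. Suppose some $\beta_{g_0}$ with $g_0\neq1$ is inner, i.e.\ $\beta_{g_0}=\ad(u)$ for a unitary $u\in\CM(B)$. Any central sequence asymptotically commutes with $u$ (one has $[u,x_n]b\to0$ for all $b\in B$ whenever $(x_n)$ represents an element of $B_{\infty,\beta}\cap B'$), so the induced automorphism $\tilde\beta_{\infty,g_0}$ of $F_{\infty,\beta}(B)$ is the identity. An isometry $s$ as above would then satisfy $ss^*\perp ss^*$, forcing $s=0$ and contradicting $s^*s=\eins$. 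Hence every $\beta_g$ with $g\neq1$ is outer.

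The substantial direction is ``pointwise outer $\Rightarrow$ isometrically shift-absorbing'', and this is where ultrafilters and the deeper inputs enter. First I would replace the sequential central sequence algebra by the one built with a fixed free ultrafilter $\omega$; by separability of $B$ and countability of $G$ a standard reindexation shows that it suffices to produce an isometry $s\in F_\omega(B)$ with $s^*\tilde\beta_{\omega,g}(s)=0$ for all $g\neq1$. Fixing an increasing exhaustion $F_1\subseteq F_2\subseteq\cdots$ of $G\setminus\{1\}$ by finite sets and using the $\eps$-test in $F_\omega(B)$, this reduces further to the following finite statement: for each finite $F\subseteq G\setminus\{1\}$ there is a nonzero projection $p\in F_\omega(B)$ with $p\,\tilde\beta_{\omega,g}(p)=0$ for all $g\in F$. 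Indeed, granting such a $p$, the fact that $B\cong B\otimes\CO_\infty$ forces every nonzero projection of $F_\omega(B)$ to be properly infinite and full, so $\eins\precsim p$ and one may choose an isometry $s$ with $ss^*\leq p$; then $ss^*\perp\tilde\beta_{\omega,g}(ss^*)$ for $g\in F$, and a diagonal limit over the $F_k$ yields the global isometry.

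The heart of the matter, and the step I expect to be the main obstacle, is thus the production of the orthogonal projection $p$ for a finite set $F$ out of the outerness of the individual automorphisms $\beta_g$. This is exactly where the pointwise outerness of $\beta$ on the simple algebra $B$ must be converted into freeness of the induced $G$-action on the central sequence algebra. I would invoke the results of the second-named author in \cite{Szabo18kp}, which package precisely this conversion, building on Kishimoto's theorem \cite{Kishimoto81} that an outer automorphism of a simple \cstar-algebra is automatically properly outer, together with the Kirchberg--Phillips absorption $B\cong B\otimes\CO_\infty$ \cite{KirchbergPhillips00} that controls the infinite structure of $F_\omega(B)$. The two genuine difficulties here are that $F_\omega(B)$ is far from simple, so Kishimoto's theorem cannot be applied to it verbatim, and that the orthogonality must be achieved simultaneously for all $g\in F$ rather than one element at a time; both are handled by the local, purely infinite arguments underlying the cited results. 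Combining this core input with the reduction of the previous paragraph completes the proof.
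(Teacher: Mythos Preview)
Your proof is correct and follows essentially the same route as the paper: the forward implication via triviality of inner automorphisms on the central sequence algebra, and the converse by producing, for each finite $F\subseteq G\setminus\{1\}$, a projection in $F_\omega(B)$ orthogonal to its $F$-translates (citing \cite{Szabo18kp}), upgrading it to an isometry, and then running a diagonal argument back in $F_\infty(B)$.

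One point deserves correction. You write that ``$F_\omega(B)$ is far from simple, so Kishimoto's theorem cannot be applied to it verbatim.'' In fact the opposite is true: for a Kirchberg algebra $B$ and a free ultrafilter $\omega$, the algebra $F_\omega(B)$ is unital, simple and purely infinite --- this is exactly what the results you cite from \cite{Szabo18kp} establish (building on Kirchberg's work). The paper's proof uses this directly: once $F_\omega(B)$ is known to be purely infinite simple, any nonzero projection $p$ dominates an isometry, with no further appeal to $\CO_\infty$-stability or fullness needed. Your detour through ``properly infinite and full'' is not wrong, but it obscures the actual reason the argument works. The genuine subtlety handled by \cite{Szabo18kp} is rather that outerness of $\beta_g$ on $B$ must be promoted to outerness (indeed freeness) of $\tilde\beta_{\omega,g}$ on $F_\omega(B)$, which is where Kishimoto's result on simple \cstar-algebras enters.
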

\begin{proof}
Any inner automorphism on $B$ induces the trivial automorphism on $F_{\infty}(B)$, and thus isometrically shift-absorbing actions must be pointwise outer. Conversely, suppose $\beta$ is pointwise outer.
Let $\CF \subset B$ and $K \subseteq G\setminus\{1\}$ be finite subsets and $\eps>0$. By \cite[Propositions 2.2, 3.2, and Theorem 3.1]{Szabo18kp}, the (ultrapower) central sequence algebra $F_\omega(B)$ is purely infinite and simple, and there exists a non-zero projection $p\in F_\omega(B)$ such that $p \tilde{\beta}_{\infty, g}(p) =0$ for $g\in K$. As $F_\omega(B)$ is unital, purely infinite and simple, there exists an isometry $v\in F_\omega(B)$ such that $vv^* \leq p$. By picking a contractive representing sequence for $v$, and choosing a suitable entry from this sequence, we obtain a contraction $d\in B$ satisfying
\[
\max_{b\in \CF} \| (\eins - d^* d) b\| \leq \eps, \quad \max_{b\in \CF}\| db - bd \| \leq \eps, \quad  \max_{b\in \CF, g\in K} \| d^* \beta_g(d) b\| \leq \eps.
\]
By a standard diagonal argument we obtain an isometry $s\in F_\infty(B)$ such that $ss^* \perp \tilde{\beta}_{\infty,g}(ss^*)$ for $g\neq 1$ as desired.
\end{proof}

Next to using our recent results from \cite{GabeSzabo22}, the following technical observation can be seen as the driving force behind our classification theory.

\begin{lemma} \label{lem:key-lemma}
Let $\alpha: G\curvearrowright A$ and $\beta: G\curvearrowright B$ be two actions on separable \cstar-algebras, and assume that $\beta$ is amenable and isometrically shift-absorbing.
Let $(\phi,\Iu), (\psi,\Iv): (A,\alpha) \to (\CM(B),\beta)$ be two cocycle representations.
Suppose that $\phi$ weakly contains $\psi$ when they are viewed as cocycle representations with respect to the trivial group.
Then it follows that $(\phi,\Iu)$ approximately 1-dominates $(\psi,\Iv)$.
\end{lemma}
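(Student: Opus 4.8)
The plan is to translate both defining inequalities of approximate $1$-domination into the language of the equivariant Hilbert-module map furnished by isometric shift-absorption, solving the cocycle condition by an amenability-averaging argument and the remaining condition by the assumed static weak containment.

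First I would invoke \autoref{prop:the-condition} (applicable since $\beta$ is isometrically shift-absorbing) to fix the equivariant isometric $B$-bimodule map $\theta:(L^2(G,B),\bar\beta)\to(B_{\infty,\beta},\beta_\infty)$, which by \autoref{multiplierbimodule} is moreover a $\CM(B)$-bimodule map. The point is the pair of identities, valid for $c=\theta(\xi)$ with $\xi\in\CC_c(G,B)$: using equivariance $\beta_{\infty,g}(\theta(\xi))=\theta(\bar\beta_g\xi)$ together with the $\CM(B)$-bimodule property to absorb $\Iu_g$, one gets
\[
c^*\Iu_g\beta_{\infty,g}(c)=\langle\xi\mid\bar\beta^\Iu_g\xi\rangle,\qquad c^*\phi(a)c=\langle\xi\mid\phi(a)\xi\rangle,
\]
where $\bar\beta^\Iu_g$ is the cocycle-perturbed action $(\bar\beta^\Iu_g\xi)(h)=\Iu_g\beta_g(\xi(g^{-1}h))$ and $\phi(a)$ acts by pointwise left multiplication. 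It then suffices to produce, for given $K,\CF,\eps$ and a contraction $b$, a vector $\xi$ making the two right-hand sides approximate $b^*\Iv_g\beta_g(b)$ and $b^*\psi(a)b$; the honest element of $B$ is extracted from a representing sequence.

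For the cocycle inequality I would exploit a Fell-type absorption: substituting $\xi(h)=\Iu_h\Iv_h^*\rho(h)$ converts the $\Iu$-twist into the $\Iv$-twist, giving $c^*\Iu_g\beta_{\infty,g}(c)=\langle\rho\mid\bar\beta^\Iv_g\rho\rangle$ by a direct cocycle computation. Choosing $\rho(h)=b\zeta(h)$ for an amenability net $\zeta$ of $\beta$ and writing $D_g=b^*\Iv_g\beta_g(b)\in B$, this rewrites as $\langle\zeta\mid D_g\,\bar\beta_g\zeta\rangle$. Here amenability does the work: approximate centrality of $\zeta$ extracts $D_g$, while the right-weighted invariance $\|(\zeta-\bar\beta_g\zeta)a\|_2\to0$ yields $\langle\zeta\mid\bar\beta_g\zeta\rangle\to\eins$ strictly and uniformly on $K$, so $\langle\zeta\mid D_g\,\bar\beta_g\zeta\rangle\to b^*\Iv_g\beta_g(b)$ uniformly on $K$. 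Note $\zeta$ must \emph{not} be made invariant in a bare sense, since the $g$-dependence of $D_g$ is genuine; only the weighted amenability estimates are used.

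The main obstacle is that the \emph{same} $\xi$ must simultaneously satisfy the static inequality, yet the converting factor $\Iu_h\Iv_h^*$ entangles $\phi$ with the dynamics, as $\Iu_h^*\phi(a)\Iu_h=\beta_h(\phi(\alpha_{h^{-1}}(a)))$. To reconcile the two requirements I would use that $\phi$ statically weakly contains $\psi$, invoked uniformly over the compact set $\overline{\supp}\,\zeta$ so that the orbit elements $\{\alpha_{h^{-1}}(a)\}$ form a finitely approximable family, in order to replace the $\psi$-data by $\phi$-data. The conceptually cleanest organization, which I expect to be the technical heart, is to instead build an approximate intertwiner $S$ in the sequence algebra with $S^*\phi(a)S\approx\psi(a)$ (from static weak containment, realized using the room provided by isometric shift-absorption) and then average $S$ over $G$ against $\zeta$ via the twist $\tau_g(x)=\Iu_g\beta_{\infty,g}(x)\Iv_g^*$: the cocycle-intertwining $\Iu_g\beta_{\infty,g}(S)\approx S\Iv_g$ emerges from approximate $\tau$-invariance, and — the key compatibility — the relation $S^*\phi(a)S=\psi(a)$ is itself $\tau$-invariant, so averaging preserves it. Then $c=Sb$ handles both inequalities at once. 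Finally, if static weak containment produces $N>1$ terms, I would reduce to $N=1$ using the $\beta$-invariant orthogonal isometries from the unital inclusion $\CO_\infty\subset F_{\infty,\beta}(B)^{\tilde\beta_\infty}$ available in the amenable case of \autoref{prop:the-condition}: writing $c=\sum_k s_kc_k$ with central, $\beta$-invariant $s_k$ of orthogonal range, the cross terms vanish because each $\phi(a)c_k$ and each $\Iu_g\beta_g(c_k)$ lies in $B$ and is approximately commuted past the central isometries, leaving $\sum_k c_k^*\phi(a)c_k$ and $\sum_k c_k^*\Iu_g\beta_g(c_k)$.
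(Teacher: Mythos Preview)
Your plan is correct and matches the paper's approach: both feed the twisted function $\xi(h)=\Iu_h\beta_h(c)\Iv_h^*\,\zeta(h)$ (your $\tau_h(c)\zeta(h)$, with $c$ coming from the static weak-containment witnesses and $\zeta$ from amenability of $\beta^\Iv$) into the module map $\theta$, and read off the two required approximations from the resulting inner-product identities. The only organizational difference is how the $N$ witnesses $c_1,\dots,c_N$ are handled: the paper places them in separate $\ell^2(\IN)$-summands of $L^2_\infty(G,B)$ and sets $\xi=(\xi_1,\dots,\xi_N,0,\dots)$, whereas you first collapse to $N=1$ via $\beta$-invariant central isometries from $\CO_\infty\subset F_{\infty,\beta}(B)^{\tilde\beta_\infty}$. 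Both work; the paper's choice sidesteps the mild level-mixing in yours (your combined $c=\sum_k s_kc_k$ lives in $B_\infty$, so $h\mapsto\tau_h(c)\zeta(h)$ takes values in $B_\infty$ rather than $B$, forcing you to work with representing sequences or do the $\CO_\infty$ trick approximately in $B$). The paper also inserts an auxiliary approximate unit $e$ so that weak containment yields $\sum_k c_k^*\phi(\cdot)c_k\approx e\psi(\cdot)e$, removed at the end via $e\beta_g(b)\approx\beta_g(b)$; your outline suppresses this routine step.
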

\begin{proof}
As before, we choose a Haar measure $\mu$ on $G$.
Let compact sets $1\in K\subseteq G$, $\CF\subset A_{\leq 1}$, and $\eps> 0$ be given.
Let $b\in B$ be any contraction.
Since $\beta$ is amenable, so is $\beta^\Iv$, and there exists a function $\zeta\in\CC_c(G,B)$ with $\|\zeta\|_2\leq 1$ such that
\begin{equation} \label{eq:key-1}
\max_{a\in\CF} \big\| \big( \langle\zeta\mid \psi(a)\zeta\rangle - \psi(a) \big) b \big\|\leq\eps
\end{equation}
\begin{equation} \label{eq:key-2}
\max_{g\in K} \big\| \big(\eins-\langle\zeta\mid \bar{\beta}^\Iv_g(\zeta) \rangle\big)\Iv_g\beta_g(b) \|\leq\eps.
\end{equation}
Choose a positive contraction $e\in B$ (apply \autoref{lem:Kasparov} with $\beta^\Iv$ in place of $\beta$) with the following properties:
\begin{equation} \label{eq:key-3}
\max_{g\in R} \|\beta^\Iv_g(e)-e\|\leq\eps ,\quad\text{where } R=\overline{\supp(\zeta)}.
\end{equation}
\begin{equation} \label{eq:key-4}
\max_{g\in K} \| \langle \zeta \mid e^2\Iv_g \bar{\beta}_g(\zeta) \rangle - e\langle \zeta \mid \Iv_g\bar{\beta}_g(\zeta) \rangle e \|\leq\eps.
\end{equation}
\begin{equation} \label{eq:key-5}
\max_{a\in \CF} \| \langle \zeta \mid e \psi(a) e \zeta \rangle - e \langle \zeta \mid \psi(a) \zeta \rangle e \|\leq\eps.
\end{equation}
\begin{equation}  \label{eq:key-6}
\max_{g\in K}\|(\eins-e) \beta_g(b) \|\leq\eps.
\end{equation}
Using that $\phi$ weakly contains $\psi$ (as an ordinary $*$-homomorphism), we may choose a collection of elements $\{c_{k} \mid k=1,\dots,N\}\subset B$ satisfying
\begin{equation} \label{eq:key-7}
\Big\| e^2-\sum_{k=1}^{N} c_{k}^*c_{k} \Big\| \leq \eps %/\mu(R)
\end{equation}
and
\begin{equation} \label{eq:key-8}
\max_{a\in\CF}\ \max_{h\in R} \Big\| e\psi(\alpha_{h^{-1}}(a))e - \sum_{k=1}^N c_{k}^*\phi(\alpha_{h^{-1}}(a))c_{k} \Big\| \leq \eps  %/\mu(R).
\end{equation}
%Without loss of generality we may assume that $\|\sum_{k=1}^{N} c_{k}^*c_{k}\|\leq 1$.
Since $\beta$ is isometrically shift-absorbing, it follows from \autoref{prop:the-condition} that there exists an equivariant linear $B$-bimodule map 
\[
\theta: ( L^2_\infty(G,B), \bar{\beta}) \to ( B_{\infty,\beta} , \beta_\infty)
\] 
satisfying $\theta(\xi)^*\theta(\eta)=\langle\xi\mid\eta\rangle_B$ for all $\xi,\eta\in L^2_\infty(G,B)$.
For every $k=1,\dots,N$ we consider $\xi_k\in \CC_c(G,B)$ via
\[
\xi_k(h)= \Iu_h\beta_h(c_k)\Iv_h^*\zeta(h) ,\quad h\in G.
\]
We set $\xi=(\xi_1,\xi_2,\dots,\xi_N,0,0,\dots)\in L^2_\infty(G,B)$.
Recall that $\theta$ is a $\CM(B)$-bimodule map by \autoref{multiplierbimodule}. Using this, we compute for every $g\in K$ that
\[
\begin{array}{cl}
\multicolumn{2}{l}{ \dst\theta(\xi)^* \Iu_g \beta_{g,\infty}(\theta(\xi)) } \\ 
=& \theta(\xi)^*\theta(\Iu_g\bar{\beta}_g(\xi)) \\
=& \dst\langle \xi \mid \Iu_g\bar{\beta}_g(\xi)\rangle \\
=& \dst \sum_{k=1}^N \int_G \zeta(h)^* \Iv_h \beta_h(c_k^*)\Iu_h^*\cdot  \Iu_g \beta_g\Big( \Iu_{g^{-1}h}\beta_{g^{-1}h}(c_k)\Iv_{g^{-1}h}^* \zeta(g^{-1}h) \Big) ~d\mu(h) \\
=& \dst \sum_{k=1}^N \int_{G} \zeta(h) \Iv_h \beta_h(c_k^*c_k)\beta_g(\Iv_{g^{-1}h})^* \bar{\beta}_g(\zeta)(h) ~d\mu(h) \\ 
=& \dst \sum_{k=1}^N \int_{G} \zeta(h) \beta^\Iv_h(c_k^*c_k) \Iv_g \bar{\beta}_g(\zeta)(h) ~d\mu(h) \\
\stackrel{\eqref{eq:key-3},\eqref{eq:key-7}}{=}_{\makebox[0pt]{\footnotesize\hspace{-9mm} $2\eps$}} & \dst \int_G \zeta(h) e^2 \Iv_g \bar{\beta}_g(\zeta)(h)  ~d\mu(h) \\
=& \dst \big\langle \zeta \mid e^2\Iv_g\bar{\beta}_g(\zeta) \big\rangle \\
\stackrel{\eqref{eq:key-4}}{=}_{\makebox[0pt]{\footnotesize\hspace{-2mm} $\eps$}} & e \big\langle \zeta\mid \Iv_g \bar{\beta}_g(\zeta) \big\rangle e. 
\end{array}
\]
Using how we chose $e$ and $\zeta$, we use this to observe that
\[
\def\arraystretch{1.25}
\begin{array}{ccl}
\theta(\xi b)^*\Iu_g\beta_{g,\infty}(\theta(\xi b)) &=_{\makebox[0pt]{\footnotesize\hspace{3mm} $3\eps$}}&  b^* e \big\langle \zeta\mid \Iv_g \bar{\beta}_g(\zeta) \big\rangle e \beta_g(b) \\
&\stackrel{\eqref{eq:key-6}}{=}_{\makebox[0pt]{\footnotesize\hspace{-1mm} $2\eps$}}& b^* \big\langle \zeta\mid \bar{\beta}^\Iv_g(\zeta) \big\rangle \Iv_g \beta_g(b) \\
&\stackrel{\eqref{eq:key-2}}{=}_{\makebox[0pt]{\footnotesize\hspace{-3mm} $\eps$}}& b^*\Iv_g\beta_g(b).
\end{array}
\]
Moreover, again using that $\theta$ is a $\CM(B)$-bimodule map, we compute for every $a\in\CF$ that
\begin{longtable}{cl}
\multicolumn{2}{l}{ $\theta(\xi b)^*\phi(a)\theta(\xi b)$  } \\
$=$ & $b^*\theta(\xi)^*\theta(\phi(a)\xi)b$ \\
$=$ & $\dst b^* \sum_{k=1}^N \int_{G} \zeta(h)^* \Iv_h \beta_h(c_k)^*\Iu_h^*\phi(a)\Iu_h\beta_h(c_k)\Iv_h^* \zeta(h) ~d\mu(h) \cdot b$ \\
$=$ & $\dst b^* \sum_{k=1}^N \int_{G} \zeta(h)^* \Iv_h \beta_h(c_k)^*\beta_h(\phi( \alpha_{h^{-1}}(a) ))\beta_h(c_k)\Iv_h^* \zeta(h) ~d\mu(h) \cdot b$ \\
$=$ & $\dst b^* \int_{G} \zeta(h)^* \beta_h^\Iv\Big( \sum_{k=1}^N c_k^*\phi( \alpha_{h^{-1}}(a) ) c_k \Big) \zeta(h) ~d\mu(h)\cdot b$ \\
$\stackrel{\eqref{eq:key-8}}{=}_{\makebox[0pt]{\footnotesize\hspace{-2mm} $\eps$}}$ & $\dst b^* \int_{G} \zeta(h)^* \beta_h^\Iv\Big( e \psi( \alpha_{h^{-1}}(a) ) e \Big) \zeta(h) ~d\mu(h) \cdot b$ \\
$\stackrel{\eqref{eq:key-3}}{=}_{\makebox[0pt]{\footnotesize\hspace{-1mm} $2\eps$}}$ & $\dst b^* \int_{G} \zeta(h)^* e \beta^\Iv_h\Big( \psi( \alpha_{h^{-1}}(a)) \Big) e \zeta(h)~d\mu(h)\cdot b$ \\
$=$ & $b^* \big\langle \zeta \mid e\psi(a)e \zeta\rangle b$ \\
$\stackrel{\eqref{eq:key-5}}{=}_{\makebox[0pt]{\footnotesize\hspace{-2mm} $\eps$}}$ & $b^* e \big\langle \zeta \mid \psi(a) \zeta\rangle e b$ \\
$\stackrel{\eqref{eq:key-6}}{=}_{\makebox[0pt]{\footnotesize\hspace{-1mm} $2\eps$}}$ & $b^* \big\langle \zeta \mid \psi(a) \zeta\rangle b$ \\
$\stackrel{\eqref{eq:key-1}}{=}_{\makebox[0pt]{\footnotesize\hspace{-2mm} $\eps$}}$ & $b^*\psi(a)b.$
\end{longtable}
\noindent
By lifting the element $\theta(\xi b)\in B_{\infty,\beta}$ to a sequence of contractions in $B$, the computations above imply that we can obtain a contraction $v\in B$ such that
\[
\max_{g\in K} \|b^*\Iv_g\beta_g(b) - v^*\Iu_g\beta_g(v)\|\leq 7\eps
\]
and
\[
\max_{a\in\CF} \|b^*\psi(a)b-v^*\phi(a)v\|\leq 8\eps.
\]
Since $K,\CF,\eps$ and $b$ were arbitrary, this implies that $(\phi,\Iu)$ approximately 1-dominates $(\psi,\Iv)$.
\end{proof}

\begin{cor} \label{cor:inf-repeats-are-absorbing}
Let $\alpha: G\curvearrowright A$ and $\beta: G\curvearrowright B$ be two actions on separable \cstar-algebras, and assume that $\beta$ is amenable, isometrically shift-absorbing and strongly stable.
Suppose that $A$ or $B$ is nuclear.
Let $(\phi,\Iu): (A,\alpha)\to (B,\beta)$ be a cocycle morphism such that $\phi$ is full.
Then the infinite repeat $(\phi^\infty,\Iu^\infty): (A,\alpha)\to(\CM(B),\beta)$ is an absorbing cocycle representation.
\end{cor}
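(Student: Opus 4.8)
The plan is to show that $(\phi^\infty,\Iu^\infty)$ absorbs an arbitrary cocycle representation $(\psi,\Iv)\colon(A,\alpha)\to(\CM(B),\beta)$. First I would record the two structural facts that drive everything. Since $\beta$ is amenable and isometrically shift-absorbing, \autoref{prop:the-condition} gives $\beta\cc\beta\otimes\id_{\CO_\infty}$, and combined with strong stability this yields $B\cong B\otimes\CO_\infty\otimes\CK$. Moreover, since $A$ or $B$ is nuclear, every such $\psi$ is weakly nuclear: the c.c.p.\ map $b^*\psi(\cdot)b$ either factors through the nuclear algebra $A$, or, when $B$ is nuclear, is nuclear because $\id_B$ is and the composite of a c.c.p.\ map with a nuclear map is nuclear.

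Next I would establish approximate $1$-domination. As $\phi$ is full with $\phi(A)\subseteq B$ and $\psi$ is weakly nuclear, \autoref{rem:ordinary-dom} shows that $\phi$ weakly contains $\psi$ when viewed over the trivial group. Writing the infinite repeat through isometries $t_n\in\CM(B)^\beta$ with $\sum_n t_nt_n^*=\eins$, one has $\phi=t_1^*\phi^\infty(\cdot)t_1$, so replacing any family of witnesses $c_k$ by $t_1c_k$ shows that $\phi^\infty$ weakly contains $\psi$ as well. \autoref{lem:key-lemma} then upgrades this to the statement that $(\phi^\infty,\Iu^\infty)$ approximately $1$-dominates $(\psi,\Iv)$; since the argument is uniform in $(\psi,\Iv)$, the same holds for every cocycle representation, in particular for the infinite repeat $(\psi^\infty,\Iv^\infty)$.

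For the absorption itself I would feed this into the sum-absorption principle \autoref{lem:strong-sum-absorption}, after manufacturing the two required commuting Cuntz algebras from the isometries $t_n$. Partitioning $\IN$ into infinitely many infinite subsets and forming shift isometries $S_k=\sum_m t_{i_{k,m}}t_m^*\in\CM(B)^\beta$ produces a unital copy of $\CO_\infty$ commuting with the ranges of $\phi^\infty$ and $\Iu^\infty$; running the analogous construction for $\psi^\infty$ with a partition of $\IN$ into two infinite subsets gives a unital $\CO_2\subseteq\CM(B)^\beta$ commuting with the ranges of $\psi^\infty$ and $\Iv^\infty$. Passing to $\psi^\infty$ is essential here, since a general $\psi$ need not admit such a commuting $\CO_2$. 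This yields $(\phi^\infty,\Iu^\infty)\asymp(\phi^\infty\oplus\psi^\infty,\Iu^\infty\oplus\Iv^\infty)$, i.e.\ $(\phi^\infty,\Iu^\infty)$ absorbs $(\psi^\infty,\Iv^\infty)$. To descend to $\psi$ I would then run an Eilenberg swindle: a reindexation gives a unitary in $\CM(B)^\beta$ identifying $(\psi^\infty\oplus\psi,\Iv^\infty\oplus\Iv)$ with $(\psi^\infty,\Iv^\infty)$, which is an asymptotic unitary equivalence by the ``in particular'' clause of \autoref{lem:removing-multipliers}. Combined with transitivity of $\asymp$ and its stability under Cuntz sums with a fixed representation, this converts absorption of $\psi^\infty$ into absorption of $\psi$, and since $(\psi,\Iv)$ was arbitrary the proof is complete.

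The main obstacle I anticipate is that infinite repeats are genuine cocycle representations into $\CM(B)$, whose cocycles take values in $\CU(\CM(B))$ rather than $\CU(\eins+B)$, so neither \autoref{lem:strong-sum-absorption} nor the intertwining results feeding it (\autoref{cor:approx-dom}, and through it \autoref{lem:absorption-step-1}, which crucially uses $\Iu\in\CU(\eins+B)$) apply verbatim. I would resolve this by carrying out the intertwining at the level of the multiplier algebra: producing the intertwining isometries $S_n\in\CM(B)^\beta$ from approximate $1$-domination and then passing to an asymptotic unitary equivalence only invokes the stable-uniqueness machinery of \cite{GabeSzabo22} (Lemma 3.9 and Corollary 4.4 there) together with \autoref{lem:removing-multipliers}, all of which are formulated for cocycle representations into multiplier algebras, with the one use of strong stability being the final appeal to \autoref{lem:removing-multipliers}.
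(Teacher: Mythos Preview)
Your approach is considerably more elaborate than the paper's, which is a three-line argument: by \cite[Corollary 3.12]{GabeSzabo22}, the infinite repeat $(\phi^\infty,\Iu^\infty)$ is absorbing as soon as $(\phi,\Iu)$ itself weakly contains every cocycle representation; by \autoref{lem:key-lemma} this reduces to $\phi$ weakly containing every $*$-homomorphism $A\to\CM(B)$ (over the trivial group); and that in turn follows from \autoref{rem:ordinary-dom}. The paper never needs to touch $(\phi^\infty,\Iu^\infty)$ directly, nor to invoke \autoref{lem:strong-sum-absorption} or any swindle.

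Your route --- establishing approximate $1$-domination at the level of the infinite repeats and then applying the sum-absorption principle --- is in spirit exactly how \cite[Corollary 3.12]{GabeSzabo22} is proved, so you are effectively reconstructing that result rather than citing it. The obstacle you flag is genuine: \autoref{lem:strong-sum-absorption} and its feeder \autoref{lem:absorption-step-1} are stated only for \emph{proper} cocycle morphisms (the latter explicitly uses $\Iu_g\in\CU(\eins+B)$), so they do not apply to $(\phi^\infty,\Iu^\infty)$ or $(\psi^\infty,\Iv^\infty)$ as written. Your proposed workaround via \cite[Lemma 3.9, Corollary 4.4]{GabeSzabo22} is precisely the content packaged by \cite[Corollary 3.12]{GabeSzabo22}, so once you allow yourself those citations you may as well cite the corollary directly and obtain the paper's short proof. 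In short, your argument is not wrong, but it takes the long way around and the detour is only closable by the very external lemma the paper invokes at the outset.
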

\begin{proof}
By \cite[Corollary 3.12]{GabeSzabo22}, the claim is true if $(\phi,\Iu)$ weakly contains every cocycle representation.
By \autoref{lem:key-lemma}, this is true if $\phi$ weakly contains every $*$-homomorphism $A\to\CM(B)$, which follows from \autoref{rem:ordinary-dom}.
\end{proof}

%%%%

\section{The dynamical $\CO_2$-embedding theorem}

In this section we prove a dynamical version of the $\CO_2$-embedding theorem, namely \autoref{thmi:O2-embedding}.
Our strategy follows the broad strokes of the classical strategy of proving the known $\CO_2$-embedding theorem, but requires some specific technical setup in order to be adapted.
In particular we need access to a uniqueness theorem for equivariant maps of the form $(A,\alpha)\to (B_{\infty,\beta},\beta_\infty)$, where the results from the previous section may not immediately apply because for certain choices of $G$ the action $\beta_\infty$ is never amenable.\footnote{We note, however, that this issue and the resulting technical setup becomes somewhat redundant when $G$ is exact by virtue of the results in \cite{OzawaSuzuki21}}
Some of the arguments that follow are nevertheless similar in spirit to the arguments in the previous sections, but work based on the amenability of $\alpha$ rather than $\beta$, at the expense of having a rather narrow range of applicability (though sufficient for the goal of this section).

\begin{lemma} \label{lem:reduce-1-dom}
Let $\alpha: G\curvearrowright A$ and $\beta: G\curvearrowright B$ be two actions on  \cstar-algebras.
Suppose that $A$ is separable and $\beta$ is strongly stable.
Let $\phi, \psi: (A,\alpha) \to (B_{\infty, \beta},\beta_\infty)$ be two equivariant $*$-homomorphisms.
Suppose that for every contraction $d\in A$ there exists a contraction $s\in B_{\infty}$ with
\[
s^*\beta_{\infty,g}(s) = \psi(d^*\alpha_g(d)) \qquad s^* \phi(a) s = \psi(d^*ad)
\]
for all $a\in A$ and $g\in G$. Then it follows that $(\phi,\eins)$ approximately 1-dominates $(\psi,\eins)$.
\end{lemma}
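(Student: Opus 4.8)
The plan is to verify the defining condition of approximate $1$-domination from \autoref{def:wc} head-on. So fix a compact set $K\subseteq G$ (which we may assume contains the unit), a finite set $\CF\subset A$, a tolerance $\eps>0$, and a contraction $b\in B_{\infty,\beta}$; the task is to produce a \emph{single} contraction $c\in B_{\infty,\beta}$ with $\max_{a\in\CF}\|b^*\psi(a)b-c^*\phi(a)c\|\leq\eps$ and $\max_{g\in K}\|b^*\beta_{\infty,g}(b)-c^*\beta_{\infty,g}(c)\|\leq\eps$. First I would invoke \autoref{lem:Kasparov} for $\alpha$ to choose a positive contraction $e\in A$ lying in a quasicentral approximate unit, with $\max_{g\in K}\|\alpha_g(e)-e\|$ as small as we like and $\|eae-a\|\leq\eps$ for all $a\in\CF$. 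Feeding $d=e$ into the hypothesis produces a contraction $s\in B_\infty$ with $s^*\phi(a)s=\psi(eae)$ and $s^*\beta_{\infty,g}(s)=\psi(e\alpha_g(e))$ for all $a\in A$, $g\in G$; in particular $s^*s=\psi(e^2)$.

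The first key point is that $c_1:=sb$ already lies in $B_{\infty,\beta}$, even though the hypothesis only guarantees $s\in B_\infty$. Indeed, expanding $(sb-\beta_{\infty,g}(sb))^*(sb-\beta_{\infty,g}(sb))$ and substituting the two defining relations for $s$ shows, using $b\in B_{\infty,\beta}$ and $\alpha_g(e)\to e$, that this quantity tends to $0$ as $g\to 1$; since $\beta_\infty$ acts isometrically, this forces $g\mapsto\beta_{\infty,g}(sb)$ to be norm-continuous, i.e. $sb\in B_{\infty,\beta}$. This element reproduces the $\phi$-side, $c_1^*\phi(a)c_1=b^*\psi(eae)b\approx_\eps b^*\psi(a)b$, and carries the cocycle compression $c_1^*\beta_{\infty,g}(c_1)=b^*\psi(e\alpha_g(e))\beta_{\infty,g}(b)$, which by near-invariance of $e$ is close to $b^*\psi(e^2)\beta_{\infty,g}(b)$. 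To supply the complementary cocycle mass I would set $m:=(\eins-\psi(e^2))^{1/2}b\in B_{\infty,\beta}$, so that $m^*\beta_{\infty,g}(m)\approx b^*(\eins-\psi(e^2))\beta_{\infty,g}(b)$, again via $\alpha_g(e)\approx e$ and uniform continuity of the square root. Crucially $c_1^*c_1+m^*m=b^*\psi(e^2)b+b^*(\eins-\psi(e^2))b=b^*b$, so the two pieces together use up exactly the norm budget of $b$.

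To assemble these into one contraction while keeping the $\phi$-picture clean, I would move $m$ into an orthogonal matrix corner. Let $D\subseteq B_{\infty,\beta}$ be the separable $\beta_\infty$-invariant \cstar-subalgebra generated by $\phi(A)$, $\psi(A)$, $b$, $c_1$ and $m$; by \autoref{lem:local-stability} there is an equivariant map $\iota\colon (D\otimes\CK,\beta_\infty\otimes\id_\CK)\to(B_{\infty,\beta},\beta_\infty)$ with $\iota(d\otimes e_{1,1})=d$. Setting $c_2:=\iota(m\otimes e_{2,1})$ and $c:=c_1+c_2$, one computes $\|c\|^2=\|c_1^*c_1+m^*m\|=\|b^*b\|\leq 1$. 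Because $c_1\in D$ sits in the $(1,1)$-corner and $c_2$ maps into the orthogonal $(2,1)$-corner, all cross terms vanish identically: $\phi(a)c_2=\iota(\phi(a)\otimes e_{1,1})\iota(m\otimes e_{2,1})=0$ and $c_1^*\beta_{\infty,g}(c_2)=\iota(c_1^*\beta_{\infty,g}(m)\otimes e_{1,1}e_{2,1})=0$ (together with their adjoints), while $c_2^*\phi(a)c_2=0$ and $c_2^*\beta_{\infty,g}(c_2)=m^*\beta_{\infty,g}(m)$. Hence $c^*\phi(a)c=c_1^*\phi(a)c_1\approx_\eps b^*\psi(a)b$, and $c^*\beta_{\infty,g}(c)=b^*\psi(e\alpha_g(e))\beta_{\infty,g}(b)+m^*\beta_{\infty,g}(m)\approx b^*\psi(e^2)\beta_{\infty,g}(b)+b^*(\eins-\psi(e^2))\beta_{\infty,g}(b)=b^*\beta_{\infty,g}(b)$, the error being controlled by how invariant $e$ was chosen. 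This $c$ is the required witness, completing the verification. I expect the one genuinely delicate step to be the observation in the second paragraph: although the hypothesis only delivers $s\in B_\infty$, the relations force $sb\in B_{\infty,\beta}$, and it is precisely this that lets the $\phi$-part be placed inside $D$ and combined orthogonally with the strong-stability correction; everything else then reduces to the two routine estimates governed by $\alpha_g(e)\approx e$.
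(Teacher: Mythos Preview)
Your argument is correct. The approach is close in spirit to the paper's but packaged differently: the paper picks an approximately $\alpha$-invariant approximate unit $(e_n)$, obtains the corresponding sequence $(s_n)$, observes that $\|s_n-\beta_{\infty,g}(s_n)\|^2\leq\|e_n^2-e_n\alpha_g(e_n)-\alpha_g(e_n)e_n+\alpha_g(e_n^2)\|\to 0$ (the same expansion you use for $sb$), then passes to a separable $D\subseteq B_{\infty,\beta}$ made strongly stable via \autoref{lem:local-stability} and invokes \autoref{lem:absorption-step-1} as a black box to upgrade $(s_n)$ to isometries $S_n\in\CM^{\beta_\infty}(D)$; the witness is then $c_n=S_nb$. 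You instead inline the construction of that isometry: your correction term $c_2=\iota(m\otimes e_{2,1})$ with $m=(\eins-\psi(e^2))^{1/2}b$ is precisely the hand-built analog of the second summand $r_{2,n}(\eins-s_n^*s_n)^{1/2}$ appearing in the proof of \autoref{lem:absorption-step-1}, applied to $b$. The matrix-corner orthogonality then plays the role of the relation $r_{1,n}r_{1,n}^*+r_{2,n}r_{2,n}^*=\eins$. What your route buys is a self-contained argument that avoids checking conditions (iii)--(iv) of \autoref{lem:absorption-step-1} and never needs to pass to multipliers; what the paper's route buys is brevity, since the correction mechanism has already been isolated earlier. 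Your observation that $sb\in B_{\infty,\beta}$ even though $s$ is only known to lie in $B_\infty$ is indeed the crux, and it corresponds exactly to the intermediate inequality in the paper's computation of $\|s_n-\beta_{\infty,g}(s_n)\|^2$.
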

\begin{proof}
Let $b\in B_{\infty, \beta}$ be a contraction. 
Choose an approximately $\alpha$-invariant approximate unit $e_n\in A$.
For each $n\geq 1$, apply the assumption for $e_n$ in place of $d$ and choose a corresponding element $s_n$.
Given that $e_n$ is an approximate unit, we see that the second condition implies $s_n^*\phi(a)s_n\to\psi(a)$ for all $a\in A$.
Furthermore, the first condition yields for all $g\in G$ that
\[
\begin{array}{ccl}
\|s_n-\beta_{\infty,g}(s_n)\|^2 &=& \| (s_n-\beta_{\infty,g}(s_n))^*(s_n-\beta_{\infty,g}(s_n))\| \\
&\leq& \|e_n^2-e_n\alpha_g(e_n)-\alpha_g(e_n)e_n+\alpha_g(e_n^2)\| \ \to \ 0.
\end{array}
\]
Note that the intermediate inequality yields $s_n\in B_{\infty,\beta}$ for all $n$, and this computation implies that the convergence is uniform over compact sets.
By \autoref{lem:local-stability}, we may choose a separable $\beta_{\infty}$-invariant \cstar-subalgebra $D\subset B_{\infty,\beta}$ containing $\phi(A)\cup\psi(A)\cup\set{s_n}_{n\geq 1}\cup\set{b}$ such that $\beta_\infty|_D$ is strongly stable.
If we view $\phi$ and $\psi$ as equivariant maps into $D$, we see that the sequence $s_n\in D$ satisfies the requirements of \autoref{lem:absorption-step-1}.
Thus there exists a sequence of isometries $S_n\in\CM^{\beta_\infty}(D)$ such that
\[
S_n^*\phi(a)S_n\to \psi(a) \quad\text{and}\quad \max_{g\in K} \|S_n-\beta_{\infty,g}(S_n)\|\to 0
\]
for all $a\in A$ and compact sets $K\subseteq G$.
If we set $c_n=S_nb\in D$, then 
\[
c_n^*\psi(a)c_n\to b^*\psi(a)b \quad\text{and}\quad \max_{g\in K} \| c_n^*\beta_{\infty,g}(c_n) - b^*\beta_{\infty,g}(b)\|\to 0
\]
for all $a\in A$ and every compact set $K\subseteq G$.
Since $b\in B_{\infty,\beta}$ was arbitrary, this shows the claim.
\end{proof}

\begin{lemma}\label{lem:1domequicont}
Let $\alpha: G\curvearrowright A$ and $\beta: G\curvearrowright B$ be two actions on separable \cstar-algebras.
Assume that $\alpha$ is amenable and $\beta$ is strongly stable and isometrically shift-absorbing.
Let $\phi, \psi: (A,\alpha) \to (B_{\infty, \beta},\beta_\infty)$ be two equivariant $*$-homomorphisms.
Suppose that as (ordinary) maps into $B_\infty$, $\phi$ and $\psi$ are nuclear and $\phi$ approximately 1-dominates $\psi$. 
Then it follows that $(\phi, \eins)$ approximately 1-dominates $(\psi, \eins)$ as maps into $B_{\infty, \beta}$.
\end{lemma}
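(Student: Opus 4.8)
The plan is to verify the sufficient condition isolated in \autoref{lem:reduce-1-dom}. That is, I would show that for every contraction $d\in A$ there exists a contraction $s\in B_\infty$ with
\[
s^*\beta_{\infty,g}(s)=\psi(d^*\alpha_g(d))\quad\text{and}\quad s^*\phi(a)s=\psi(d^*ad)\qquad(a\in A,\ g\in G).
\]
Since these are exact identities in a sequence algebra, a standard reindexation (using that $A$ is separable and $G$ is second-countable) reduces this to an approximate task: for every compact $K\subseteq G$, every finite $\CF\subset A$ and every $\eps>0$, produce a contraction $s\in B_{\infty,\beta}$ satisfying the two identities to within $\eps$ for $g\in K$ and $a\in\CF$. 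I would fix such data together with $d$ and build $s$ explicitly from three ingredients.

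First I would invoke the amenability of $\alpha$ to pick $\zeta\in\CC_c(G,A)$ with $\|\zeta\|_2\leq 1$ that is sufficiently central and $\bar\alpha$-invariant relative to $(d,\CF,K,\eps)$, and set $R=\overline{\supp}(\zeta)$. Next, since $\phi$ approximately $1$-dominates $\psi$ as ordinary maps into $B_\infty$ and the set $\{\alpha_{h^{-1}}(a)\mid a\in\CF,\ h\in R\}$ is compact (hence totally bounded) in $A$, I would extract a single contraction $c\in B_\infty$, with $c^*c$ serving as an approximate unit on the relevant elements, such that $c^*\phi(\alpha_{h^{-1}}(a))c\approx\psi(\alpha_{h^{-1}}(a))$ uniformly for $a\in\CF$, $h\in R$. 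Finally, isometric shift-absorption of $\beta$ supplies, through \autoref{prop:the-condition}, an equivariant inner-product-preserving $B$-bimodule map $\theta\colon(L^2(G,B),\bar\beta)\to(B_{\infty,\beta},\beta_\infty)$. With these at hand I would set
\[
\xi(h)=\beta_{\infty,h}(c)\,\psi(\zeta(h)\,d),\qquad s=\theta(\xi).
\]

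The verification should then mirror the proof of \autoref{lem:key-lemma}, but driven by the amenability of $\alpha$ rather than $\beta$; the crucial design choice is to place $\psi(d)$ on the \emph{right} of each fibre $\xi(h)$. Using that $\theta$ intertwines left multiplication by the ranges of $\phi,\psi$, the equivariance of $\phi$ and $\psi$, and the defining property of $c$, I expect the first identity to unfold as
\[
s^*\phi(a)s=\langle\xi\mid\phi(a)\xi\rangle\approx\psi\big(d^*\langle\zeta\mid a\zeta\rangle d\big)\approx\psi(d^*ad),
\]
the last step being the standard consequence $\langle\zeta\mid a\zeta\rangle\to a$ of amenability. For the covariance, because $\psi(d)$ factors out to both outer ends, the $g$-dependent part collapses to a single scalar-like term and I expect
\[
s^*\beta_{\infty,g}(s)=\langle\xi\mid\bar\beta_g(\xi)\rangle\approx\psi(d)^*\,\psi\big(\langle\zeta\mid\bar\alpha_g(\zeta)\rangle\big)\,\psi(\alpha_g(d)).
\]
Since $\langle\zeta\mid\bar\alpha_g(\zeta)\rangle\to\eins$ strictly and uniformly on compact sets (again by amenability), this lies within $\eps$ of $\psi(d)^*\psi(\alpha_g(d))=\psi(d^*\alpha_g(d))$ for $g\in K$; a short estimate gives $\langle\xi\mid\xi\rangle\leq\psi(d^*d)\leq\eins$, so $s$ is a contraction. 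Feeding the resulting exact identities into \autoref{lem:reduce-1-dom} would finish the argument.

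The hard part will be the interaction between the shift-absorption structure and the maps $\phi,\psi$. The map $\theta$ produced by \autoref{prop:the-condition} comes from isometries in the central sequence algebra $F_{\infty,\beta}(B)$ and therefore only commutes with $B$, whereas $\phi(A)$ and $\psi(A)$ live in the strictly larger algebra $B_{\infty,\beta}$; consequently $\theta$ is not a priori a bimodule map over their ranges, which is exactly what the two computations above require. I would resolve this by a reindexation realizing the equivariant shift in the relative commutant of the separable subalgebra $D\subseteq B_{\infty,\beta}$ generated by $\phi(A)$, $\psi(A)$ and the $\beta_\infty$-orbit of $c$, thereby upgrading $\theta$ to an equivariant inner-product-preserving bimodule map over $D$. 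This is the step that genuinely uses the \emph{nuclearity} of $\phi$ and $\psi$: it is what guarantees that the relative commutant is large enough to still accommodate the equivariant copy of the shift. Once this reindexation is in place, everything else is bookkeeping entirely analogous to \autoref{lem:key-lemma}.
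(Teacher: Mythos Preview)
Your overall strategy matches the paper's: reduce to \autoref{lem:reduce-1-dom} and build the element $s$ from amenability of $\alpha$, the given 1-domination, and the shift-absorption map $\theta$. The formal computations you sketch are essentially the ones the paper carries out. But there is a genuine gap in the step you yourself flag as hard, and your diagnosis of how nuclearity enters is incorrect.

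Your function $\xi(h)=\beta_{\infty,h}(c)\,\psi(\zeta(h)d)$ takes values in $B_\infty$; indeed $c$ comes from 1-domination in $B_\infty$ and has no reason to be $\beta$-equicontinuous, so $\beta_{\infty,h}(c)$ need not even lie in $B_{\infty,\beta}$ and $h\mapsto\beta_{\infty,h}(c)$ need not be norm-continuous. The map $\theta$ from \autoref{prop:the-condition} is only defined on $L^2(G,B)$, so $\theta(\xi)$ is not a priori meaningful. Your proposed fix --- a reindexation placing the shift in the commutant of a separable $D\subseteq B_{\infty,\beta}$ containing $\phi(A)$, $\psi(A)$ and the orbit of $c$ --- has two problems. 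First, that orbit need not sit inside $B_{\infty,\beta}$, so no such $D$ exists in general. Second, even where such a relative-commutant transfer can be made sense of, it is a saturation/diagonal argument that has nothing to do with nuclearity of $\phi$ and $\psi$; so your account of where nuclearity is used is wrong, and as stated the proof would not explain why that hypothesis is needed.

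The paper resolves this differently and this is exactly where nuclearity is spent. One invokes the Choi--Effros lifting theorem to obtain c.p.c.\ lifts $(\phi_k)_k,(\psi_k)_k:A\to\ell^\infty_\beta(B)$ of $\phi,\psi$, and also lifts the 1-dominating element to a sequence $(c_k)_k$ in $B$. With an auxiliary almost-invariant $e\in A$ from \autoref{lem:Kasparov}, one sets $\xi_k(h)=\beta_h(\phi_k(e)c_k)\,\psi_k(\zeta(h))\in\CC_c(G,B)$, to which $\theta$ applies directly. All estimates are then carried out at the level of $B$, uniformly for $k\geq N$, and the contraction $s\in B_\infty$ is produced only at the very end by a diagonal choice. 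In short, nuclearity is not what would make your reindexation work --- it is what lets one avoid the reindexation altogether by descending from $B_\infty$ to $B$.
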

\begin{proof}
Let $d\in A$ be a contraction (we consider this independently to the other parameters that will now be fixed).
Let compact sets $1\in K\subseteq G$, $\CF\subset A_{\leq 1}$, and $\eps> 0$ be given. 
Since $\alpha$ is amenable, there exists a function $\zeta\in\CC_c(G,A)$ with $\|\zeta\|_2\leq 1$ such that
\begin{equation} \label{eq:aa-key-1}
\max_{a\in\CF} \big\| \langle\zeta\mid a\zeta \rangle - a \big\|\leq\eps;
\end{equation}
\begin{equation} \label{eq:aa-key-2}
\max_{g\in K} \big\| d^* \big( \eins-\langle \zeta \mid \bar{\alpha}_g(\zeta) \rangle \big) \|\leq\eps.
\end{equation}
By applying \autoref{lem:Kasparov} with $\alpha$ in place of $\beta$, we choose a positive contraction $e\in A$ with the following properties:
\begin{equation} \label{eq:aa-key-3}
\max_{g\in R} \|\alpha_g(e)-e\|\leq\eps , \qquad \textrm{where } R = \overline{\supp(\zeta)};
\end{equation}
\begin{equation} \label{eq:aa-key-4}
\max_{g\in K} \| \langle \zeta \mid e^2 \bar{\alpha}_g(\zeta) \rangle - e \langle \zeta \mid \bar{\alpha}_g(\zeta) \rangle e \|\leq\eps;
\end{equation}
\begin{equation} \label{eq:aa-key-5}
\max_{a\in \CF} \| \langle \zeta \mid e a e \zeta \rangle - e \langle \zeta \mid a \zeta \rangle e \|\leq\eps;
\end{equation}
\begin{equation}  \label{eq:aa-key-6}
\max_{g\in K}\|(\eins-e) \alpha_g(d) \|\leq\eps.
\end{equation}
As $\phi$ and $\psi$ are nuclear as maps into $B_\infty$, we can use the Choi--Effros lifting theorem \cite{ChoiEffros76} to pick completely positive contractive maps
\[
(\phi_k)_k, (\psi_k)_k : A \to \ell^\infty(B)
\]
that lift $\phi$ and $\psi$ respectively.
Since $\phi$ and $\psi$ take values in $B_{\infty, \beta}$ it follows that $(\phi_k)_k$ and $(\psi_k)_k$ take values in $\ell^\infty_\beta(B)$.  
Consider
\[
\CG_0 = \{ d,d^*, e, e^2\} \cup \mathcal F \cup \zeta(G) \cup\{ \langle \zeta, a \overline{\alpha}_g(\zeta)\rangle: a\in \CF, \, g\in K\}.
\]
Set $\CG_1 = \bigcup_{h\in K \cup R \cup R^{-1}} \alpha_h(\CG_0)$ and let $\CG$ be the set of products of 5 or less elements from $\CG_1$.
Then $\CG\subset A$ is compact.
Set $M=1+ \mu(R) (\| \zeta\|_\infty + \| \zeta\|_\infty^2)$. 
Using that $(\phi_k)_k$ and $(\psi_k)_k$ are pointwise $G$-equicontinuous, approximately equivariant approximate $*$-homo\-morphisms, we may pick $N\in \mathbb N$ such that 
\begin{equation}\label{eq:aa-new-1}
\max_{a\in \CG} \max_{g\in K \cup R \cup R^{-1}} \sup_{k\geq N}\| \beta_g(\phi_k(a)) - \phi_k(\alpha_g(a))\| \leq \eps/M;
\end{equation}
\begin{equation}\label{eq:aa-new-2}
\max_{a_1, a_2\in \CG} \sup_{k\geq N} \| \phi_k(a_1 a_2) - \phi_k(a_1)\phi_k(a_2)\| \leq \eps/M.
\end{equation}
We also assume that the above hold with $\psi_k$ in place $\phi_k$.
Note that for any $f\in \CC_c(G)$ one has the inequality $\| f\|_2 \leq \mu(\overline{\supp}(f)) \| f\|_\infty$.
Hence the estimates above with $\eps /M$ imply that
\begin{equation}\label{eq:aa-new-3}
\max_{g\in K} \sup_{k\geq N} \| \beta_g \circ \psi_k \circ f - \psi_k \circ \alpha_g \circ f\|_2 \leq \eps
\end{equation}
and
\begin{equation}\label{eq:aa-new-4}
\sup_{k\geq N} \| \psi_k \circ (F \cdot f) - (\psi_k \circ F) \cdot (\psi_k \circ f)\|_2 \leq \eps
\end{equation}
for every $f\in \CC_c(G, A)$ and $F \in \CC_b(G, A)$ with $f(G), F(G) \subseteq \CG$, $\mu(\overline{\supp} f) \leq \mu(R)$ and $\| f\|_\infty \leq \| \zeta\|_\infty$ and $\|F\|_\infty \leq \min\{1, \| \zeta\|_\infty\}$. 

Moreover, as $\phi$ approximately 1-dominates $\psi$ as maps into $B_\infty$, we may find a contraction $(c_k)_k \in \ell^\infty(B)$ such that after possibly increasing $N$ we have
\begin{equation}\label{eq:aa-ckphick}
\max_{a\in \CG} \sup_{k\geq N} \| c_k^\ast \phi_k(a) c_k - \psi_k(a)\| \leq \eps.
\end{equation}
Since the maps $\psi_k$ are completely positive contractive, we get from Kadison's inequality that for every $f\in \CC_c(G, A)$ one has
\[
\|\psi_k\circ f\|_2^2 = \Big\| \int_G \psi_k(f(h))^\ast \psi_k(f(h)) ~d\mu(h) \Big\| \leq \|\psi_k(\langle f, f \rangle )\| \leq \| f\|_2^2. 
\]
Since we assumed $\beta$ to be isometrically shift-absorbing, it follows from \autoref{prop:the-condition} that there exists an equivariant linear $B$-bimodule map 
\[
\theta: ( L^2(G,B), \bar{\beta}) \to ( B_{\infty,\beta} , \beta_\infty)
\] 
satisfying $\theta(\xi)^*\theta(\eta)=\langle\xi\mid\eta\rangle_B$ for all $\xi,\eta\in L^2(G,B)$.
We consider $\xi_k\in \CC_c(G,B)\subseteq L^2(G,B)$ via
\[
\xi_k(h)= \beta_h(\phi_k(e)c_k)\psi_k(\zeta(h)) ,\quad h\in G.
\]
Recall that we consider $B\subseteq B_{\infty, \beta}$ as constant sequences. We compute for every $g\in K$ and $k\geq N$ that %(reminding the reader that $\phi_k$ and $\psi_k$ have norm at most 2)
\[
\renewcommand{\arraystretch}{1.5}
\begin{array}{cl}
\multicolumn{2}{l}{ \dst\theta(\xi_k)^* \beta_{g,\infty}(\theta(\xi_k)) } \\ 
=& \theta(\xi_k)^*\theta(\bar{\beta}_g(\xi_k)) \\
=& \dst\langle \xi_k \mid \bar{\beta}_g(\xi_k)\rangle \\
=& \dst \int_G \psi_k(\zeta(h))^* \beta_h(c_k^* \phi_k(e)) \beta_g\Big( \beta_{g^{-1}h}(\phi_k(e) c_k) \psi_k(\zeta(g^{-1}h)) \Big) ~d\mu(h) \\
=& \dst \int_{G} \psi_k(\zeta(h))^* \beta_h(c_k^*\phi_k(e)^2 c_k) \beta_{g}(\psi_k(\zeta(g^{-1}h))) ~d\mu(h) \\ 
\stackrel{\eqref{eq:aa-new-3}}{=}_{\makebox[0pt]{\footnotesize\hspace{-2mm} $\eps$}}& \dst \int_{G} \psi_k(\zeta(h))^* \beta_h(c_k^*\phi_k(e)^2 c_k) \psi_k(\bar{\alpha}_g(\zeta)(h))) ~d\mu(h) \\
\stackrel{\eqref{eq:aa-new-2}}{=}_{\makebox[0pt]{\footnotesize\hspace{-2mm} $\eps$}}& \dst \int_{G} \psi_k(\zeta(h))^* \beta_h(c_k^*\phi_k(e^2) c_k) \psi_k(\bar{\alpha}_g(\zeta)(h))) ~d\mu(h) \\
\stackrel{\eqref{eq:aa-ckphick}}{=}_{\makebox[0pt]{\footnotesize\hspace{-3mm} $\eps$}}& \dst \int_{G} \psi_k(\zeta(h))^* \beta_h(\psi_k(e^2)) \psi_k(\bar{\alpha}_g(\zeta)(h)) ~d\mu(h)  \\
\stackrel{\eqref{eq:aa-new-1}}{=}_{\makebox[0pt]{\footnotesize\hspace{-2mm} $\eps$}}& \dst \int_{G} \psi_k(\zeta(h))^* \psi_k(\alpha_h(e^2)) \psi_k(\bar{\alpha}_g(\zeta)(h)) ~d\mu(h)  \\
\stackrel{\eqref{eq:aa-new-4}}{=}_{\makebox[0pt]{\footnotesize\hspace{-2mm} $2\eps$}}& \dst \int_{G} \psi_k\Big( \zeta(h)^* \alpha_h(e^2) \bar{\alpha}_g(\zeta)(h)\Big) ~d\mu(h)  \\
=& \dst \psi_k\Big( \int_{G} \zeta(h)^* \alpha_h(e^2) \bar{\alpha}_g(\zeta)(h) ~d\mu(h) \Big) \\
\stackrel{\eqref{eq:aa-key-3}}{=}_{\makebox[0pt]{\footnotesize\hspace{-2mm} $2\eps$}} & \dst \psi_k\Big( \int_{G} \zeta(h)^* e^2 \bar{\alpha}_g(\zeta)(h) ~d\mu(h) \Big) \\
=& \dst \psi_k\big( \big\langle \zeta \mid e^2\bar{\alpha}_g(\zeta) \big\rangle \big) \\
\stackrel{\eqref{eq:aa-key-4}}{=}_{\makebox[0pt]{\footnotesize\hspace{-2mm} $\eps$}} & \psi_k( e \big\langle \zeta\mid \bar{\alpha}_g(\zeta) \big\rangle e ). 
\end{array}
\]
Hence for $k\geq N$ and $g\in K$ we get
\[
\begin{array}{cl}
\multicolumn{2}{l}{ \dst \psi_k(d)^* \theta(\xi_k)^* \beta_{g,\infty}(\theta(\xi_k)\psi_k(d)) } \\
=& \dst \psi_k(d)^* \theta(\xi_k)^* \beta_{g,\infty}(\theta(\xi_k)) \beta_g(\psi_k(d)) \\
{=}_{\makebox[0pt]{\footnotesize\hspace{3mm} $9\eps$}} & \psi_k(d)^* \psi_k( e \big\langle \zeta\mid \bar{\alpha}_g(\zeta) \big\rangle e ) \beta_g(\psi_k(d)) \\
\stackrel{\eqref{eq:aa-new-1}}{=}_{\makebox[0pt]{\footnotesize\hspace{-2mm} $\eps$}} & \psi_k(d)^* \psi_k( e \big\langle \zeta\mid \bar{\alpha}_g(\zeta) \big\rangle e ) \psi_k(\alpha_g(d)) \\
\stackrel{\eqref{eq:aa-new-2}}{=}_{\makebox[0pt]{\footnotesize\hspace{-2mm} $2\eps$}} & \psi_k(d^*e \big\langle \zeta\mid \bar{\alpha}_g(\zeta) \big\rangle e \alpha_g(d)) \\
\stackrel{\eqref{eq:aa-key-6}}{=}_{\makebox[0pt]{\footnotesize\hspace{-1mm} $2\eps$}} & \psi_k\big( d^* \big\langle \zeta\mid \bar{\alpha}_g(\zeta) \big\rangle \alpha_g(d) \Big) \\
\stackrel{\eqref{eq:aa-key-2}}{=}_{\makebox[0pt]{\footnotesize\hspace{-3mm} $\eps$}} & \psi_k(d^*\alpha_g(d)).
\end{array}
\]
Moreover, for $a\in \CF$ and $k\geq N$ we have
\begin{longtable}{cl}
\multicolumn{2}{l}{ $\theta(\xi_k)^*\phi_k(a)\theta(\xi_k)$  } \\
$=$ & $\theta(\xi_k )^*\theta(\phi_k(a)\xi_k)$ \\
$=$ & $\dst \int_{G} \psi_k(\zeta(h))^* \beta_h(c_k^* \phi_k(e) \beta_{h^{-1}}(\phi_k(a)) \phi_k(e)c_k) \psi_k(\zeta(h)) ~d\mu(h) $ \\
$\stackrel{\eqref{eq:aa-new-1}}{=}_{\makebox[0pt]{\footnotesize\hspace{-2mm} $\eps$}}$ & $\dst \int_{G} \psi_k(\zeta(h))^* \beta_h(c_k^* \phi_k(e) \phi_k(\alpha_{h^{-1}}(a)) \phi_k(e)c_k) \psi_k(\zeta(h)) ~d\mu(h) $ \\
$\stackrel{\eqref{eq:aa-new-2}}{=}_{\makebox[0pt]{\footnotesize\hspace{-2mm} $2\eps$}}$ & $\dst \int_{G} \psi_k(\zeta(h))^* \beta_h\Big( c_k^*\phi_k( e\alpha_{h^{-1}}(a)e ) c_k \Big) \psi_k(\zeta(h)) ~d\mu(h) $ \\
$\stackrel{\eqref{eq:aa-ckphick}}{=}_{\makebox[0pt]{\footnotesize\hspace{-3mm} $\eps$}}$ & $\dst  \int_{G} \psi_k(\zeta(h))^* \beta_h\Big( \psi_k( e\alpha_{h^{-1}}(a)e )  \Big) \psi_k(\zeta(h)) ~d\mu(h) $ \\
$\stackrel{\eqref{eq:aa-key-3}}{=}_{\makebox[0pt]{\footnotesize\hspace{-1mm} $2\eps$}}$ & $\dst  \int_{G} \psi_k(\zeta(h))^* \beta_h(\psi_k(\alpha_{h^{-1}}(eae))) \psi_k(\zeta(h))~d\mu(h)$ \\
$\stackrel{\eqref{eq:aa-new-1}}{=}_{\makebox[0pt]{\footnotesize\hspace{-1mm} $\eps$}}$ & $\dst \int_{G} \psi_k(\zeta(h))^* \psi_k(eae) \psi_k(\zeta(h))~d\mu(h) $ \\
$\stackrel{\eqref{eq:aa-new-4}}{=}_{\makebox[0pt]{\footnotesize\hspace{-1mm} $2\eps$}}$ & $\dst \int_{G} \psi_k\Big( \zeta(h)^* eae \zeta(h)\Big)~d\mu(h) $ \\
$=$ & $\psi_k\Big( \big\langle \zeta \mid eae \zeta\rangle  \Big)$ \\
$\stackrel{\eqref{eq:aa-key-5}}{=}_{\makebox[0pt]{\footnotesize\hspace{-2mm} $\eps$}}$ & $\psi_k\Big( e \big\langle \zeta \mid a \zeta\rangle e  \Big).$
\end{longtable}
So for $a\in \CF$ and $k\geq N$ we get
\begin{longtable}{cl}
\multicolumn{2}{l}{ $ \psi_k(d)^*\theta(\xi_k)^*\phi_k(a)\theta(\xi_k)\psi_k(d)$  } \\%
${=}_{\makebox[0pt]{\footnotesize\hspace{3mm} $10\eps$}}$ & $\psi_k(d)^*\psi_k\Big( e \big\langle \zeta \mid a \zeta\rangle e  \Big) \psi_k(d) $ \\
$\stackrel{\eqref{eq:aa-new-2}}{=}_{\makebox[0pt]{\footnotesize\hspace{-1mm} $2\eps$}}$ & $\psi_k\Big( d^*e \big\langle \zeta \mid a \zeta\rangle e d \Big)$ \\
$\stackrel{\eqref{eq:aa-key-6}}{=}_{\makebox[0pt]{\footnotesize\hspace{-1mm} $2\eps$}}$ & $\psi_k\Big( d^* \big\langle \zeta \mid a \zeta\rangle d \Big)$ \\
$\stackrel{\eqref{eq:aa-key-1}}{=}_{\makebox[0pt]{\footnotesize\hspace{-2mm} $\eps$}}$ & $\psi_k(d^*ad).$
\end{longtable}
Hence we may for each $k\geq N$ lift $\theta(\xi_k)\psi_k(d)$ to a contraction $\ell^\infty_\beta(B)$ and pick an entry $z_k$ such that
\[
\sup_{k\geq N} \max_{g\in K} | \| z_k^\ast \beta_g(z_k) - \psi_k(d^\ast \alpha_g(d))\| | \leq 16 \eps
\]
and
\[
\sup_{k\geq N}\max_{a\in \CF} \| z_k^\ast \phi_k(a) z_k - \psi_k(d^*ad)\| \leq 16 \eps.
\]
By a diagonal argument (with respect to $\eps, \CF$ and $K$), we may find a contraction $s\in B_\infty$ such that 
\[
s^\ast \phi(a) s = \psi(d^*ad) ,\qquad  s^* \beta_{\infty, g}(s)  = \psi(d^* \alpha_g(d) )
\]
for all $a\in A$ and $g\in G$. 
As $d\in A$ was an arbitrary contraction, it follows from \autoref{lem:reduce-1-dom} that $(\phi, \eins)$ approximately 1-dominates $(\psi, \eins)$ 
\end{proof}

\begin{lemma} \label{lem:O2-absorbing-uniqueness}
Let $\alpha: G\curvearrowright A$ and $\beta: G\curvearrowright B$ be two actions on separable \cstar-algebras.
Suppose that $\alpha$ is amenable and $\beta$ is isometrically shift-absorbing, strongly stable and that $\beta\cc\beta\otimes\id_{\CO_2}$.
Let
\[
\phi, \psi: (A,\alpha)\to (B_{\infty,\beta},\beta_\infty)
\]
be two equivariant $*$-homomorphisms that are full and nuclear when considered as $*$-homomorphisms into $B_\infty$.
Then $(\phi,\eins)$ and $(\psi,\eins)$ are properly unitarily equivalent.
\end{lemma}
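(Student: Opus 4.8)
The plan is to imitate the classical $\CO_2$-uniqueness theorem: use \autoref{lem:1domequicont} to import the non-equivariant domination theory, and then exploit the $\CO_2$-absorption to merge cross-intertwiners into a single $\beta_\infty$-fixed conjugating unitary. First I would record that the hypotheses make $B$ stably $\CO_\infty$-absorbing: strong stability gives $B\cong B\otimes\CK$, while $\beta\cc\beta\otimes\id_{\CO_2}$ gives $B\cong B\otimes\CO_2\cong B\otimes\CO_2\otimes\CO_\infty$, so $B\cong B\otimes\CO_\infty\otimes\CK$. Moreover, exactly as in the last paragraph of the proof of \autoref{prop:the-condition} but with $\CO_2$ in place of $\CO_\infty$ (via \cite[Corollary 3.8]{Szabo18ssa}), the relation $\beta\cc\beta\otimes\id_{\CO_2}$ provides a unital $\tilde{\beta}_\infty$-fixed inclusion $\CO_2\subseteq F_{\infty,\beta}(B)^{\tilde{\beta}_\infty}$, and hence, after the usual reindexation, a $\beta_\infty$-fixed pair of central isometries whose range projections sum to the unit and which commute with $\phi(A)$ and $\psi(A)$.

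Since $\phi$ is full, $\psi$ is nuclear, and $B$ is $\CO_\infty\otimes\CK$-stable, the non-equivariant domination result \autoref{rem:ordinary-dom} — applied at the level of the sequence algebra $B_\infty$ after the routine reduction to separable subalgebras — shows that, viewed as ordinary $*$-homomorphisms into $B_\infty$, $\phi$ approximately $1$-dominates $\psi$ and, by the symmetric roles of $\phi$ and $\psi$ (both full and nuclear), $\psi$ also approximately $1$-dominates $\phi$. Feeding each of these into \autoref{lem:1domequicont} (whose hypotheses that $\alpha$ is amenable, that $\beta$ is strongly stable and isometrically shift-absorbing, and that $\phi,\psi$ are nuclear are all in force) upgrades them to the equivariant statements that $(\phi,\eins)$ approximately $1$-dominates $(\psi,\eins)$ and $(\psi,\eins)$ approximately $1$-dominates $(\phi,\eins)$ as cocycle representations into $(B_{\infty,\beta},\beta_\infty)$ with trivial cocycles.

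To extract the conjugating unitary I would, for each direction separately, mimic the proof of \autoref{lem:reduce-1-dom}: pass to a separable $\beta_\infty$-invariant \cstar-subalgebra $D\subseteq B_{\infty,\beta}$ containing all relevant elements and on which $\beta_\infty$ is strongly stable (\autoref{lem:local-stability}), and apply \autoref{cor:approx-dom} inside $D$. As the cocycles are trivial, the resulting isometries are asymptotically $\beta_\infty$-invariant and asymptotically intertwining, so a reindexation produces a $\beta_\infty$-fixed isometry $S$ with $\phi(a)S=S\psi(a)$ and, symmetrically, a $\beta_\infty$-fixed isometry $T$ with $\psi(a)T=T\phi(a)$, for all $a\in A$. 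Using the $\beta_\infty$-fixed central copy of $\CO_2$ to form Cuntz sums commuting with $\phi(A)$ and $\psi(A)$, a two-sided Elliott-type intertwining argument — the mechanism of the classical $\CO_2$-uniqueness theorem, where $\CO_2$-absorption allows one to rotate between complementary range projections — combines $S$ and $T$ into $\beta_\infty$-fixed unitaries conjugating $\phi$ to $\psi$ up to arbitrarily small error on any finite subset of $A$. A final diagonal and reindexing argument in $B_{\infty,\beta}$, using separability of $A$, then yields a single $\beta_\infty$-fixed unitary $u\in\CU(\eins+B_{\infty,\beta})$ with $\psi=\ad(u)\circ\phi$; since the cocycles are trivial and $u$ is $\beta_\infty$-fixed, this is exactly a proper unitary equivalence of $(\phi,\eins)$ and $(\psi,\eins)$.

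I expect the main obstacle to be this final step. Carrying out the intertwining inside the non-$\sigma$-unital sequence algebra $B_{\infty,\beta}$ while keeping every isometry and unitary $\beta_\infty$-fixed, and then passing from approximate to exact conjugacy, is precisely where the hypothesis $\beta\cc\beta\otimes\id_{\CO_2}$ becomes indispensable: $\CO_2$-absorption removes the $K$-theoretic obstruction that would otherwise prevent merging the two cross-intertwiners $S$ and $T$ into a single unitary, which is the dynamical counterpart of why the classical $\CO_2$-embedding theorem yields uniqueness with no $K$-theoretic input.
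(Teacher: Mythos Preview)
Your strategy is correct and aligns with the paper's: establish mutual equivariant approximate 1-domination via \autoref{lem:1domequicont}, pass to a suitable separable $\beta_\infty$-invariant $D\subseteq B_{\infty,\beta}$, and then exploit $\CO_2$-absorption to conjugate. Two places where the paper is more direct than your write-up:

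(1) For the non-equivariant 1-domination step in $B_\infty$, the paper observes that $B_\infty$ is strongly purely infinite (since $B$ is $\CO_\infty$-stable) and cites \cite[Corollary 3.13, Theorem 4.8, Proposition 9.4]{Gabe21} directly, rather than reducing to \autoref{rem:ordinary-dom} via a separable subalgebra. Your reduction can be made to work, but arranging the separable subalgebra to be $\CO_\infty\otimes\CK$-stable while retaining fullness is not quite ``routine''.

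(2) More significantly, your endgame --- extracting exact $\beta_\infty$-fixed intertwining isometries $S,T$ by reindexation and then merging them by an ad hoc ``Elliott-type intertwining'', which you leave vague --- is precisely what \autoref{lem:strong-sum-absorption} already packages. The paper enlarges $D$ so that a unital copy of $\CO_2\subset\CM(D)^{\beta_\infty}$ commutes with both $\phi(A)$ and $\psi(A)$, applies \autoref{lem:strong-sum-absorption} twice to obtain that $(\phi,\eins)$ and $(\psi,\eins)$ are strongly asymptotically unitarily equivalent within $D$ (via $(\phi,\eins)\sim(\phi\oplus\psi,\eins)$ and $(\psi,\eins)\sim(\psi\oplus\phi,\eins)$), and then --- since the cocycles are trivial and hence the implementing path is asymptotically $\beta_\infty$-invariant --- reads off proper unitary equivalence in $B_{\infty,\beta}$ by a standard diagonal sequence argument. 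Your sketch amounts to re-deriving this lemma inline; citing it would close the argument cleanly.
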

\begin{proof}
Since $B$ is $\CO_\infty$-stable it follows that $B_\infty$ is strongly purely infinite by \cite[Proposition 5.12, Theorem 8.6]{KirchbergRordam02}. 
The $*$-homomorphisms $\phi, \psi : A \to B_\infty$ are full and nuclear, and thus they approximately 1-dominate each other by \cite[Corollary 3.13, Theorem 4.8, Proposition 9.4]{Gabe21}.

By \autoref{lem:1domequicont} it follows that $(\phi, \eins)$ and $(\psi, \eins)$ approximately 1-dominate each other as maps into $B_{\infty , \beta}$. 
Pick a separable, $\beta_\infty$-invariant \cstar-subalgebra $D\subseteq B_{\infty, \beta}$ containing the images of $\phi$ and $\psi$, and such that $(\phi, \eins)$ and $(\psi, \eins)$ approximately 1-dominate each other when corestricted to $D$.
By \autoref{lem:local-stability} we may assume that $\beta_\infty|_D$ is strongly stable. 
Moreover, as $\beta \cc \beta\otimes \id_{\CO_2}$ it follows (see \cite[Lemma 2.12]{Szabo18ssa2}) that $\CO_2$ unitally embeds into $F(D,B_{\infty,\beta})^{\tilde \beta_\infty}$.\footnote{Here $F(D,B_{\infty,\beta})=(B_{\infty,\beta}\cap D')/(B_{\infty,\beta}\cap D^\perp)$ and $\tilde{\beta}_\infty$ is the action induced by $\beta$.}
There is a canonical commutative diagram of equivariant $*$-homomorphisms given by
\[
\xymatrix{
(D,\beta_\infty) \ar[rd]^{\id_D\otimes\eins} \ar[rr] && ( B_{\infty,\beta}, \beta_\infty ) \\
& (D\otimes_{\max} F(D,B_{\infty,\beta}), \beta_\infty \otimes \tilde{\beta}_\infty ) \ar[ur] &
}
\]
Hence we may enlarge $D$ and thus assume that there is a unital inclusion $\CO_2 \to \CM(D)^{\beta_\infty}$ commuting with the images of $\phi$ and $\psi$.  
By applying \autoref{lem:strong-sum-absorption} twice, we get that $(\phi,\eins)$ and $(\psi,\eins)$ are strongly asymptotically unitarily equivalent. 
As the cocycles involved are trivial, the unitary path implementing this equivalence is asymptotically $\beta_\infty|_D$-invariant. 
By performing a standard diagonal sequence argument within $B_{\infty}$, it is a routine consequence that $(\phi,\eins)$ and $(\psi,\eins)$ are properly unitarily equivalent as maps into $B_\infty$ implemented by a $\beta_\infty$-invariant unitary, and thus they are properly unitarily equivalent as equivariant maps into $B_{\infty, \beta}$. 
\end{proof}

\begin{rem}\label{rem:fullseq}
For the purpose of subsequent applications of \autoref{lem:O2-absorbing-uniqueness}, we shall point out in more explicit terms when $\ast$-homomorphisms $\phi : A \to B_\infty$ are full and nuclear, for $A$ exact and $B$ simple and purely infinite.
Assuming that $A$ is exact, it follows from \cite[Proposition 3.3]{Dadarlat97} and the Choi--Effros lifting theorem that $\phi$ is nuclear if and only if it can be represented by a sequence of maps $(\phi_n)_n$ where each $\phi_n : A \to B$ is a nuclear c.p.c.\ map.

To describe full maps, we first describe full elements of $B_\infty$.
It is obvious that for $x = [(x_n)_n] \in B_\infty$ to be full, it must satisfy $\liminf_{n\to \infty} \| x_n\| >0$.
Moreover, when $B$ is simple and purely infinite, this characterizes full elements in $B_\infty$. This follows easily by the same argument as \cite[Proposition 6.2.6]{Rordam}, which showed that the ultrapower $B_\omega$ is simple and purely infinite.

In particular, if $A$ is exact and $B$ is a Kirchberg algebra, then a $\ast$-homomorphism $\phi : A \to B_\infty$ is full and nuclear if and only if it can be represented by a sequence of c.p.c.~maps $\phi_n : A \to B$ for $n\in \mathbb N$ such that $\liminf_{n\to \infty} \|\phi_n(a)\| >0$ for every non-zero $a\in A$. 
In a situation where the domain $A$ happens to be given as a minimal tensor product of two \cstar-algebras, it follows from \cite[Lemma 4.1.9]{Rordam} that it suffices to check fullness on elementary tensors.
We apply this observation in the proof of \autoref{lem:equi-O2-embedding-step1}.
\end{rem}

\begin{rem} \label{rem:embed-compacts-into-gamma}
We claim that there exists an equivariant embedding 
\[
(\CK(\CH_G^\infty),\ad(\lambda^\infty))\to(\CO_\infty,\gamma).
\]
Indeed, let $\Fs: \CH_G^\infty\to\CO_\infty$ be the universal linear map whose range generates $\CO_\infty$, and which satisfies $\Fs\circ\lambda^\infty_g=\gamma_g\circ \Fs$ for all $g\in G$.
Then we obtain a $*$-homomorphism $\iota: \CK(\CH_G^\infty)\to\CO_\infty$ that is uniquely determined on the set of rank one operators via the formula
\[
\iota(E_{\xi,\eta})=\Fs(\xi)\Fs(\eta)^*,\quad\text{where } \xi,\eta\in\CH_G^\infty,\ \xi, \eta \neq 0, \text{ and } E_{\xi,\eta}(\nu):=\xi \cdot \langle \eta\mid\nu\rangle.
\]
Evidently $\iota$ is equivariant and hence defines the desired inclusion.
\end{rem}

\begin{rem} \label{rem:suspension-crossed-product}
Let $\sigma$ be the shift automorphism on $\CC_0(\IR)$ given by $\sigma(f)(t)=f(t+1)$.
It is well-known that $\CC_0(\IR)\rtimes_\sigma\IZ$ is isomorphic to $\CC(\IT)\otimes\CK$.
If $A$ is a \cstar-algebra, we denote for brevity $SA=\CC_0(\IR)\otimes A$.
Then there is a natural isomorphism 
\[
SA\rtimes_{\sigma\otimes\id_A}\IZ\cong A\otimes (\CC_0(\IR)\rtimes_\sigma\IZ)\cong A\otimes\CC(\IT)\otimes\CK.
\]
If $\alpha: G\curvearrowright A$ is an action, then $S\alpha=\id_{\CC_0(\IR)}\otimes\alpha: G\curvearrowright SA$ commutes with $\sigma\otimes\id_A$, so naturality of this isomorphism entails that the action $\alpha\otimes\id$ on the right-hand side becomes the unique $G$-action on the left-hand side that extends $\id_{\CC_0(\IR)}\otimes\alpha$ by acting trivially on the copy of $\IZ$.
In particular, $A$ embeds equivariantly as a full corner into both sides of this isomorphism, which we will use below.
\end{rem}

\begin{lemma} \label{lem:equi-O2-embedding-step1}
Let $A$ be a separable exact \cstar-algebra with an action $\alpha: G\curvearrowright A$.
Let $\beta: G\curvearrowright B$ be an isometrically shift-absorbing action on a Kirchberg algebra with $\beta\cc\beta\otimes\id_{\CO_\infty}$. 
Then there exists an equivariant $*$-homomorphism from $(SA,S\alpha)$ to $(B_{\infty,\beta},\beta_\infty)$ which is full and nuclear as a map into $B_\infty$.
\end{lemma}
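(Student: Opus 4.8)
The plan is to combine a non-equivariant full nuclear embedding coming from exactness with the equivariant ``regular representation'' that isometric shift-absorption makes available inside $B_{\infty,\beta}$, and to spend the suspension coordinate only at the very end in order to repair multiplicativity.

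First, since $A$ is separable and exact, the (non-equivariant) $\CO_2$-embedding theorem \cite{KirchbergPhillips00}, together with the fact that $\CO_2$ embeds unitally into a corner $p\CO_\infty p$ for a projection $p=\eins-\Fs(\xi)\Fs(\xi)^*$ of trivial $K_0$-class, yields a full nuclear $*$-homomorphism $\nu\colon A\to\CO_\infty$. As $B\cong B\otimes\CO_\infty$ by hypothesis, I may regard $\nu$ as a non-equivariant full nuclear map $\tilde\nu\colon A\to\CM(B)$ with image commuting with $B$. I would then form the $\beta$-twisted family $\tilde\nu_g=\beta_g\circ\tilde\nu\circ\alpha_{g^{-1}}$, which is strictly continuous and bounded in $g$, and use it to define a left action $\pi$ of $(A,\alpha)$ on the equivariant correspondence $(L^2(G,B),\bar\beta)$ by $(\pi(a)\xi)(g)=\tilde\nu_g(a)\,\xi(g)$. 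A direct computation (the untwisting behind Fell's absorption principle) shows that $\pi$ takes values in adjointable operators and is equivariant in the sense that $\bar\beta_h\circ\pi(a)\circ\bar\beta_{h^{-1}}=\pi(\alpha_h(a))$ for all $h\in G$.

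Next I would invoke isometric shift-absorption: by condition~\ref{prop:the-condition:4} of \autoref{prop:the-condition} there is an equivariant isometric $B$-bimodule map $\theta\colon(L^2(G,B),\bar\beta)\to(B_{\infty,\beta},\beta_\infty)$. Together with $\pi$ this induces a (possibly degenerate) equivariant $*$-homomorphism $\hat\pi\colon A\to\CM(B_{\infty,\beta})$ determined by $\hat\pi(a)\theta(\xi)=\theta(\pi(a)\xi)$, i.e.\ the induced representation of the covariant system. For any $\eta\in\CC_c(G,B)$ the compression
\[
\Phi_\eta(a)=\theta(\eta)^*\hat\pi(a)\theta(\eta)=\langle\eta\mid\pi(a)\eta\rangle_B=\int_G\eta(g)^*\tilde\nu_g(a)\eta(g)\,d\mu(g)
\]
is an equivariant c.p.c.\ map into $B_{\infty,\beta}$, and---this is where exactness is really used, and where any amenability of $\alpha$ is avoided---it is nuclear, being a point-norm limit over the compact set $\overline{\supp}\,\eta$ of finite sums of the nuclear maps $a\mapsto\eta(g_i)^*\tilde\nu_{g_i}(a)\eta(g_i)$. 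By choosing $\eta$ appropriately (normalising the orbit of a strictly positive element of $B$ against a bump function), $\Phi_\eta$ can moreover be arranged to be full in the sense of \autoref{rem:fullseq}.

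The remaining, and main, obstacle is that $\Phi_\eta$ is only completely positive and not multiplicative; turning the induced representation $\hat\pi$ on $A$ itself into a nuclear map is impossible for non-amenable $\alpha$, and this is exactly why the statement is formulated for the suspension rather than for $A$. I would resolve it by passing to $SA=\CC_0(\IR)\otimes A$ and promoting the nuclear equivariant c.p.c.\ map $\Phi_\eta$ to an honest equivariant nuclear $*$-homomorphism $(SA,S\alpha)\to(B_{\infty,\beta},\beta_\infty)$: the multiplicative defect of $\Phi_\eta$ is a homotopy-type obstruction that the extra $\CC_0(\IR)$-coordinate absorbs, while the ample supply of orthogonal $\beta_\infty$-invariant isometries furnished by $\beta\cc\beta\otimes\id_{\CO_\infty}$ (equivalently, the strong pure infiniteness of $B_\infty$) provides the room to realise it genuinely on the sequence algebra. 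Nuclearity and fullness are then inherited from $\Phi_\eta$, the latter being checked on elementary tensors $f\otimes a$ via \autoref{rem:fullseq} and \cite[Lemma 4.1.9]{Rordam}. I expect this conversion of the compressed c.p.c.\ map into a $*$-homomorphism on the suspension, carried out uniformly and equivariantly inside $B_{\infty,\beta}$, to be the technical heart of the argument.
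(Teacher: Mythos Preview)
Your outline has two genuine gaps, one of which is already fatal before you reach the suspension.

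\textbf{The compressed maps are not equivariant.} You assert that $\Phi_\eta(a)=\theta(\eta)^*\hat\pi(a)\theta(\eta)$ is an equivariant c.p.c.\ map into $B_{\infty,\beta}$. It is not. Using equivariance of $\theta$ and of $\hat\pi$ (the latter via Fell absorption as you describe), one computes
\[
\beta_{\infty,h}\big(\Phi_\eta(a)\big)=\theta(\bar\beta_h(\eta))^*\,\hat\pi(\alpha_h(a))\,\theta(\bar\beta_h(\eta))=\Phi_{\bar\beta_h(\eta)}(\alpha_h(a)),
\]
which equals $\Phi_\eta(\alpha_h(a))$ only when $\eta$ is $\bar\beta$-invariant. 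A nonzero compactly supported $\eta$ on a non-compact $G$ can never be $\bar\beta$-invariant, and finding approximately invariant $\eta\in\CC_c(G,B)$ with $\langle\eta\mid\eta\rangle\approx\eins$ is precisely amenability of $\beta$, which is not among the hypotheses. So the only genuinely equivariant object you have produced is the multiplier-valued $\hat\pi$, and you have not explained how to bring it down to $B_{\infty,\beta}$ equivariantly.

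\textbf{The ``promotion'' step is not a mechanism.} Even setting the above aside, the claim that the suspension coordinate absorbs the multiplicative defect of a c.p.c.\ map is not a real argument. There is no general principle turning an equivariant nuclear c.p.c.\ map $A\to C$ into an equivariant $*$-homomorphism $SA\to C$. In the paper the suspension does something completely different and entirely concrete: one first builds a genuinely equivariant $*$-homomorphism $\pi^\alpha\colon(A,\alpha)\to(\CM(B^s),\beta^s)$ with $B^s=B\otimes\CK(\CH_G^\infty)$ and $\beta^s=\beta\otimes\ad(\lambda^\infty)$, then manufactures, via \autoref{lem:Kasparov} and $\CO_\infty$-absorption, a single positive element $h\in(B^s_{\infty,\beta^s})^{\beta^s_\infty}$ with full spectrum $[0,1]$ commuting with $\pi^\alpha(A)$. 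The assignment $f\otimes a\mapsto f(h)\cdot(\eins\otimes\pi^\alpha(a))$ is then an honest equivariant $*$-homomorphism on $SA$ by construction, nuclear because $\pi^\alpha$ factors through the compacts, and full by \autoref{rem:fullseq}. Finally, isometric shift-absorption (via \autoref{rem:embed-compacts-into-gamma}) is used \emph{after} this, only to pass from $B^s_{\infty,\beta^s}$ back to $B_{\infty,\beta}$ by a diagonal argument. The role of the suspension is functional calculus with a central invariant element, not homotopy-theoretic repair of a c.p.c.\ map.
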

\begin{proof}
Since $A$ is separable, we find a faithful representation $\pi: A\to\CB(\ell^2(\IN))$.
The canonically induced covariant representation of $(A,\alpha)$ then consists of the unitary representation $\lambda^\infty=\eins_{\ell^2(\IN)}\otimes\lambda: G\to \CU(\ell^2(\IN)\hat{\otimes}\CH_G)$ and the representation $\pi^\alpha: A\to \CB(\ell^2(\IN)\hat{\otimes}\CH_G)=\CB(L^2(G,\ell^2(\IN)))$ given by $\pi^\alpha(a)(\xi)(g)=\pi(\alpha_{g^{-1}}(a))\xi(g)$ for all $\xi\in L^2(G,\ell^2(\IN))$ and $g\in G$.
We may apply \autoref{lem:Kasparov} and choose an approximate unit $h_n^{(1)}\in\CK := \CK(\ell^2(\IN)\hat{\otimes}\CH_G)$ such that
\[
\lim_{n\to\infty}\max_{g\in K} \|\ad(\lambda^\infty_g)(h_n^{(1)})-h_n^{(1)}\|=0,\quad \lim_{n\to\infty} \|[h_n^{(1)},\pi^\alpha(a)]\|=0
\]
for every compact set $K\subseteq G$ and all $a\in A$.
Using that $\beta\cc\beta\otimes\id_{\CO_\infty}$ and that $(B,\beta)$ also has an approximately $\beta$-invariant approximate unit, we may find a sequence of positive elements $h_n^{(2)}\in B$ with full spectrum $[0,1]$ and $\lim_{n\to\infty}\max_{g\in K} \|\beta_g(h_n^{(2)})-h_n^{(2)}\|=0$ for every compact set $K\subseteq G$. Set $B^s=B\otimes\CK$ and $\beta^s=\beta\otimes\ad(\lambda^\infty)$.
We define the sequence $h_n=h_n^{(2)}\otimes h_n^{(1)}\in B^s$ and consider the element $h=(h_n)_n$ in the sequence algebra of $B^s$.
It is then fixed under the induced action $\beta^s_\infty$ and commutes with the range of $\eins_{\CM(B)}\otimes\pi^\alpha$ if we view the latter as an equivariant $*$-homomorphism into $\CM(B^s)$.
We obtain an equivariant $*$-homomorphism
\[
\psi: (SA,S\alpha) \to (B^s_{\infty, \beta^s}, \beta^s_\infty),\quad \psi(f\otimes a)=f(h)\cdot (\eins_{\CM(B)}\otimes\pi^\alpha(a))
\]
for all $f\in\CC_0(0,1)\cong\CC_0(\IR)$ and $a\in A$.
For any non-zero $a\in A$ and $f\in \CC_0(0,1)$, we may pick a contraction $x\in \CK$ such that $\pi^\alpha(a) x \neq 0$. Since $h_n^{(1)}$ approximately acts as a unit on $\pi^\alpha(a) x$, we get that 
\[
 \psi(f\otimes a) \cdot (\eins_{\CM(B)}\otimes x) = [(f(h_n^{(2)}) \otimes \pi^{\alpha}(a) x )_{n\in \mathbb N}] \in B_\infty^s.
\] 
Because 
\[
\liminf_{n\to \infty} \| f(h_n^{(2)}) \otimes \pi^{\alpha}(a) x \| = \liminf_{n\to \infty} \| f(h_n^{(2)}) \| \| \pi^{\alpha}(a)x\| >0,
\]
it follows from \autoref{rem:fullseq} that $\psi$ is a full map into $B^s_\infty$ (since we verified fullness on elementary tensors). 
Since $A$ is exact, the representation $\pi^\alpha$ is nuclear, so it follows by \cite[Lemma 6.9]{Gabe20} that $\psi$ is also nuclear. Hence it can be represented by a sequence $(\psi_n : SA \to B^s)$ of c.p.c.~maps such that $\liminf_{n\to \infty} \| \psi_n (c)\| >0$ for all non-zero $c\in SA$. 

Since $\beta$ is assumed to be isometrically shift-absorbing, we can conclude from \autoref{prop:the-condition} that there exists an equivariant $*$-homomorphism $\Theta : (B\otimes\CO_\infty,\beta\otimes\gamma) \to (B_{\infty,\beta},\beta_\infty)$, which is (automatically) nuclear and full as a map into $B_\infty$. 
By \autoref{rem:embed-compacts-into-gamma} $(B\otimes\CO_\infty,\beta\otimes\gamma)$ contains $(B^s, \beta^s)$ as a subsystem. Use \autoref{rem:fullseq} to represent $\Theta|_{B^s}$ by a sequence of c.p.c.~maps $\theta_m: B^s \to B$ such that $\liminf_{m\to \infty} \|\theta_m(b) \| >0$ for all non-zero $b\in B^s$.

By a standard diagonal argument applied to the c.p.c. maps $\theta_m \circ \psi_n : SA \to B$ we obtain an equivariant $*$-homomorphism from $(SA,S\alpha)$ to $(B_{\infty,\beta},\beta_\infty)$ which is full and nuclear as a map into $B_\infty$ by \autoref{rem:fullseq}.
\end{proof}

\begin{theorem} \label{thm:equi-O2-embedding}
Let $A$ be a separable exact \cstar-algebra with an amenable action $\alpha: G\curvearrowright A$.
Suppose that $\beta: G\curvearrowright B$ is an isometrically shift-absorbing action on a Kirchberg algebra with $\beta\cc\beta\otimes\id_{\CO_2}$.
Then there exists a proper cocycle embedding from $(A,\alpha)$ to $(B,\beta)$.
\end{theorem}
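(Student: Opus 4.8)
The plan is to adapt Kirchberg's suspension strategy for the $\CO_2$-embedding theorem, carried out inside the $\beta$-continuous sequence algebra and driven by the uniqueness result \autoref{lem:O2-absorbing-uniqueness}. Since that result requires the codomain action to be strongly stable, I would first arrange this as a preliminary reduction: for stable $B$ it is immediate from \autoref{prop:cc-stability}, and in general one passes to the strongly stable action $(B\otimes\CK,\beta\otimes\id_\CK)$ --- which is again an isometrically shift-absorbing, $\CO_2$-absorbing action on a Kirchberg algebra --- and compresses back to the full corner $(B,\beta)$ at the end. Granting strong stability of $\beta$, note that $\beta\cc\beta\otimes\id_{\CO_2}$ implies $\beta\cc\beta\otimes\id_{\CO_\infty}$ and that $S\alpha=\id_{\CC_0(\IR)}\otimes\alpha$ inherits amenability from $\alpha$; hence \autoref{lem:equi-O2-embedding-step1} supplies an equivariant $*$-homomorphism $\psi\colon(SA,S\alpha)\to(B_{\infty,\beta},\beta_\infty)$ that is full and nuclear as a map into $B_\infty$.

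Next I would \emph{de-suspend}. Let $\sigma$ be the shift on $\CC_0(\IR)$ and $\sigma_A=\sigma\otimes\id_A$ the induced equivariant automorphism of $(SA,S\alpha)$, which commutes with $S\alpha$. Then $\psi\circ\sigma_A$ is again equivariant, full and nuclear, so \autoref{lem:O2-absorbing-uniqueness} shows that $(\psi,\eins)$ and $(\psi\circ\sigma_A,\eins)$ are properly unitarily equivalent; concretely there is a $\beta_\infty$-invariant unitary $w\in\CU(\eins+B_{\infty,\beta})$ with $\psi\circ\sigma_A=\ad(w)\circ\psi$. Thus $(\psi,w)$ is a covariant representation of the $\IZ$-system $(SA,\sigma_A)$, and as $\IZ$ is amenable it integrates to a $*$-homomorphism $\Psi\colon SA\rtimes_{\sigma_A}\IZ\to B_{\infty,\beta}$. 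Since $\psi$ is equivariant, $w$ is $\beta_\infty$-invariant, and the canonical $G$-action on $SA\rtimes_{\sigma_A}\IZ$ from \autoref{rem:suspension-crossed-product} acts trivially on the implementing unitary, $\Psi$ is equivariant, and it remains full and nuclear into $B_\infty$ because $\psi$ is. Using the identification $SA\rtimes_{\sigma_A}\IZ\cong A\otimes\CC(\IT)\otimes\CK$ of \autoref{rem:suspension-crossed-product}, which exhibits $(A,\alpha)$ as a $G$-equivariant full corner, I would restrict $\Psi$ to obtain an equivariant $*$-homomorphism $\Phi\colon(A,\alpha)\to(B_{\infty,\beta},\beta_\infty)$ that is full and nuclear into $B_\infty$; fullness of $\Phi$ forces it to be injective by the criterion in \autoref{rem:fullseq}.

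Finally I would convert $\Phi$ into an honest proper cocycle embedding $(A,\alpha)\to(B,\beta)$, and this is the step I expect to be the main obstacle. The difficulty is that an equivariant $*$-homomorphism into the sequence algebra only lifts to a sequence of \emph{approximately} multiplicative and \emph{approximately} equivariant c.p.c.\ maps $A\to B$, whereas the target object is a genuine $*$-homomorphism $\phi$ together with a genuine norm-continuous $\beta$-cocycle $\Iu$ satisfying $\ad(\Iu_g)\circ\beta_g\circ\phi=\phi\circ\alpha_g$ exactly. The idea is to run a one-sided Elliott-type intertwining in the proper cocycle category: lifting $\Phi$ and repeatedly invoking \autoref{lem:O2-absorbing-uniqueness} (together with \autoref{lem:removing-multipliers} to replace multiplier unitaries by unitaries in $\CU(\eins+B)$) to align successive approximate lifts by honest unitaries $W_k\in\CU(\eins+B)$, one obtains in the limit a $*$-homomorphism $\phi=\lim_k\ad(W_k)\circ\Phi_{n_k}$ whose residual failure of equivariance is absorbed into the cocycle, the products $W_k\beta_{\bullet}(W_k)^*$ stabilising to the required $\Iu$; a reindexation argument makes the convergence genuine. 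Injectivity of $\phi$ is inherited from fullness of $\Phi$, and simplicity of $B$ ensures the result is an embedding. The same intertwining is where the stabilise-and-compress reduction from $(B\otimes\CK,\beta\otimes\id_\CK)$ back to the full corner $(B,\beta)$ would be absorbed, so I would expect the careful bookkeeping of this descent --- rather than the clean suspension argument --- to carry the technical weight of the proof.
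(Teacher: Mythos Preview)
Your suspension strategy through \autoref{lem:equi-O2-embedding-step1} and \autoref{lem:O2-absorbing-uniqueness}, followed by integration to $SA\rtimes_{\sigma_A}\IZ$ and restriction to the full corner $(A,\alpha)$, is exactly the route the paper takes. The reduction to strongly stable $\beta$ is handled slightly differently there: rather than ``compressing back'' (which is not obviously possible, since a proper cocycle embedding into $B\otimes\CK$ need not land in the corner $B\otimes e_{1,1}$), the paper observes that $\beta\cc\beta\otimes\id_{\CO_2}$ upgrades to a \emph{proper} cocycle conjugacy via \autoref{thm:strong-ssa-abs}, and then any inclusion $\CK\subseteq\CO_2$ yields a proper cocycle embedding $(B\otimes\CK,\beta\otimes\id_\CK)\to(B,\beta)$ through which one can post-compose.

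The genuine gap is your final paragraph. Your proposed intertwining does not typecheck: \autoref{lem:O2-absorbing-uniqueness} compares two equivariant $*$-homomorphisms into $(B_{\infty,\beta},\beta_\infty)$ and returns a unitary in $\CU(\eins+B_{\infty,\beta})^{\beta_\infty}$. The individual lifts $\Phi_{n_k}\colon A\to B$ are neither multiplicative nor equivariant, so there is no way to feed them into that lemma to produce unitaries $W_k\in\CU(\eins+B)$. The paper resolves this cleanly with the reindexation trick: for any map $\kappa\colon\IN\to\IN$ with $\kappa(n)\to\infty$, the induced endomorphism $\kappa^*$ of $B_{\infty,\beta}$ is equivariant and preserves fullness and nuclearity, so $\kappa^*\circ\Phi$ is again an equivariant, full, nuclear $*$-homomorphism into $B_{\infty,\beta}$. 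Now \autoref{lem:O2-absorbing-uniqueness} applies directly and shows that $\Phi$ and $\kappa^*\circ\Phi$ are properly unitarily equivalent for \emph{every} such $\kappa$. This is precisely the hypothesis of the one-sided intertwining theorem \cite[Theorem~4.10]{Szabo21cc}, which then produces the desired proper cocycle embedding $(A,\alpha)\to(B,\beta)$ in one stroke, with all the bookkeeping you anticipate already packaged inside that reference.
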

\begin{proof}
We note first that we may assume without loss of generality that $\beta$ is strongly stable.
This is because $\beta$ being cocycle conjugate to $\beta\otimes\id_{\CO_2}$ automatically implies that it is properly cocycle conjugate to $\beta\otimes\id_{\CO_2}$ by \autoref{thm:strong-ssa-abs}.
Since one can easily construct an inclusion $\CK\subseteq\CO_2$, this provides a proper cocycle embedding from $\beta\otimes\id_{\CK}$ to $\beta$, hence we assume from now on that $\beta$ is strongly stable.

We apply \autoref{lem:equi-O2-embedding-step1} and choose an equivariant $*$-homom\-orphism $\psi: (SA,S\alpha) \to (B_{\infty,\beta},\beta_\infty)$ that is full and nuclear as a map into $B_\infty$.
Recall the notation introduced in \autoref{rem:suspension-crossed-product}.
Due to our assumptions on $\alpha$ and $\beta$, the assumptions of \autoref{lem:O2-absorbing-uniqueness} are satisfied, so it follows that $\psi$ and $\psi\circ(\sigma\otimes\id_A)$ are properly unitarily equivalent.
In other words, there exists a unitary in the fixed point algebra $U\in\CU(\eins+(B_{\infty,\beta})^{\beta_\infty})$ with $\ad(U)\circ\psi=\psi\circ(\sigma\otimes\id_A)$.
By the universal property of the crossed product we obtain a $*$-homomorphism
\[
\bar{\psi}: SA\rtimes_{\sigma\otimes\id_A}\IZ \to B_{\infty,\beta} \quad\text{with}\quad \bar{\psi}|_{SA}=\psi.
\]
By construction $\bar{\psi}$ is also equivariant with respect to the obvious extension of $\id_{\CC_0(\IR)}\otimes\alpha$ on the left side and $\beta_\infty$ on the right.
We have that $\bar{\psi}$ is nuclear into $B_\infty$ by \cite[Lemma 6.10]{Gabe20}.
In light of \autoref{rem:suspension-crossed-product}, we can restrict $\bar{\psi}$ to a full corner that is equivariantly isomorphic to $(A,\alpha)$, and thereby obtain an equivariant $*$-homomor\-phism
\[
\phi: (A,\alpha) \to (B_{\infty,\beta},\beta_\infty)
\]
that is nuclear as a map into $B_\infty$.
Since $\phi(a)$ generates a larger closed ideal than $\psi(f\otimes a)$ for any $f\in \CC_0(\IR)$, it follows that $\phi$ is full as a map into $B_\infty$.
Let $\kappa: \IN\to\IN$ be an arbitrary map with $\lim_{n\to\infty}\kappa(n)=\infty$.
Then we obtain an equivariant endomorphism $\kappa^*$ of $B_{\infty,\beta}$ given at the level of representing sequences by $\kappa^*[ (b_n)_n] = [(b_{\kappa(n)})_n]$.
By \autoref{rem:fullseq} it follows that $\kappa^*$ has the property that it maps full elements in $B_\infty$ to full elements.
Therefore, we have that the composition $\kappa^*\circ\phi$ is also an equivariant $*$-homomorphism from $(A,\alpha)$ to $(B_{\infty,\beta},\beta_\infty)$ that is full and nuclear as a map into $B_\infty$.
By the assumptions on $\alpha$ and $\beta$, the assumptions of \autoref{lem:O2-absorbing-uniqueness} are satisfied and we can conclude that $\phi$ and $\kappa^*\circ\phi$ are properly unitarily equivalent.
Since $\kappa$ was arbitrary, the existence of the claimed proper cocycle embedding follows directly from the one-sided intertwining result \cite[Theorem 4.10]{Szabo21cc}.
\end{proof}

\begin{rem} \label{rem:exact-case}
We note that if $G$ is exact, then \autoref{thm:equi-O2-embedding} holds without the assumption that $\alpha$ is amenable.
This is because by exactness, we may find some amenable action $\delta: G\curvearrowright D$ on a separable unital \cstar-algebra, and hence embed $\alpha\otimes\delta$ in place of $\alpha$.
In contrast, if $G$ is not exact, then \autoref{thm:equi-O2-embedding} fails even for $\alpha$ being the trivial action on $\IC$.
Namely, if we exploit \cite{OzawaSuzuki21}, we can find an action $\beta$ as in the statement that is also amenable, which rules out the possibility that any of its cocycle perturbations fix a non-zero projection.
\end{rem}

%%%%%%%%%%%%%%%%%%%%%%%%%%%%%%%%%

\section{Existence and uniqueness theorems}

The following observation by the first author is central to both the existence and uniqueness theorem proved in this section.

\begin{lemma}[see {\cite[Lemma 7.1]{Gabe21}}] \label{lem:special-unitary-path}
There exists a continuous map $u: [0,\infty)\to\CU(\eins+\CO_2\otimes\CK)$ with $u_0=\eins$ such that
\begin{enumerate}[label=\textup{(\roman*)}]
\item $u_t^*(\eins\otimes e_{1,1})u_t\to \eins$ in the strict topology as $t\to\infty$;
\item for all $x\in\CO_2\otimes\CK$, one has that $u_tx$ converges in norm as $t\to\infty$.
\end{enumerate}
\end{lemma}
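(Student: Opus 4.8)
Write $B=\CO_2\otimes\CK$ and $p=\eins\otimes e_{1,1}\in B$, and for $k\geq 1$ set $q_k=\eins\otimes e_{k,k}$, so that $p=q_1$ and $\sum_{k\geq1}q_k=\eins$ strictly in $\CM(B)$. The plan is to build the path at integer times first, as a telescoping product $U_n=R^{(1)}\cdots R^{(n)}\in\CU(\eins+B)$ (with $U_0=\eins$), and then to interpolate continuously. The target at integer time $n$ is the relation $U_n^*pU_n=P_n$, where $P_n:=\eins\otimes\sum_{k=1}^{n+1}e_{k,k}$ increases strictly to $\eins$. Each $R^{(n)}$ will be chosen to conjugate $P_{n-1}$ onto $P_n$, so the relation follows by induction from $U_n^*pU_n=R^{(n)*}P_{n-1}R^{(n)}=P_n$.

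The crucial point is that each $R^{(n)}$ can be taken to be supported on a window of three consecutive corners $\Omega_n:=q_n+q_{n+1}+q_{n+2}$ that marches off to infinity. I would use that $\Omega_nB\Omega_n\cong M_3(\CO_2)$ is a unital Kirchberg algebra with $K_0=K_1=0$, and that in such an algebra two projections are conjugate by a unitary precisely when they and their complements are Murray--von Neumann equivalent. Since $P_{n-1}\wedge\Omega_n=q_n$ and $P_n\wedge\Omega_n=q_n+q_{n+1}$, while $q_n\sim q_n+q_{n+1}$ and the complements $q_{n+1}+q_{n+2}\sim q_{n+2}$ are all nonzero and properly infinite (all of $K_0$-class $0$), there is a unitary $\rho_n\in\CU(\Omega_nB\Omega_n)$ with $\rho_n^*q_n\rho_n=q_n+q_{n+1}$. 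Setting $R^{(n)}=\rho_n+(\eins-\Omega_n)\in\CU(\eins+B)$ then conjugates $P_{n-1}$ onto $P_n$, since $R^{(n)}$ acts trivially on $q_1,\dots,q_{n-1}$ where $P_{n-1}$ and $P_n$ already agree. As $\CU(M_3(\CO_2))$ is connected, each $R^{(n)}$ is connected to $\eins$ through unitaries of the same localized form $\rho_n^{(\theta)}+(\eins-\Omega_n)$, and I would use such paths to interpolate $U_{n-1}$ to $U_n=U_{n-1}R^{(n)}$ on $[n-1,n]$, yielding a norm-continuous $u:[0,\infty)\to\CU(\eins+B)$ with $u_0=\eins$.

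It then remains to verify (i) and (ii), and here the localization does the work. For (ii) one combines contractivity of $u_t$ with a density argument: if $x$ satisfies $q_kx=0$ for all $k\geq m$, then $\Omega_nx=0$ for $n\geq m$, so $u_tx$ is eventually constant in $t$; since such $x$ are dense in $B$ and the $u_t$ are unitary, $u_tx$ converges for every $x\in B$. For (i) one observes that the window $\Omega_n$ used on $[n-1,n]$ is disjoint from $q_1,\dots,q_{n-1}$, so the projection $u_t^*pu_t$ dominates $P_{n-2}$ throughout that interval; as $P_{n-2}\leq u_t^*pu_t\leq\eins$ with $P_{n-2}\nearrow\eins$ strictly, the estimate $\eins-u_t^*pu_t\leq\eins-P_{n-2}$ gives $u_t^*pu_t\to\eins$ strictly.

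The main conceptual obstacle to flag is that one cannot hope for $u_t$ to converge \emph{strictly} (two-sidedly) to a limit: if $u_t\to V$ strictly with $V$ an isometry, then (i) would force $V^*pV=\eins$, which is impossible since $p\in B$ maps to $0$ in the corona algebra $\CM(B)/B$ whereas $\eins$ maps to a unit. This is precisely why $\CO_2$ (and not merely $\CK$) is needed, and why the construction must be genuinely one-sided, converging on the left ($u_tx$ converges) but not on the right. Reconciling this one-sided convergence in (ii) with the demand in (i) that the conjugated corner swell all the way up to $\eins$ is the real crux, and keeping the perturbations localized on receding three-corner windows is what makes the two compatible.
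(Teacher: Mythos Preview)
The paper does not prove this lemma; it merely cites \cite[Lemma 7.1]{Gabe21}. Your argument is correct and is essentially the standard construction one would expect: building the path as a telescoping product of local moves supported on receding windows $\Omega_n=q_n+q_{n+1}+q_{n+2}$, exploiting that in $M_3(\CO_2)$ any two nonzero proper projections are unitarily conjugate (since $K_0=K_1=0$ and the algebra is purely infinite simple), and that $\CU(M_3(\CO_2))$ is connected so one can interpolate. Your verification of (i) via the sandwich $P_{n-2}\leq u_t^*pu_t\leq\eins$ and of (ii) via eventual constancy on finitely supported elements is clean and complete. The closing paragraph on why two-sided strict convergence is impossible and why $\CO_2$ (rather than $\CK$ alone) is essential is a nice conceptual observation that goes slightly beyond what the bare statement requires.
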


\begin{lemma} \label{lem:non-standard-embedding}
Let $B$ be a \cstar-algebra and $\beta: G\curvearrowright B$ a strongly stable action.
Then there exists a non-degenerate equivariant embedding from $(B\otimes\CO_2,\beta\otimes\id_{\CO_2})$ to $(B\otimes\CO_\infty,\beta\otimes\id_{\CO_\infty})$
\end{lemma}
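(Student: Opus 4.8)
The plan is to strip the group action off the problem and reduce to a single non-equivariant embedding of stabilised Cuntz algebras. Since $\beta$ is strongly stable, I would fix an equivariant isomorphism $(B,\beta)\cong(B\otimes\CK,\beta\otimes\id_\CK)$ and tensor it with $\CO_2$ and with $\CO_\infty$; after commuting the tensor factors this identifies $(B\otimes\CO_2,\beta\otimes\id_{\CO_2})$ with $(B\otimes(\CO_2\otimes\CK),\beta\otimes\id)$ and $(B\otimes\CO_\infty,\beta\otimes\id_{\CO_\infty})$ with $(B\otimes(\CO_\infty\otimes\CK),\beta\otimes\id)$, where the Cuntz factors carry the trivial $G$-action. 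Thus it suffices to construct one non-degenerate injective $*$-homomorphism $\rho:\CO_2\otimes\CK\to\CO_\infty\otimes\CK$ and then take $\id_{(B,\beta)}\otimes\rho$: this is equivariant because the actions on the Cuntz factors are trivial, injective because the minimal tensor product of injective maps is injective, and non-degenerate because $B\otimes(-)$ preserves non-degeneracy.

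Before building $\rho$ it is worth recording why the domain must be the \emph{stabilised} (hence non-unital) algebra $\CO_2\otimes\CK$: any $*$-homomorphism out of the unital algebra $\CO_2$ sends $\eins$ to a projection and can therefore never be non-degenerate into the non-unital $\CO_\infty\otimes\CK$, and in fact there is no unital embedding $\CO_2\to\CO_\infty$ whatsoever, since on $K_0$ the image of $K_0(\CO_2)=0$ cannot contain the generator $[\eins_{\CO_\infty}]$. This is exactly the $K$-theoretic reason the embedding is ``non-standard'', and it forces one to genuinely entangle the $\CO_2$- and $\CK$-directions rather than keep them in separate tensor legs. To do this I would choose the projection $p=\eins_{\CO_\infty}-s_1s_1^*$, where $s_1$ is a canonical isometry, which satisfies $[p]=0$ in $K_0(\CO_\infty)$, so that $p\CO_\infty p$ is a unital Kirchberg algebra whose unit has trivial $K_0$-class. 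By the (unital) existence part of the Kirchberg--Phillips theorem there is then a unital embedding $\iota:\CO_2\to p\CO_\infty p=\operatorname{End}_{\CO_\infty}(p\CO_\infty)$ realising $0\in KK(\CO_2,p\CO_\infty p)$. Viewing $F=p\CO_\infty$ as a full, finitely generated projective right $\CO_\infty$-module equipped via $\iota$ with a unital left $\CO_2$-action, the amplification $\CK(H_{\CO_2})\to\CK(H_{\CO_2}\otimes_{\CO_2}F)$ is a non-degenerate $*$-homomorphism, and since $H_{\CO_2}\otimes_{\CO_2}F\cong\ell^2(\IN)\otimes p\CO_\infty$ one has $\CK(H_{\CO_2}\otimes_{\CO_2}F)\cong\CK\otimes p\CO_\infty p\cong\CO_\infty\otimes\CK$ by Morita invariance. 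This yields the desired $\rho$, which is injective by simplicity of $\CO_2\otimes\CK$. As an elementary alternative avoiding the existence theorem, the special unitary path of \autoref{lem:special-unitary-path} can be used to spread the rank-one corner $\CO_2\cong(\eins\otimes e_{1,1})(\CO_2\otimes\CK)(\eins\otimes e_{1,1})$ across the whole algebra, with its property~(i) supplying non-degeneracy and property~(ii) ensuring norm-convergence to a genuine $*$-homomorphism.

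The main obstacle throughout is non-degeneracy: the naive maps that respect the tensor-leg decomposition always compress $\CO_2$ into the commutant $\CO_\infty\otimes\eins$ of the $\CK$-leg and thereby demand the non-existent unital embedding $\CO_2\to\CO_\infty$. Consequently the heart of the argument is to verify that the entangled map actually lands inside $\CO_\infty\otimes\CK$, rather than merely its multiplier algebra, and carries an approximate unit of $\CO_2\otimes\CK$ to an approximate unit of $\CO_\infty\otimes\CK$; once this is in place, injectivity and equivariance are immediate from simplicity of the domain and the triviality of the actions on the Cuntz factors.
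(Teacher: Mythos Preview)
Your reduction is exactly the paper's: use strong stability to replace $(B,\beta)$ by $(B\otimes\CK,\beta\otimes\id_\CK)$, commute tensor factors, and reduce to exhibiting a single non-degenerate embedding $\CO_2\otimes\CK\to\CO_\infty\otimes\CK$. The paper then dispatches this in one line by citing Brown's stable isomorphism theorem \cite{Brown77}: pick any full projection $p\in\CO_\infty$ admitting a unital copy of $\CO_2$ (elementary, since $p=\eins-s_1s_1^*$ has $[p]=0$ and is properly infinite), stabilize the unital inclusion $\CO_2\hookrightarrow p\CO_\infty p$, and compose with Brown's isomorphism $p\CO_\infty p\otimes\CK\cong\CO_\infty\otimes\CK$.

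Your main construction is the same idea phrased through Hilbert modules---your identification $\CK(H_{\CO_2}\otimes_{\CO_2}F)\cong\CK\otimes p\CO_\infty p\cong\CO_\infty\otimes\CK$ \emph{is} Brown's theorem---but you invoke the Kirchberg--Phillips existence theorem to produce the unital map $\CO_2\to p\CO_\infty p$, which is considerable overkill: the embedding follows from the bare fact that $p$ is properly infinite with $[p]=0$, so $p\sim p\oplus p$ and one reads off two $\CO_2$-isometries directly. Your alternative via \autoref{lem:special-unitary-path} is not fully coherent as written, since that path lives in $\CU(\eins+\CO_2\otimes\CK)$ and spreads the corner across $\CO_2\otimes\CK$ itself, not into $\CO_\infty\otimes\CK$; you would still need to feed in an embedding into $\CO_\infty$ somewhere.
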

\begin{proof}
Since $\beta$ is (genuinely) conjugate to $\beta \otimes \id_{\CK}$, it suffices to show that there exists a non-degenerate embedding $\CO_2\otimes \CK \to \CO_\infty\otimes \mathcal K$.
Such an embedding is known to exist, for instance as a consequence of Brown's stable isomorphism theorem \cite{Brown77}.
\end{proof}

\begin{cor} \label{cor:special-relative-O2-embedding}
Let $\alpha: G\curvearrowright A$ and $\beta: G\curvearrowright B$ be actions on \cstar-algebras
Suppose that $\beta$ is strongly stable and conjugate to $\beta\otimes\id_{\CO_\infty}$.
Suppose that there exists a proper cocycle embedding from $(A,\alpha)$ into $(B\otimes\CO_2,\beta\otimes\id_{\CO_2})$.
Then there exists a proper cocycle embedding $(\theta,\Iy): (A,\alpha)\to (B,\beta)$ along with a unital embedding $\iota_0: \CO_2\to \CM(B)^{\beta}$ whose range commutes with the ranges of $\theta$ and $\Iy$.
\end{cor}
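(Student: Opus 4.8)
The plan is to first manufacture a commuting copy of $\CO_2$ already at the level of $B\otimes\CO_2$, by exploiting the tensorial absorption $\CO_2\cong\CO_2\otimes\CO_2$, and only afterwards to transport the whole picture into $B$ by means of a non-degenerate equivariant embedding supplied by \autoref{lem:non-standard-embedding}. Let me write $(\phi,\Iu)\colon (A,\alpha)\to(B\otimes\CO_2,\beta\otimes\id_{\CO_2})$ for the given proper cocycle embedding.

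For the first step I would consider the map $\eta\colon B\otimes\CO_2\to B\otimes\CO_2\otimes\CO_2$, $x\mapsto x\otimes\eins_{\CO_2}$, which appends a fresh tensor factor. This is a non-degenerate equivariant embedding for the actions $\beta\otimes\id_{\CO_2}$ and $\beta\otimes\id_{\CO_2}\otimes\id_{\CO_2}$, and its range commutes with the unital copy $\eins\otimes\eins\otimes\CO_2$ of $\CO_2$ inside $\CM(B\otimes\CO_2\otimes\CO_2)$, which lies in the fixed-point algebra because the appended factor carries the trivial action. Fixing an isomorphism $\CO_2\cong\CO_2\otimes\CO_2$ identifies $(B\otimes\CO_2\otimes\CO_2,\beta\otimes\id_{\CO_2}\otimes\id_{\CO_2})$ equivariantly with $(B\otimes\CO_2,\beta\otimes\id_{\CO_2})$. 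Composing $(\phi,\Iu)$ with $\eta$ and this identification (extending $\eta$ to multipliers, which is legitimate as $\eta$ is non-degenerate) would then produce a proper cocycle embedding $(\phi',\Iu')\colon (A,\alpha)\to(B\otimes\CO_2,\beta\otimes\id_{\CO_2})$ together with a unital embedding $\iota_1\colon\CO_2\to\CM(B\otimes\CO_2)^{\beta\otimes\id_{\CO_2}}$ whose range commutes with the ranges of $\phi'$ and $\Iu'$.

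For the second step I would combine \autoref{lem:non-standard-embedding} with the assumed conjugacy $(B,\beta)\cc(B\otimes\CO_\infty,\beta\otimes\id_{\CO_\infty})$ to obtain a non-degenerate equivariant embedding $\rho\colon(B\otimes\CO_2,\beta\otimes\id_{\CO_2})\to(B,\beta)$, and let $\bar\rho\colon\CM(B\otimes\CO_2)\to\CM(B)$ be its (unital, equivariant) strictly continuous extension to multipliers. Setting $\theta=\rho\circ\phi'$, $\Iy=\bar\rho\circ\Iu'$ and $\iota_0=\bar\rho\circ\iota_1$ should finish the construction: $\theta$ is injective since $\rho$ and $\phi'$ are, and $\Iy$ takes values in $\CU(\eins+B)$ because $\Iu'$ takes values in $\CU(\eins+B\otimes\CO_2)$ and $\bar\rho$ is unital, so that $(\theta,\Iy)$ is a proper cocycle embedding; $\iota_0$ is a unital embedding into $\CM(B)^\beta$ since $\bar\rho$ is unital and equivariant and $\CO_2$ is simple; and applying the homomorphism $\bar\rho$ to the commutation relations between $\iota_1$ and $\phi',\Iu'$ shows that the range of $\iota_0$ commutes with those of $\theta$ and $\Iy$.

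The construction is essentially formal once the right pieces are in place; the one genuine idea — and the step I would single out as the crux — is the use of $\CO_2\cong\CO_2\otimes\CO_2$ to produce a commuting copy of $\CO_2$ out of thin air, since no such commuting copy is visible in $B\otimes\CO_2$ a priori. The remaining points to be careful about are purely bookkeeping: that non-degenerate $*$-homomorphisms extend uniquely and unitally to the multiplier algebras while carrying equivariance and fixed points along, and that these extensions interact correctly with the composition of proper cocycle morphisms as recalled earlier in the paper.
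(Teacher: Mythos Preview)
Your proposal is correct and follows essentially the same approach as the paper's proof: both arguments use the isomorphism $\CO_2\cong\CO_2\otimes\CO_2$ to manufacture a commuting fixed copy of $\CO_2$ inside $\CM(B\otimes\CO_2)$, and then invoke \autoref{lem:non-standard-embedding} together with the conjugacy $\beta\cong\beta\otimes\id_{\CO_\infty}$ to transport everything into $(B,\beta)$ via a non-degenerate equivariant embedding. You have simply spelled out in greater detail the multiplier-algebra bookkeeping that the paper leaves implicit.
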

\begin{proof}
By assumption there exists a proper cocycle embedding from $(A,\alpha)$ to $(B\otimes\CO_2,\beta\otimes\id_{\CO_2})$.
If we use an isomorphism $\CO_2\cong\CO_2\otimes\CO_2$, we may choose one whose range commutes pointwise with the range of some unital embedding $\iota_0: \CO_2\to \CM(B\otimes\CO_2)^{\beta\otimes\id_{\CO_2}}$.
By \autoref{lem:non-standard-embedding} and the assumption that $\beta$ is conjugate to $\beta\otimes\id_{\CO_\infty}$, there exists a non-degenerate equivariant embedding from $(B\otimes\CO_2,\beta\otimes\id_{\CO_2})$ to $(B,\beta)$, so the claim follows.
\end{proof}

The next remark slightly extends some terminology from the first section.

\begin{rem}[cf.\ \autoref{def:KKG}] \label{rem:KKGEG}
Suppose $\beta$ is strongly stable.
Given some $x\in\IE^G(\alpha,\beta)$, we denote its homotopy class by $[x]_h$.
More specifically, if we are given a proper cocycle morphism $(\phi,\Iu): (A,\alpha)\to (B,\beta)$, then we naturally associate to it the Cuntz pair $\big( (\phi,\Iu), (0,\eins) \big)$.
Its homotopy class in $\IE^G(\alpha,\beta)/{\sim_h}$ is also denoted $[(\phi,\Iu)]_h$.
We call it \emph{anchored} when $(\Iu,\eins)\sim_h (\eins,\eins)$.

Note that any anchored proper cocycle conjugacy $(\psi, \Iv) : (B, \beta) \to (C, \gamma)$ induces an isomorphism $(\psi, \Iv)_* : \IE^G(\alpha,\beta)/{\sim_h} \to \IE^G(\alpha,\gamma)/{\sim_h}$ such that $(\psi, \Iv)_* ([(\phi, \Iu)]_h) = [(\psi, \Iv)\circ (\phi, \Iu)]_h$ for all anchored proper cocycle morphisms $(\phi,\Iu): (A,\alpha)\to (B,\beta)$.
\end{rem}

We shall now prove the existence and uniqueness theorems underpinning our classification theory.
We note that the combination of \autoref{thm:existence} and \autoref{thm:uniqueness} proves \autoref{intro:existence-uniqueness}.

\subsection{Existence}

The following is the existence theorem underpinning our classification theory.
Its strategy of proof can be regarded as the dynamical generalization of \cite[Lemma 7.3]{Gabe21}.

\begin{theorem} \label{thm:existence}
Let $A$ be a separable exact \cstar-algebra with an action $\alpha: G\curvearrowright A$.
Suppose that $G$ is exact or that $\alpha$ is amenable.
Let $B$ be a Kirchberg algebra and $\beta: G\curvearrowright B$ a strongly stable, amenable and isometrically shift-absorbing action.
Then:
	\begin{enumerate}[leftmargin=*,label=\textup{(\roman*)}]
	\item For every $z\in \IE^G(\alpha,\beta)/{\sim_h}$, there exists a proper cocycle embedding $(\phi,\Iu): (A,\alpha)\to (B,\beta)$ such that $[(\phi,\Iu)]_h=z$. \label{thm:existence:1}
	\item For every $z\in KK^G(\alpha,\beta)$, there exists an anchored proper cocycle embedding $(\phi,\Iu): (A,\alpha)\to (B,\beta)$ with $KK^G(\phi,\Iu)=z$.\label{thm:existence:2}
	\end{enumerate}
\end{theorem}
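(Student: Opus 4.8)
The plan is to build one ``universal'' proper cocycle embedding that is absorbing and of class zero, and then to realise an arbitrary class by reducing a representing Cuntz pair so that its negative part is exactly this embedding. First I would reduce to the case where $\beta$ is genuinely conjugate to $\beta\otimes\id_{\CO_\infty}$: both invariants $\IE^G(\alpha,\beta)/{\sim_h}$ and $KK^G(\alpha,\beta)$ transform naturally under a (proper) cocycle conjugacy of the codomain (see \autoref{rem:KKGEG} and the functoriality of the Kasparov product established just after \autoref{prop:KKG-simplified}), so since $\beta\cc\beta\otimes\id_{\CO_\infty}$ by \autoref{prop:the-condition} I may replace $\beta$ by $\beta\otimes\id_{\CO_\infty}$, which is genuinely conjugate to its own $\CO_\infty$-tensor product and retains all the hypotheses. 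Then $\beta\otimes\id_{\CO_2}$ meets the hypotheses of \autoref{thm:equi-O2-embedding}, giving a proper cocycle embedding into $(B\otimes\CO_2,\beta\otimes\id_{\CO_2})$, and \autoref{cor:special-relative-O2-embedding} converts this into a proper cocycle embedding $\iota=(\theta,\Iy)\colon(A,\alpha)\to(B,\beta)$ equipped with a commuting unital copy $\langle s_1,s_2\rangle\cong\CO_2\subseteq\CM(B)^\beta$.

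This commuting $\CO_2$ is what makes the argument run. Since $s_1,s_2$ commute with $\theta$ and $\Iy$ one has the exact identity $(\theta,\Iy)\oplus_{s_1,s_2}(\theta,\Iy)=(\theta,\Iy)$, so $(\theta,\Iy)$ is idempotent for Cuntz addition and hence $[(\theta,\Iy)]_h=0$ in $\IE^G(\alpha,\beta)/{\sim_h}$; the same isometries give $(\theta,\Iy)\cong(\theta^\infty,\Iy^\infty)$ via a unitary in $\CM(B)^\beta$, and as $\theta$ is full and $B$ is nuclear, \autoref{cor:inf-repeats-are-absorbing} shows $(\theta,\Iy)$ is absorbing. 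For part (i) I represent a given $z$ by a Cuntz pair $((\phi^0,\Iu^0),(\psi^0,\Iv^0))$ into $(\CM(B),\beta)$ (using strong stability to avoid $\CK$) and then aim to homotope it to one of the special form $((\Phi,\IU),(\theta,\Iy))$. The point of reaching this form is that the Cuntz-pair condition then forces $\Phi(a)=\theta(a)+(\Phi(a)-\theta(a))\in B$ and $\IU_g\in\CU(\eins+B)$, so $(\Phi,\IU)$ is automatically a proper cocycle morphism, while the group identity $[((\Phi,\IU),(\theta,\Iy))]=[(\Phi,\IU)]_h-[(\theta,\Iy)]_h=[(\Phi,\IU)]_h$ together with $[(\theta,\Iy)]_h=0$ gives $[(\Phi,\IU)]_h=z$. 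Replacing $(\Phi,\IU)$ by $(\Phi,\IU)\oplus\iota$ finally makes it injective without altering the class, completing (i).

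For part (ii) I would use that $KK^G(\alpha,\beta)\cong\IE_0^G(\alpha,\beta)/{\sim_h}$ (\autoref{def:KKG}): given $z\in KK^G(\alpha,\beta)$, choose an anchored representative $\tilde z$ and run the construction of (i) on $\tilde z$, taking all auxiliary pieces (in particular $\iota$) anchored. Since the whole construction takes place inside $\IE^G(\alpha,\beta)/{\sim_h}$ and preserves the homotopy class, the output $(\phi,\Iu)$ satisfies $[(\phi,\Iu)]_h=\tilde z$; as this class is anchored, $(\phi,\Iu)$ is an anchored proper cocycle embedding with $KK^G(\phi,\Iu)=z$.

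The hard part is exactly the reduction that turns an arbitrary Cuntz pair into one whose negative part is the fixed embedding $(\theta,\Iy)$. Conceptually one adds $(\theta^\infty,\Iy^\infty)$ to both entries and then uses absorption to push the negative entry down to $(\theta,\Iy)$; but the absorbing representation, and the unitary $W\in\CM(B)^\beta$ implementing $(\theta,\Iy)\cong(\theta^\infty,\Iy^\infty)$, live at the multiplier level, whereas every step must preserve the defining condition that the relevant differences lie in the ideal $B$ rather than merely in $\CM(B)$, and the conjugated positive entry must actually converge. Making this legitimate is where the proper asymptotic unitary equivalence technology does the work: I expect to run the absorption through $\CU(\eins+B)$ and to clean up the residual multiplier unitaries using \autoref{lem:strong-sum-absorption}, \autoref{lem:removing-multipliers} and \autoref{cor:approx-dom}, which is precisely the machinery imported from \cite{GabeSzabo22}.
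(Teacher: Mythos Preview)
Your overall strategy matches the paper's: reduce to $\beta$ conjugate to $\beta\otimes\id_{\CO_\infty}$, obtain $(\theta,\Iy)$ with a commuting unital $\CO_2$, observe that its infinite repeat is absorbing and has class zero, represent $z$ by a Cuntz pair whose second slot is this absorbing representation, and then compress down to a proper cocycle morphism in the first slot. The idempotency argument and the deduction of (ii) from (i) are fine.

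The gap is precisely in the step you yourself flag as ``hard''. Absorption gives you a path $u_t$ (even one in $\CU(\eins+B)$) carrying the negative entry $(\psi^0\oplus\theta^\infty,\Iv^0\oplus\Iy^\infty)$ to $(\theta^\infty,\Iy^\infty)$ in the limit, but there is no reason whatsoever for $u_t\big((\phi^0-\psi^0)(a)\oplus 0\big)u_t^*$ to converge in norm, so the conjugated positive entry has no limit. None of \autoref{lem:strong-sum-absorption}, \autoref{lem:removing-multipliers}, \autoref{cor:approx-dom} address this; those results improve the \emph{nature} of implementing unitaries, they do not produce norm limits of conjugates of arbitrary ideal elements.

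The paper resolves this with a different device you have not invoked, namely \autoref{lem:special-unitary-path}: a specific path $u\colon[0,\infty)\to\CU(\eins+\CO_2\otimes\CK)$ satisfying $u_t^*(\eins\otimes e_{1,1})u_t\to\eins$ strictly \emph{and} the extra property that $u_tx$ converges in norm for every $x\in\CO_2\otimes\CK$. One transports this path through a non-degenerate embedding $\iota\colon\CO_2\otimes\CK\to\CM(B)^\beta$ built from the commuting $\CO_2$ and the stability isometries $r_n$; the resulting path lies in $\CM(B)^\beta$, commutes with $(\theta^\infty,\Iy^\infty)$, and makes $u_tb$ converge for all $b\in B$ by non-degeneracy. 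Conjugating a representative $((\psi,\Iv),(\theta^\infty,\Iy^\infty))$ for $z$ (obtained via \cite[Corollary~3.17]{GabeSzabo22}) by this path then yields a genuine norm limit, and the strict convergence $u_t^*r_1r_1^*u_t\to\eins$ forces the limit to decompose as $(\phi,\Iu)\oplus_{r_1,r_\infty}(\theta^\infty,\Iy^\infty)$ with $(\phi,\Iu)$ automatically proper. Your proposed toolkit cannot substitute for this convergence-producing path; it is the essential missing ingredient.
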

\begin{proof}
We note that \ref{thm:existence:2} can be viewed as a special case of \ref{thm:existence:1} in light of \autoref{def:KKG}, hence we only need to prove the first part.

We know from \autoref{prop:isa-implies-absorption} that $\beta\cc\beta\otimes\id_{\CO_\infty}$.
By \autoref{thm:strong-ssa-abs}, it follows in fact that the first factor embedding $(B,\beta) \to (B\otimes \CO_\infty, \beta \otimes \id_{\CO_\infty})$ is strongly asympotically unitarily equivalent to a properly cocycle conjugacy, which is anchored since the first factor embedding is trivially anchored.
We may therefore, without any loss of generality, replace $\beta$ by $\beta\otimes\id_{\CO_\infty}$ in our claim and assume that $\beta$ is conjugate to $\beta\otimes\id_{\CO_\infty}$. 
Depending on whether $G$ is exact or not, we argue as in \autoref{rem:exact-case}, and apply \autoref{cor:special-relative-O2-embedding} and \autoref{thm:equi-O2-embedding} with $\beta\otimes\id_{\CO_2}$ in place of $\beta$.
This allows us to find a proper cocycle embedding $(\theta,\Iy): (A,\alpha)\to (B,\beta)$ and a unital embedding $\iota_0: \CO_2\to \CM(B)^{\beta}$ whose range commutes with $\theta$ and $\Iy$.

Using that $\beta$ is strongly stable, let us choose a sequence of isometries $r_n\in\CM(B)^\beta$ such that $\sum_{n=1}^\infty r_nr_n^*=\eins$ in the strict topology.
As before we let $\set{e_{k,\ell}\mid k,\ell\geq 1}$ be a set of matrix units generating $\CK$.
We then consider the non-degenerate embedding
\[
\iota: \CO_2\otimes\CK\to\CM(B)^\beta,\quad \iota(a\otimes e_{k,\ell})=r_k\iota_0(a)r_\ell^*,\quad k,\ell\geq 1.
\]
Using \autoref{lem:special-unitary-path}, we may find a continuous unitary path $u: [0,\infty)\to\CU(\eins+\iota(\CO_2\otimes\CK))\subseteq\CU(\CM(B)^\beta)$ with $u_0=\eins$ such that 
\begin{equation}\label{eq:utr1r1ut}
\eins=\lim_{t\to\infty} u_t^*\iota(\eins\otimes e_{1,1})u_t=\lim_{t\to\infty} u_t^* r_1r_1^* u_t
\end{equation}
holds in the strict topology as $t\to\infty$, and moreover $u_tb$ converges in norm as $t\to\infty$ for all $b\in B$.

By \autoref{cor:inf-repeats-are-absorbing}, we have that $(\theta^\infty,\Iy^\infty)$ is an absorbing cocycle representation, where we note that the infinite repeat is meant to be formed via the sequence $r_n$.
This ensures that the range of $\iota$, and therefore also the range of $u$, commutes pointwise with the range of $\theta^\infty$ and $\Iy^\infty$.
It follows from \cite[Corollary 3.17]{GabeSzabo22} that we can find some cocycle representation $(\psi,\Iv): (A,\alpha)\to(\CM(B),\beta)$ that forms an $(\alpha,\beta)$-Cuntz pair together with $(\theta^\infty,\Iy^\infty)$ such that one has $z=\big[ (\psi,\Iv), (\theta^\infty,\Iy^\infty) \big]_h$.

By the properties of the unitary path $u$ constructed above, we have for all $a\in A$ that
\[
u_t\psi(a)u_t^* = u_t(\underbrace{\psi(a)-\theta^\infty(a)}_{\in B})u_t^*+\theta^\infty(a)
\]
converges in norm as $t\to\infty$.
Let $\phi'=\lim_{t\to\infty} \ad(u_t)\circ\psi$ be the $*$-homomorphism arising as the point-norm limit.
We observe for every $a\in A$ 
\[
\begin{array}{ccl}
\|(\eins-r_1r_1^*)(\phi'(a)-\theta^\infty(a))\| &=& \dst\lim_{t\to\infty}\|(\eins-r_1r_1^*)(u_t\psi(a)u_t^*-\theta^\infty(a))\| \\
&=& \dst\lim_{t\to\infty} \|(\eins-u_t^*r_1r_1^*u_t)(\psi(a)-\theta^\infty(a))\| \\
&\stackrel{\eqref{eq:utr1r1ut}}{=}& 0.
\end{array}
\]
Let us consider the isometry $r_\infty=\sum_{n=1}^\infty r_{n+1}r_n^*\in\CM(B)^\beta$, which has the property that $r_1 r_1^* + r_\infty r_\infty^*=\eins$.
Since all partial isometries of the form $r_kr_\ell^*$ commute with the range of $\theta^\infty$, we can conclude that the projection $r_1r_1^*$ commutes with the range of $\phi'$.
Hence it follows for all $a\in A$ that 
\[
\begin{array}{ccl}
\phi'(a) &=& r_1r_1^*\phi'(a)+(\eins-r_1r_1^*)\phi'(a) \\
&=& r_1r_1^*\phi'(a)+(\eins-r_1r_1^*)\theta^\infty(a) \\
&=& \phi(a)\oplus_{r_1,r_\infty}\theta^\infty(a),
\end{array}
\]
where $\phi: A\to\CM(B)$ is the $*$-homomorphism defined as $\phi(a)=r_1^*\phi'(a)r_1$.
Appealing to the properties of the unitary path $u$ once more, we have for all $g\in G$ that
\[
u_t\Iv_gu_t^*=u_t(\underbrace{\Iv_g-\Iy^\infty_g}_{\in B})u_t^*+\Iy_g^\infty
\]
converges in norm as $t\to\infty$.
Since the pointwise difference $\Iv-\Iy^\infty$ is norm-continuous (see \cite[Proposition 6.9]{Szabo21cc}), this convergence is uniform over compact subsets of $G$.
Let $\Iu'_\bullet=\lim_{t\to\infty} u_t\Iv_\bullet u_t^*$ be the $\beta$-cocycle arising as the pointwise limit in norm.
We observe for every $g\in G$ that
\[
\begin{array}{ccl}
\|(\eins-r_1r_1^*)(\Iu'_g-\Iy^\infty_g)\| &=& \dst\lim_{t\to\infty}\|(\eins-r_1r_1^*)(u_t\Iv_gu_t^*-\Iy^\infty_g)\| \\
&=& \dst\lim_{t\to\infty} \|(\eins-u_t^*r_1r_1^*u_t)(\Iv_g-\Iy^\infty_g)\| \\
&\stackrel{\eqref{eq:utr1r1ut}}{=}& 0.
\end{array}
\]
Since all partial isometries of the form $r_kr_\ell^*$ commute commute with the range of $\Iy^\infty$, we can conclude that the projection $r_1r_1^*$ commutes with the range of $\Iu'$.
Hence it follows for all $g\in G$ that 
\[
\renewcommand\arraystretch{1.25}
\begin{array}{ccl}
\Iu'_g &=& r_1r_1^*\Iu'_g + (\eins-r_1r_1^*)\Iu'_g \\
&=& r_1r_1^*\Iu'_g + (\eins-r_1r_1^*)\Iy^\infty_g \\
&=& \Iu_g\oplus_{r_1,r_\infty}\Iy^\infty_g,
\end{array}
\]
where $\Iu: G\to\CU(\CM(B))$ is the $\beta$-cocycle defined as $\Iu_g=r_1^*\Iu'_gr_1$.
In conclusion, we have constructed a cocycle representation $(\phi,\Iu): (A,\alpha)\to (\CM(B),\beta)$ such that the two $(\alpha,\beta)$-Cuntz pairs
\[
\big( (\phi,\Iu)\oplus_{r_1,r_\infty}(\theta^\infty,\Iy^\infty), (\theta^\infty,\Iy^\infty) \big) \quad\text{and}\quad \big( (\psi,\Iv), (\theta^\infty,\Iy^\infty) \big)
\]
are homotopic.
Note that our choice of isometries to define the Cuntz sum leads to the equation $(\theta^\infty,\Iy^\infty)=(\theta,\Iy)\oplus_{r_1,r_\infty}(\theta^\infty,\Iy^\infty)$.
In particular, we see that $(\phi,\Iu)$ and $(\theta,\Iy)$ also necessarily form an $(\alpha,\beta)$-Cuntz pair representing the class $z$.
Since $(\theta,\Iy)$ was a proper cocycle morphism, so is hence $(\phi,\Iu)$.
By construction (and \autoref{def:KKG}) it is clear that $[(\theta,\Iy)]_h=0$, so we may conlude that
\[ 
z=\big[ (\phi,\Iu), (\theta,\Iy) \big]_h=[(\phi,\Iu)]_h-[(\theta,\Iy)]_h=[(\phi,\Iu)]_h.
\]
Finally, if $\phi$ is not an embedding, we may replace $(\phi,\Iu)$ by $(\phi,\Iu)\oplus (\theta,\Iy)$, the homotopy class of which also equals $z$.
This finishes the proof.
\end{proof}

\begin{theorem} \label{thm:unital-existence}
Suppose $G$ is exact.\footnote{When $G$ is non-exact, an action $\beta$ as in this theorem cannot exist.}
Let $A$ be a separable exact unital \cstar-algebra with an action $\alpha: G\curvearrowright A$.
Let $B$ be a unital Kirchberg algebra and $\beta: G\curvearrowright B$ an amenable and  isometrically shift-absorbing action.
Then for every $x\in KK^G(\alpha,\beta)$ with $[\eins_A]_0\otimes x=[\eins_B]_0\in K_0(B)$\footnote{To make sense of this formula, we can view $x\in KK(A,B)$ via the forgetful map. The canonical identification $K_0(\_)\cong KK(\IC,\_)$ and the Kasparov product allow us to make sense of this compatibility formula of $x$ with the $K_0$-classes of the unit elements.},
there exists a unital cocycle embedding $(\psi,\Iv): (A,\alpha)\to (B,\beta)$ such that $KK^G(\psi,\Iv)=x$.
\end{theorem}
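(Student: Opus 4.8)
The plan is to deduce this unital statement from the non-unital existence theorem \autoref{thm:existence} by passing to the stabilization and then cutting down to a corner of the correct $K_0$-class.

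First I would set $B^s=B\otimes\CK$ and $\beta^s=\beta\otimes\id_\CK$. Then $B^s$ is a stable Kirchberg algebra, and $\beta^s$ is strongly stable (the isometries $\eins_B\otimes v_n$ for a sequence of isometries $v_n\in\CM(\CK)$ with $\sum_n v_nv_n^*=\eins$ witness this), amenable, and isometrically shift-absorbing (since $F_{\infty,\beta^s}(B^s)\cong F_{\infty,\beta}(B)$ equivariantly, the map from \autoref{prop:the-condition} transfers). The canonical corner inclusion $\iota\colon (B,\beta)\to(B^s,\beta^s)$, $b\mapsto b\otimes e_{1,1}$, is an anchored proper cocycle morphism inducing a $KK^G$-equivalence. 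As $G$ is exact, \autoref{thm:existence}(ii) applies to $\beta^s$, and applied to the image $\iota_*x\in KK^G(\alpha,\beta^s)$ it yields an anchored proper cocycle embedding $(\phi_0,\Iu_0)\colon(A,\alpha)\to(B^s,\beta^s)$ with $KK^G(\phi_0,\Iu_0)=\iota_*x$.

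Next I would exploit the $K_0$-hypothesis. Since $A$ is unital and $(\phi_0,\Iu_0)$ is proper, $p:=\phi_0(\eins_A)$ is a projection in $B^s$, and forgetting the $G$-action one computes $[p]_0=[\eins_A]_0\otimes KK^G(\phi_0,\Iu_0)=[\eins_A]_0\otimes x=[\eins_B]_0$ in $K_0(B^s)\cong K_0(B)$; that is, $[p]_0=[\eins_B\otimes e_{1,1}]_0$. As $B^s$ is purely infinite and simple, two projections with equal $K_0$-class are Murray--von Neumann equivalent (see \cite{Rordam}), so there is a partial isometry $s\in B^s$ with $s^*s=p$ and $ss^*=\eins_B\otimes e_{1,1}$. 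I would then define
\[
\psi=\ad(s)\circ\phi_0\colon A\to (\eins_B\otimes e_{1,1})B^s(\eins_B\otimes e_{1,1})\cong B,\qquad \Iv_g=s\,\Iu_{0,g}\,\beta^s_g(s)^*.
\]
Using the identity $\Iu_{0,g}\beta^s_g(p)\Iu_{0,g}^*=\phi_0(\alpha_g(\eins_A))=p$ together with the cocycle identity for $\Iu_0$, a direct computation shows that $\psi$ is a unital injective $*$-homomorphism into $B$ (note $\phi_0=s^*\psi(\cdot)s$ recovers $\phi_0$, forcing injectivity), that each $\Iv_g$ is a unitary in the corner $\cong B$, that $g\mapsto\Iv_g$ is a norm-continuous $\beta$-cocycle, and that $\ad(\Iv_g)\circ\beta_g\circ\psi=\psi\circ\alpha_g$. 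Hence $(\psi,\Iv)$ is a unital cocycle embedding $(A,\alpha)\to(B,\beta)$.

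It remains to identify the $KK^G$-class, which I regard as the crux. Left multiplication by $s$ restricts to a map $\overline{\phi_0(A)(B^s)^{\Iu_0}}\to\overline{\psi(A)(B^s)^{\Iv}}$ which, by the relations $s^*s=p$ and $\psi(a)s=s\phi_0(a)$, is an isometric Hilbert-module isomorphism intertwining the left actions $\phi_0$ and $\psi$; one checks it is equivariant for the twisted $G$-actions induced by $\Iu_0$ and $\Iv$. By \autoref{prop:KKG-simplified} the two associated Kasparov triples thus represent the same element, so $KK^G\big(\iota\circ(\psi,\Iv)\big)=KK^G(\phi_0,\Iu_0)=\iota_*x$. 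Since $\iota$ is a $KK^G$-equivalence, compatibility of composition with the Kasparov product and invertibility of the class of $\iota$ give $KK^G(\psi,\Iv)=x$, completing the proof. The main obstacle is the bookkeeping in this final step — confirming that the partial-isometry conjugation is $KK^G$-preserving — together with checking the cocycle and equivariance identities for $(\psi,\Iv)$ in the previous step; the $K_0$-hypothesis enters precisely to guarantee the existence of the conjugating partial isometry $s$.
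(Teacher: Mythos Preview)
Your proof is correct and follows essentially the same strategy as the paper's: stabilize to $(B^s,\beta^s)$, apply \autoref{thm:existence} to realize $\iota_*x$, then use the $K_0$-hypothesis to cut down to the corner $\eins_B\otimes e_{1,1}$. The one technical difference is that the paper uses a full unitary $U\in\CU(\eins+B^s)$ implementing the equivalence of projections (available since $B^s$ is a stable Kirchberg algebra), so that $\ad(U)\circ(\phi_0,\Iu_0)$ is still a proper cocycle morphism with the same $KK^G$-class, and then invokes \cite[Proposition~6.14]{Szabo21cc} to discard the cocycle piece living in the complementary corner; you instead use only a partial isometry and identify the $KK^G$-class directly via \autoref{prop:KKG-simplified} and an explicit Hilbert-module isomorphism. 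Your route is slightly more self-contained in that it avoids the external citation, at the cost of the small Hilbert-module verification; both work equally well.
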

\begin{proof}
Denote $B^s=B\otimes\CK$ and $\beta^s=\beta\otimes\id_\CK$.
Let us consider the invertible element $\kappa\in KK^G(\beta,\beta^s)$ given by the canonical inclusion $B\subseteq B^s$, $b\mapsto b\otimes e_{1,1}$.
By \autoref{thm:existence}, we can find a proper cocycle embedding $(\phi,\Iu): (A,\alpha)\to (B^s,\beta^s)$ such that $KK^G(\phi,\Iu)=x\otimes\kappa$.
By the assumptions on $x$, it follows that the projections $\phi(\eins_A)$ and $\eins_B\otimes e_{1,1}$ represent the same $K_0$-class.
Since $B^s$ is a stable Kirchberg algebra, these projections are unitarily equivalent, so we find a unitary $U\in\CU(\eins+B^s)$ with $U\phi(\eins)U^*=\eins_B\otimes e_{1,1}$.
Thus $\ad(U)\circ(\phi,\Iu): (A,\alpha)\to(B^s,\beta^s)$ is of the form
$(\psi\otimes e_{1,1},\Iv\otimes e_{1,1}+\Iv')$
for a unital cocycle embedding $(\psi,\Iv): (A,\alpha)\to (B,\beta)$ and a $\beta$-cocycle $\Iv'$ with values in $(\eins-e_{1,1})+(\eins-e_{1,1})B^s(\eins-e_{1,1})$.
Using \cite[Proposition 6.14]{Szabo21cc}, it follows that indeed
\[
KK^G(\psi,\Iv)=KK^G(\ad(U)\circ(\phi,\Iu))\otimes\kappa^{-1} = KK^G(\phi,\Iu)\otimes \kappa^{-1} = x.
\]
\end{proof}

\subsection{Uniqueness}

The following is the uniqueness theorem underpinning our classification theory.
Its strategy of proof can be regarded as the dynamical generalization of \cite[Lemma 7.4]{Gabe21}.

\begin{theorem} \label{thm:uniqueness}
Let $A$ be a separable exact \cstar-algebra with an action $\alpha: G\curvearrowright A$.
Let $B$ be a Kirchberg algebra and $\beta: G\curvearrowright B$ a strongly stable, amenable and isometrically shift-absorbing action.
Let $(\phi,\Iu), (\psi,\Iv): (A,\alpha)\to(B,\beta)$ be two proper cocycle embeddings that form an anchored Cuntz pair.
Then $KK^G(\phi,\Iu)=KK^G(\psi,\Iv)$ if and only if $(\phi,\Iu)$ and $(\psi,\Iv)$ are strongly asymptotically unitarily equivalent.
\end{theorem}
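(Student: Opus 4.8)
The plan is to prove the two implications separately; the forward direction (strong asymptotic unitary equivalence $\Rightarrow$ equal $KK^G$-class) is routine, while the reverse direction carries the real content and is a dynamical version of \cite[Lemma 7.4]{Gabe21}.

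For the forward direction, suppose a norm-continuous path $v: [0,\infty)\to\CU(\eins+B)$ with $v_0=\eins$ implements a strong asymptotic unitary equivalence. Reparametrising $[0,\infty]$ over $[0,1]$ and assigning the endpoint the value $(\psi,\Iv)$, the family $t\mapsto\big(\ad(v_t)\circ(\phi,\Iu),(\psi,\Iv)\big)$ assembles into an $(\alpha,\beta[0,1])$-Cuntz pair; here one uses exactly the membership conditions $\psi(a)-v_t\phi(a)v_t^*\in B$ and $\Iv_g-v_t\Iu_g\beta_g(v_t)^*\in B$ from \autoref{def:various-equivalences} together with the stated limits. This is a homotopy from $\big((\phi,\Iu),(\psi,\Iv)\big)$ to the degenerate pair $\big((\psi,\Iv),(\psi,\Iv)\big)$, whence $KK^G(\phi,\Iu)=KK^G(\psi,\Iv)$ by \autoref{def:KKG}.

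For the hard direction I would first reduce to a convenient situation. Since $\beta$ is amenable and isometrically shift-absorbing, \autoref{prop:the-condition} gives $\beta\cc\beta\otimes\id_{\CO_\infty}$, and by \autoref{thm:strong-ssa-abs} the first-factor embedding $(B,\beta)\to(B\otimes\CO_\infty,\beta\otimes\id_{\CO_\infty})$ is strongly asymptotically unitarily equivalent to a proper cocycle conjugacy. As this embedding is a $KK^G$-equivalence and strong asymptotic unitary equivalence is compatible with composition by a proper cocycle conjugacy in both directions, I may replace $(\phi,\Iu),(\psi,\Iv)$ by $(\phi(\cdot)\otimes 1,\Iu\otimes 1),(\psi(\cdot)\otimes 1,\Iv\otimes 1)$ into $(B\otimes\CO_\infty,\beta\otimes\id_{\CO_\infty})$. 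All hypotheses (strongly stable, amenable, isometrically shift-absorbing, Kirchberg) as well as anchoredness and equality of $KK^G$-classes survive this passage, and now there is a distinguished unital copy $1\otimes\CO_\infty\subseteq\CM(B\otimes\CO_\infty)^{\beta\otimes\id}$ commuting with the ranges of both cocycle morphisms. Keeping the notation $(\phi,\Iu),(\psi,\Iv):(A,\alpha)\to(B,\beta)$, I thus have a unital $\CO_\infty\subseteq\CM(B)^\beta$ commuting with all four objects, and inside it a unital copy of $\CO_2$.

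The core is then an absorb--compare--absorb argument. Because $\phi$ is injective into the simple algebra $B$ it is full, so by \autoref{cor:inf-repeats-are-absorbing} its infinite repeat $(\theta,\Iy):=(\phi^\infty,\Iu^\infty)$, formed using isometries of the form $r_n\otimes 1\in\CM(B)^\beta$, is absorbing; by this choice $\theta$ and $\Iy$ again commute with the distinguished $\CO_\infty$, and hence with the fixed copy of $\CO_2$. Fullness of $\phi$ and $\psi$ together with \autoref{rem:ordinary-dom} (applicable since $B\cong B\otimes\CO_\infty\otimes\CK$, with $\theta$ automatically weakly nuclear as $B$ is nuclear) shows that $\phi$ and $\psi$ each weakly contain $\theta$ as ordinary $*$-homomorphisms, so \autoref{lem:key-lemma} yields that $(\phi,\Iu)$ and $(\psi,\Iv)$ each approximately $1$-dominate $(\theta,\Iy)$. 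The commuting copies of $\CO_2$ and $\CO_\infty$ are precisely what \autoref{lem:strong-sum-absorption} requires, so two applications give that $(\phi,\Iu)$ is strongly asymptotically unitarily equivalent to $(\phi,\Iu)\oplus(\theta,\Iy)$ and $(\psi,\Iv)$ to $(\psi,\Iv)\oplus(\theta,\Iy)$. It remains to compare the two stabilised morphisms, and this is where the equality of $KK^G$-classes and the anchoredness enter: since $\big((\phi,\Iu),(\psi,\Iv)\big)$ is an anchored Cuntz pair of class zero in $KK^G(\alpha,\beta)$ and $(\theta,\Iy)$ is absorbing, the equivariant stable uniqueness theorem of \cite{GabeSzabo22} provides that $(\phi,\Iu)\oplus(\theta,\Iy)$ and $(\psi,\Iv)\oplus(\theta,\Iy)$ are strongly asymptotically unitarily equivalent. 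Chaining the three equivalences and undoing the reduction completes the proof. I expect this last comparison to be the main obstacle: straightening a homotopy of anchored Cuntz pairs into an honest unitary path needs the full strength of the stable uniqueness machinery, and the anchoredness hypothesis is exactly what allows the cocycle component of the resulting path to be taken to start at $\eins$; the only supporting subtlety is arranging the simultaneous commuting copies of $\CO_2$ and $\CO_\infty$, which the $\CO_\infty$-tensor reduction disposes of cleanly.
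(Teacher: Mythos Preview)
Your overall architecture matches the paper's, but there is a real gap in the choice of the auxiliary morphism $(\theta,\Iy)$. You set $(\theta,\Iy)=(\phi^\infty,\Iu^\infty)$, the infinite repeat of $(\phi,\Iu)$. This is a cocycle \emph{representation} into $\CM(B)$, not a proper cocycle morphism: $\phi^\infty(a)=\sum_n r_n\phi(a)r_n^*$ only converges strictly, and $\Iu^\infty$ does not take values in $\CU(\eins+B)$. Consequently \autoref{lem:strong-sum-absorption} does not apply; indeed its conclusion fails outright here, since a proper cocycle morphism cannot be strongly asymptotically unitarily equivalent to one whose range leaves $B$ (the condition $\psi(a)-u_t\phi(a)u_t^*\in B$ in \autoref{def:various-equivalences} is violated). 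So the two outer equivalences in your chain, $(\phi,\Iu)\sim(\phi,\Iu)\oplus(\phi^\infty,\Iu^\infty)$ and the analogous one for $(\psi,\Iv)$, are not available.

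The paper separates the two roles you tried to merge. It first produces, via the dynamical $\CO_2$-embedding theorem and \autoref{cor:special-relative-O2-embedding}, a genuinely \emph{proper} cocycle embedding $(\theta,\Iy):(A,\alpha)\to(B,\beta)$ together with a commuting unital $\CO_2\subset\CM(B)^\beta$; this is what feeds into \autoref{lem:strong-sum-absorption}. The absorbing representation is then the infinite repeat $(\theta^\infty,\Iy^\infty)$ of \emph{this} map, and stable uniqueness gives only $(\phi,\Iu)\oplus(\theta^\infty,\Iy^\infty)\sim(\psi,\Iv)\oplus(\theta^\infty,\Iy^\infty)$. Bridging the gap between $\theta^\infty$ and a single $\theta$ requires an additional compression step, carried out with the special unitary path of \autoref{lem:special-unitary-path} applied inside the $\CO_2\otimes\CK$ that the isometries $r_n$ and the commuting $\CO_2$ generate. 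Your sketch omits both the construction of a proper $(\theta,\Iy)$ (hence implicitly the dynamical $\CO_2$-embedding input) and this compression argument; these are the missing ideas.
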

\begin{proof}
Since the ``if'' part is clear, we prove the ``only if'' part.
By \autoref{prop:isa-implies-absorption}, we have $\beta\cc\beta\otimes\id_{\CO_\infty}$, so with \autoref{thm:strong-ssa-abs}, it follows that there exists a proper cocycle conjugacy
\[
(\kappa,\Ix): (B,\beta)\to (B\otimes\CO_\infty,\beta\otimes\id_{\CO_\infty})
\]
that is strongly asymptotically unitarily equivalent to the equivariant first-factor embedding $\id_B\otimes\eins_{\CO_\infty}$.
We may in particular conclude that the proper cocycle morphism $(\kappa,\Ix)^{-1}\circ(\id_B\otimes\eins_{\CO_\infty},\eins)$ is strongly asymptotically inner.
This way we see that in order to prove the claim, it suffices to show that the two proper cocycle morphisms
\[
(\phi\otimes\eins_{\CO_\infty},\Iu\otimes\eins_{\CO_\infty}), (\psi\otimes\eins_{\CO_\infty},\Iv\otimes\eins_{\CO_\infty}): (A,\alpha)\to (B\otimes\CO_\infty,\beta\otimes\id_{\CO_\infty})
\]
are strongly asymptotically unitarily equivalent.
By appealing to \autoref{cor:special-relative-O2-embedding}, we may hence assume without loss of generality that there exists a unital inclusion $\CO_\infty\subset\CM(B)^\beta$ that commutes pointwise with the ranges of the maps $\phi,\psi,\Iu,\Iv$, and moreover a proper cocycle embedding $(\theta,\Iy): (A,\alpha)\to (B,\beta)$ and a unital inclusion $\iota_0: \CO_2\to\CM(B)^\beta$ whose range commutes with the range of $\theta$ and $\Iy$.

Combining \autoref{rem:ordinary-dom} and \autoref{lem:key-lemma}, it follows that both $(\phi,\Iu)$ and $(\psi,\Iv)$ approximately 1-dominate $(\theta,\Iy)$.
By \autoref{lem:strong-sum-absorption}, it follows that $(\phi,\Iu)$ is strongly asymptotically unitarily equivalent to $(\phi\oplus\theta,\Iu\oplus\Iy)$ and that $(\psi,\Iv)$ is strongly asymptotically unitarily equivalent to $(\psi\oplus\theta,\Iv\oplus\Iy)$.
In particular it suffices to show that the two proper cocycle morphisms
\[
(\phi\oplus\theta,\Iu\oplus\Iy), (\psi\oplus\theta,\Iv\oplus\Iy): (A,\alpha)\to (B,\beta)
\]
are strongly asymptotically unitarily equivalent, which we are about to do.

As before, we pick a sequence of isometries $r_n\in\CM(B)^\beta$ such that $\sum_{n=1}^\infty r_n r_n^*=\eins$ holds in the strict topology, and construct all infinite repeats by using this sequence.
Let us also consider the isometry $r_\infty=\sum_{n=1}^\infty r_{n+1}r_n^*$, which fits into the equation $r_1r_1^*+r_\infty r_\infty^*=\eins$.
Furthermore we let $\set{e_{k,\ell}\mid k,\ell\geq 1}$ be a set of matrix units generating $\CK$.

By \autoref{cor:inf-repeats-are-absorbing}, the infinite repeat $(\theta^\infty,\Iy^\infty)$ is an absorbing cocycle representation.
We then consider the non-degenerate embedding
\[
\iota: \CO_2\otimes\CK\to\CM(B)^\beta,\quad \iota(a\otimes e_{k,\ell})=r_k\iota_0(a)r_\ell^*,\quad k,\ell\geq 1.
\]
Using \autoref{lem:special-unitary-path}, we may find a continuous unitary path $u: [0,\infty)\to\CU(\eins+r_\infty\iota(\CO_2\otimes\CK)r_\infty^*)\subseteq\CU(\CM(B)^\beta)$ %with $u_0=\eins$ 
such that 
\[
r_\infty r_\infty^* = \lim_{t\to\infty} u_t^*r_\infty\iota(\eins\otimes e_{1,1})r_\infty^*u_t=\lim_{t\to\infty} u_t^* r_2r_2^* u_t
\] 
in the strict topology as $t\to\infty$. %, and moreover $u_tb$ converges in norm as $t\to\infty$ for all $b\in B$. 
Note that by construction, the range of $\iota$ commutes pointwise with the range of $\theta^\infty$ and $\Iy^\infty$, and therefore the range of $u$ commutes pointwise with the range of $0\oplus_{r_1,r_\infty}\theta^\infty$ and $0\oplus_{r_1,r_\infty}\Iy^\infty$.
Since the range of $u$ clearly acts like a unit on $r_1\CM(B)r_1^*$, we also observe the strict convergence
\[
\eins  =\lim_{t\to\infty} u_t^* (r_1r_1^* + r_2r_2^*) u_t.
\] 
By assumption, we have an equality of classes
\[
0=KK^G(\phi,\Iu)-KK^G(\psi,\Iv)=\big[ (\phi,\Iu), (\psi,\Iv) \big] \quad\text{in } KK^G(\alpha,\beta).
\]
Since we assumed the pair of proper cocycle morphisms to be anchored, it thus follows from the stable uniqueness theorem \cite[Theorem 5.4]{GabeSzabo22} that the cocycle representations $(\phi,\Iu)\oplus_{r_1,r_\infty}(\theta^\infty,\Iy^\infty)$ and $(\psi,\Iv)\oplus_{r_1,r_\infty}(\theta^\infty,\Iy^\infty)$ are strongly asymptotically unitarily equivalent.
In other words, we find a norm-continuous path of unitaries $w: [0,\infty)\to\CU(\eins+B)$ with $w_0=\eins$ such that
\[
\lim_{t\to\infty} w_t(\phi(a)\oplus_{r_1,r_\infty} \theta^\infty(a))w_t^*=\psi(a)\oplus_{r_1,r_\infty} \theta^\infty(a)
\]
for all $a\in A$, and
\[
\lim_{t\to\infty} \max_{g\in K} \| w_t(\Iu_g\oplus_{r_1,r_\infty}\Iy^\infty_g)\beta_g(w_t)^* - (\Iv_g\oplus_{r_1,r_\infty}\Iy^\infty_g) \| = 0
\]
for every compact set $K\subseteq G$. %\footnote{Here the Cuntz sum is meant to be formed with $r_1$ and $r_\infty$ each time.}
For ease of notation we shall denote $p_2=r_1r_1^* + r_2r_2^*$.
By reparameterizing and/or cutting off an initial segment of $u$, if necessary, we may additionally assume
\[
\lim_{t\to\infty} \| \big( u_t^* p_2 u_t - \eins \big) (w_t-\eins)  \| = 0 = \lim_{t\to\infty} \| \big( p_2 - \eins \big) (u_tw_tu_t^*-\eins)  \|,
\]
and that these norms are uniformly bounded above by $1/4$ over all $t\geq 0$.
Let us consider the norm-continuous path of elements $z': [0,\infty)\to \eins+p_2Bp_2$ given by
\[
z'_t = p_2u_tw_tu_t^*p_2+(\eins-p_2).
\]
Then $z'_0=\eins$ and $\sup_{t\geq 0} \|u_tw_tu_t^*-z'_t\|<1$, so we may define the continuous unitary path $z: [0,\infty)\to\CU(\eins+p_2Bp_2)$ via $z_t=z_t'|z_t'|^{-1}$.
By construction, we can see that $\lim_{t\to\infty} \|z_t-u_tw_tu_t^*\|=0$.
By choice of the unitary paths $u$ and $w$, we can now conclude
\[
\begin{array}{cl}
\multicolumn{2}{l}{ \dst \lim_{t\to\infty} z_t(\phi(a)\oplus_{r_1,r_\infty} \theta^\infty(a))z_t^* } \\
=& \dst \lim_{t\to\infty} u_tw_tu_t^*(\phi(a)\oplus_{r_1,r_\infty} \theta^\infty(a))u_tw_t^*u_t^* \\
=& \dst \lim_{t\to\infty} u_tw_t(\phi(a)\oplus_{r_1,r_\infty} \theta^\infty(a))w_t^*u_t^* \\
=& \dst \lim_{t\to\infty} u_t (\psi(a)\oplus_{r_1,r_\infty} \theta^\infty(a)) u_t^* \\
=& \psi(a)\oplus_{r_1,r_\infty} \theta^\infty(a)
\end{array}
\]
for all $a\in A$, and likewise
\[
\lim_{t\to\infty} \max_{g\in K} \| z_t(\Iu_g\oplus_{r_1,r_\infty}\Iy^\infty_g)\beta_g(z_t)^* - (\Iv_g\oplus_{r_1,r_\infty}\Iy^\infty_g) \| = 0
\]
for every compact set $K\subseteq G$.
By multiplying all the involved elements with $p_2$, we can see
\[
\lim_{t\to\infty} z_t(r_1\phi(a)r_1^*+ r_2\theta(a)r_2^*)z_t^*=r_1\psi(a)r_1^* + r_2\theta(a)r_2^*
\]
for all $a\in A$, and
\[
\lim_{t\to\infty} \max_{g\in K} \| z_t(r_1\Iu_gr_1^*+r_2\Iy_gr_2^*)\beta_g(z_t)^* - (r_1\Iv_gr_1^* + r_2\Iy_gr_2^*) \| = 0
\]
for every compact set $K\subseteq G$.
Consider the isometry $R=r_1r_1^*+r_2r_\infty^*\in\CM(B)^\beta$, which satisfies $RR^*=p_2$.
Then $v: [0,\infty)\to\CU(\eins+B)$ given by $v_t=R^*z_tR$ is a unitary path with $v_0=\eins$.
Since $R^*r_1=r_1$ and $R^*r_2=r_\infty$, we can consider the above limit properties and conjugate the terms via $R^*(\dots)R$, in order to finally arrive at
\[
\lim_{t\to\infty} v_t(\phi(a)\oplus_{r_1,r_\infty} \theta(a))v_t^*=\psi(a)\oplus_{r_1,r_\infty} \theta(a)
\]
for all $a\in A$, and
\[
\lim_{t\to\infty} \max_{g\in K} \| v_t(\Iu_g\oplus_{r_1,r_\infty}\Iy_g)\beta_g(v_t)^* - (\Iv_g\oplus_{r_1,r_\infty}\Iy_g) \| = 0
\]
for every compact set $K\subseteq G$.
In particular, we have just shown that $(\phi\oplus\theta,\Iu\oplus\Iy)$ and $(\psi\oplus\theta,\Iv\oplus\Iy)$ are indeed strongly asymptotically unitarily equivalent.
This finishes the proof.
\end{proof}

\begin{theorem} \label{thm:unital-uniqueness}
Suppose $G$ is exact.
Let $A$ be a separable unital exact \cstar-algebra with an action $\alpha: G\curvearrowright A$.
Let $B$ be a unital Kirchberg algebra and $\beta: G\curvearrowright B$ an amenable and isometrically shift-absorbing action.
Let $(\phi,\Iu), (\psi,\Iv): (A,\alpha)\to(B,\beta)$ be two unital cocycle embeddings.
Then $KK^G(\phi,\Iu)=KK^G(\psi,\Iv)$ if and only if $(\phi,\Iu)$ and $(\psi,\Iv)$ are asymptotically unitarily equivalent.
\end{theorem}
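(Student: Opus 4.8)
The plan is to reduce the statement to the stable uniqueness theorem \autoref{thm:uniqueness} by a stabilization argument, in the same spirit as the reduction from \autoref{thm:existence} to \autoref{thm:unital-existence}. Since the ``only if'' direction is immediate, I focus on the ``if'' direction. Set $B^s=B\otimes\CK$ and $\beta^s=\beta\otimes\id_\CK$, and let $\kappa\in KK^G(\beta,\beta^s)$ be the invertible class of the corner inclusion $(\iota,\eins)\colon(B,\beta)\to(B^s,\beta^s)$, $\iota(b)=b\otimes e_{1,1}$. One first checks that $\beta^s$ is strongly stable and inherits amenability and isometric shift-absorption from $\beta$, so that $(B^s,\beta^s)$ satisfies the hypotheses of \autoref{thm:uniqueness}. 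Composing the given unital cocycle embeddings with $(\iota,\eins)$ yields proper cocycle embeddings $(\phi^s,\Iu^s),(\psi^s,\Iv^s)\colon(A,\alpha)\to(B^s,\beta^s)$, where $\phi^s=\phi\otimes e_{1,1}$ and $\psi^s=\psi\otimes e_{1,1}$ are unital onto the corner $p=\eins_B\otimes e_{1,1}$, while $\Iu^s_g=\Iu_g\otimes e_{1,1}+(\eins-p)$ and similarly for $\Iv^s$. By the compatibility of composition with the Kasparov product established after \autoref{prop:KKG-simplified}, one has $KK^G(\phi^s,\Iu^s)=KK^G(\phi,\Iu)\otimes\kappa=KK^G(\psi,\Iv)\otimes\kappa=KK^G(\psi^s,\Iv^s)$.

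The key step is to arrange that $\big((\phi^s,\Iu^s),(\psi^s,\Iv^s)\big)$ is an \emph{anchored} Cuntz pair, which is not automatic. The point is that the equivariance of $(\phi^s,\Iu^s)$ only constrains $\Iu^s_g$ on the corner $p$, whereas on the complementary corner one is free to replace the trivial cocycle by an arbitrary $\beta^s$-cocycle $\IW$ with values in $(\eins-p)+(\eins-p)B^s(\eins-p)$. This changes neither the property of being a proper cocycle embedding nor the $KK^G$-class, since by \autoref{prop:KKG-simplified} the representing Hilbert module $\overline{\phi^s(A)(B^s)^{\Iu^s}}=pB^s$ only feels the corner part of $\Iu^s$. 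Now the naive Cuntz pair has vanishing $KK^G$-class, so by \autoref{def:KKG} its homotopy class lies in the subgroup $H_{\beta^s}$ of cocycle-pair classes. Replacing the off-corner parts of $\Iu^s$ and $\Iv^s$ by a cocycle pair $(\IW,\IW')$ supported on $\eins-p$ adds the class $[(\IW,\IW')]$ to the Cuntz-pair class; since $\eins-p$ is $\beta^s$-invariant and $(\eins-p)B^s(\eins-p)$ is $\beta^s$-equivariantly isomorphic to $(B^s,\beta^s)$, every element of $H_{\beta^s}$ is realized there. Hence I would choose $(\IW,\IW')$ representing the inverse class, making the modified Cuntz pair anchored without altering the two $KK^G$-classes or the corner data.

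With the pair anchored and of equal $KK^G$-class, \autoref{thm:uniqueness} produces a norm-continuous path $v\colon[0,\infty)\to\CU(\eins+B^s)$ with $v_0=\eins$ implementing a strong asymptotic unitary equivalence of $(\phi^s,\Iu^s)$ and $(\psi^s,\Iv^s)$. It then remains to compress back to $B$: evaluating at $a=\eins_A$ shows $v_tpv_t^*\to p$, so $v_t$ asymptotically commutes with $p$ and $pv_tp$ is eventually invertible in the corner $pB^sp\cong B$; taking its polar part (after cutting off an initial segment and reparametrizing) gives a unitary path $\bar v_t\in\CU(B)$ with $\bar v_t\phi(a)\bar v_t^*\to\psi(a)$ and $\max_{g\in K}\|\Iv_g-\bar v_t\Iu_g\beta_g(\bar v_t)^*\|\to0$ for all $a\in A$ and compact $K\subseteq G$, which is precisely asymptotic unitary equivalence of $(\phi,\Iu)$ and $(\psi,\Iv)$. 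It is the polar correction in this last step that in general destroys the initial condition, explaining why only asymptotic rather than strong asymptotic unitary equivalence is claimed in the unital setting.

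The hard part will be the anchoring step. The cocycles $\Iu,\Iv$ carry a $K_1$-type homotopy invariant that is invisible to $KK^G$ (which quotients out $H_{\beta^s}$), so it cannot simply be ignored; the whole argument hinges on exploiting the freedom in the complementary corner together with the group structure of $H_{\beta^s}$ to cancel it. By contrast, the compression argument in the final paragraph is routine, as is the verification that strong stability, amenability and isometric shift-absorption pass from $\beta$ to $\beta^s$.
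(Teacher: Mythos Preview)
Your overall strategy — stabilize, anchor, apply \autoref{thm:uniqueness}, compress back to the corner — is exactly the paper's, and your compression argument in the final paragraph is essentially identical to the paper's. The gap is in the anchoring step.

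You assert two things without justification: first, that replacing the off-corner parts of $(\Iu^s,\Iv^s)$ by $(\IW,\IW')$ \emph{adds} the class $[(\IW+p,\IW'+p)]$ to the cocycle-pair class; second, that the extension-by-$p$ map from cocycle pairs on $(\eins-p)B^s(\eins-p)$ to $H_{\beta^s}$ is surjective. The first claim is about additivity of a block-diagonal decomposition with respect to the orthogonal projections $p$ and $\eins-p$, but the group structure on $\IE^G/{\sim_h}$ is defined via Cuntz sums with isometries $t_1,t_2$ satisfying $t_1t_1^*+t_2t_2^*=\eins$, and $p$ is \emph{not} the range of an isometry in $\CM(B^s)$ (since $e_{1,1}$ is not Murray--von~Neumann equivalent to $\eins_{\CM(\CK)}$). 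So your block-diagonal modification is not literally a Cuntz sum, and its interaction with the group structure requires a separate argument that you have not supplied. The second claim — that corner inclusion surjects onto $H_{\beta^s}$ — is about a group that is finer than $KK^G$, so stability of equivariant $KK$-theory does not immediately give it to you; the equivariant isomorphism $(\eins-p)B^s(\eins-p)\cong B^s$ only tells you about the $H$-group of the corner, not about how extension-by-$p$ relates it to $H_{\beta^s}$.

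The paper sidesteps both issues by a cleaner device. It applies \autoref{thm:existence}\ref{thm:existence:1} to the case $A=0$ (where a proper cocycle morphism is nothing but a norm-continuous $\beta^s$-cocycle) to obtain a single cocycle $\Ix:G\to\CU(\eins+B^s)$ with $[(\Iu',\Iv')]_h=[(\Ix,\eins)]_h$. Then it chooses isometries $r_1,r_2\in\CM(\CK)\subseteq\CM(B^s)^{\beta^s}$ with $r_1r_1^*+r_2r_2^*=\eins$ and the crucial property $r_1e_{1,1}=e_{1,1}$, so that $\Iu'=\Iu'\oplus_{r_1,r_2}\eins$ and $\iota\circ\psi=(\iota\circ\psi)\oplus_{r_1,r_2}0$. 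One then replaces $(\iota\circ\psi,\Iv')$ by $(\iota\circ\psi,\Iv'\oplus_{r_1,r_2}\Ix)$; this is now a genuine Cuntz sum, so standard additivity gives $[(\Iu',\Iv'\oplus_{r_1,r_2}\Ix)]_h=[(\Iu',\Iv')]_h-[(\Ix,\eins)]_h=0$, the pair is anchored, and the $KK^G$-class is unchanged by \cite[Proposition 6.14]{Szabo21cc}. In other words, the existence theorem (for $A=0$) does the work that your surjectivity claim was meant to do, and the specific choice of isometries converts the anchoring modification into an honest Cuntz sum, making the additivity immediate.
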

\begin{proof}
As before, the ``if'' part is clear, so from now on assume $KK^G(\phi,\Iu)=KK^G(\psi,\Iv)$ holds.

Denote $B^s=B\otimes\CK$ and $\beta^s=\beta\otimes\id_\CK$.
Let us consider the canonical inclusion $\iota: B\to B^s$ via $\iota(b)=b\otimes e_{1,1}$.
Consider the $\beta^s$-cocycles $\Iu'_g=\iota(\Iu_g)+\eins-e_{1,1}$ and $\Iv'_g=\iota(\Iv_g)+\eins-e_{1,1}$.
Applying \autoref{thm:existence}\ref{thm:existence:1} to $A=0$, we obtain a norm-continuous $\beta^s$-cocycle $\Ix: G\to\CU(\eins+B^s)$ such that $[(\Iu',\Iv')]_h=[(\Ix,\eins)]_h$.

Choose a pair of isometries $r_1,r_2\in\CM(\CK)\subseteq\CM(B^s)^{\beta^s}$ with $r_1r_1^*+r_2r_2^*=\eins$ and $r_1e_{1,1}=e_{1,1}$.
This leads in particular to the equation $\Iu'=\Iu'\oplus_{r_1,r_2}\eins$.
Then we have an equality (see \cite[Proposition 6.14]{Szabo21cc})
\[
KK^G(\iota\circ\phi,\Iu') = KK^G(\iota\circ\psi,\Iv'\oplus_{r_1,r_2}\Ix) 
\]
in $KK^G(\alpha,\beta^s)$.
By construction, we have 
\[
[(\Iu',\Iv\oplus\Ix)]_h=[(\Iu'\oplus_{r_1,r_2}\eins,\Iv'\oplus_{r_1,r_2}\Ix)]_h=[(\Iu',\Iv')]_h-[(\Ix,\eins)]_h=0.
\]
Thus we can apply \autoref{thm:uniqueness} and find a norm-continuous path of unitaries $v: [0,\infty)\to\CU(\eins+B^s)$ witnessing that $(\iota\circ\phi,\Iu')$ and $(\iota\circ\psi,\Iv'\oplus_{r_1,r_2}\Ix)$ are strongly asymptitically unitarily equivalent.
Then as $t\to\infty$, the unitaries $v_t$ approximately commute with $\eins_B\otimes e_{1,1}$.
Thus, after cutting away an initial segment of $v$, if necessary,\footnote{This is the reason why it is not always possible to arrange strong asymptotic unitary equivalence in the claim here.} we can define a norm-continuous path of unitaries
\[
u: [0,\infty)\to\CU(B),\ u_t=\iota^{-1}\Big( (\eins_B\otimes e_{1,1})v_t(\eins_B\otimes e_{1,1}) |(\eins_B\otimes e_{1,1})v_t(\eins_B\otimes e_{1,1})|^{-1} \Big)
\]
which then satisfies
\[
\lim_{t\to\infty} u_t\phi(a)u_t^*=\psi(a) \quad\text{and}\quad \lim_{t\to\infty} \max_{g\in K} \| u_t\Iu_g\beta_g(u_t)^* - \Iv_g \| = 0
\]
for all $a\in A$ and every compact set $K\subseteq G$.
This finishes the proof.
\end{proof}

%%%
\subsection{Characterizing asymptotic coboundaries}

We finish this section by applying the uniqueness theorem to determine in $K$-theoretic terms when cocycles on unital Kirchberg algebras can be realized as continuous limits of coboundaries.

\begin{defi}[cf.\ {\cite[Definition 1.4]{Szabo17ssa3}}]
Let $G$ be a second-countable, locally compact group.
Let $B$ be a \cstar-algebra and $\beta: G\curvearrowright B$ an action.
We say that a norm-continuous cocycle $\Iu: G\to\CU(\eins+B)$ is an \emph{asymptotic coboundary}, if there exists a continuous path of unitaries $v: [0,\infty)\to\CU(\eins+B)$ such that
\[
\lim_{t\to\infty} \max_{g\in K} \| \Iu_g-v_t\beta_g(v_t)^* \| =0
\]
for every compact set $K\subseteq G$.
If $\alpha: G\curvearrowright A$ is another action on a \cstar-algebra, then a proper cocycle conjugacy $(\phi,\Iu): (A,\alpha)\to (B,\beta)$ is called a very strong cocycle conjugacy, if $\Iu$ is an asymptotic coboundary.
\end{defi}

We recall the following observation due to Izumi:

\begin{prop}[see {\cite[Lemma 2.4]{Izumi04}}]
Let $G$ be a compact group and $\beta: G\curvearrowright B$ an action on a \cstar-algebra.
If $\Iu: G\to\CU(\eins+B)$ is a norm-continuous $\beta$-cocycle with $\max_{g\in G} \|\Iu_g-\eins\|<1$, then $\Iu$ is a coboundary, i.e., there exists $v\in\CU(\eins+B)$ with $\Iu_g=v\beta_g(v)^*$ for all $g\in G$.
\end{prop}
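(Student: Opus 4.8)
The plan is to run the classical averaging argument over the compact group $G$, whose only genuine subtlety is arranging invertibility of the average — which is precisely where the hypothesis $\max_g\|\Iu_g-\eins\|<1$ gets used.

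First I would normalize the Haar measure so that $G$ has total mass $1$ and set $x=\int_G\Iu_g\,dg$. Since $g\mapsto\Iu_g-\eins$ is a norm-continuous map from the compact space $G$ into $B$, this integral exists (as a Bochner integral) and lies in $B$, so $x\in\eins+B$ with scalar part $\eins$. The hypothesis gives $\|x-\eins\|\le\int_G\|\Iu_g-\eins\|\,dg<1$, whence $x$ is invertible with $x^{-1}\in\eins+B$.

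The key step is a computation of how $x$ transforms under $\beta$. Using the cocycle relation in the form $\beta_h(\Iu_g)=\Iu_h^*\Iu_{hg}$ together with the left-invariance of the Haar measure (substituting $k=hg$), I would obtain
\[
\beta_h(x)=\int_G\beta_h(\Iu_g)\,dg=\Iu_h^*\int_G\Iu_{hg}\,dg=\Iu_h^*x
\]
for every $h\in G$. Applying this to $x^*x$ and using that each $\Iu_h$ is unitary then yields $\beta_h(x^*x)=x^*\Iu_h\Iu_h^*x=x^*x$, so the positive invertible element $x^*x$, and hence $|x|=(x^*x)^{1/2}$ by continuous functional calculus, is fixed by $\beta$.

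Finally I would take the polar decomposition $x=v|x|$ with $v=x|x|^{-1}$. As $x,|x|^{-1}\in\eins+B$ both have scalar part $\eins$, this gives $v\in\CU(\eins+B)$. The $\beta$-invariance of $|x|$ yields $\beta_h(x)=\beta_h(v)|x|$, while the key identity yields $\beta_h(x)=\Iu_h^*v|x|$; cancelling the invertible element $|x|$ gives $\beta_h(v)=\Iu_h^*v$, i.e.\ $\Iu_h=v\beta_h(v)^*$ for all $h$, as required. The whole argument is routine, and the only place the hypothesis enters is the invertibility of $x$; without it the average could fail to admit a polar decomposition inside $\eins+B$, and indeed the cocycle need not be a coboundary.
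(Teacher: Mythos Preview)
Your proof is correct and is precisely the standard averaging argument; the paper does not supply its own proof of this proposition but merely cites \cite[Lemma 2.4]{Izumi04}, where essentially the same argument appears.
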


\begin{cor} \label{cor:vscc-for-compact}
Let $G$ be a compact group and $\beta: G\curvearrowright B$ an action on a \cstar-algebra.
If a norm-continuous $\beta$-cocycle $\Iu: G\to\CU(\eins+B)$ is an asymptotic coboundary, then it is a coboundary.
Moreover, if $\alpha: G\curvearrowright A$ is another action on a \cstar-algebra, then a very strong cocycle conjugacy $(\phi,\Iu): (A,\alpha)\to (B,\beta)$ is properly unitarily equivalent to a conjugacy.
\end{cor}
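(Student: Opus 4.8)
The plan is to reduce both assertions to the cited proposition of Izumi, exploiting compactness of $G$ together with a standard gauge manipulation of cocycles. For the first assertion, suppose $\Iu$ is an asymptotic coboundary, witnessed by a continuous path $v: [0,\infty)\to\CU(\eins+B)$. Since $G$ is compact, we may take $K=G$ in the definition, so that $\max_{g\in G}\|\Iu_g-v_t\beta_g(v_t)^*\|\to 0$ as $t\to\infty$. First I would fix some $t_0\geq 0$ with $\max_{g\in G}\|\Iu_g-v_{t_0}\beta_g(v_{t_0})^*\|<1$ and pass to the gauge-transformed map $\Iw_g=v_{t_0}^*\Iu_g\beta_g(v_{t_0})$. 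A direct computation from the cocycle identity for $\Iu$ shows that $\Iw$ is again a norm-continuous $\beta$-cocycle, and since $\beta_g(v_{t_0})$ is unitary one has $\|\Iw_g-\eins\|=\|\Iu_g-v_{t_0}\beta_g(v_{t_0})^*\|<1$ for all $g\in G$. The proposition then applies to $\Iw$, yielding a unitary $w\in\CU(\eins+B)$ with $\Iw_g=w\beta_g(w)^*$. Substituting back gives $\Iu_g=(v_{t_0}w)\beta_g(v_{t_0}w)^*$, so $\Iu$ is a coboundary.

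For the moreover part, let $(\phi,\Iu):(A,\alpha)\to(B,\beta)$ be a very strong cocycle conjugacy, so that $\phi$ is an isomorphism and $\Iu$ is an asymptotic coboundary. By the first part there is a single unitary $u\in\CU(\eins+B)$ with $\Iu_g=u\beta_g(u)^*$. Perturbing $(\phi,\Iu)$ by $u^*$ produces the proper cocycle morphism $\ad(u^*)\circ(\phi,\Iu)=\big(\ad(u^*)\circ\phi,\,u^*\Iu_\bullet\beta_\bullet(u^*)^*\big)$, whose cocycle component equals $u^*\Iu_g\beta_g(u)=u^*\big(u\beta_g(u)^*\big)\beta_g(u)=\eins$ for every $g\in G$. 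Thus the perturbed pair has trivial cocycle; by the defining equation of a proper cocycle morphism this forces $\ad(u^*)\circ\phi$ to be $\alpha$-to-$\beta$ equivariant, and being an isomorphism it is a genuine conjugacy. Since proper unitary equivalence is precisely perturbation by a unitary in $\CU(\eins+B)$, this exhibits $(\phi,\Iu)$ as properly unitarily equivalent to a conjugacy.

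The only point requiring a modicum of care is verifying that the gauge transform $\Iw_g=v_{t_0}^*\Iu_g\beta_g(v_{t_0})$ preserves the cocycle identity, which is a short but slightly fiddly calculation, since one must commute $\beta_g(v_{t_0})$ past the factors $\beta_g(v_{t_0}^*)$ arising in $\beta_g(\Iw_h)$ and then fold $\Iu_g\beta_g(\Iu_h)=\Iu_{gh}$. Beyond this, the entire content of the corollary rests on the cited proposition, and compactness enters solely to convert ``uniform over compact sets'' convergence into a uniform-over-$G$ statement that brings the perturbed cocycle within norm distance $1$ of the trivial cocycle; I do not anticipate any genuine obstacle.
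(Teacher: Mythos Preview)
Your proof is correct and matches the paper's intended argument: the corollary is stated there without proof, as an immediate consequence of Izumi's proposition, and your gauge-transform-then-apply-Izumi argument is precisely the standard way to fill in the details. The second assertion is likewise handled exactly as one would expect from the definition of proper unitary equivalence.
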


\begin{rem} \label{rem:vscc-comp}
We recall (see \cite[Section 1]{Szabo21cc}) that for any two proper cocycle morphisms $(\phi,\Iu): (A,\alpha)\to (B,\beta)$ and $(\psi,\Iv): (B,\beta)\to (C,\gamma)$, their composition is given by $(\psi,\Iv)\circ(\phi,\Iu)=(\psi\circ\phi,\psi(\Iu_\bullet)\Iv_\bullet)$.
Suppose that $\Iu$ is an asymptotic $\beta$-coboundary witnessed by a path $y: [0,\infty)\to\CU(\eins+B)$ and that $\Iv$ is an asymptotic $\gamma$-coboundary witnessed by the path $z: [0,\infty)\to\CU(\eins+C)$.
Then it follows that also $\psi(\Iu_\bullet)\Iv_\bullet$ is an asymptotic $\gamma$-coboundary, since we can compute for all $g\in G$ that
\[
\begin{array}{ccl}
\psi(\Iu_g)\Iv_g &=& \dst \lim_{t\to\infty} \psi(y_t\beta_g(y_t)^*)\Iv_g \\
&=& \dst \lim_{t\to\infty} \psi(y_t) \Iv_g \gamma_g(\psi(y_t))^* \\
&=& \dst \lim_{t\to\infty} \psi(y_t)z_t\gamma_g(z_t^*\psi(y_t)^*).
\end{array}
\]
\end{rem}

\begin{nota}
Given a unital \cstar-algebra $A$, we will denote by $\iota_A: \IC\to A$ the canonical unital inclusion.
If $A$ carries a group action, we will equip $\IC$ with the trivial action so that $\iota_A$ can be viewed as an equivariant inclusion.
\end{nota}

\begin{theorem} \label{thm:compact-first-cohomology}
Let $\beta: G\curvearrowright B$ be an amenable and isometrically shift-absorbing action on a Kirchberg algebra and let $\Iu: G\to\CU(\eins+B)$ be a $\beta$-cocycle.
	\begin{enumerate}[label=\textup{(\roman*)},leftmargin=*]
	\item \label{thm:compact-first-cohomology:1}
	Suppose $\beta$ is strongly stable.
	If $(\Iu,\eins)\sim_h (\eins,\eins)$ in the sense of \autoref{def:KKG-Thomsen}, then $\Iu$ is an asymptotic coboundary, and in fact there exists a norm-continuous path $y: [0,\infty)\to\CU(\eins+B)$ with $y_0=\eins$ and
\[
\lim_{t\to\infty} \max_{g\in K} \|\Iu_g-y_t\beta_g(y_t)^*\|=0
\]
for every compact set $K\subseteq G$.
	\item \label{thm:compact-first-cohomology:2}
	Suppose $G$ is exact and $B$ is unital.
	Consider the element $\Iu^\sharp=KK^G(\iota_B,\Iu)\in KK^G(\id_\IC,\beta)$ associated to $\Iu$.
Then $\Iu$ is an asymptotic coboundary if and only if $\Iu^\sharp=KK^G(\iota_B,\eins)$.
	\end{enumerate}
\end{theorem}
\begin{proof}
\ref{thm:compact-first-cohomology:1}: This is a direct consequence of \autoref{thm:uniqueness} applied to the special case $A=0$ and the two proper cocycle embeddings $(0,\Iu)$ and $(0,\eins)$.

\ref{thm:compact-first-cohomology:2}: We observe by comparing definitions that $\Iu$ is an asymptotic coboundary if and only if the two unital cocycle embeddings
\[
(\iota_B,\eins), (\iota_B,\Iu): (\IC,\id)\to (B,\beta)
\]
are asymptotically unitarily equivalent.
This is seen to be equivalent to $\Iu^\sharp=KK^G(\iota_B)$ by applying \autoref{thm:unital-uniqueness} to $A=\IC$.
\end{proof}

%%%%%%%%%%%%%%%%%%%%%%%%%%%%%%%%%%%

\section{The classification theorem and some applications}

We recall the following intertwining result from \cite[Corollary 4.6]{Szabo21cc}, which will be the last piece towards our main classification theorem.

\begin{theorem} \label{thm:two-sided-intertwining}
Let $\alpha: G\curvearrowright A$ and $\beta: G\curvearrowright B$ be two actions on separable \cstar-algebras.
Suppose that
\[
(\phi,\Iu): (A,\alpha) \to (B,\beta) \quad\text{and}\quad  (\psi,\Iv): (B,\beta)\to (A,\alpha)
\]
are two proper cocycle morphisms such that both
\[
(\psi,\Iv)\circ(\phi,\Iu)  \quad\text{and}\quad (\phi,\Iu)\circ(\psi,\Iv) 
\]
are properly asymptotically inner.
Then $(\phi,\Iu)$ is strongly asymptotically unitarily equivalent to a proper cocycle conjugacy.
\end{theorem}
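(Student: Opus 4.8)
The plan is to prove this as the proper-cocycle-category incarnation of Elliott's two-sided approximate intertwining theorem. First I would unpack the hypothesis: saying that $(\psi,\Iv)\circ(\phi,\Iu)$ and $(\phi,\Iu)\circ(\psi,\Iv)$ are \emph{properly asymptotically inner} means that they are properly asymptotically unitarily equivalent, in the sense of \autoref{def:various-equivalences}, to the identity morphisms $(\id_A,\eins)$ and $(\id_B,\eins)$ respectively. Recalling the composition formula $(\psi,\Iv)\circ(\phi,\Iu)=(\psi\circ\phi,\psi^\dagger(\Iu_\bullet)\Iv_\bullet)$ from \autoref{rem:vscc-comp}, I would fix norm-continuous witnessing unitary paths $a\colon[0,\infty)\to\CU(\eins+A)$ and $b\colon[0,\infty)\to\CU(\eins+B)$ such that
\[
\psi(\phi(x))=\lim_{t\to\infty} a_t x a_t^*,\qquad \lim_{t\to\infty}\max_{g\in K}\|\psi^\dagger(\Iu_g)\Iv_g-a_t\alpha_g(a_t)^*\|=0
\]
for all $x\in A$ and compact $K\subseteq G$, and symmetrically for $b$ on the $B$-side.

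The core is a back-and-forth construction. I would fix increasing finite subsets $F_1\subseteq F_2\subseteq\cdots\subseteq A$ and $E_1\subseteq E_2\subseteq\cdots\subseteq B$ with dense unions, an exhaustion of $G$ by compact sets $K_1\subseteq K_2\subseteq\cdots$, and a summable tolerance sequence $\eps_n$. Proceeding inductively, I would build correction unitaries $s_n\in\CU(\eins+B)$ and $t_n\in\CU(\eins+A)$, together with continuous unitary paths interpolating to them from the previous stage, so that the perturbed morphisms $\ad(s_n\cdots s_1)\circ(\phi,\Iu)$ and $\ad(t_n\cdots t_1)\circ(\psi,\Iv)$ fit into an approximate intertwining: on $F_n$, on $E_n$, and uniformly over $K_n$, the alternating compositions agree with $\id_A$ and $\id_B$ up to $\eps_n$, both in the $*$-homomorphism part and in the cocycle part. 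At each step the next correction unitary is produced by evaluating the witnessing path $a$ (resp.\ $b$) at a sufficiently large time and absorbing the resulting discrepancy, while summability of $\eps_n$ forces the telescoped conjugations of $\phi$ and of $\psi$ to be point-norm Cauchy, with the associated cocycles Cauchy uniformly on compacts.

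Passing to the limit, $\Lambda_0:=\lim_n \ad(s_n\cdots s_1)\circ\phi$ exists in point-norm and is an isometric, hence injective, $*$-homomorphism, while the perturbed cocycles converge uniformly on compacts to a norm-continuous $\beta$-cocycle $\IW$ valued in $\CU(\eins+B)$, so that $(\Lambda_0,\IW)$ is a proper cocycle morphism. The interleaved $\psi$-side limit supplies a two-sided inverse for $\Lambda_0$, whence $\Lambda_0$ is a $*$-isomorphism and $(\Lambda_0,\IW)$ is a proper cocycle conjugacy. Finally, concatenating the interpolating unitary paths of the construction into a single norm-continuous path $w\colon[0,\infty)\to\CU(\eins+B)$ with $w_0=\eins$, and reparametrising, I would read off that $(\phi,\Iu)$ and $(\Lambda_0,\IW)$ are strongly asymptotically unitarily equivalent, which is the assertion.

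The hard part will be the simultaneous and uniform bookkeeping: unlike the classical case of $*$-homomorphisms one must track the cocycle components alongside the algebra components, and all cocycle estimates must hold uniformly over compact subsets of the possibly non-discrete group $G$, while keeping every perturbed cocycle a genuine $\beta$-cocycle valued in $\CU(\eins+B)$. Equally delicate is upgrading the discretely chosen correction unitaries to a single continuous path based at $\eins$, as required for \emph{strong} (rather than merely proper) asymptotic unitary equivalence; here I would exploit that proper cocycle morphisms automatically have norm-continuous cocycles by \cite[Proposition 6.9]{Szabo21cc}, and that the witnessing paths $a,b$ are genuinely continuous, so that the interpolation can be performed without breaking either continuity or properness.
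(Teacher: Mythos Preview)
The paper does not supply its own proof of this theorem; it merely recalls it as \cite[Corollary 4.6]{Szabo21cc}. Your outline is precisely the argument carried out in that reference: an Elliott-style two-sided approximate intertwining performed inside the proper cocycle category, where at each stage one corrects by a unitary sampled from the witnessing asymptotic-innerness paths, keeps track of both the $*$-homomorphism and the cocycle components uniformly over compact subsets of $G$, and concatenates the continuous interpolating segments into a single path based at $\eins$ to obtain strong (rather than merely proper) asymptotic unitary equivalence. So your proposal is correct and matches the intended proof.
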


The following is our main classification result and proves \autoref{intro:main-result}, which includes \autoref{intro:main-result-discrete} as a special case.
Before starting it, we remind the reader of Zhang's dichotomy \cite{Zhang92}, which implies that every Kirchberg algebra is either stable or unital.

\begin{theorem} \label{thm:main-theorem}
Let $G$ be a second-countable locally compact group.
Let $\alpha: G\curvearrowright A$ and $\beta: G\curvearrowright B$ be two amenable and isometrically shift-absorbing actions on Kirchberg algebras.\footnote{Let us point out once more that when $G$ is discrete, $\alpha$ and $\beta$ are isometrically shift-absorbing if and only if they are outer by \autoref{rem:outer-actions}.}
	\begin{enumerate}[label=\textup{(\roman*)},leftmargin=*]
	\item \label{thm:main-theorem:1}
	If both $A$ and $B$ are stable, then every invertible element $x\in KK^G(\alpha,\beta)$ lifts to a cocycle conjugacy $(A,\alpha)\to (B,\beta)$.
	\item \label{thm:main-theorem:2}
	If $\alpha$ and $\beta$ are strongly stable, then every invertible element $x\in KK^G(\alpha,\beta)$ lifts to a very strong cocycle conjugacy $(A,\alpha)\to (B,\beta)$.
	\item \label{thm:main-theorem:3}
	Suppose $G$ is exact.
	If $A$ and $B$ are unital, then every invertible element $x\in KK^G(\alpha,\beta)$ with $[\eins_A]_0\otimes x=[\eins_B]_0$ lifts to a cocycle conjugacy $(A,\alpha)\to (B,\beta)$.
	Moreover, such an element $x\in KK^G(\alpha,\beta)$ lifts to a very strong cocycle conjugacy if and only $KK^G(\iota_A)\otimes x=KK^G(\iota_B)$.
	\end{enumerate}
%
%If $G$ is compact, then every instance of ``very strong cocycle conjugacy'' can be replaced by ``conjugacy''.
\end{theorem}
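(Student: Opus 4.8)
The plan is to prove part~\ref{thm:main-theorem:2} as the technical core and to deduce parts~\ref{thm:main-theorem:1} and~\ref{thm:main-theorem:3} by reductions, with the compact case handled at the end. For \ref{thm:main-theorem:2}, fix an invertible $x\in KK^G(\alpha,\beta)$ with inverse $x^{-1}\in KK^G(\beta,\alpha)$. Since $\alpha$ is amenable and $\beta$ is strongly stable, amenable and isometrically shift-absorbing (and symmetrically with the roles reversed), \autoref{thm:existence}\ref{thm:existence:2} produces anchored proper cocycle embeddings $(\phi,\Iu)\colon(A,\alpha)\to(B,\beta)$ and $(\psi,\Iv)\colon(B,\beta)\to(A,\alpha)$ with $KK^G(\phi,\Iu)=x$ and $KK^G(\psi,\Iv)=x^{-1}$. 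By the composition formula for the Kasparov product (the corollary to \autoref{prop:KKG-simplified}), the endomorphism $(\psi,\Iv)\circ(\phi,\Iu)$ of $(A,\alpha)$ and $(\id_A,\eins)$ carry the same class $1_\alpha$; since $\Iu,\Iv$ are homotopic to the trivial cocycle so is the composite cocycle, and hence these two proper cocycle embeddings form an anchored Cuntz pair. \autoref{thm:uniqueness} then shows that $(\psi,\Iv)\circ(\phi,\Iu)$ is strongly asymptotically unitarily equivalent to $\id_A$, and symmetrically for $(\phi,\Iu)\circ(\psi,\Iv)$; in particular both composites are properly asymptotically inner. Feeding this into \autoref{thm:two-sided-intertwining} yields a proper cocycle conjugacy $(\Phi,\IU)\colon(A,\alpha)\to(B,\beta)$ that is strongly asymptotically unitarily equivalent to $(\phi,\Iu)$, and therefore satisfies $KK^G(\Phi,\IU)=x$.

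It remains to upgrade this to a \emph{very strong} cocycle conjugacy, i.e.\ to arrange that $\IU$ is an asymptotic coboundary, and this is the step I expect to be the main obstacle. Because $\Iu$ is homotopic to the trivial cocycle and the implementing path starts at $\eins$, one checks that the path $s\mapsto w_s\Iu_\bullet\beta_\bullet(w_s)^*$ exhibits $\IU$ as homotopic to $\Iu$ and hence to the trivial cocycle. Thus the whole point reduces to the first-cohomology statement that, for a strongly stable amenable isometrically shift-absorbing action on a Kirchberg algebra, a $\beta$-cocycle homotopic to the trivial cocycle is automatically an asymptotic coboundary. In the unital case this is exactly \autoref{thm:compact-first-cohomology}: a cocycle homotopic to $\eins$ has $\IU^\sharp=KK^G(\iota_B)$ by homotopy invariance of $KK^G(\iota_B,\cdot)$, whence it is an asymptotic coboundary. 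I would establish the stable analogue by the same mechanism, comparing the cocycle representations attached to the trivial and the given cocycle via the uniqueness machinery; the delicate point is the compatibility of a fixed embedding with the cocycle, which I would circumvent by passing to the cocycle-perturbed action $\beta^{\IU}$ and the genuine equivariant isomorphism $\Phi\colon(A,\alpha)\to(B,\beta^{\IU})$.

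For part~\ref{thm:main-theorem:1} I would first invoke \autoref{prop:cc-stability} to replace $\alpha$ and $\beta$ by cocycle conjugate strongly stable actions $\alpha'$ and $\beta'$. Amenability is preserved under cocycle conjugacy (as recorded after the definition of amenability), and so is isometric shift-absorption, since inner automorphisms act trivially on $F_{\infty,\beta}(B)$ and thus \autoref{prop:the-condition} is unaffected. As cocycle conjugate actions are canonically $KK^G$-equivalent, $x$ transports to an invertible class in $KK^G(\alpha',\beta')$, to which part~\ref{thm:main-theorem:2} applies; composing the resulting cocycle conjugacy with the two cocycle conjugacies from \autoref{prop:cc-stability} gives a cocycle conjugacy $(A,\alpha)\to(B,\beta)$ lifting $x$. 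Part~\ref{thm:main-theorem:3} runs in complete parallel to \ref{thm:main-theorem:2}, using \autoref{thm:unital-existence} and \autoref{thm:unital-uniqueness} in place of their stable counterparts, where the hypothesis $[\eins_A]_0\otimes x=[\eins_B]_0$ is precisely what lets one lift $x$ and $x^{-1}$ to \emph{unital} cocycle embeddings; the resulting proper cocycle conjugacy is then unital, hence a genuine cocycle conjugacy.

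For the ``very strong'' dichotomy in part~\ref{thm:main-theorem:3}, I would observe that the produced conjugacy satisfies $(\Phi,\IU)\circ\iota_A=(\iota_B,\IU)$, so that $KK^G(\iota_A)\otimes x=\IU^\sharp$, and then \autoref{thm:compact-first-cohomology} shows that $\IU$ is an asymptotic coboundary exactly when $KK^G(\iota_A)\otimes x=KK^G(\iota_B)$, which is the claimed equivalence. Finally, when $G$ is compact, \autoref{cor:vscc-for-compact} turns each very strong cocycle conjugacy produced above into one that is properly unitarily equivalent to a genuine conjugacy; since proper unitary equivalence preserves the $KK^G$-class, that conjugacy still lifts $x$, which yields the concluding sentence. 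The principal difficulty throughout remains the asymptotic-coboundary upgrade of the second paragraph, as it is the one place where the abstract intertwining output has to be refined by genuinely cohomological (equivariant $KK$-theoretic) information rather than by the existence and uniqueness theorems alone.
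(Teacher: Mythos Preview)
Your overall architecture matches the paper's proof closely: prove \ref{thm:main-theorem:2} via existence--uniqueness--intertwining, deduce \ref{thm:main-theorem:1} from \autoref{prop:cc-stability}, run \ref{thm:main-theorem:3} with the unital existence and uniqueness theorems, and close with \autoref{cor:vscc-for-compact}. The handling of \ref{thm:main-theorem:1}, \ref{thm:main-theorem:3}, and the compact case is essentially identical to the paper.

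The one substantive divergence is the step you flag as ``the principal difficulty'': showing that the resulting cocycle $\IU$ is an asymptotic coboundary in the strongly stable case. You propose to reduce this to a stable analogue of \autoref{thm:compact-first-cohomology} and sketch a workaround via the perturbed action $\beta^{\IU}$. The paper resolves this much more directly, and the trick is worth knowing: apply \autoref{thm:uniqueness} with $A=0$. A proper cocycle morphism $(0,\alpha)\to(B,\beta)$ is nothing but a pair $(0,\Iw)$ for a norm-continuous $\beta$-cocycle $\Iw$ in $\CU(\eins+B)$, the anchoring condition is exactly that $\Iw$ is homotopic to $\eins$, and the $KK^G$-condition is vacuous. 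The uniqueness theorem then says precisely that $(0,\Iw)$ and $(0,\eins)$ are strongly asymptotically unitarily equivalent, i.e.\ that $\Iw$ is an asymptotic coboundary. The paper applies this immediately to the anchored cocycles $\Iu$ and $\Iv$ coming out of \autoref{thm:existence}, so that by the time the intertwining produces $(\Phi,\IU)$ one already knows $\Iu$ is an asymptotic coboundary, and hence so is $\IU$ (being obtained from $\Iu$ by a strong asymptotic unitary equivalence). This also streamlines your verification that the composite cocycles are anchored: the paper deduces this from \autoref{rem:vscc-comp}, since a composition of asymptotic coboundaries is again one. Your route would likely work, but the $A=0$ application of the uniqueness theorem makes the ``hard step'' evaporate.
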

\begin{proof}
In light of \autoref{prop:cc-stability}, we see that \ref{thm:main-theorem:1} follows directly from \ref{thm:main-theorem:2}.

\ref{thm:main-theorem:2}: By applying \autoref{thm:existence} twice, we may find two anchored proper cocycle embeddings $(\phi,\Iu): (A,\alpha)\to (B,\beta)$ and $(\psi,\Iv): (B,\beta)\to (A,\alpha)$ with $KK^G(\phi,\Iu)=x$ and $KK^G(\psi,\Iv)=x^{-1}$.
By \autoref{thm:compact-first-cohomology}\ref{thm:compact-first-cohomology:1}, there exists a norm-continuous path $y: [0,\infty)\to\CU(\eins+B)$ with $y_0=\eins$ and
\[
\lim_{t\to\infty} \max_{g\in K} \|\Iu_g-y_t\beta_g(y_t)^*\|=0
\]
for every compact set $K\subseteq G$.
Likewise, we can choose such a path $z: [0,\infty)\to\CU(\eins+A)$ for the $\alpha$-cocycle $\Iv$.
In light of \autoref{rem:vscc-comp}, we can observe that the $\alpha$-cocycle $\psi(\Iu_\bullet)\Iv_\bullet$ is norm-homotopic to the trivial $\alpha$-cocycle, hence $[(\psi(\Iu_\bullet)\Iv_\bullet,\eins)]_h=0$ in $\IE^G(\beta,\alpha)/{\sim_h}$.
Analogously we have $[(\phi(\Iv_\bullet)\Iu_\bullet,\eins)]_h=0$ in $\IE^G(\alpha,\beta)/{\sim_h}$.
Hence the compositions $(\psi,\Iv)\circ(\phi,\Iu)$ and $(\phi,\Iu)\circ(\psi,\Iv)$ are both anchored.
This allows us to apply \autoref{thm:uniqueness} and conclude that they are both strongly asymptotically inner.
We conclude from \autoref{thm:two-sided-intertwining} that $(\phi,\Iu)$ is strongly asymptotically unitarily equivalent to a proper cocycle conjugacy $(\Phi,\IU)$, which necessarily also represents $x$.
Since $\Iu$ was an asymptotic coboundary, so is $\IU$ and $(\Phi,\IU)$ is in fact a very strong cocycle conjugacy.

\ref{thm:main-theorem:3}: 
For the first part of the claim, carry out the analogous argument we used to prove \ref{thm:main-theorem:2} above, but use \autoref{thm:unital-existence} in place of \autoref{thm:existence} and \autoref{thm:unital-uniqueness} in place of \autoref{thm:uniqueness}.
Let us hence prove the ``Moreover'' part, i.e., let an invertible element $x\in KK^G(\alpha,\beta)$ with $[\eins_A]_0\otimes x=[\eins_B]_0$ be given.
We already know that $x$ lifts to a cocycle conjugacy $(\phi,\Iu): (A,\alpha)\to (B,\beta)$.
By the compatibility of the Kasparov product with respect to compositions, we observe that
\[
KK^G(\iota_A)\otimes x = KK^G(\iota_A)\otimes KK^G(\phi,\Iu)=KK^G(\iota_B,\Iu)=\Iu^\sharp.
\]
If $(\phi,\Iu)$ can be chosen to be a very strong cocycle conjugacy, then it follows by the ``only if'' part in \autoref{thm:compact-first-cohomology}\ref{thm:compact-first-cohomology:2} that this class is equal to $KK^G(\iota_B)$.
Conversely, if we assume $KK^G(\iota_A)\otimes x=KK^G(\iota_B)$ and choose $(\phi,\Iu)$ arbitrarily, then it evidently follows that $\Iu^\sharp=KK^G(\iota_B)$.
By the ``if'' part from \autoref{thm:compact-first-cohomology}\ref{thm:compact-first-cohomology:2}, it follows that $\Iu$ is an asymptotic coboundary, so $(\phi,\Iu)$ is automatically a very strong cocycle conjugacy.
%
%Finally, if $G$ is compact, then the last part of the statement is a direct consequence of \autoref{cor:vscc-for-compact}.
\end{proof}

In the case of compact groups, we can further improve the conclusion above by observing that one is always in one of the situations described by the two conditions \ref{thm:main-theorem:2} and \ref{thm:main-theorem:3}.

\begin{prop} \label{prop:compact-groups}
Let $G$ be a second-countable compact group and $\beta: G\curvearrowright B$ an isometrically shift-absorbing action on a separable stable \cstar-algebra.
Then $\beta$ is strongly stable.
\end{prop}
\begin{proof}
We assume $B\neq 0$.
The inclusion $B^\beta\subseteq B$ from the fixed point algebra is non-degenerate.
Since this induces a canonical unital inclusion $\CM(B^\beta)\subseteq\CM(B)^\beta$ that is strictly continuous on the unit ball, it follows directly from \cite[Remark 1.4]{GabeSzabo22} (see the paragraph after \autoref{nota:strongly-stable}) that $\beta$ is strongly stable if $B^\beta$ is stable.
By the main result of \cite{HjelmborgRordam98}, any given $\sigma$-unital \cstar-algebra $A$ is stable if and only if for every $\eps>0$ and positive element $e\in A$ of norm one, there exists a contraction $x\in A$ with $e=_\eps x^*x$ and $\|exx^*\|\leq\eps$.
We shall verify this condition for $A=B^\beta$.

Let $\eps>0$ and $e\in B^\beta$ be given.
Since $B$ is stable, we can find a contraction $y\in B$ with $e=_\eps y^*y$ and $\|e-y^*y\| + \|eyy^*\|\leq\eps$.
Using that $e$ is a fixed point, we get for all $g\in G$ that
\[
\eps \geq \|e-y^*y\| + \|eyy^*\| = \|e-\beta_g(y^*y)\| + \|e\beta_g(yy^*)\|.
\]
We consider $L^2(G,B)$ constructed in the sense of \autoref{nota:Hilbert-modules} with the normalized Haar measure $\mu$ on $G$ and define a contraction $\zeta\in\CC(G,B)\subseteq L^2(G,B)$ via $\zeta(h)=\beta_h(y)$.
Then $\zeta$ is fixed under $\bar{\beta}$ and moreover we observe the two properties
\[
\| e-\langle \zeta\mid \zeta\rangle\| = \Big\| \int_G \big( e-\beta_g(y^*y) \big)~d\mu \Big\| \leq \eps.
\]
and
\[
\| e\zeta \|_2^2 = \Big\| \int_G e\beta_g(yy^*)e ~d\mu \Big\| \leq \int_G \|e\beta_g(yy^*)e\|~d\mu \leq \eps.
\]
By \autoref{prop:the-condition}\ref{prop:the-condition:4}, there exists an equivariant linear $B$-bimodule map 
\[
\theta: ( L^2(G,B), \bar{\beta}) \to ( B_{\infty,\beta} , \beta_\infty)
\] 
satisfying $\theta(\xi)^*\theta(\eta)=\langle\xi\mid\eta\rangle_B$ for all $\xi,\eta\in L^2(G,B)$.
By what we have computed above, $z=\theta(\zeta)\in (B_{\infty,\beta})^{\beta_\infty}=(B^\beta)_\infty$ is a contraction satisfying $\|e-z^*z\|\leq\eps$ and $\|ez\|^2\leq\eps$.
Since $\eps>0$ was arbitrary, we can apply a standard reindexation trick and find a sequence of contractions $x_n\in B^\beta$ satisfying $x_n^*x_n\to e$ and $ex_n\to 0$.
Since $e$ was arbitrary, it follows that $B^\beta$ is indeed stable.  
\end{proof}

For compact groups $G$, very strong cocycle conjugacy coincides with conjugacy by \autoref{cor:vscc-for-compact}. Hence we may combine the above proposition with \autoref{thm:main-theorem} and obtain classification up to genuine conjugacy for compact groups.

\begin{cor} \label{cor:main-compact}
Let $G$ be a second-countable compact group.
Let $\alpha: G\curvearrowright A$ and $\beta: G\curvearrowright B$ be two isometrically shift-absorbing actions on Kirchberg algebras.	\begin{enumerate}[label=\textup{(\roman*)},leftmargin=*]
	\item \label{cor:main-compact:1}
	If $A$ and $B$ are stable, then every invertible element $x\in KK^G(\alpha,\beta)$ lifts to a conjugacy $(A,\alpha)\to (B,\beta)$.
	\item \label{cor:main-compact:2}
	If $A$ and $B$ are unital, then every invertible element $x\in KK^G(\alpha,\beta)$ with $KK^G(\iota_A)\otimes x=KK^G(\iota_B)$ lifts to a conjugacy $(A,\alpha)\to (B,\beta)$.
	\end{enumerate}
\end{cor}

\begin{rem}
The extra condition appearing in both \autoref{thm:main-theorem}\ref{thm:main-theorem:3} and \autoref{cor:main-compact}\ref{cor:main-compact:2} on the $KK^G$-class $x$ is not redundant.
Even for $G=\IZ/2\IZ$, there are known examples of outer actions $\alpha,\beta: G\curvearrowright B$ for $B=\CO_2$ that are cocycle conjugate, but not conjugate; see \cite[Corollary 5.5]{BarlakLi17}.
If we choose these actions and define $x$ to be the class associated to a cocycle conjugacy $(\phi,\Iu): (B,\alpha)\to (B,\beta)$, then it follows that $x$ cannot satisfy the extra condition $KK^G(\iota_B)\otimes x=KK^G(\iota_B)$.
More generally, let $G$ be any compact group and $\beta: G\curvearrowright B$ any action as in \autoref{thm:main-theorem}\ref{thm:main-theorem:3}. 
Given a $\beta$-cocycle $\Iu: G\curvearrowright\CU(B)$, we consider the $KK^G$-equivalence in $KK^G(\beta^\Iu,\beta)$ induced by the exterior equivalence $(\id_B,\Iu): (B,\beta^\Iu)\to (B,\beta)$. 
By \autoref{thm:compact-first-cohomology}, the extra condition $\Iu^\sharp=KK^G(\iota_B)$ holds if and only if $\Iu$ is a coboundary, which holds if and only if this element can also be represented by a conjugacy.
In some very special cases, such as when $\beta$ has the Rokhlin property, it may happen that all cocycles are coboundaries, but in general this provides plenty of examples demonstrating that conjugacy between actions on unital Kirchberg algebras is indeed stronger than cocycle conjugacy.
\end{rem}

We now demonstrate how a special case of our main results provides a positive solution to a conjecture of Izumi; see \cite[Conjecture 1]{Izumi10}, \cite[Conjecture 1.2]{IzumiMatui21}, and \cite[Conjecture 1.1]{IzumiMatui21_2}.
The last of these conjectures is the precise one that we verify below, which has been deemed ``the most optimistic version of conjectures of similar kind'' in \cite{IzumiMatui21_2}.
In order to set up the statement of the theorem we choose to directly quote the relevant paragraph from the introduction of \cite{IzumiMatui21_2}:
\renewenvironment{quote}{%
   \list{}{%
     \leftmargin 2mm   % this is the adjusting screw
     \rightmargin 0mm
   }
   \item\relax
}
{\endlist}
	\begin{quote}
	\it ``We recall the notion of classifying spaces in algebraic topology first.
	For any topological group $G$, there exists a universal principal $G$-bundle $EG \to BG$ satisfying the following property: 
	every numerable principal $G$-bundle $\CP \to X$ is isomorphic to the pullback bundle $f^*EG$ of a continuous map $f : X \to BG$ so that the set of isomorphism classes of numerable principal $G$-bundles over $X$ is in one-to-one correspondence with the homotopy set $[X,BG]$; see \cite[Chapter 4]{Husemoller94}. 
	The space $BG$, which is unique up to homotopy equivalence by universality, is called the classifying space of $G$. 
	Since the Milnor construction of $BG$ is functorial, a continuous group homomorphism $h : G_1 \to G_2$ induces a continuous map $Bh : BG_1 \to BG_2$.
	If moreover $G_1$ and $G_2$ are discrete groups, the map
	\[
	\Hom(G_1,G_2)/\text{conjugacy} \ \ni \ [h] \mapsto [Bh] \ \in \ [BG_1,BG_2]
	\]
is a bijection, which follows from the classification of regular covering spaces over $BG_1$; see for example \cite[Section 3.7, 3.8]{May99}.''
	\end{quote}

The following theorem, when restricted to actions of poly-$\IZ$ groups, recovers and generalizes the main result of \cite{IzumiMatui21_2}.

\begin{theorem} \label{thm:Izumi-conjecture}
Let $G$ be a countable discrete amenable torsion-free group, and let $A$ be a stable Kirchberg algebra. Then the map
\[
\CO\CA(G,A)/\text{cocycle conjugacy} \ \ni \ [\alpha] \mapsto [B\alpha] \ \in \ [BG,B\Aut(A)]
\]
is a bijection, where $\CO\CA(G,A)$ denotes the set of outer actions of $G$ on $A$.
\end{theorem}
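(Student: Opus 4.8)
The plan is to reduce \emph{both} sides of the claimed bijection to the single set of $KK^G$-equivalence classes of outer actions of $G$ on $A$, and then to check that the assignment $[\alpha]\mapsto[B\alpha]$ intertwines the two reductions. For the left-hand side I would use the machinery of this paper directly. Since $G$ is discrete and amenable, every action of $G$ is automatically amenable, and by \autoref{rem:outer-actions} an action on the Kirchberg algebra $A$ is isometrically shift-absorbing exactly when it is outer. Thus every $\alpha\in\CO\CA(G,A)$ satisfies the hypotheses of \autoref{thm:main-theorem}\ref{thm:main-theorem:1}, and since $A$ is stable that theorem gives: two outer actions are cocycle conjugate if and only if there is an invertible class in $KK^G(\alpha,\beta)$. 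Combined with the elementary fact that a cocycle conjugacy induces an invertible element of equivariant $KK$-theory, this produces a canonical bijection
\[
\CO\CA(G,A)/\text{cocycle conjugacy}\ \cong\ \big\{\,KK^G\text{-equivalence classes of outer }G\text{-actions on }A\,\big\},\qquad [\alpha]\mapsto KK^G\text{-class of }\alpha.
\]

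Next I would address the topological side and the well-definedness of $[\alpha]\mapsto[B\alpha]$. A homomorphism $\alpha\colon G\to\Aut(A)$ determines a flat principal $\Aut(A)$-bundle over $BG=EG/G$, equivalently the associated bundle of Kirchberg algebras $EG\times_\alpha A$, classified by $B\alpha\colon BG\to B\Aut(A)$. Conjugate actions yield isomorphic bundles, hence homotopic classifying maps. For an exterior equivalence $\beta=\alpha^\Iu$ the two actions differ by the inner cocycle $g\mapsto\ad(\Iu_g)$ taking values in $\ad(\CU(\CM(A)))$; as $A$ is stable, $\CU(\CM(A))$ is contractible in the strict topology, so this cocycle is null-homotopic and $B\alpha\simeq B\beta$. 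Since cocycle conjugacy is generated by conjugacy and exterior equivalence, $[\alpha]\mapsto[B\alpha]$ descends to cocycle conjugacy classes (this also follows a posteriori from the compatibility of Meyer's identification below with the easy implication $\alpha\cong_{\mathrm{cc}}\beta\Rightarrow KK^G(\alpha)=KK^G(\beta)$).

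The heart of the matter is identifying $[BG,B\Aut(A)]$ with the \emph{same} set of $KK^G$-equivalence classes, compatibly with the classifying-space construction. Here I would invoke Meyer's reformulation of Izumi's conjecture \cite{Meyer21}: for a torsion-free group $G$, passage from an action to its associated bundle sets up a bijection between $KK^G$-equivalence classes of outer actions on $A$ and homotopy classes in $[BG,B\Aut(A)]$, under which the $KK^G$-class of $\alpha$ corresponds to $[B\alpha]$. Torsion-freeness is what makes $EG$ a model for the classifying space of proper $G$-actions, so that the bundle over $BG$ is a complete invariant with no isotropy data to record, while amenability of $G$ ensures the Baum--Connes assembly map underlying the reformulation is an isomorphism, matching the homotopy-theoretic description of $[BG,B\Aut(A)]$ with the analytical invariant $KK^G$. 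It is essential that this reformulation is merely a change of bookkeeping for the invariant and requires \emph{no} universal-coefficient computation, which is why the theorem survives without a UCT hypothesis. Splicing this bijection with the one from the first paragraph, and verifying on the nose that both composites send $[\alpha]$ to $[B\alpha]$, yields the theorem.

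I expect the principal obstacle to lie entirely in this last step: confirming that Meyer's identification is available in precisely the generality needed — torsion-free discrete amenable $G$, an arbitrary stable Kirchberg algebra $A$, and no UCT — and that it is genuinely the transformation induced by $[\alpha]\mapsto[B\alpha]$ rather than some a priori different natural map. Within it, surjectivity is the delicate point: given an arbitrary $f\colon BG\to B\Aut(A)$ one must manufacture a genuine outer action realizing it, which combines Meyer's dictionary (every homotopy class arises as a $KK^G$-class) with the range result \autoref{thm:range} (every such class is realized by an amenable, isometrically shift-absorbing, hence outer, action on $A$). Injectivity, by contrast, should then follow formally from \autoref{thm:main-theorem}\ref{thm:main-theorem:1} once both $KK^G$-identifications are in place.
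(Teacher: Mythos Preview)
Your proposal is correct and follows essentially the same route as the paper: the proof there is literally one sentence, invoking the main classification result \autoref{thm:main-theorem} (which, via \autoref{rem:outer-actions} and amenability of $G$, applies to all outer actions) together with Meyer's equivalent reformulation \cite[Theorem 3.10]{Meyer21} of Izumi's conjecture in terms of $KK^G$. Your additional discussion of well-definedness, the roles of torsion-freeness and amenability, and the surjectivity argument via \autoref{thm:range} simply unpacks what is implicit in that reformulation.
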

\begin{proof}
Meyer showed in \cite[Theorem 3.10]{Meyer21}\footnote{Meyer's theorem relies on Kirchberg's unpublished classification of non-simple purely infinite \cstar-algebras. The proof was supposed to appear in his book which unfortunately was not completed before his passing. The first named author provided an alternative proof of this classification theorem in \cite{Gabe21}.} that $KK^G$-classes of (outer) $G$-actions on $A$ are in a natural bijective correspondence with $[BG,B\Aut(A)]$.
As a consequence, the claim holds precisely when outer $G$-actions on $A$ are classified by $KK^G$.
This is what is accomplished in \autoref{thm:main-theorem} and therefore the claim is proved. 
\end{proof}

Next, we observe that the classification result implies that in many cases, isometric shift-absorption for an action is expressible in terms of (very strongly) tensorially absorbing the canonical quasi-free action on $\CO_\infty$.

\begin{defi}[cf.\ {\cite[Definition 5.1]{Szabo21cc}}]
Let $\alpha: G\curvearrowright A$ and $\delta: G\curvearrowright D$ be actions on separable \cstar-algebras, and suppose $D$ is unital.
We say that $\alpha$ \emph{very strongly absorbs} $\delta$, if the equivariant embedding
\[
\id_A\otimes\eins_D: (A,\alpha)\to (A\otimes D, \alpha\otimes\delta)
\]
is strongly asymptotically unitarily equivalent to a cocycle conjugacy.
\end{defi}

\begin{cor} \label{cor:model-action-ssa}
Let $\gamma: G\curvearrowright\CO_\infty$ be the model action from \autoref{def:the-model}.
Let $\beta: G\curvearrowright B$ be an amenable action on a Kirchberg algebra.
Consider the following conditions:
	\begin{enumerate}[label=\textup{(\roman*)},leftmargin=*]
	\item $\beta$ very strongly absorbs $\gamma$; \label{cor:model-action-ssa:1}
	\item $\beta$ is cocycle conjugate to $\beta\otimes\gamma^{\otimes\infty}$; \label{cor:model-action-ssa:2}
	\item $\beta$ is isometrically shift-absorbing. \label{cor:model-action-ssa:3}
	\end{enumerate}
Then \ref{cor:model-action-ssa:1}$\implies$\ref{cor:model-action-ssa:2}$\iff$\ref{cor:model-action-ssa:3}.
If $B$ is unital or $\beta$ is strongly stable, then all three conditions are equivalent.
Furthermore, if $G$ is amenable, then $\gamma^{\otimes\infty}: G\curvearrowright\CO_\infty^{\otimes\infty}$ is strongly self-absorbing and all three conditions are equivalent.	
If $G$ is both discrete and amenable, then in fact $\gamma$ is strongly self-absorbing.\footnote{Whether this is true when $G$ is non-discrete remains an open problem.}
\end{cor}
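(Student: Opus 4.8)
The plan is to prove the implications in the order $(ii)\Leftrightarrow(iii)$, then $(i)\Rightarrow(ii)$, then $(ii)\Rightarrow(i)$ under the extra hypotheses, and finally to deduce the two strong self-absorption statements. The key preliminary observation I would isolate is that $\gamma^{\otimes\infty}$ is \emph{itself} isometrically shift-absorbing for every $G$: embedding $(\CO_\infty,\gamma)$ as the $N$-th tensor factor of $(\CO_\infty^{\otimes\infty},\gamma^{\otimes\infty})$ gives, as $N\to\infty$, an approximately central, unital, equivariant sequence of copies of $(\CO_\infty,\gamma)$, hence a unital equivariant $*$-homomorphism $(\CO_\infty,\gamma)\to\big(F_{\infty,\gamma^{\otimes\infty}}(\CO_\infty^{\otimes\infty}),\widetilde{\gamma^{\otimes\infty}}_\infty\big)$, so \autoref{prop:the-condition} applies. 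Tensoring with $B$ shows $\beta\otimes\gamma^{\otimes\infty}$ is isometrically shift-absorbing, and it is amenable (a tensor product with the amenable $\beta$), so it is again an amenable isometrically shift-absorbing action on a Kirchberg algebra. For $(ii)\Rightarrow(iii)$ I would transport isometric shift-absorption across the cocycle conjugacy $\beta\cc\beta\otimes\gamma^{\otimes\infty}$, using that it is an invariant of the equivariant central sequence algebra. For $(iii)\Rightarrow(ii)$ I would use classification: the first-factor embedding $\id_B\otimes\eins_{\CO_\infty^{\otimes\infty}}$ is a $KK^G$-equivalence, since $(\IC,\id_\IC)\to(\CO_\infty^{\otimes\infty},\gamma^{\otimes\infty})$ is one by \autoref{cor:modelKKtrivial} and external tensoring with $(B,\beta)$ preserves invertibility. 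By the Zhang dichotomy ($B$ is either unital or stable), one applies \autoref{thm:main-theorem}\ref{thm:main-theorem:1} in the stable case, and \autoref{thm:main-theorem}\ref{thm:main-theorem:3} in the unital case, where $\beta$ amenable on a unital algebra forces $G$ exact and the unit-to-unit $K_0$-compatibility is automatic.

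For $(i)\Rightarrow(ii)$ I would run a telescoping one-sided intertwining exactly as in the proof of \autoref{thm:equi-O2-embedding}: very strong absorption says the first-factor embedding $(B,\beta)\to(B\otimes\CO_\infty,\beta\otimes\gamma)$ is strongly asymptotically unitarily equivalent to a cocycle conjugacy, and the same holds for each connecting map $\id\otimes\eins_{\CO_\infty}$ in the inductive system with limit $(B\otimes\CO_\infty^{\otimes\infty},\beta\otimes\gamma^{\otimes\infty})$, so the intertwining machinery of \cite{Szabo21cc} yields a cocycle conjugacy onto the limit. The reverse implication is the reusable core, which I would phrase as a single principle: if $\beta\colon G\curvearrowright B$ and $\kappa\colon G\curvearrowright C$ are amenable isometrically shift-absorbing actions on Kirchberg algebras and $f\colon(B,\beta)\to(C,\kappa)$ is a proper cocycle morphism that is a $KK^G$-equivalence (with $B,C$ simultaneously unital or stable), then $f$ is strongly asymptotically unitarily equivalent to a cocycle conjugacy. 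The existence part (\autoref{thm:main-theorem}, which may be taken to produce a very strong, hence anchored, cocycle conjugacy $c$) lifts $KK^G(f)$ to a cocycle conjugacy, and the uniqueness part (\autoref{thm:uniqueness} in the stable case, \autoref{thm:unital-uniqueness} in the unital case) shows $f$ and $c$ are asymptotically unitarily equivalent; replacing $c$ by $\ad(u_0)\circ c$, where $u_0$ is the starting unitary of the implementing path, upgrades this to a \emph{strong} asymptotic equivalence (set $w_t=u_t u_0^\ast$ so $w_0=\eins$) onto a cocycle conjugacy. Applying the principle to $f=\id_B\otimes\eins_{\CO_\infty}\colon(B,\beta)\to(B\otimes\CO_\infty,\beta\otimes\gamma)$, which is a $KK^G$-equivalence by \autoref{rem:KK-class-quasifree}, yields $(ii)\Rightarrow(i)$; here $(ii)\Leftrightarrow(iii)$ ensures $\beta$ and $\beta\otimes\gamma$ are amenable and isometrically shift-absorbing.

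The two self-absorption statements follow from the same principle. When $G$ is amenable every $G$-action is amenable, so $\gamma^{\otimes\infty}$ is an amenable isometrically shift-absorbing action on the unital Kirchberg algebra $\CO_\infty^{\otimes\infty}$; applying the principle to the first-factor embedding $(\CO_\infty^{\otimes\infty},\gamma^{\otimes\infty})\to(\CO_\infty^{\otimes\infty}\otimes\CO_\infty^{\otimes\infty},\gamma^{\otimes\infty}\otimes\gamma^{\otimes\infty})$ shows that $\gamma^{\otimes\infty}$ very strongly absorbs itself, which is the defining property of a strongly self-absorbing action in the sense of \cite[Section 5]{Szabo21cc}, \cite{Szabo18ssa}. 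When $G$ is discrete and amenable, $\gamma$ is pointwise outer by \autoref{prop:quasifree-outer} (as $\lambda^\infty_g\neq\eins$ for $g\neq 1$), hence isometrically shift-absorbing by \autoref{rem:outer-actions}, and amenable; feeding $\beta=\gamma$ into the already established $(iii)\Rightarrow(ii)$ gives $\gamma\cc\gamma\otimes\gamma^{\otimes\infty}\cong\gamma^{\otimes\infty}$, and as being strongly self-absorbing is a cocycle conjugacy invariant, $\gamma$ inherits it from $\gamma^{\otimes\infty}$.

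The main obstacle I anticipate is essentially bookkeeping at two junctures. First, one must invoke the unital and stable branches of the classification theorem with all hypotheses verified — notably that $B$ unital together with amenability of $\beta$ forces $G$ exact (needed for \autoref{thm:main-theorem}\ref{thm:main-theorem:3} and \autoref{thm:unital-uniqueness}), that the $K_0$-classes of units are respected by every first-factor embedding, and that the Cuntz pairs to which \autoref{thm:uniqueness} is applied are anchored. Second, one must confirm that the property extracted from classification, namely \emph{strongly asymptotically unitarily equivalent to a cocycle conjugacy}, coincides with the definitions of \emph{very strongly absorbs} and of a strongly self-absorbing action; the $\ad(u_0)$-perturbation is what bridges the merely asymptotic equivalence of the unital case with the strong version those definitions demand. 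The genuinely analytic input — the existence, uniqueness and intertwining theorems together with \autoref{prop:the-condition} and \autoref{cor:modelKKtrivial} — is already in place, so no new hard estimates should be needed.
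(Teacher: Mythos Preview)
Your proposal is correct and follows essentially the same strategy as the paper. A few points of comparison: the paper dispatches $(i)\Rightarrow(ii)$ by a direct citation of \cite[Proposition~5.2]{Szabo21cc} rather than your telescoping argument; for the general implication $(iii)\Rightarrow(ii)$ the paper reduces to the strongly stable case via \autoref{prop:cc-stability} and then passes through $(i)$, while you invoke Zhang's dichotomy and the two branches of \autoref{thm:main-theorem} directly; and for the final claim (that $\gamma$ is strongly self-absorbing when $G$ is discrete amenable) the paper simply applies the equivalence $(iii)\Rightarrow(i)$ with $\beta=\gamma$, so that $\gamma$ very strongly absorbs itself, rather than going via $\gamma\cc\gamma^{\otimes\infty}$ and transporting strong self-absorption back. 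Your explicit ``principle'' is exactly what the paper uses implicitly in its appeal to \autoref{thm:main-theorem} together with \autoref{thm:uniqueness}/\autoref{thm:unital-uniqueness}.

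One imprecision to tighten: the hypotheses of \autoref{thm:uniqueness} and of \autoref{thm:main-theorem}\ref{thm:main-theorem:2} require the codomain action to be \emph{strongly} stable, not merely stable, so your principle should be stated in that regime. In your actual applications this is harmless --- if $\beta$ is strongly stable then so is $\beta\otimes\gamma$, and the unital case is handled separately --- but as phrased with ``stable'' the principle is slightly too broad.
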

\begin{proof}
The implication \ref{cor:model-action-ssa:1}$\implies$\ref{cor:model-action-ssa:2} holds in general due to \cite[Proposition 5.2]{Szabo21cc}.
The implication \ref{cor:model-action-ssa:2}$\implies$\ref{cor:model-action-ssa:3} is clear by \autoref{prop:the-condition}.
Let us for a moment assume that $B$ is either unital or $\beta$ is strongly stable and prove \ref{cor:model-action-ssa:3}$\implies$\ref{cor:model-action-ssa:1}.
By \autoref{rem:KK-class-quasifree}, we see that the first-factor embedding
\[
\id_B\otimes\eins: (B,\beta)\to (B\otimes \CO_\infty, \beta\otimes\gamma)
\]
is a $KK^G$-equivalence.
Since both the domain and codomain actions are amenable and isometrically shift-absorbing, it follows from \autoref{thm:main-theorem} and either \autoref{thm:unital-uniqueness} or \autoref{thm:uniqueness} that this embedding is strongly asympotically unitarily equivalent to a cocycle conjugacy.
In particular, $\beta$ very strongly absorbs $\gamma$.
The only remaining implication is \ref{cor:model-action-ssa:3}$\implies$\ref{cor:model-action-ssa:2} if $B$ is non-unital and $\beta$ is not strongly stable.
However, condition \ref{cor:model-action-ssa:2} is clearly invariant under cocycle conjugacy.
Since $B$ is stable, $\beta$ is therefore cocycle conjugate to a strongly stable action by \autoref{prop:cc-stability}, so the above completes the circle of implications.

Now let us assume that $G$ is amenable.
Then the above applies in particular to $\beta=\gamma^{\otimes\infty}$ and yields the strong self-absorption of this action.
In this case, the implication \ref{cor:model-action-ssa:2}$\implies$\ref{cor:model-action-ssa:1} follows from \autoref{thm:strong-ssa-abs}.
Finally, let us assume that $G$ is discrete and amenable.
By \autoref{rem:outer-actions}, every outer $G$-action on a Kirchberg algebra is isometrically shift-absorbing, so the claim follows from the fact that every faithful quasi-free action on $\CO_\infty$ is outer; see \autoref{prop:quasifree-outer}. 
\end{proof}

Next, we verify another conjecture of Izumi and hence generalize the main result of \cite{GoldsteinIzumi11} to infinite amenable groups:

\begin{theorem} \label{thm:Izumi-other-conjecture}
Let $G$ be a countable discrete amenable group.
Then all faithful quasi-free actions $G\curvearrowright\CO_\infty$ are mutually very strongly cocycle conjugate.
\end{theorem}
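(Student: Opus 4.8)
The plan is to deduce the statement directly from \autoref{thm:main-theorem}\ref{thm:main-theorem:3}, the unital classification result, once its hypotheses have been verified and an appropriate invertible $KK^G$-class has been constructed. I fix two faithful quasi-free actions $\alpha,\beta: G\curvearrowright\CO_\infty$; it suffices to produce a very strong cocycle conjugacy $(\CO_\infty,\alpha)\to(\CO_\infty,\beta)$, since running the argument over all ordered pairs then shows that any two such actions are mutually very strongly cocycle conjugate.

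First I would check that the standing assumptions of \autoref{thm:main-theorem}\ref{thm:main-theorem:3} hold. The group $G$ is discrete and amenable, hence exact, and every action of $G$ is amenable. The algebra $\CO_\infty$ is a unital Kirchberg algebra. Since $\alpha$ and $\beta$ are faithful, for each $g\neq 1$ the automorphisms $\alpha_g$ and $\beta_g$ are induced by unitaries different from $\eins$ on the underlying Hilbert space (otherwise $\alpha_g$ or $\beta_g$ would be the identity), so \autoref{prop:quasifree-outer} shows that $\alpha$ and $\beta$ are pointwise outer. By \autoref{rem:outer-actions} they are therefore isometrically shift-absorbing, so all hypotheses on the actions are met.

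Next I would build the required class. Writing $\iota_\alpha:(\IC,\id_\IC)\to(\CO_\infty,\alpha)$ and $\iota_\beta:(\IC,\id_\IC)\to(\CO_\infty,\beta)$ for the canonical unital inclusions, \autoref{rem:KK-class-quasifree} guarantees that $KK^G(\iota_\alpha)$ and $KK^G(\iota_\beta)$ are invertible. Hence
\[
x:=KK^G(\iota_\alpha)^{-1}\otimes KK^G(\iota_\beta)\ \in\ KK^G(\alpha,\beta)
\]
is invertible. With $\iota_A=\iota_\alpha$ and $\iota_B=\iota_\beta$ in the notation of \autoref{thm:main-theorem}, one computes
\[
KK^G(\iota_A)\otimes x=KK^G(\iota_\alpha)\otimes KK^G(\iota_\alpha)^{-1}\otimes KK^G(\iota_\beta)=KK^G(\iota_B),
\]
which is exactly the criterion in the ``Moreover'' clause of \autoref{thm:main-theorem}\ref{thm:main-theorem:3}; applying the forgetful functor to non-equivariant $KK$-theory further yields $[\eins_{\CO_\infty}]_0\otimes x=[\eins_{\CO_\infty}]_0$, so the unital hypothesis on $x$ holds as well. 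Then \autoref{thm:main-theorem}\ref{thm:main-theorem:3} lifts $x$ to a very strong cocycle conjugacy, completing the argument.

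There is in truth no serious obstacle here: every analytic ingredient has already been established, and what remains is essentially bookkeeping in $KK^G$-theory. The only points requiring care are the verification that faithfulness forces pointwise outerness (so that isometric shift-absorption becomes available via \autoref{rem:outer-actions}) and keeping track of the correct composition order of the Kasparov product when arranging the equivariant $K$-theoretic condition on $x$; both are routine given the cited results.
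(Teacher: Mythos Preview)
Your proposal is correct and follows essentially the same approach as the paper: invoke \autoref{prop:quasifree-outer} for outerness (hence isometric shift-absorption via \autoref{rem:outer-actions}), use \autoref{rem:KK-class-quasifree} to obtain the canonical $KK^G$-equivalence $x=KK^G(\iota_\alpha)^{-1}\otimes KK^G(\iota_\beta)$, and apply \autoref{thm:main-theorem}\ref{thm:main-theorem:3}. You have simply spelled out in more detail the verification of the ``Moreover'' condition $KK^G(\iota_A)\otimes x=KK^G(\iota_B)$ that the paper leaves implicit in the phrase ``as required by''.
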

\begin{proof}
We know by \autoref{prop:quasifree-outer} that every faithful quasi-free action is outer.
Moreover we know by \autoref{rem:KK-class-quasifree} that the canonical unital inclusion $\IC\subset\CO_\infty$ is a $KK^G$-equivalence with respect to any quasi-free action on $\CO_\infty$.
In particular, we see that any pair of faithful quasi-free actions $G\curvearrowright\CO_\infty$ has a canonical $KK^G$-equivalence between them as required by \autoref{thm:main-theorem}\ref{thm:main-theorem:3}, which shows the claim.
\end{proof}

As a consequence of our existence theorem, we can, via abstract means, construct new examples of amenable actions of non-amenable groups with the Haagerup property, which so far are only known for free groups; see \cite[Corollary 6.4]{OzawaSuzuki21}.
In particular this partially resolves an issue raised in the concluding remarks of \cite{Suzuki21}.
We note that shortly before the submission of this article, Suzuki \cite{Suzuki23} has constructed such examples by a different method.

\begin{theorem} \label{thm:exact-HP}
Let $G$ be a second-countable, locally compact group.
Suppose $G$ is exact and satisfies the Haagerup property.
Then there exists an amenable and isometrically shift-absorbing action $\alpha: G\curvearrowright\CO_\infty$ such that the unital inclusion $\IC\subset\CO_\infty$ is a $KK^G$-equivalence.
\end{theorem}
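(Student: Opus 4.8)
The plan is to first produce an amenable and isometrically shift-absorbing action on a \emph{stable} Kirchberg algebra that is $KK^G$-equivalent to $(\IC,\id_\IC)$, and then to cut it down to a unital corner isomorphic to $\CO_\infty$. The reduction to the stable case is straightforward from the tools already available; the whole difficulty is concentrated in the unitalization, which I would handle by applying the existence theorem with domain $\IC$.

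First I would invoke the Haagerup property. By the Higson--Kasparov theorem \cite{HigsonKasparov01} there is a proper -- hence amenable -- action $\gamma_0: G\curvearrowright D_0$ on a separable type I (in particular nuclear) \cstar-algebra with $(D_0,\gamma_0)$ being $KK^G$-equivalent to $(\IC,\id_\IC)$. Feeding this into \autoref{thm:range} yields an amenable and isometrically shift-absorbing action $\beta: G\curvearrowright B$ on a stable Kirchberg algebra together with an equivariant embedding $(D_0,\gamma_0)\to(B,\beta)$ that is a $KK^G$-equivalence; consequently $(B,\beta)$ is $KK^G$-equivalent to $(\IC,\id_\IC)$. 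Using \autoref{prop:cc-stability} I may replace $\beta$ by a cocycle conjugate action and assume that $\beta$ is strongly stable, which preserves amenability, isometric shift-absorption, and the $KK^G$-equivalence class. Let $z\in KK^G(\id_\IC,\beta)$ denote the resulting invertible class.

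The crucial step is unitalization. Here I would apply \autoref{thm:existence}(ii) with $A=\IC$ and $\alpha=\id_\IC$ -- permissible since $G$ is exact -- to the invertible class $z$, obtaining an anchored proper cocycle embedding $(\phi,\Iu):(\IC,\id_\IC)\to(B,\beta)$ with $KK^G(\phi,\Iu)=z$. Then $p:=\phi(1)\in B$ is a nonzero projection satisfying $\ad(\Iu_g)\circ\beta_g(p)=p$ for all $g\in G$, so $p$ is invariant under the cocycle-perturbed action $\beta^\Iu=\ad(\Iu_\bullet)\circ\beta_\bullet$, and hence $\beta^\Iu$ restricts to a genuine action $\alpha:=\beta^\Iu|_{pBp}$ on the corner $pBp$. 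Since $B$ is simple, $p$ is full, so the inclusion $pBp\subseteq B$ is a $KK^G$-equivalence; and since $KK^G(\phi,\Iu)=z$ is invertible, on $K_0$ one reads off that $[p]$ is the generator of $K_0(B)\cong\IZ$. Because $\beta^\Iu$ is cocycle conjugate to $\beta$ it remains amenable and isometrically shift-absorbing, and since $p$ is full the corner $(pBp,\alpha)$ is equivariantly Morita equivalent to $(B,\beta^\Iu)$; both amenability and isometric shift-absorption pass to $(pBp,\alpha)$, the latter because $F_{\infty,\alpha}(pBp)$ is equivariantly isomorphic to $F_{\infty,\beta^\Iu}(B)$, so the unital equivariant copy of $(\CO_\infty,\gamma)$ supplied by \autoref{prop:the-condition} transports. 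The unital inclusion $(\IC,\id_\IC)\to(pBp,\alpha)$ is a $KK^G$-equivalence, as composing it with the corner equivalence $pBp\subseteq B$ recovers $z$. Finally $pBp$ is a unital Kirchberg algebra with $(K_0,[\eins],K_1)=(\IZ,1,0)$ in the UCT class \cite{RosenbergSchochet87}, whence $pBp\cong\CO_\infty$ by Kirchberg--Phillips \cite{KirchbergC, Phillips00}, completing the argument after renaming $(pBp,\alpha)$ as $(\CO_\infty,\alpha)$.

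I expect the main obstacle to be exactly this unitalization: a priori there is no reason for $B$ (let alone the type I algebra $D_0$) to carry an honestly $\beta$-invariant projection of the unit class, and it is precisely \autoref{thm:existence} -- which produces a projection invariant under a \emph{perturbed} action while pinning down its $K_0$-class via the prescribed invertible $KK^G$-element -- that circumvents this. The remaining points (fullness of $p$, the descent of amenability and isometric shift-absorption along the full corner $pBp\subseteq B$, and the cocycle-conjugacy invariance of both properties) are routine but should be verified carefully.
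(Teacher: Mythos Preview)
Your proposal is correct and follows essentially the same approach as the paper's proof: construct an amenable, isometrically shift-absorbing action on a stable Kirchberg algebra that is $KK^G$-equivalent to $\IC$, apply \autoref{thm:existence} with domain $\IC$ to produce a projection invariant under a cocycle perturbation, and pass to the corner. The only differences are cosmetic: the paper obtains the stable model directly as $\CO_\infty\otimes\CK$ via \cite[Corollary 6.3]{OzawaSuzuki21} (together with \autoref{rem:ISA-KK}), which lets it identify the corner with $\CO_\infty$ without invoking the UCT or Kirchberg--Phillips, whereas you go through Higson--Kasparov and \autoref{thm:range} and then appeal to classification at the end; and the paper makes the factorization $(\id,\Iv)\circ\iota\circ\iota_1=(\psi,\Iv)$ explicit when showing the unital inclusion is a $KK^G$-equivalence (your phrase ``recovers $z$'' tacitly absorbs the exterior equivalence $(\id,\Iu)$).
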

\begin{proof}
By \cite[Corollary 6.3]{OzawaSuzuki21}, we can find an amenable action $\beta: G\curvearrowright\CO_\infty\otimes\CK$ and an invertible element $x\in KK^G(\id_\IC,\beta)$.
Due to \autoref{rem:ISA-KK}, we may assume $\beta$ to be isometrically shift-absorbing.
With \autoref{thm:existence} we can find a proper cocycle embedding $(\psi,\Iv): (\IC,\id_\IC)\to (\CO_\infty\otimes\CK,\beta)$ with $KK^G(\psi,\Iv)=x$.
In particular we have that $p=\psi(\eins)$ is a projection fixed by the action $\beta^\Iv$.
We may hence define $\alpha$ to be the restriction of $\beta^\Iv$ to the corner spanned by $p$.
Since $\psi$ has to also induce an ordinary $KK$-equivalence, we deduce that $p$ must be a generator of the $K_0$-group, which in this case means that $p(\CO_\infty\otimes\CK)p\cong\CO_\infty$.
By the stability properties of $KK^G$, the inclusion map $\iota: (\CO_\infty,\alpha)\to (\CO_\infty\otimes\CK,\beta^\Iv)$ is a $KK^G$-equivalence.
If $\iota_1: \IC\to\CO_\infty$ denotes the unital inclusion, then it fits into the obvious equality of proper cocycle morphisms $(\id,\Iv)\circ\iota\circ\iota_1=(\psi,\Iv)$.
This implies\footnote{Keep in mind that Kasparov product is compatible with compositions in the reverse order.} that 
\[
KK^G(\iota_1)= x\otimes KK^G(\id,\Iv^*)\otimes KK^G(\iota)^{-1} \ \in \ KK^G(\id_\IC,\alpha)
\]
is invertible.
Since both amenability and isometric shift-absorption are properties passing to cocycle perturbations and hereditary subsystems (cf.\ \autoref{prop:the-condition} and \cite[Proposition 3.7]{OzawaSuzuki21}), $\alpha$ shares these properties with $\beta$.
\end{proof}

We also exhibit the following interesting phenomenon (along with some consequences), which can be obtained from our main result in conjunction with the Baum--Connes machinery of Meyer--Nest \cite{MeyerNest06}.
It is rather striking that such a statement can be obtained as a consequence of the deep homological algebra techniques that are applicable to the structure of equivariant $KK$-groups, especially because it seems impossible to prove more directly.

\begin{theorem} \label{thm:Meyer-Nest-consequence}
Let $G$ be a second-countable locally compact group with the Haagerup property.
Let $\alpha: G\curvearrowright A$ and $\beta: G\curvearrowright B$ be two amenable and isometrically shift-absorbing actions on Kirchberg algebras.
Suppose that either both $A$ and $B$ are unital or both $\alpha$ and $\beta$ are strongly stable.
Let $(\phi,\Iu): (A,\alpha)\to (B,\beta)$ be a proper cocycle morphism.
Then $(\phi,\Iu)$ is strongly asymptotically unitarily equivalent to a proper cocycle conjugacy if and only if for every compact subgroup $H\subseteq G$, the proper cocycle embedding of restricted $H$-actions $(\phi,\Iu|_H): (A,\alpha|_H)\to (B,\beta|_H)$ is strongly asymptotically unitarily equivalent to a proper cocycle conjugacy (of $H$-actions).
\end{theorem}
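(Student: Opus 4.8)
The plan is to reduce both sides of the asserted equivalence to an invertibility statement in equivariant $KK$-theory and then to bridge the two via the Baum--Connes machinery. The first ingredient is a reduction which, for the pairs of actions under consideration, is essentially a restatement of the main classification theorem: a proper cocycle morphism between such actions is strongly asymptotically unitarily equivalent to a proper cocycle conjugacy precisely when its $KK^G$-class is invertible. The ``only if'' implication is immediate, since a proper cocycle conjugacy induces a $KK^G$-equivalence (by functoriality of $KK^G$) and strong asymptotic unitary equivalence leaves the $KK^G$-class unchanged. For the ``if'' implication I would run the argument from the proof of \autoref{thm:main-theorem}: as $A$ is simple and $\phi$ is nonzero (its class being invertible), $\phi$ is injective and $(\phi,\Iu)$ is a proper cocycle embedding; using \autoref{thm:existence} (resp.\ \autoref{thm:unital-existence}) I would produce a proper cocycle embedding $(\psi,\Iv)$ in the opposite direction realizing the inverse class, verify with \autoref{thm:uniqueness} (resp.\ \autoref{thm:unital-uniqueness}) that both compositions are properly asymptotically inner, and conclude with the two-sided intertwining \autoref{thm:two-sided-intertwining}. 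The cocycle-anchoring bookkeeping in the strongly stable case and the $K_0$-compatibility of the unit in the unital case are handled exactly as in the proofs of \autoref{thm:unital-uniqueness} and \autoref{thm:main-theorem}; in the unital case the needed relation $[\eins_A]_0\otimes KK^G(\phi,\Iu)=[\eins_B]_0$ is supplied by the hypothesis for the trivial subgroup $H=\{1\}$, which forces $\phi$ to be unital.

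Second, I would check that this reduction applies verbatim to the restricted pair $(\alpha|_H,\beta|_H)$ for each compact subgroup $H\subseteq G$, so that the compact-subgroup side of the theorem likewise becomes a $KK$-invertibility statement. The coefficient algebras are unchanged, hence still Kirchberg and still unital (resp.\ strongly stable, the defining isometries lying in $\CM(B)^{\beta}\subseteq\CM(B)^{\beta|_H}$). Amenability is automatic, since every action of a compact group is amenable: in the quasicentral approximation property one may take $\zeta(h)=\mu(H)^{-1/2}e_i$ for an approximately invariant approximate unit $e_i$ coming from \autoref{lem:Kasparov}. For isometric shift-absorption I would use the $H$-equivariant decomposition $L^2(G)|_H\cong L^2(H)\otimes L^2(H\backslash G)$, which yields an $H$-equivariant isometric inclusion $\CH_H\hookrightarrow\CH_G$; composing the map $\Fs$ of \autoref{defi:the-condition} with this inclusion, and noting $F_{\infty,\beta}(B)\subseteq F_{\infty,\beta|_H}(B)$ because $\beta$-equicontinuity implies $\beta|_H$-equicontinuity, produces a map witnessing that $\beta|_H$ is isometrically shift-absorbing. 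Consequently $(\phi,\Iu|_H)$ is strongly asymptotically unitarily equivalent to a proper cocycle conjugacy of $H$-actions if and only if $KK^H(\phi,\Iu|_H)$ is invertible, and by functoriality of restriction one has $KK^H(\phi,\Iu|_H)=\mathrm{res}^G_H\,KK^G(\phi,\Iu)$.

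The decisive and hardest step, where the Haagerup hypothesis is genuinely used, is the purely $KK$-theoretic assertion that $x\in KK^G(\alpha,\beta)$ is invertible if and only if $\mathrm{res}^G_H(x)$ is invertible for every compact subgroup $H\subseteq G$. I would derive this from the localisation framework of Meyer--Nest \cite{MeyerNest06}: by Higson--Kasparov \cite{HigsonKasparov01} the group $G$ satisfies the Dirac--dual-Dirac method with $\gamma$-element equal to $\eins$, so that $KK^G$ coincides with the localising subcategory generated by the compactly induced objects. A class $x$ whose restriction to every compact subgroup is invertible is a weak equivalence, whence its mapping cone is weakly contractible; but this cone also lies in the generated subcategory, and since the two members of the associated complementary pair meet only in the zero object, the cone vanishes and $x$ is invertible (the reverse implication being trivial). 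Chaining the three steps then gives that $(\phi,\Iu)$ is strongly asymptotically unitarily equivalent to a proper cocycle conjugacy if and only if $x=KK^G(\phi,\Iu)$ is invertible, if and only if $\mathrm{res}^G_H(x)$ is invertible for all compact $H$, if and only if each restriction $(\phi,\Iu|_H)$ is strongly asymptotically unitarily equivalent to a proper cocycle conjugacy of $H$-actions, which is exactly the claim. The main obstacle is precisely this last Meyer--Nest step, being the only point that does not follow directly from the existence and uniqueness theory developed earlier and that requires the homological machinery surrounding the Baum--Connes conjecture.
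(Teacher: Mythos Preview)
Your proposal is correct and follows the same approach as the paper: reduce both sides to invertibility of the relevant equivariant $KK$-class via the classification machinery, and then invoke \cite[Theorem 8.5]{MeyerNest06}. You flesh out details the paper leaves implicit---that the restricted $H$-actions remain amenable and isometrically shift-absorbing, and how Higson--Kasparov feeds into the Meyer--Nest criterion---but the structure is otherwise identical; one small point is that injectivity of $\phi$ is more cleanly obtained (as the paper does) directly from the $H=\{1\}$ hypothesis rather than from invertibility of the $KK^G$-class, since the latter inference can fail when $A$ is $KK^G$-contractible.
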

\begin{proof}
The ``only if'' part is tautological.
For the ``if'' part, keep in mind that the case $H=\set{1}$ already implies that $\phi$ must be an embedding, which is necessarily unital if $A$ and $B$ are unital.
We are in a position to apply \autoref{thm:main-theorem} and either \autoref{thm:uniqueness} or \autoref{thm:unital-uniqueness}.
Hence we observe that $(\phi,\Iu)$ is strongly asymptotically unitarily equivalent to a proper cocycle conjugacy if and only if $KK^G(\phi,\Iu)\in KK^G(\alpha,\beta)$ is invertible.
Given any compact subgroup $H\subseteq G$, the same equivalence holds for the restricted $H$-actions if we insert $H$ in place of $G$.
In other words, the claim amounts to saying that if $KK^H(\phi,\Iu|_H)\in KK^H(\alpha|_H,\beta|_H)$ is invertible for every compact subgroup $H\subseteq G$, then $KK^G(\phi,\Iu)\in KK^G(\alpha,\beta)$ is invertible.
But this follows directly from \cite[Theorem 8.5]{MeyerNest06}.
\end{proof}

\begin{cor} \label{cor:ssa-abs-MN}
Let $G$ be a second-countable locally compact group with the Haagerup property.
Let $D$ be a separable unital simple nuclear \cstar-algebra with actions $\delta,\delta^{(1)},\delta^{(2)}: G\curvearrowright D$.
Let $\alpha: G\curvearrowright A$ be an amenable and isometrically shift-absorbing action on a Kirchberg algebra.
Suppose that $A$ is either unital or $\alpha$ is strongly stable.
Then:
\begin{enumerate}[leftmargin=*,label=\textup{(\roman*)}]
\item \label{cor:ssa-abs-MN:1}
$\alpha$ very strongly absorbs $\delta$ if and only if for every compact subgroup $H\subseteq G$, the restricted $H$-action $\alpha|_H$ very strongly absorbs $\delta|_H$.
\item \label{cor:ssa-abs-MN:2}
Suppose $\delta$ is amenable and isometrically shift-absorbing.
Then $\delta$ is strongly self-absorbing if and only if for every compact subgroup $H\subseteq G$, the restricted action $\delta|_H$ is strongly self-absorbing.
\item \label{cor:ssa-abs-MN:3}
Suppose that $\delta^{(1)}$ and $\delta^{(2)}$ are amenable, isometrically shift-absorbing, and strongly self-absorbing.
Then $\delta^{(1)}\cc\delta^{(2)}$ if and only if for every compact subgroup $H\subseteq G$, one has $\delta^{(1)}|_H\cc\delta^{(2)}|_H$ as $H$-actions. 
\end{enumerate}
\end{cor}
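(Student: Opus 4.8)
The plan is to obtain all three statements from the single descent criterion \autoref{thm:Meyer-Nest-consequence}, by expressing each property as the assertion that a canonical proper cocycle morphism between amenable, isometrically shift-absorbing actions on Kirchberg algebras is strongly asymptotically unitarily equivalent to a proper cocycle conjugacy, and by checking that this assertion restricts correctly to compact subgroups. First I would collect the permanence properties needed to apply \autoref{thm:Meyer-Nest-consequence}. Since $A$ (resp.\ $D$) is a Kirchberg algebra and $D$ is unital, simple and nuclear, each of $A\otimes D$ and $D\otimes D$ is again a Kirchberg algebra; amenability of $\alpha$ (resp.\ $\delta$) passes to $\alpha\otimes\delta$ and $\delta\otimes\delta$ because tensoring an amenable action with an arbitrary one stays amenable \cite{OzawaSuzuki21}; and isometric shift-absorption is inherited by these tensor products via the canonical equivariant unital embedding $F_{\infty,\alpha}(A)\hookrightarrow F_{\infty,\alpha\otimes\delta}(A\otimes D)$, $c\mapsto c\otimes\eins_D$, composed with the map from \autoref{defi:the-condition}. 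The unital-or-strongly-stable alternative is preserved as well. Crucially, for a compact subgroup $H\subseteq G$ both hypotheses pass to the restricted actions: there is an $H$-equivariant isometry $(\CH_H,\lambda_H)\hookrightarrow(\CH_G,\lambda_G|_H)$ (coming from $L^2(G)\cong L^2(H)\otimes L^2(H\backslash G)$ as $H$-representations), and precomposing the map of \autoref{defi:the-condition} with it witnesses isometric shift-absorption of $\delta|_H$, while amenability of $\delta$ restricts to $\delta|_H$ by the permanence properties of amenable actions.

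Part \ref{cor:ssa-abs-MN:1} is then immediate from the definitions. By definition, $\alpha$ very strongly absorbs $\delta$ precisely when the first-factor embedding $(\id_A\otimes\eins_D,\eins)\colon(A,\alpha)\to(A\otimes D,\alpha\otimes\delta)$ is strongly asymptotically unitarily equivalent to a proper cocycle conjugacy, and its restriction to any compact $H$ is exactly the first-factor embedding $(A,\alpha|_H)\to(A\otimes D,\alpha|_H\otimes\delta|_H)$, whose corresponding property is that $\alpha|_H$ very strongly absorbs $\delta|_H$. Applying \autoref{thm:Meyer-Nest-consequence} to this one morphism yields the stated equivalence verbatim.

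For part \ref{cor:ssa-abs-MN:2} I would use the characterization that, under our standing hypotheses, $\delta$ is strongly self-absorbing if and only if it very strongly absorbs itself, i.e.\ the first-factor embedding $(\id_D\otimes\eins_D,\eins)\colon(D,\delta)\to(D\otimes D,\delta\otimes\delta)$ is strongly asymptotically unitarily equivalent to a proper cocycle conjugacy. Indeed, in the separable unital setting strong self-absorption means exactly that some equivariant isomorphism $(D,\delta)\to(D\otimes D,\delta\otimes\delta)$ is approximately unitarily equivalent to the first-factor embedding, and this upgrades to a strong asymptotic equivalence because $\delta$ is equivariantly Jiang--Su stable (note $\delta$ amenable and isometrically shift-absorbing forces $\delta\cc\delta\otimes\id_{\CO_\infty}$ by \autoref{prop:the-condition}, so $D$ is a unital Kirchberg algebra). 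Granting this, part \ref{cor:ssa-abs-MN:2} follows by applying part \ref{cor:ssa-abs-MN:1} with the absorbed action also equal to $\delta$: then $\delta$ is strongly self-absorbing iff $\delta$ very strongly absorbs $\delta$ iff $\delta|_H$ very strongly absorbs $\delta|_H$ for every compact $H$ iff $\delta|_H$ is strongly self-absorbing for every compact $H$, where the last step re-applies the characterization to each $\delta|_H$ (again amenable and isometrically shift-absorbing by the preliminaries). I expect this definitional matching --- reconciling the central-sequence form of strong self-absorption from \cite{Szabo18ssa} with the ``equivalent to a proper cocycle conjugacy'' formulation demanded by \autoref{thm:Meyer-Nest-consequence}, and verifying that it restricts --- to be the main obstacle.

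Part \ref{cor:ssa-abs-MN:3} reduces to the previous two. The ``only if'' direction is trivial by restriction. For the converse I would record the elementary fact that, for strongly self-absorbing $\delta^{(1)},\delta^{(2)}$ on $D$, one has $\delta^{(1)}\cc\delta^{(2)}$ if and only if $\delta^{(1)}\cc\delta^{(1)}\otimes\delta^{(2)}$ and $\delta^{(2)}\cc\delta^{(2)}\otimes\delta^{(1)}$: the forward direction tensors a cocycle conjugacy $\delta^{(1)}\cc\delta^{(2)}$ into the self-absorption $\delta^{(i)}\cc\delta^{(i)}\otimes\delta^{(i)}$, while the backward direction chains $\delta^{(1)}\cc\delta^{(1)}\otimes\delta^{(2)}\cc\delta^{(2)}\otimes\delta^{(1)}\cc\delta^{(2)}$ through the tensor-flip conjugacy $(D\otimes D,\delta^{(1)}\otimes\delta^{(2)})\cong(D\otimes D,\delta^{(2)}\otimes\delta^{(1)})$. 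By \autoref{thm:strong-ssa-abs} (legitimate since each $\delta^{(i)}$ is strongly self-absorbing and equivariantly Jiang--Su stable), the condition $\delta^{(i)}\cc\delta^{(i)}\otimes\delta^{(j)}$ is equivalent to $\delta^{(i)}$ very strongly absorbing $\delta^{(j)}$. Combining these with part \ref{cor:ssa-abs-MN:1}, and using part \ref{cor:ssa-abs-MN:2} to guarantee that each $\delta^{(i)}|_H$ is again strongly self-absorbing, I obtain the chain: $\delta^{(1)}\cc\delta^{(2)}$ holds iff $\delta^{(i)}$ very strongly absorbs $\delta^{(j)}$ for both ordered pairs, iff (by part \ref{cor:ssa-abs-MN:1}) $\delta^{(i)}|_H$ very strongly absorbs $\delta^{(j)}|_H$ for both pairs and every compact $H$, iff (running the same two equivalences over each $H$) $\delta^{(1)}|_H\cc\delta^{(2)}|_H$ for every compact $H$, as desired.
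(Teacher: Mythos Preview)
Your proposal is correct and follows essentially the same approach as the paper: deduce \ref{cor:ssa-abs-MN:1} directly from \autoref{thm:Meyer-Nest-consequence} applied to the first-factor embedding, reduce \ref{cor:ssa-abs-MN:2} to \ref{cor:ssa-abs-MN:1} via the characterization (from \autoref{thm:strong-ssa-abs}, using equivariant Jiang--Su stability) that strong self-absorption is equivalent to very strong self-absorption, and reduce \ref{cor:ssa-abs-MN:3} to \ref{cor:ssa-abs-MN:1} applied twice via the standard mutual-absorption characterization of cocycle conjugacy between strongly self-absorbing actions. Your write-up is in fact more careful than the paper's in two respects: you verify explicitly that $\alpha\otimes\delta$ inherits amenability and isometric shift-absorption (needed to invoke \autoref{thm:Meyer-Nest-consequence}), and in \ref{cor:ssa-abs-MN:3} you correctly invoke \ref{cor:ssa-abs-MN:2} to ensure each $\delta^{(i)}|_H$ is still strongly self-absorbing so that the mutual-absorption characterization can be run at the $H$-level.
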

\begin{proof}
We first note that by \autoref{prop:the-condition}, every amenable and isometrically shift-absorbing action also absorbs the trivial action on $\CO_\infty$.

\ref{cor:ssa-abs-MN:1}:
This is a special case of \autoref{thm:Meyer-Nest-consequence} if we insert $\alpha\otimes\delta$ in place of $\beta$.

\ref{cor:ssa-abs-MN:2}: 
Since $\delta$ is equivariantly Jiang--Su stable, it follows from \autoref{thm:strong-ssa-abs} that $\delta$ is strongly self-absorbing if and only if $\delta$ very strongly absorbs $\delta$.
Hence this becomes a special case of \ref{cor:ssa-abs-MN:1}.

\ref{cor:ssa-abs-MN:3}:
Since we assume $\delta^{(1)}$ and $\delta^{(2)}$ to be strongly self-absorbing, they are cocycle conjugate if and only if they absorb each other.
Since they are both equivariantly Jiang--Su stable, it follows from \autoref{thm:strong-ssa-abs} that this is the case if and only if they absorb each other very strongly.
Hence the claim follows from applying part \ref{cor:ssa-abs-MN:1} twice.
\end{proof}

As a consequence, we can partially verify and extend on a conjecture by the second author, which was originally posed for actions of amenable groups; see \cite{Szabo17ssa3} or \cite[Conjecture A]{Szabo19ssa4}.

\begin{cor} \label{cor:ssa-conj}
Let $G$ be a countable discrete torsion-free group with the Haagerup property.
Let $\CD$ be a strongly self-absorbing Kirchberg algebra.
%Then:
\begin{enumerate}[leftmargin=*,label=\textup{(\roman*)}]
\item \label{cor:ssa-conj:1}
An amenable action $\alpha: G\curvearrowright A$ on a $\CD$-stable Kirchberg algebra is outer if and only if $\alpha$ very strongly absorbs every action $\delta: G\curvearrowright\CD$.
\item \label{cor:ssa-conj:2}
Suppose $G$ is exact.
Then up to (very strong) cocycle conjugacy, there exists a unique amenable outer action $\delta: G\curvearrowright\CD$, which is automatically strongly self-absorbing.
\end{enumerate}
\end{cor}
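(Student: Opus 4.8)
The plan is to derive both parts from the reduction principle in \autoref{cor:ssa-abs-MN}, using the fact that a countable discrete torsion-free group has no nontrivial finite subgroups, and hence no nontrivial compact subgroups. Consequently, every clause of the form ``for every compact subgroup $H\subseteq G$'' in \autoref{cor:ssa-abs-MN} collapses to the single condition at $H=\{1\}$, where the equivariant notions degenerate to their classical (non-equivariant) counterparts. These base cases will be governed entirely by the hypothesis that $\CD$ is strongly self-absorbing. I also recall that for discrete $G$ an action on a Kirchberg algebra is isometrically shift-absorbing precisely when it is pointwise outer (\autoref{rem:outer-actions}), so throughout ``amenable outer'' and ``amenable isometrically shift-absorbing'' are interchangeable.

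For part \ref{cor:ssa-conj:1}, assume first that $A$ is unital or $\alpha$ is strongly stable (the general $\CD$-stable Kirchberg case reduces to this via \autoref{prop:cc-stability} and the invariance of the relevant properties under cocycle conjugacy). Given an amenable outer $\alpha$, I would apply \autoref{cor:ssa-abs-MN}\ref{cor:ssa-abs-MN:1} to an arbitrary action $\delta\colon G\curvearrowright\CD$: since $\{1\}$ is the only compact subgroup, it remains to check that $\alpha|_{\{1\}}$ very strongly absorbs $\delta|_{\{1\}}$, i.e.\ that the non-equivariant first-factor embedding $A\to A\otimes\CD$ is strongly asymptotically unitarily equivalent to an isomorphism. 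This is exactly $\CD$-stability of $A$ together with strong self-absorption of $\CD$ (apply \autoref{thm:strong-ssa-abs} with the trivial group), which holds by hypothesis. For the converse I would exhibit one concrete outer action: using $\CD\cong\CD\otimes\CO_\infty$ and the model action $\gamma$ of \autoref{def:the-model}, the action $\id_\CD\otimes\gamma$ is pointwise outer by \autoref{prop:quasifree-outer}; if $\alpha$ very strongly absorbs it, then $\alpha\cc\alpha\otimes(\id_\CD\otimes\gamma)$, and the latter is pointwise outer since tensoring an outer automorphism of a simple \cstar-algebra with an identity remains outer. As pointwise outerness is a cocycle conjugacy invariant, $\alpha$ is outer. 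This direction requires neither exactness nor part \ref{cor:ssa-conj:2}.

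For part \ref{cor:ssa-conj:2}, existence comes from \autoref{thm:exact-HP}: for $G$ exact with the Haagerup property there is an amenable isometrically shift-absorbing action $\alpha\colon G\curvearrowright\CO_\infty$, and transporting $\alpha\otimes\id_\CD$ along $\CO_\infty\otimes\CD\cong\CD$ produces an amenable action on $\CD$ that is outer, hence isometrically shift-absorbing. That any amenable outer $\delta\colon G\curvearrowright\CD$ is automatically strongly self-absorbing follows from \autoref{cor:ssa-abs-MN}\ref{cor:ssa-abs-MN:2}, since $\CD$ is unital and the only restriction to verify, $\delta|_{\{1\}}$, is strongly self-absorbing exactly because $\CD$ is. For uniqueness up to very strong cocycle conjugacy, let $\delta^{(1)},\delta^{(2)}\colon G\curvearrowright\CD$ be amenable and outer. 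Applying \autoref{cor:ssa-abs-MN}\ref{cor:ssa-abs-MN:1} in both directions (again only $H=\{1\}$ to check, handled by strong self-absorption of $\CD$) shows that each $\delta^{(i)}$ very strongly absorbs the other. The point I would emphasize is that very strong absorption upgrades directly to a \emph{very strong} cocycle conjugacy: the first-factor embedding $(\CD,\delta^{(1)})\to(\CD\otimes\CD,\delta^{(1)}\otimes\delta^{(2)})$ carries the trivial cocycle, so whichever cocycle conjugacy is strongly asymptotically unitarily equivalent to it must have its cocycle realized as an asymptotic coboundary. Hence $\delta^{(1)}\cc\delta^{(1)}\otimes\delta^{(2)}$ and $\delta^{(2)}\cc\delta^{(2)}\otimes\delta^{(1)}$ very strongly; composing with the tensor-flip conjugacy and using that very strong cocycle conjugacies compose (\autoref{rem:vscc-comp}) yields a very strong cocycle conjugacy $\delta^{(1)}\cc\delta^{(2)}$. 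The compact-$G$ clause is vacuous here since a torsion-free compact group is trivial, but it is in any case subsumed by \autoref{cor:vscc-for-compact}.

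The conceptual weight is carried entirely by \autoref{cor:ssa-abs-MN}, and behind it the Meyer--Nest/Baum--Connes input of \autoref{thm:Meyer-Nest-consequence}; granting that, the substance reduces to two bookkeeping facts, namely that torsion-freeness annihilates all nontrivial compact subgroups and that the $H=\{1\}$ base cases are precisely strong self-absorption of $\CD$. I expect the one genuinely non-formal step to be the upgrade from cocycle conjugacy to \emph{very strong} cocycle conjugacy, which rests on the trivial-cocycle observation above, while the only mild technical nuisance is the reduction in part \ref{cor:ssa-conj:1} of a general $\CD$-stable Kirchberg algebra to the unital-or-strongly-stable setting required by \autoref{cor:ssa-abs-MN}.
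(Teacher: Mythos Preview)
Your proposal is correct and follows essentially the same route as the paper: both parts are reduced to \autoref{cor:ssa-abs-MN} using that torsion-freeness forces $H=\{1\}$, with existence in \ref{cor:ssa-conj:2} coming from \autoref{thm:exact-HP}. You are simply more explicit than the paper in two places: you spell out the converse direction in \ref{cor:ssa-conj:1} (exhibiting a concrete outer $\delta$ so that $\alpha\cc\alpha\otimes\delta$ forces outerness), and for uniqueness in \ref{cor:ssa-conj:2} you unpack the ``very strong'' upgrade via the trivial-cocycle observation rather than quoting \autoref{cor:ssa-abs-MN}\ref{cor:ssa-abs-MN:3}, whose proof does exactly this anyway.
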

\begin{proof}
\ref{cor:ssa-conj:1} follows from \autoref{cor:ssa-abs-MN}\ref{cor:ssa-abs-MN:1} for $H=\set{1}$.

\ref{cor:ssa-conj:2}: From \autoref{thm:exact-HP} we can conclude that there exists such an action in the first place.
The fact that it is strongly self-absorbing and unique follows directly from \autoref{cor:ssa-abs-MN}\ref{cor:ssa-abs-MN:2}+\ref{cor:ssa-abs-MN:3} for $H=\set{1}$.
\end{proof}

For the subsequent applications of our main results to flows, we recall the statement below as a special case of \cite[Theorem 20.7.3]{BlaKK}, which can be viewed as a strengthening of the Connes--Thom isomorphism theorem \cite{Connes81, FackSkandalis81}.

\begin{theorem} \label{thm:Connes-Thom}
Let $k\geq 1$ be given and let $\alpha: \IR^k\curvearrowright A$ and $\beta: \IR^k\curvearrowright B$ be two actions on separable \cstar-algebras.
Then the canonical forgetful map $KK^{\IR^k}(\alpha,\beta)\to KK(A,B)$, which takes a $KK^{\IR^k}$-class and sends it to the associated ordinary $KK$-class between \cstar-algebras, is an isomorphism of abelian groups.
\end{theorem}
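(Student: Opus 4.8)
The plan is to reduce to the case $k=1$ and then bootstrap to general $k$ by peeling off one copy of $\IR$ at a time. For $k=1$ the assertion that the forgetful map $KK^{\IR}(\alpha,\beta)\to KK(A,B)$ is an isomorphism is exactly the equivariant strengthening of the Connes--Thom isomorphism recorded in \cite[Theorem 20.7.3]{BlaKK}, so in that case there is nothing to do beyond invoking it. The substance of the argument lies in organizing the inductive step so that the one-variable result can be applied repeatedly.

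For general $k$ I would write $\IR^k=\IR\times\IR^{k-1}$ and factor the forgetful map as
\[
KK^{\IR^k}(\alpha,\beta)\xrightarrow{\ \mathcal F_1\ } KK^{\IR^{k-1}}(\alpha|_{\IR^{k-1}},\beta|_{\IR^{k-1}})\xrightarrow{\ \mathcal F'\ } KK(A,B),
\]
where $\mathcal F_1$ forgets only the first $\IR$-coordinate while retaining the $\IR^{k-1}$-equivariance, and $\mathcal F'$ forgets the remaining coordinates. By functoriality of the forgetful construction this composite is precisely the forgetful map in the statement. By the inductive hypothesis $\mathcal F'$ is an isomorphism, so it suffices to prove that $\mathcal F_1$ is an isomorphism, after which the claim follows as a composition of isomorphisms.

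The map $\mathcal F_1$ is the Connes--Thom isomorphism for the distinguished $\IR$-factor, but carried out within the category of $\IR^{k-1}$-\cstar-algebras rather than within plain \cstar-algebras. The point is that the invertible Thom elements implementing the Connes--Thom isomorphism are built naturally out of the $\IR$-action and are, by their characterizing properties (see \cite{FackSkandalis81}), compatible with any auxiliary commuting group action; since the $\IR^{k-1}$-action on $A$ and $B$ commutes with the distinguished $\IR$-factor, these elements live in the $\IR^{k-1}$-equivariant $KK$-groups, and forgetting the $\IR$-direction respects the $\IR^{k-1}$-structure. Consequently the isomorphism of \cite[Theorem 20.7.3]{BlaKK} upgrades to an isomorphism of $\IR^{k-1}$-equivariant $KK$-groups, which is exactly the invertibility of $\mathcal F_1$. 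I expect this compatibility of the one-variable Connes--Thom isomorphism with the remaining $\IR^{k-1}$-symmetry to be the only genuine obstacle; everything else in the induction is formal, and once it is verified (or extracted directly from the parametrized form of the cited theorem) the induction closes.
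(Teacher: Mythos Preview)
Your inductive approach is valid but different from the paper's (commented-out) argument. The paper does not induct on $k$; instead it reduces in one stroke to the case of \emph{trivial} $\IR^k$-actions, where the forgetful map is evidently an isomorphism from the Kasparov picture. The reduction goes via the homotopy $\bar{\alpha}_{\vec r}(f)(t)=\alpha_{t\vec r}(f(t))$ on $A[0,1]$, whose evaluation maps $\ev_0$ and $\ev_1$ land in $(A,\id_A)$ and $(A,\alpha)$ respectively; since $\IR^k$ has no nontrivial compact subgroups, \cite[Theorem 8.5]{MeyerNest06} upgrades these ordinary $KK$-equivalences to $KK^{\IR^k}$-equivalences, yielding an invertible $\kappa_\alpha\in KK^{\IR^k}(\alpha,\id_A)$ with trivial image under the forgetful map. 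Conjugating by $\kappa_\alpha$ and $\kappa_\beta$ shows that the forgetful map factors through the trivial-action case.

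Your route trades this Meyer--Nest input for the assertion that the one-variable Connes--Thom isomorphism is natural with respect to a commuting $\IR^{k-1}$-action. That is true, and the Fack--Skandalis axiomatics you cite do make the Thom element canonical enough to carry the extra symmetry, but you have left this as an expectation rather than a verification; the paper's argument sidesteps this entirely by never needing a relative Connes--Thom. Note also that the paper treats the full $\IR^k$-statement as already a special case of \cite[Theorem 20.7.3]{BlaKK}, so invoking that reference only for $k=1$ undersells it.
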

%%%
\begin{comment}
\begin{proof}
Upon reviewing the construction of equivariant $KK$-theory, it is evident (especially through the original Kasparov picture) that the statement is true for $\alpha$ and $\beta$ being the trivial actions.

For general $\alpha$, we consider the action $\bar{\alpha}: \IR\curvearrowright A[0,1]$ given by $\bar{\alpha}_{\vec{r}}(f)(t)=\alpha_{t\vec{r}}(f(t))$ for all $f\in A[0,1]$, $\vec{r}\in\IR^k$ and $t\in [0,1]$.
Then the equivariant evaluation maps $\ev_0: (A[0,1],\bar{\alpha})\to(A,\id_A)$ and $\ev_1: (A[0,1],\bar{\alpha})\to(A,\alpha)$ are both $KK^{\IR^k}$-equivalences by \cite[Theorem 8.5]{MeyerNest06}.
Hence 
\[
\kappa_\alpha=KK^{\IR^k}(\ev_1)^{-1}\otimes KK^{\IR^k}(\ev_0) \ \in \ KK^{\IR^k}(\alpha,\id_A)
\]
is invertible.
Clearly $\Ff(\kappa_\alpha)=KK(\id_A)$ in $KK(A,A)$.
Then $x\mapsto\kappa_\alpha^{-1}\otimes x\otimes\kappa_\beta$ defines an isomorphism $KK^{\IR^k}(\alpha,\beta)\cong KK^{\IR^k}(\id_A,\id_B)$, and we see that upon further composing with the forgetful map, we have an isomorphism $KK^{\IR^k}(\alpha,\beta)\cong KK(A,B)$ which agrees with the original forgetful map $\Ff$.
\end{proof}
\end{comment}
%%%

For what follows, the reader should briefly recall the definition of the Rokhlin property for actions of $\IR^k$; see \cite[Definition 6.2]{Szabo19ssa4}.
The argument below provides an alternative proof of and generalizes \cite[Theorem A]{Szabo21R}.

\begin{cor} \label{cor:Rokhlin-multiflows}
Let $\gamma: \IR^k\curvearrowright\CO_\infty$ be the canonical quasi-free action from \autoref{def:the-model}.
Let $\beta: \IR^k\curvearrowright B$ be an action on a separable $\CO_\infty$-absorbing \cstar-algebra.
	\begin{enumerate}[leftmargin=*,label=\textup{(\roman*)}]
	\item $\beta$ has the Rokhlin property if and only if $\beta$ is isometrically shift-absorbing. \label{cor:Rokhlin-multiflows:1}
	\item Suppose $B$ is a Kirchberg algebra.
	Then $\beta$ has the Rokhlin property if and only if $\beta$ is cocycle conjugate to $\id_B\otimes\gamma^{\otimes\infty}: \IR^k\curvearrowright B\otimes\CO_\infty^{\otimes\infty}$.
	If either $B$ is unital or $\beta$ is strongly stable, then these two actions are in fact very strongly cocycle conjugate. \label{cor:Rokhlin-multiflows:2}
	\end{enumerate}
\end{cor}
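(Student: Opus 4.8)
The plan is to treat \autoref{cor:Rokhlin-multiflows:1} first, since \autoref{cor:Rokhlin-multiflows:2} will then follow by feeding it into the classification machinery. Throughout I would use two facts for free: that $\IR^k$ is amenable, so every action $\beta:\IR^k\curvearrowright B$ is automatically amenable, and that the model action $\gamma^{\otimes\infty}:\IR^k\curvearrowright\CO_\infty^{\otimes\infty}$ is strongly self-absorbing by \autoref{cor:model-action-ssa}. The guiding idea for \autoref{cor:Rokhlin-multiflows:1} is to show that, for separable $\CO_\infty$-absorbing $B$, \emph{both} the Rokhlin property and isometric shift-absorption are equivalent to $\beta\cc\beta\otimes\gamma^{\otimes\infty}$. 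The point of routing everything through this absorption is that, since $\gamma^{\otimes\infty}$ is strongly self-absorbing, the relation $\beta\cc\beta\otimes\gamma^{\otimes\infty}$ is governed by a dynamical McDuff-type criterion \cite{Szabo18ssa}: it holds if and only if there is a unital equivariant $*$-homomorphism $(\CO_\infty^{\otimes\infty},\gamma^{\otimes\infty})\to(F_{\infty,\beta}(B),\tilde\beta_\infty)$. This criterion needs neither simplicity nor nuclearity of $B$, which is exactly what keeps us in the $\CO_\infty$-absorbing generality rather than the Kirchberg setting of \autoref{cor:model-action-ssa}.

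For the isometric shift-absorbing side I would argue as follows. By \autoref{prop:the-condition}, $\beta$ is isometrically shift-absorbing precisely when there is a unital equivariant $*$-homomorphism $(\CO_\infty,\gamma)\to(F_{\infty,\beta}(B),\tilde\beta_\infty)$. A standard reindexation argument inside the central sequence algebra upgrades a single such copy to commuting copies, hence to a unital equivariant copy of the infinite tensor power $(\CO_\infty^{\otimes\infty},\gamma^{\otimes\infty})$. Combined with the McDuff-type criterion above, this shows that isometric shift-absorption is equivalent to $\beta\cc\beta\otimes\gamma^{\otimes\infty}$.

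It then remains to identify the Rokhlin property with the same absorption, which I would split into two implications. For ``absorption implies Rokhlin'' I would first verify that the model $\gamma^{\otimes\infty}$ itself has the Rokhlin property of \cite[Definition 6.2]{Szabo19ssa4}; this is a concrete computation inside the quasi-free action, exploiting that $(\CO_\infty,\gamma)$ equivariantly contains $(\CK(\CH_G^\infty),\ad(\lambda^\infty))$ as in \autoref{rem:embed-compacts-into-gamma}, so that the required Rokhlin data can be manufactured from the regular representation $\lambda$. Once $\gamma^{\otimes\infty}$ is known to be Rokhlin, the Rokhlin data transports through the absorbed tensor factor of $\beta\cc\beta\otimes\gamma^{\otimes\infty}$ into $F_{\infty,\beta}(B)$. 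The reverse implication ``Rokhlin implies absorption'' is the crux of the corollary and the step I expect to be the main obstacle: starting only from the Rokhlin tower in the central sequence algebra, one must build a unital equivariant copy of $(\CO_\infty,\gamma)$, i.e.\ the full isometric shift map. Here the $\CO_\infty$-absorption of $B$ is indispensable, since it supplies the orthogonal isometric ``multiplicity'' that the essentially commutative Rokhlin model cannot provide on its own; this construction amounts to the dynamical adaptation of the methods of \cite{Szabo21R}, and is the most technical part of the argument.

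For \autoref{cor:Rokhlin-multiflows:2}, assume $B$ is a Kirchberg algebra. By part \ref{cor:Rokhlin-multiflows:1} the Rokhlin property is equivalent to isometric shift-absorption, which by \autoref{cor:model-action-ssa} is equivalent to $\beta\cc\beta\otimes\gamma^{\otimes\infty}$. It therefore suffices to show that $\beta\otimes\gamma^{\otimes\infty}$ and $\id_B\otimes\gamma^{\otimes\infty}$ on $B\otimes\CO_\infty^{\otimes\infty}$ are cocycle conjugate. Both are amenable and isometrically shift-absorbing actions on a Kirchberg algebra, so by \autoref{thm:main-theorem} it is enough to produce a $KK^{\IR^k}$-equivalence between them, and this is where \autoref{thm:Connes-Thom} enters: the forgetful map $KK^{\IR^k}(\beta\otimes\gamma^{\otimes\infty},\id_B\otimes\gamma^{\otimes\infty})\to KK(B\otimes\CO_\infty^{\otimes\infty},B\otimes\CO_\infty^{\otimes\infty})$ is a ring isomorphism, so the preimage of $[\id]$ is an invertible $KK^{\IR^k}$-class, which \autoref{thm:main-theorem} lifts to a cocycle conjugacy. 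For the very strong statement when $\beta$ is strongly stable (resp.\ $B$ is unital) I would run the same argument through \autoref{thm:main-theorem}\ref{thm:main-theorem:2} (resp.\ \ref{thm:main-theorem:3}) and use \autoref{rem:vscc-comp} to compose the very strong cocycle conjugacy $\beta\cc\beta\otimes\gamma^{\otimes\infty}$ coming from \autoref{cor:model-action-ssa}\ref{cor:model-action-ssa:1} with the one just obtained; in the unital case the extra $K$-theoretic side condition of \autoref{thm:main-theorem}\ref{thm:main-theorem:3} is automatic, since it forgets to the true statement about the unit inclusion and the relevant forgetful map is injective by \autoref{thm:Connes-Thom}.
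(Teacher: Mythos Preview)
Your plan for part \ref{cor:Rokhlin-multiflows:2} is essentially the paper's, but for part \ref{cor:Rokhlin-multiflows:1} you have inverted the difficulty of the two implications and thereby miss the paper's much shorter argument.

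The direction you flag as ``the crux'' and ``the main obstacle'' --- Rokhlin implies absorption of $\gamma^{\otimes\infty}$ --- is in fact a one-line citation: since $\gamma^{\otimes\infty}$ is strongly self-absorbing (\autoref{cor:model-action-ssa}), \cite[Corollary B]{Szabo19ssa4} gives directly that any Rokhlin $\IR^k$-action on a separable $\CO_\infty$-absorbing \cstar-algebra absorbs it, so $\beta\cc\beta\otimes\gamma^{\otimes\infty}$ and isometric shift-absorption follow. No reworking of \cite{Szabo21R} is required. Conversely, the step you treat as routine --- verifying that $\gamma^{\otimes\infty}$ itself has the Rokhlin property via a ``concrete computation'' inside $(\CK(\CH_G^\infty),\ad(\lambda^\infty))$ --- is precisely where the paper invokes the main classification theorem. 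Your sketch does not explain how to obtain the Rokhlin unitaries (which must transform by characters of $\IR^k$) from the compact operators, and the paper does not attempt this. Instead it cites the existence of \emph{some} Rokhlin $\IR^k$-action $\delta$ on $\CO_\infty$ via \cite[Example 6.8]{Szabo19ssa4} and \cite{BratteliKishimotoRobinson07}, and then applies \autoref{thm:Connes-Thom} together with \autoref{thm:main-theorem}\ref{thm:main-theorem:3} to conclude $\gamma^{\otimes\infty}\cc\gamma^{\otimes\infty}\otimes\delta$, which manifestly has the Rokhlin property. So the classification machinery already enters in part \ref{cor:Rokhlin-multiflows:1}, not only in part \ref{cor:Rokhlin-multiflows:2}.

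For part \ref{cor:Rokhlin-multiflows:2} the paper is slightly more direct than your route: rather than passing from $\beta$ to $\beta\otimes\gamma^{\otimes\infty}$ and then composing two very strong cocycle conjugacies via \autoref{rem:vscc-comp}, it applies \autoref{thm:Connes-Thom} and \autoref{thm:main-theorem} in one step to the pair $(\beta,\id_B\otimes\gamma^{\otimes\infty})$, both of which are amenable and isometrically shift-absorbing by part \ref{cor:Rokhlin-multiflows:1}.
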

\begin{proof}
\ref{cor:Rokhlin-multiflows:1}:
Note that the existence of a Rokhlin action on $\CO_\infty$ is ensured by \cite[Example 6.8]{Szabo19rd}, which uses \cite{BratteliKishimotoRobinson07}.
For the ``only if'' part, we note that $\gamma^{\otimes\infty}$ is a strongly self-absorbing action by \autoref{cor:model-action-ssa}. 
Assuming $\beta$ has the Rokhlin property, it follows by \cite[Corollary B]{Szabo19rd} that $\beta\cc\beta\otimes\gamma^{\otimes\infty}$, so in particular $\beta$ is isometrically shift-absorbing.
Conversely, if we know that $\beta$ is isometrically shift-absorbing, then it suffices to show by \autoref{prop:the-condition} that $\gamma^{\otimes\infty}$ has the Rokhlin property.
By \autoref{thm:Connes-Thom} and \autoref{thm:main-theorem}\ref{thm:main-theorem:3}, we can obtain a cocycle conjugacy between $\gamma^{\otimes\infty}$ and its tensor product with any Rokhlin action on $\CO_\infty$, which shows the claim.

\ref{cor:Rokhlin-multiflows:2}:
With \ref{cor:Rokhlin-multiflows:1} the ``if'' part is clear, so we show the ``only if'' part.
By \autoref{thm:Connes-Thom}, we have canonical (if $B$ is unital, in particular unit-preserving) $KK^{\IR^k}$-equivalences of actions $\beta\sim\id_B\sim\id_B\otimes\gamma^{\otimes\infty}$.
By \ref{cor:Rokhlin-multiflows:1} we know that $\beta$ is isometrically shift-absorbing.
So with \autoref{thm:main-theorem}, it follows that $\beta$ and $\id_B\otimes\gamma^{\otimes\infty}$ are indeed (very strongly) cocycle conjugate. 
\end{proof}

%%%%%%%%%%%%%%%%%%%%%%%%%%%%%%%%%%%%%%%%%%%%%%%%%%%%%%%%%%%%%%%%%%

\textbf{Acknowledgements.} 
The first named author has been supported by the Independent Research Fund Denmark through the Sapere Aude:\ DFF-Starting Grant 1054-00094B, and by the Australian Research Council grant DP180100595. 
The second named author has been supported by the start-up grant STG/18/019 of KU Leuven, the research project C14/19/088 funded by the research council of KU Leuven, and the project G085020N funded by the Research Foundation Flanders (FWO).

We would like to thank Stuart White and Yuhei Suzuki for some discussions that had a positive impact on this article. In addition our gratitude goes out to Sergio Giron Pacheco,  Robert Neagu and Shanshan Hua for dedicating a reading seminar in Oxford to read our work carefully, which benefited us and the readers in the form of numerous small corrections and improvements. We would lastly like to thank the referees for a careful reading and valuable suggestions.

%%%%%%%%%%%%%%%%%%%%%%%%%%%%%%%%%%%%%%%%%%%%%%%%%%%%%%%%%%%%%%%%%%

\bibliographystyle{gabor}
\bibliography{master}

\begin{thebibliography}{100}
\providecommand{\url}[1]{\texttt{#1}}
\providecommand{\urlprefix}{URL }

\bibitem{Delaroche79}
C.~Anantharaman-Delaroche: Action moyennable d'un groupe localement compact sur
  une alg{\`e}bre de von {N}eumann.
\newblock Math. Scand. 45 (1979), pp. 289--304.

\bibitem{AranoKubota17}
Y.~Arano, Y.~Kubota: Compact {L}ie group action with the continuous {R}okhlin
  property.
\newblock J. Funct. Anal. 272 (2017), no.~2, pp. 522--545.

\bibitem{BarlakLi17}
S.~Barlak, X.~Li: Cartan subalgebras and the {UCT} problem.
\newblock Adv. Math. 316 (2017), pp. 748--769.

\bibitem{BarlakSzabo16}
S.~Barlak, G.~Szab{\'o}: Sequentially split $*$-homomorphisms between
  \cstar-algebras.
\newblock Int. J. Math. 42 (2016).
\newblock 1650105.

\bibitem{BlaKK}
B.~Blackadar: {$K$}-theory for {O}perator {A}lgebras.
\newblock Second edition. Cambridge University Press (1998).

\bibitem{BratteliKishimotoRobinson07}
O.~Bratteli, A.~Kishimoto, D.~W. Robinson: Rohlin flows on the {C}untz algebra
  {$\CO_\infty$}.
\newblock J. Funct. Anal. 248 (2007), pp. 472--511.

\bibitem{Brown77}
L.~G. Brown: Stable isomorphism of hereditary subalgebras of \cstar-algebras.
\newblock Pacific J. Math. 71 (1977), no.~2, pp. 335--348.

\bibitem{Brown00}
L.~G. Brown: Continuity of actions of groups and semigroups on {B}anach spaces.
\newblock J. London Math. Soc. 62 (2000), pp. 107--116.

\bibitem{BrownOzawa}
N.~P. Brown, N.~Ozawa: \cstar-algebras and finite-dimensional approximations,
  \emph{Graduate Studies in Mathematics}, volume~88.
\newblock American Mathematical Society, Providence, RI (2008).

\bibitem{BussEchterhoffWillett20}
A.~Buss, S.~Echterhoff, R.~Willett: Injectivity, crossed products, and amenable
  group actions.
\newblock Contemp. Math. 749 (2020), pp. 105--138.

\bibitem{BussEchterhoffWillett23}
A.~Buss, S.~Echterhoff, R.~Willett: Amenability and weak containment for
  actions of locally compact groups on \cstar-algebras.
\newblock Mem. Amer. Math. Soc., to appear  (2023).
\newblock \urlprefix\url{https://arxiv.org/abs/2003.03469}.

\bibitem{CGSTW23}
J.~R. Carri{\'o}n, J.~Gabe, C.~Schafhauser, A.~Tikuisis, S.~White: Classifying
  *-homomorphisms {I}: {U}nital simple nuclear \cstar-algebras  (2023).
\newblock \urlprefix\url{https://arxiv.org/abs/2307.06480}.

\bibitem{CCJJV}
P.-A. Cherix, M.~Cowling, P.~Jolissaint, P.~Julg, A.~Valette: Groups with the
  {H}aagerup property. {G}romov's a-{T}-menability, \emph{Progress in
  Mathematics}, volume 197.
\newblock Birkh{\"a}user, Basel (2001).

\bibitem{ChoiEffros76}
M.-D. Choi, E.~G. Effros: The completely positive lifting problem for
  \cstar-algebras.
\newblock Ann. Math. 104 (1976), no.~3, pp. 585--609.

\bibitem{Connes73}
A.~Connes: Une classification des facteurs de type {III}.
\newblock Ann. Sci. \'Ecole Norm. Sup. 6 (1973), pp. 133--252.

\bibitem{Connes75}
A.~Connes: Outer conjugacy classes of automorphisms of factors.
\newblock Ann. Sci. {\'E}cole Norm. Sup. 4 (1975), no.~8, pp. 383--419.

\bibitem{Connes76}
A.~Connes: Classification of injective factors. {Cases II${}_1$, II${}_\infty$,
  III${}_\lambda$, $\lambda\neq1$}.
\newblock Ann. Math. 74 (1976), pp. 73--115.

\bibitem{Connes77}
A.~Connes: Periodic automorphisms of the hyperfinite factors of type {II}$_1$.
\newblock Acta Sci. Math. 39 (1977), pp. 39--66.

\bibitem{Connes81}
A.~Connes: An analogue of the {T}hom isomorphism for crossed products of a
  \cstar-algebra by an action of {$\IR$}.
\newblock Adv. Math. 39 (1981), no.~1, pp. 31--55.

\bibitem{Cuntz77}
J.~Cuntz: Simple \cstar-algebras generated by isometries.
\newblock Comm. Math. Phys. 57 (1977), no.~2, pp. 173--185.

\bibitem{Cuntz83}
J.~Cuntz: Generalized homomorphisms between \cstar-algebras and {$KK$}-theory.
\newblock Lect. Notes Math. 1031 (1983), pp. 31--45.

\bibitem{Cuntz83_2}
J.~Cuntz: {$K$}-theoretic amenability for discrete groups.
\newblock J. reine angew. Math. 344 (1983), pp. 180--195.

\bibitem{Cuntz84}
J.~Cuntz: {$K$}-theory and \cstar-algebras.
\newblock Lect. Notes Math. 1046 (1984), pp. 55--79.

\bibitem{Dadarlat97}
M.~Dadarlat: Quasidiagonal morphisms and homotopy.
\newblock J. Funct. Anal. 151 (1997), no.~1, pp. 213--233.

\bibitem{DadarlatEilers01}
M.~Dadarlat, S.~Eilers: Asymptotic unitary equivalence in {$KK$}-theory.
\newblock K-theory 23 (2001), pp. 305--322.

\bibitem{DadarlatEilers02}
M.~Dadarlat, S.~Eilers: On the classification of nuclear \cstar-algebras.
\newblock Proc. London Math. Soc. (3) 85 (2002), pp. 168--210.

\bibitem{DAEmersonMeyer14}
I.~Dell'Ambrogio, H.~Emerson, R.~Meyer: An equivariant {L}efschetz fixed-point
  formula for correspondences.
\newblock Doc. Math. 19 (2014), pp. 141--193.

\bibitem{ElliottGongLinNiu20}
G.~Elliott, G.~Gong, H.~Lin, Z.~Niu: The classification of simple separable
  {$KK$}-contractible \cstar-algebras with finite nuclear dimension.
\newblock J. Geom. Phys. 158 (2020).
\newblock 103861.

\bibitem{ElliottGongLinNiu20-2}
G.~Elliott, G.~Gong, H.~Lin, Z.~Niu: Simple stably projectionless
  \cstar-algebras with generalized tracial rank one.
\newblock J. Noncommut. Geom. 14 (2020), no.~1, pp. 251--347.

\bibitem{Elliott94}
G.~A. Elliott: The classification problem for amenable \cstar-algebras.
\newblock Proc. Internat. Congress of Mathematicians, Zurich, Switzerland,
  Birkhauser Verlag, Basel  (1994), pp. 922--932.

\bibitem{Elliott10}
G.~A. Elliott: Towards a theory of classification.
\newblock Adv. Math. 223 (2010), pp. 30--48.

\bibitem{ElliottGongLinNiu15}
G.~A. Elliott, G.~Gong, H.~Lin, Z.~Niu: On the classification of simple
  \cstar-algebras with finite decomposition rank, {II} 158 (2015).
\newblock \urlprefix\url{https://arxiv.org/abs/1507.03437}.

\bibitem{Evans80}
D.~E. Evans: On {$\CO_n$}.
\newblock Publ. RIMS, Kyoto Univ. 16 (1980), pp. 915--927.

\bibitem{EvansKishimoto97}
D.~E. Evans, A.~Kishimoto: Trace scaling automorphisms of certain stable {AF}
  algebras.
\newblock Hokkaido Math. J. 26 (1997), pp. 211--224.

\bibitem{FackSkandalis81}
T.~Fack, G.~Skandalis: Connes' analogue of the {T}hom isomorphism for the
  {K}asparov groups.
\newblock Invent. Math. 64 (1981), pp. 7--14.

\bibitem{Gabe20}
J.~Gabe: A new proof of {K}irchberg's {$\mathcal{O}_2$}-stable classification.
\newblock J. reine angew. Math. 761 (2020), pp. 247--289.

\bibitem{Gabe21}
J.~Gabe: Classification of {$\mathcal{O}_\infty$}-stable \cstar-algebras.
\newblock Mem. Amer. Math. Soc., to appear  (2021).
\newblock \urlprefix\url{https://arxiv.org/abs/1910.06504v3}.

\bibitem{GabeSzabo22}
J.~Gabe, G.~Szab{\'o}: The stable uniqueness theorem for equivariant {K}asparov
  theory  (2022).
\newblock \urlprefix\url{https://arxiv.org/abs/2202.09809v2}.

\bibitem{Gardella19}
E.~Gardella: Compact group actions with the {R}okhlin property.
\newblock Trans. Amer. Math. Soc. 371 (2019), pp. 2837--2874.

\bibitem{Gardella22}
E.~Gardella: Equivariant {$KK$}-theory and the continuous {R}okhlin property.
\newblock IMRN 21 (2022), pp. 16779--16813.
\newblock Article rnaa292.

\bibitem{GoldsteinIzumi11}
P.~Goldstein, M.~Izumi: Quasi-free actions of finite groups on the {C}untz
  algebra {$\CO_\infty$}.
\newblock Tohoku Math. J. 63 (2011), pp. 729--749.

\bibitem{GongLin20}
G.~Gong, H.~Lin: On classification of non-unital simple amenable
  \cstar-algebras, {II}.
\newblock J. Geom. Phys. 158 (2020).
\newblock Article 103865.

\bibitem{GongLin21}
G.~Gong, H.~Lin: On classification of non-unital amenable simple
  \cstar-algebras, {III}, stably projectionless \cstar-algebras  (2021).
\newblock \urlprefix\url{https://arxiv.org/abs/2112.14003}.

\bibitem{GongLin22}
G.~Gong, H.~Lin: On classification of non-unital simple amenable
  \cstar-algebras, {III}, the range and the reduction.
\newblock Ann. K-Theory 7 (2022), pp. 279--384.

\bibitem{GongLinNiu20}
G.~Gong, H.~Lin, Z.~Niu: A classification of finite simple amenable {$\mathcal
  Z$}-stable \cstar-algebras, {I}: \cstar-algebras with generalized tracial
  rank one.
\newblock C. R. Math. Rep. Acad. Sci. Canada 42 (2020), no.~3.

\bibitem{GongLinNiu20-2}
G.~Gong, H.~Lin, Z.~Niu: A classification of finite simple amenable {$\mathcal
  Z$}-stable \cstar-algebras, {II}: \cstar-algebras with rational generalized
  tracial rank one.
\newblock C. R. Math. Rep. Acad. Sci. Canada 42 (2020), no.~4, pp. 451--539.

\bibitem{Haagerup87}
U.~Haagerup: Connes bicentralizer problem and uniqueness of the injective
  factor of type {III${}_1$}.
\newblock Acta Math. 158 (1987), pp. 95--148.

\bibitem{Higson87}
N.~Higson: A characterization of {$KK$}-theory.
\newblock Pacific J. Math. 126 (1987), pp. 253--276.

\bibitem{HigsonKasparov01}
N.~Higson, G.~Kasparov: {$E$}-theory and {$KK$}-theory for groups which act
  properly and isometrically on {H}ilbert space.
\newblock Invent. Math. 144 (2001), no.~1, pp. 23--74.

\bibitem{HirshbergPhillips15}
I.~Hirshberg, N.~C. Phillips: Rokhlin dimension: obstructions and permanence
  properties.
\newblock Doc. Math. 20 (2015), pp. 199--236.

\bibitem{HjelmborgRordam98}
J.~Hjelmborg, M.~R{{\o}}rdam: On stabilty of \cstar-algebras.
\newblock J. Funct. Anal. 155 (1998), no.~1, pp. 153--171.

\bibitem{Husemoller94}
D.~Husemoller: Fibre bundles, \emph{Graduate Texts in Mathematics}, volume~20.
\newblock Third edition. Springer-Verlag, New York (1994).

\bibitem{Ioana18}
A.~Ioana: Rigidity for von {N}eumann algebras.
\newblock Proc. Int. Cong. Math. 2 (2018), pp. 1635--1668.

\bibitem{Izumi04}
M.~Izumi: Finite group actions on \cstar-algebras with the {R}ohlin property
  {I}.
\newblock Duke Math. J. 122 (2004), no.~2, pp. 233--280.

\bibitem{Izumi04II}
M.~Izumi: Finite group actions on \cstar-algebras with the {R}ohlin property
  {II}.
\newblock Adv. Math. 184 (2004), no.~1, pp. 119--160.

\bibitem{Izumi10}
M.~Izumi: Group {A}ctions on {O}perator {A}lgebras.
\newblock Proc. Intern. Congr. Math.  (2010), pp. 1528--1548.

\bibitem{IzumiMatui10}
M.~Izumi, H.~Matui: {$\IZ^2$}-actions on {K}irchberg algebras.
\newblock Adv. Math. 224 (2010), pp. 355--400.

\bibitem{IzumiMatui20}
M.~Izumi, H.~Matui: A weak homotopy equivalence type result related to
  {K}irchberg algebras.
\newblock J. Noncommut. Geom. 14 (2020), pp. 1325--1364.

\bibitem{IzumiMatui21}
M.~Izumi, H.~Matui: Poly-{$\IZ$} group actions on {K}irchberg algebras {I}.
\newblock IMRN  (2021), no.~16, pp. 12077--12154.

\bibitem{IzumiMatui21_2}
M.~Izumi, H.~Matui: Poly-{$\IZ$} group actions on {K}irchberg algebras {II}.
\newblock Invent. Math. 224 (2021), pp. 699--766.

\bibitem{Jones80}
V.~F.~R. Jones: Actions of finite groups on the hyperfinite type
  {$\mathrm{II}_1$} factor.
\newblock Mem. Amer. Math. Soc. 28 (1980), no. 237.

\bibitem{Jones83}
V.~F.~R. Jones: Index for subfactors.
\newblock Invent. Math. 72 (1983), pp. 1--25.

\bibitem{JulgValette84}
P.~Julg, A.~Valette: {$K$}-theoretic amenability for {${\rm SL}_{2}({\bf
  Q}_{p})$}, and the action on the associated tree.
\newblock J. Funct. Anal. 58 (1984), no.~2, pp. 194--215.

\bibitem{Kasparov88}
G.~G. Kasparov: Equivariant {$KK$}-theory and the {N}ovikov conjecture.
\newblock Invent. Math. 91 (1988), pp. 147--201.

\bibitem{KatayamaSutherlandTakesaki98}
Y.~Katayama, C.~E. Sutherland, M.~Takesaki: The characteristic square of a
  factor and the cocycle conjugacy of discrete group actions on factors.
\newblock Invent. Math. 132 (1998), pp. 331--380.

\bibitem{KawahigashiSutherlandTakesaki92}
Y.~Kawahigashi, C.~Sutherland, M.~Takesaki: The structure of the automorphism
  group of an injective factor and the cocycle conjugacy of discrete abelian
  group actions.
\newblock Acta Math. 169 (1992), pp. 105--130.

\bibitem{KirchbergC}
E.~Kirchberg: The {C}lassification of {P}urely {I}nfinite \cstar-{A}lgebras
  {U}sing {K}asparov's {T}heory.
\newblock
  \urlprefix\url{https://www.uni-muenster.de/imperia/md/content/MathematicsMuenster/ekneu1.pdf}.
\newblock Preprint.

\bibitem{Kirchberg95}
E.~Kirchberg: Exact \cstar-algebras, tensor products, and the classification of
  purely infinite algebras.
\newblock Proc. Intern. Congr. Math.  (1995), pp. 943--954.

\bibitem{Kirchberg04}
E.~Kirchberg: Central sequences in \cstar-algebras and strongly purely infinite
  algebras.
\newblock Operator Algebras: The Abel Symposium 1 (2004), pp. 175--231.

\bibitem{KirchbergPhillips00}
E.~Kirchberg, N.~C. Phillips: Embedding of exact \cstar-algebras in the {C}untz
  algebra {$\CO_2$}.
\newblock J. reine angew. Math. 525 (2000), pp. 17--53.

\bibitem{KirchbergRordam02}
E.~Kirchberg, M.~R{{\o}}rdam: Infinite {N}on-simple \cstar-{A}lgebras:
  {A}bsorbing the {C}untz {A}lgebra {$\mathcal{O}_\infty$}.
\newblock Adv. Math. 167 (2002), pp. 195--264.

\bibitem{Kishimoto81}
A.~Kishimoto: Outer {A}utomorphisms and {R}educed {C}rossed {P}roducts of
  {S}imple \cstar-{A}lgebras.
\newblock Comm. Math. Phys. 81 (1981), pp. 429--435.

\bibitem{Kishimoto96_R}
A.~Kishimoto: A {R}ohlin property for one-parameter automorphism groups.
\newblock Comm. Math. Phys. 179 (1996), no.~3, pp. 599--622.

\bibitem{KoehlerPHD}
M.~K{\"o}hler: Universal coefficient theorems in equivariant {$KK$}-theory.
\newblock Ph.D. thesis, Georg-August-Universit{\"a}t G{\"o}ttingen (2010).
\newblock \urlprefix\url{http://hdl.handle.net/11858/00-1735-0000-0006-B6A9-9}.

\bibitem{Kumjian04}
A.~Kumjian: On certain {C}untz-{P}imsner algebras.
\newblock Pacific J. Math. 217 (2004), no.~2, pp. 275--289.

\bibitem{Lin02}
H.~Lin: Stable approximate unitary equivalence of homomorphisms.
\newblock J. Operator Theory 47 (2002), no.~2, pp. 343--378.

\bibitem{Masuda13}
T.~Masuda: Unified approach to the classification of actions of discrete
  amenable groups on injective factors.
\newblock J. reine angew. Math. 683 (2013), pp. 1--47.

\bibitem{MasudaTomatsu16}
T.~Masuda, R.~Tomatsu: Rohlin flows on von {N}eumann algebras.
\newblock Mem. Amer. Math. Soc. 244 (2016), no.~2.

\bibitem{May99}
J.~P. May: A concise course in algebraic topology.
\newblock Chicago Lectures in Mathematics. University of Chicago Press,
  Chicago, IL (1999).

\bibitem{Meyer21}
R.~Meyer: On the classification of group actions on \cstar-algebras up to
  equivariant {$KK$}-equivalence.
\newblock Ann. K-theory 6 (2021), no.~2, pp. 157--238.

\bibitem{MeyerNest06}
R.~Meyer, R.~Nest: The {Baum-Connes} conjecture via localisation of categories.
\newblock Topology  (2006), no.~2, pp. 209--259.

\bibitem{Nakamura00}
H.~Nakamura: Aperiodic automorphisms of nuclear purely infinite simple
  \cstar-algebras.
\newblock Ergod. Th. Dynam. Sys. 20 (2000), pp. 1749--1765.

\bibitem{Ocneanu85}
A.~Ocneanu: Actions of discrete amenable groups on von {N}eumann algebras,
  volume 1138.
\newblock Springer-Verlag, Berlin (1985).

\bibitem{OzawaSuzuki21}
N.~Ozawa, Y.~Suzuki: On characterizations of amenable \cstar-dynamical systems
  and new examples.
\newblock Selecta Math. 27 (2021).
\newblock Article no. 92.

\bibitem{PackerRaeburn89}
J.~A. Packer, I.~Raeburn: Twisted crossed products of \cstar-algebras.
\newblock Math. Proc. Cambridge Philos. Soc. 106 (1989), no.~2, pp. 293--311.

\bibitem{Phillips00}
N.~C. Phillips: A classification theorem for nuclear purely infinite simple
  \cstar-algebras.
\newblock Doc. Math. 5 (2000), pp. 49--114.

\bibitem{Pimsner97}
M.~Pimsner: A class of \cstar-algebras generalizing both {C}untz-{K}rieger
  algebras and crossed products by {$\IZ$}.
\newblock Free probability theory (Waterloo, ON, 1995), Fields Inst. Commun.
  (1997), no.~12, pp. 189--212.

\bibitem{Popa90}
S.~Popa: Classification of subfactors: the reduction to commuting squares.
\newblock Invent. Math. 101 (1990), no.~1, pp. 19--43.

\bibitem{Popa94}
S.~Popa: Classification of amenable subfactors of type {II}.
\newblock Acta Math. 172 (1994), no.~2, pp. 163--255.

\bibitem{Popa07}
S.~Popa: Deformation and rigidity for group actions and von {N}eumann algebras.
\newblock Proc. Intern. Congr. Math.  (2006/2007), pp. 445--479.

\bibitem{Popa10}
S.~Popa: Classification of actions of discrete amenable groups on amenable
  subfactors of type {II}.
\newblock Internat. J. Math. 21 (2010), pp. 1663--1695.

\bibitem{Rordam95}
M.~R{\o}rdam: Classification of certain infinite simple \cstar-algebras.
\newblock J. Funct. Anal. 131 (1995), no.~2, pp. 415--458.

\bibitem{Rordam}
M.~R{{\o}}rdam: Classification of {N}uclear \cstar-{A}lgebras. {E}ncyclopaedia
  of {M}athematical {S}ciences.
\newblock Springer (2001).

\bibitem{RosenbergSchochet86}
J.~Rosenberg, C.~Schochet: The {K}{\"u}nneth theorem and the universal
  coefficient theorem for equivariant {$K$}-theory and {$KK$}-theory.
\newblock Mem. Amer. Math. Soc. 62 (1986), no. 348, pp. 1--95.

\bibitem{RosenbergSchochet87}
J.~Rosenberg, C.~Schochet: The {K}{\"u}nneth {T}heorem and the {U}niversal
  {C}oefficient {T}heorem for {K}asparov's generalized {$K$}-functor.
\newblock Duke Math. J. 55 (1987), no.~2, pp. 431--474.

\bibitem{Schafhauser20}
C.~Schafhauser: Subalgebras of simple {AF}-algebras.
\newblock Ann. Math. 192 (2020), no.~2, pp. 309--352.

\bibitem{Suzuki19}
Y.~Suzuki: Simple equivariant \cstar-algebras whose full and reduced crossed
  products coincide.
\newblock J. Noncommut. Geom. 13 (2019), pp. 1577--1585.

\bibitem{Suzuki21}
Y.~Suzuki: Equivariant {$\mathcal{O}_2$}-absorption theorem for exact groups.
\newblock Compos. Math. 157 (2021), no.~7, pp. 1492--1506.

\bibitem{Suzuki23}
Y.~Suzuki: Every countable group admits amenable actions on stably finite
  simple \cstar-algebras.
\newblock Amer. J. Math., to appear  (2023).
\newblock \urlprefix\url{https://arxiv.org/abs/2204.04480}.

\bibitem{Szabo17ssa3}
G.~Szab{\'o}: Strongly self-absorbing \cstar-dynamical systems, {III}.
\newblock Adv. Math. 316 (2017), no.~20, pp. 356--380.

\bibitem{Szabo18kp}
G.~Szab{\'o}: Equivariant {K}irchberg--{P}hillips-type absorption for amenable
  group actions.
\newblock Comm. Math. Phys. 361 (2018), no.~3, pp. 1115--1154.

\bibitem{Szabo18ssa2}
G.~Szab{\'o}: Strongly self-absorbing \cstar-dynamical systems, {II}.
\newblock J. Noncomm. Geom. 12 (2018), no.~1, pp. 369--406.

\bibitem{Szabo18ssa}
G.~Szab\'{o}: Strongly self-absorbing {$\mathrm{C}^*$}-dynamical systems.
\newblock Trans. Amer. Math. Soc. 370 (2018), pp. 99--130.

\bibitem{Szabo19ssa4}
G.~Szab{\'o}: Actions of certain torsion-free elementary amenable groups on
  strongly self-absorbing \cstar-algebras.
\newblock Comm. Math. Phys. 371 (2019), no.~1, pp. 267--284.

\bibitem{Szabo19rd}
G.~Szab{\'o}: Rokhlin dimension: absorption of model actions.
\newblock Anal. PDE 12 (2019), no.~5, pp. 1357--1396.

\bibitem{Szabo21R}
G.~Szab{\'o}: The classification of {R}okhlin flows on \cstar-algebras.
\newblock Comm. Math. Phys. 382 (2021), pp. 2015--2070.

\bibitem{Szabo21cc}
G.~Szab{\'o}: On a categorical framework for classifying \cstar-dynamics up to
  cocycle conjugacy.
\newblock J. Funct. Anal. 280 (2021), no.~8.
\newblock 108927.

\bibitem{Takesaki70}
M.~Takesaki: Tomita's theory of modular {H}ilbert algebras and its
  applications.
\newblock Lecture Notes Math., volume 128. Springer (1970).

\bibitem{Thomsen98}
K.~Thomsen: The universal property of equivariant {$KK$}-theory.
\newblock J. reine angew. Math. 504 (1998), pp. 55--71.

\bibitem{Thomsen00}
K.~Thomsen: Equivariant {$KK$}-theory and \cstar-extensions.
\newblock K-Theory 19 (2000), no.~3, pp. 219--249.

\bibitem{TikuisisWhiteWinter17}
A.~Tikuisis, S.~White, W.~Winter: Quasidiagonality of nuclear \cstar-algebras.
\newblock Ann. Math. 185 (2017), no.~1, pp. 229--284.

\bibitem{TomsWinter07}
A.~S. Toms, W.~Winter: Strongly self-absorbing \cstar-algebras.
\newblock Trans. Amer. Math. Soc. 359 (2007), no.~8, pp. 3999--4029.

\bibitem{Vaes10}
S.~Vaes: Rigidity for von {N}eumann algebras and their invariants.
\newblock Proc. Int. Congr. Math. 3 (2010), pp. 1624--1650.

\bibitem{White23}
S.~White: Abstract classification theorems for amenable \cstar-algebras.
\newblock Proc. Int. Congr. Math, to appear
  \urlprefix\url{https://arxiv.org/abs/2307.03782}.

\bibitem{Winter17}
W.~Winter: Structure of nuclear \cstar-algebras: {F}rom quasidiagonality to
  classification, and back again.
\newblock Proc. Int. Congr. Math.  (2017), pp. 1797--1820.

\bibitem{Zhang92}
S.~Zhang: Certain \cstar-algebras with real rank zero and their corona and
  multiplier algebras, {I}.
\newblock Pacific J. Math. 155 (1992), no.~1, pp. 169--197.

\end{thebibliography}

\end{document}